\newtheorem{theointro}{Theorem}
\newtheorem{theo}{Theorem}
\theoremstyle{plain}
\newtheorem{prop}[theo]{Proposition}
\newtheorem*{prop*}{Proposition}
\newtheorem{coro}[theo]{Corollary}
\newtheorem{lemme}[theo]{Lemma}
\newtheorem*{lemme*}{Lemma}
\theoremstyle{remark}
\newtheorem{rem}[theo]{Remark}
\newtheorem*{rem*}{Remark}
\theoremstyle{definition}
\newtheorem{defin}[theo]{Definition}
\newtheorem*{defin*}{Definition}
\newtheorem*{prop_def*}{Proposition-Definition}
\newcommand{\eps}{\varepsilon}
\renewcommand{\P}[1]{\ensuremath{\mathbb P \left[ #1 \right]}}
\newcommand{\E}[1]{\ensuremath{\mathbb E \left[ #1 \right]}}
\newcommand{\Var}[1]{\ensuremath{\mathbb {V}\mathrm{ar} \left[ #1 \right]}}
\renewcommand{\leq}{\leqslant}
\renewcommand{\geq}{\geqslant}
\newcommand{\Leb}{\operatorname{Leb}}
\newcommand{\Q}{\mathds Q}
\newcommand{\R}{\mathds R}
\newcommand{\C}{\mathds C}
\newcommand{\N}{\mathds N}
\newcommand{\Nun}{\mathds N \backslash \{0\}}
\newcommand{\id}{\operatorname{id}}
\newcommand{\wR}{w^\Re}
\newcommand{\wI}{w^\Im}
\newcommand{\crochetk}{\langle k \rangle}
\newcommand{\dtv}{\mathrm d_{\mathrm{TV}}}
\renewcommand{\cleardoublepage}{%
  \clearpage\thispagestyle{empty}%
  \if@twoside
    \ifodd\c@page
    \else
      \hbox{}\newpage
      \if@twocolumn
         \hbox{}\newpage
      \fi
    \fi
  \fi}
\title{Infinite-dimensional regularization of  McKean-Vlasov equation  with a Wasserstein diffusion}
\author{Victor \textsc{Marx} \thanks{
Université Côte d’Azur, CNRS, Laboratoire J.A. Dieudonné UMR 7351, France.       \newline
E-mail: marx@math.uni-leipzig.de }}
\date{}
\newcommand{\ZeroRoman}[1]{% 0 + \Roman
  \ifcase\value{#1}\relax 0\else% Chapter 0
  \Roman{#1}\fi}% All other chapters
\begin{document}
\maketitle

\begin{center}
\shadowbox{
\begin{minipage}{15cm}
\large
\textbf{Abstract.}

Much effort has been spent in recent years on restoring uniqueness of McKean-Vlasov SDEs with non-smooth coefficients. As a typical instance, the velocity field is assumed to be bounded and measurable in its space variable and Lipschitz-continuous with respect to the distance in total variation in its measure variable, see~\cite{jourdain97, mishuraverten, lacker18, chaudrufrikha18, roecknerzhang18}. 
In contrast with those works, we consider in this paper a Fokker-Planck equation driven by an infinite-dimensional noise, inspired by the diffusion models on the Wasserstein space studied in~\cite{konarovskyi17system, konarovskyivonrenesse18, marx18}. 
We prove that well-posedness of that equation holds for a drift function that might be only bounded and measurable in its measure argument, provided  that a trade-off is respected between the regularity in the finite-dimensional component and the regularity in 
the measure argument. In this regard, 
we show that the higher the regularity of~$b$ with respect to its space variable is, the lower regularity we have to assume on~$b$ with respect to its measure variable in order to restore uniqueness. 
\end{minipage}
}
\end{center}

\textbf{Keywords:} 
Wasserstein diffusion, McKean-Vlasov equation, Fokker-Planck equation, regularization properties, restoration of uniqueness, 
interacting particle system, coalescing particles,  Brownian sheet.
 
\textbf{AMS MSC 2010:} Primary 60H10, 60H15, Secondary 60K35, 60J60, 35Q83

\tableofcontents

\section{Introduction}

Let us denote by $\mathcal P_2(\R)$ the $L_2$-Wasserstein space, consisting in all probability measures~$\mu$ on~$\R$ such that $\int_\R x^2 \mu(\mathrm dx)$ is finite, and by $W_2$ the usual Wasserstein distance on~$\mathcal P_2(\R)$.

In this paper, we are interested in regularization by noise results for equations in infinite dimension perturbed by infinite-dimensional noises. 
More precisely, we will consider the following equation:
\begin{align}
\left\{
\begin{aligned}
\mathrm dy_t(u)&= b(y_t(u),\mu_t) \mathrm dt, \quad u \in [0,1], t \in [0,T], \\
\mu_t&=\Leb_{[0,1]}\circ y_t^{-1},
\end{aligned}
\right.
\label{eq:fp_quantile}
\end{align}
where the unknown $(y_t)_{t\in [0,T]}$ is a time-continuous process such that for each $t\in [0,T]$, $y_t$ takes values in the space $L_2^\uparrow[0,1]$ of non-decreasing square-integrable functions $f:[0,1] \to \R$. Then the measure-valued process $(\mu_t)_{t\in [0,T]}$ satisfies the following non-linear Fokker-Planck equation on the Wasserstein space $\mathcal P_2(\R)$:
\begin{align}
\partial_t \mu_t + \operatorname{div} ( b(\cdot, \mu_t)  \; \mu_t)=0. 
\label{eq:fp_mesure}
\end{align}
Remark that \eqref{eq:fp_quantile} and~\eqref{eq:fp_mesure} are deterministic equations. 
If the velocity field $b:\R \times \mathcal P_2(\R) \to \R$ is a Lipschitz-continuous function, then equation~\eqref{eq:fp_quantile} is well-posed: the proof is based on a fixed-point method and is similar to the proof of~\cite[Thm 1.1]{sznitman91}. 
Although existence might hold true in cases where $b$ is less regular, uniqueness often fails to be true when $b$ is not Lipschitz-continuous,  e.g. when for each $\mu \in \mathcal P_2(\R)$, $b(\mu):=2 \operatorname{sign}(m) \sqrt{|m|}$, where $m:= \int_\R x \mu (\mathrm dx)$ and $\operatorname{sign}(x):= \mathds 1_{\{ x \neq 0\}} \frac{x}{|x|}$. That  example is derived from the classical  Peano counter-example of ill-posedness of a one-dimensional transport equation.
Our interest is to restore uniqueness of equation~\eqref{eq:fp_quantile} for a certain class of velocity fields~$b$ by adding an infinite-dimensional diffusion. 

In the first part of this paper, we prove a  result of restoration of uniqueness for equation~\eqref{eq:fp_quantile} for a perturbative diffusion constructed on $\mathcal P_2(\R)$. 
That diffusion, which is an infinite-dimensional analogue of a  Brownian motion, is constructed as a  regularized variant of the Modified Massive Arratia flow introduced by Konarovskyi and von Renesse (see~\cite{konarovskyi17system,konarovskyivonrenesse18,marx18}). Interestingly enough,  diffusions on the Wasserstein space allow to observe averaging effects  in infinite dimension: to make it clear, we will assume in this first part that the velocity field $b:\R \times \mathcal P_2(\R) \to \R$ is $\mathcal C^2$-differentiable in the space variable (\textit{i.e.} the first variable) but only measurable and bounded in the measure variable (\textit{i.e.} the second variable). 
This comes in contrast with regularization results  in the case where the noise is of the same dimension as the ambiant space, obtained among others by Jourdain~\cite{jourdain97}, Mishura-Veretennikov~\cite{mishuraverten}, Lacker~\cite{lacker18}, Chaudru de Raynal-Frikha~\cite{chaudrufrikha18} and Röckner-Zhang~\cite{roecknerzhang18}. In those papers, the typical assumptions on the drift function is that $b$ should be bounded and measurable in the space variable and Lipschitz-continuous in total variation distance in the measure variable; in other words, finite-dimensional noises can only average the non-smoothness of a finite-dimensional argument of the drift function. 

In the second part of this paper, a connection is made between the result of the first part and the above-mentioned literature. With the aim to interpolate both aforementioned classes of assumptions on $b$, we observe a restoration of uniqueness phenomenon for a continuum of admissible drift functions~$b$, as long as a regularity condition is satisfied: roughly speaking, the assumption is $\eta+\frac{3}{2}\delta>\frac{3}{2}$, where $\eta$ is the Sobolev-regularity of $b$ in the space variable and $\delta$ is the Hölder-regularity of $b$ in the measure variable. 
 It should be already noticed at this stage that the results of this second part are obtained at the price of relaxing the related notion of weak solution and of modifying the structure of the noise, by adding an idiosyncratic Brownian motion,  as we will explain hereafter. 

Before stating the theorems proved in this paper, let us briefly recall important results on restoration of uniqueness for McKean-Valsov equations on the one hand and on construction of diffusions on the Wasserstein space on the other hand. 

\subsection{Restoration of uniqueness results for McKean-Vlasov equations}

As a matter of fact, restoration of uniqueness is now a well-understood phenomenon for classical Itô's SDEs in finite dimension; let us distinguish weak well-posedness results obtained in the aftermath of pioneer work by Stroock and Varadhan (see~\cite{stroockvaradhan69,stroockvaradhan79}) and strong well-posedness results, meaning that the solution is adapted to the filtration generated by the noise and that two solutions are almost surely indistinguishable (see Zvonkin~\cite{zvonkin74}, Veretennikov~\cite{veretennikov80}, Krylov-Röckner~\cite{krylovrockner05}). 
More recently,  restoration of uniqueness of PDEs has become an active topic of research. 
In~\cite{flandoli_gub_priola_10}, Flandoli, Gubinelli and Priola have shown that
the following transport equation with multiplicative noise
\begin{align*}
\mathrm d_t u(t,x)= (b(t,x) \cdot Du(t,x)) \mathrm dt + \sum_{i=1}^d e_i \cdot Du(t,x) \circ dW_t^i; \quad u(0,x)=u_0(x),
\end{align*}
is well-posed for Hölder-continuous drift functions~$b$, whereas the transport equation without noise is not necessarily well-posed, see e.g. the counter-example (given in~\cite{flandoli_gub_priola_10}) $b(t,x)= \frac{1}{1-\gamma}([x| \wedge R)^\gamma$ for  fixed $R>0$ and $\gamma \in (0,1)$.
Many further investigations have been made for SDEs on Hilbert spaces. In a series of papers~\cite{daprato_flandoli_10, daprato_flandoli_priola_roeckner_13,  daprato_flandoli_priola_roeckner_15}, Da Prato, Flandoli, Priola and Röckner proved that pathwise uniqueness holds for an SDE on a Hilbert space $H$ of the form
\begin{align}
\label{eq:SPDE:A:B:W}
d X_t= (A X_t + B(X_t)) \mathrm dt + \mathrm dW_t,
\end{align}
for a  certain class of self-adjoint, negative definite operators  $A: D(A) \subset H \to H$, for $W$  a cylindrical Wiener process on $H$ and for $B:H \to H$ only measurable and locally bounded. 
For an interesting introduction and a survey of results on regularization by noise phenomena, see also Flandoli's seminal lecture notes~\cite{flandoli11}. 
Various other equations in infinite-dimension have also been studied, like e.g. kinetic equations~\cite{fedrizzi_flandoli_priola_vovelle}. 
Interestingly enough, we quote in this context the recent result of Delarue \cite{Delarue2019} in which some of the above results are 
used to restore uniqueness to a mean-field game by means of an infinite dimensional common noise of Ornstein-Uhlenbeck type. Although this work 
shares some motivation with ours, it must be stressed that the dynamics 
of the particle therein
obey an operator $A$ similar to the one that appears in 
\eqref{eq:SPDE:A:B:W}. Equivalently, this says that uniqueness is restored but at the price of an extra layer of interactions
which is, in contrast to the mean-field one, purely local, arising from the Ornstein-Uhlenbeck noise.
The models that we address in the rest of the paper do not have the latter feature.

Obviously, an extensive description of restoration of uniqueness results is out of reach of this introduction, but let us focus  in a more detailed fashion on a certain class of equations, namely  McKean-Vlasov equations. 
Let $b:   \R^d \times \mathcal P_2(\R^d) \to \R^d$ be a drift function, $\sigma: \R^d \times \mathcal P_2(\R^d) \to \R^{d\times m}$ be a diffusion matrix and $({B}_t)_{t \in [0,T]}$ be a Brownian motion in $\R^m$. McKean-Vlasov equation reads as follows
\begin{align}
\label{intro:McKeanVlasov}
\left\{ 
\begin{aligned}
\mathrm dX_t &= b( X_t, \mu_t) \mathrm dt + \sigma( X_t, \mu_t)  \mathrm d{B}_t, \\
\mu_t &= \mathcal L (X_t),
\end{aligned}
\right.
\end{align}
where $\mathcal L(X_t)$ denotes the law of $X_t$. The coefficients in the stochastic differential equation~\eqref{intro:McKeanVlasov} depend on the distribution of the solution $X_t$. That dependence is called mean-field interaction, due to the link with a particle system. Indeed, equation~\eqref{intro:McKeanVlasov} should be regarded as the limit when $N \to +\infty$ of a system of particles of the following form:
\begin{align}
\label{intro:McKeanVlasov:particle}
\mathrm dX_t^i = b(X_t^i, \overline{\mu}_t^N) \mathrm dt+ \sigma (X_t^i, \overline{\mu}_t^N) \mathrm d {B}_t^i, \quad i=1, \dots, N, 
\end{align}
where $\overline{\mu}_t^N=\frac{1}{N}\sum_{j=1}^N \delta_{X_t^j}$ and $(B_t^i)_{t\in [0,T],\; 1\leq i\leq N}$ are independent Brownian motions, the latter being usually referred to as \textit{idiosyncratic noises} in order to stress the fact that there are somehow proper to a given particle. The trajectory of each particle $(X_t^i)_{t\in [0,T]}$ depends  on both the current position of the particle and  the positions of the other particles, but only via the empirical distribution $\overline{\mu}_t^N$; that is why this system is called mean-field.

Well-posedness of McKean-Vlasov SDEs has been widely studied. We here provide a tiny example
of all the existing references in the field.  Generally speaking, existence and uniqueness may be proved by a 
Picard fixed point argument on the process $(\mu_{t})_{t \in [0,T]}$ provided that the coefficients are sufficiently regular, 
say for instance that they are Lipschitz-continuous in both variables, Lipschitz-continuity 
with respect to the measure argument being understood with respect to the $L_2$-Wasserstein distance. 
This strategy is made clear in the seminal lecture notes of 
Sznitman~\cite{sznitman91}. Variants may be found, see for example (to quote earlier ones) the works of
Funaki~\cite{funaki84}, 
Gärtner \cite{gartner88} or Oelschläger~\cite{oelschlager84}. 
Interestingly enough, the proof of existence and uniqueness extends to
models with a common noise of the form
\begin{align}
\label{intro:McKeanVlasov:common}
\begin{aligned}
\mathrm dX_t &= b( X_t, \mu_t) \mathrm dt + \sigma(X_t, \mu_t)  \mathrm d{B}_t + 
\sigma_{0}(t,X_{t},\mu_{t}) {\mathrm d} W_{t},
\end{aligned}
\end{align}
with the constraint that 
$\mu_t$ now matches the conditional law of $X_{t}$ given the realization of $W$, 
where $(W_{t})_{t \in [0,T]}$ is a new Brownian motion, independent of $(B_{t})_{t \in [0,T]}$ and of dimension 
$m_{0}$, and $\sigma_{0}$ stands for a new volatility coefficient defined in the same manner as $\sigma$. 
Importantly, $\mu_{t}$ becomes random under the presence of $W$. The terminology \textit{common noise}
is better understood when we write down the analogue of \eqref{intro:McKeanVlasov:particle}, which reads:
\begin{align}
\label{intro:McKeanVlasov:particle:common}
\mathrm dX_t^i = b(X_t^i, \overline{\mu}_t^N) \mathrm dt+ \sigma (X_t^i, \overline{\mu}_t^N) \mathrm d {B}_t^i+ \sigma_{0} (X_t^i, \overline{\mu}_t^N) \mathrm d W_{t}, \quad i=1, \dots, N. 
\end{align}
The key fact here is that all the particles are driven by the same noise $(W_{t})_{t \in [0,T]}$, which is of course assumed to be independent of the collection $(B_{t}^i)_{t \in [0,T],\; 1\leq i\leq N}$. 
The reader may have a look at 
the works of Vaillancourt
\cite{vaillancourt}, Dawson and Vaillancourt \cite{dawson:vaillancourt},
Kurtz and Xiong \cite{kurtz:xiong:1,kurtz:xiong:2} 
or 
Coghi and Flandoli \cite{coghi}
for more details on 
(\ref{intro:McKeanVlasov:common}) and (\ref{intro:McKeanVlasov:particle:common}).

Let us describe  restoration of uniqueness phenomena for McKean-Vlasov equations. 
Well-posedness may  fail to be true for the  "deterministic" equation (i.e.
$\sigma \equiv 0$ in~\eqref{intro:McKeanVlasov}
in which case the randomness only comes from the initial condition)
when the drift term~$b=b(x,\mu)$ is not regular enough. 
Existing results in the field show that it is possible to require 
$b$ to be merely measurable and bounded in the space variable~$x$ and
Lipschitz-continuous in the measure variable~$\mu$, with respect to the topology generated by the total variation
distance $\dtv$, defined by 
\begin{align*}
\mathrm d_{\operatorname{TV}} (\mu,\nu):=2 \sup \left\{ \left| \int_{\R^d} f \mathrm d\mu - \int_{\R^d} f \mathrm d\nu  \right| \;; f:\R^d \to \R \text{ measurable}, \|f\|_{L_\infty} \leq 1  \right\},
\end{align*}
which is finer than the topology generated by the Wasserstein distance $W_2$. 
In particular, Jourdain~\cite{jourdain97} has proved that restoration of uniqueness holds in a weak sense for McKean-Vlasov equation~\eqref{intro:McKeanVlasov} in a case where $\sigma \equiv 1$ and $b$ is bounded, measurable and Lipschitz-continuous in its measure variable with respect to~$\dtv$.
Recently, several papers have improved the results, proving well-posedness for more general coefficients $\sigma$ in cases where $\sigma$ does not depend on $\mu$~\cite{mishuraverten,lacker18,chaudrufrikha18,roecknerzhang18}. 
In~\cite{mishuraverten}, Mishura and Veretennikov have in particular shown pathwise uniqueness under Lipschitz-continuity assumptions on $b$  with respect to the measure variable and on $\sigma$ with respect to the space variable. In~\cite{lacker18}, Lacker gives a short proof of well-posedness relying on a fixed-point argument.
Röckner and Zhang~\cite{roecknerzhang18} have extended the results to the case of
unbounded coefficients with suitable integrability properties, in the sense of Krylov-Röckner~\cite{krylovrockner05}. 
Let us emphasize once more that $b$ is assumed to be at least Lipschitz-continuous with respect to the measure-variable in  total variation distance.
This assumption might presumably be explained by the fact that the finite dimensional noise~$B$ cannot have a regularizing effect on  the infinitely many directions of the measure argument of~$b$; that is one of the reasons that drives us to study more precisely the effect of a noise defined on the Wasserstein space $\mathcal P_2(\R)$. 

\subsection{Diffusions on the Wasserstein space}

Before defining the diffusion model that we use in this text, let us briefly introduce the pre-existing models that have inspired our construction. In~\cite{vrenessesturm09}, von Renesse and Sturm constructed a so-called Wasserstein diffusion  on the space of probability measures on $[0,1]$, that is a Markovian stochastic process $(\mu_t)_{t \in [0,T]}$ with a reversibility property with respect to an entropic measure on $\mathcal P_2([0,1])$. Interestingly, the dynamics of  $(\mu_t)_{t \in [0,T]}$ are similar to the dynamics of a standard Brownian motion, in the sense that the large deviations in small time are given by the Wasserstein distance~$W_2$ and the martingale term that arises when expanding any smooth function~$\varphi$ of the measure argument along the process has exactly the square norm of the Wasserstein gradient of~$\varphi$ as local quadratic variation. 
Stochastic processes owning those diffusive features are various and several were studied in recent years. We decide in this paper to construct a diffusion inspired by the nice model of coalescing particles called Modified  Massive Arratia flow: in~\cite{konarovskii11, konarovskyi17system}, Konarovskyi introduces a diffusion model on $\mathcal P_2(\R)$ consisting in a modification of Arratia's system of coalescing particles on the real line. To wit, in Konarovskyi's model, each particle carries a mass determining its quadratic variation and moves independently of the other particles as long as it does not collide with another. To make it clear, at each collision between two particles, both particles stick together and form a unique new particle with a mass equal to the sum of the masses of both incident particles. At each time, the quadratic variation increment of a particle is given by the inverse of its mass. 
That model satisfies interesting properties, studied by Konarovskyi and von Renesse in~\cite{konarovskyi17system, konarovskyi17behavior, konarovskyivonrenesse18}, including an Itô-like formula and a Varadhan-like formula, with the Wasserstein distance~$W_2$ playing the analogous role of the Euclidean metric for the standard Brownian motion. 
Moreover, those dynamics have a canonical representation as a process of quantile functions (or increasing rearrangement functions) $(y_t)_{t \in [0,T]}$: $\forall u \in [0,1]$, $y_t(u):= \sup\{ x \in \R: \mu_t ((-\infty, x]) \leq u \}$. 

Despite a simple construction and the diffusive properties described above, the question of the uniqueness of Konarovskyi's model (not only pathwise uniqueness but also uniqueness in law) remains - as far as the author knows - open. In particular, it has a singularity at time $t=0^+$:
if $\mu_0$ has a density with respect to the Lebesgue measure, then almost surely for every $t >0$, the probability measure $\mu_t$ is a finite weighted sum of Dirac masses or, in other words, the quantile function $y_t$ is a step function. 
In~\cite{marx18}, the author overcomes lack of uniqueness by modifying Konarovskyi's model, replacing the coalescing procedure by a system of particles interacting at short range; among others, whenever the initial condition has a regular density, the solution itself remains an absolutely continuous measure. The author has proved in~\cite{marx18} the convergence of that mollified model to the Modified Massive Arratia flow. The diffusion used in this work to regularize Fokker-Planck equation is directly inspired from the works~\cite{konarovskyi17system,marx18}.

\subsection{Main results of this work}

This paper is divided into two parts and gives two complementary results of well-posedness, in a weak sense, of perturbed Fokker-Planck equations. 
First, we will address the case of a drift function~$b$ with low regularity in its measure variable but $C^2$-regularity in space. Second, we will treat a continuum  of admissible velocity fields~$b$ that somehow interpolates the assumptions of the first part and those of~\cite{jourdain97, mishuraverten, lacker18, chaudrufrikha18, roecknerzhang18}. The structure of the equations are almost similar in both parts, up to the addition of an idiosyncratic noise in the second part, which comes up with a slightly more general notion of weak solution.

\subsubsection{Restoration of uniqueness for a  velocity field merely measurable in its measure argument}

Let $T \in (0,+\infty)$ be a fixed  time.

Let us consider the Fokker-Planck equation~\eqref{eq:fp_quantile} perturbed by a diffusive noise: for each $t \in [0,T]$ and each $u \in[0,1]$, 
\begin{align}
\label{eq:fp_with_diffusion}
\left\{
\begin{aligned}
\mathrm dy_t(u)&= b(y_t(u),\mu_t) \mathrm dt 
+\frac{1}{\big( \int_0^1 \varphi(y_t(u)-y_t(v) )\mathrm dv \big)^{1/2}} \int_\R  f(k)  \Re(  e^{-ik y_t(u)} \mathrm dw(k,t) ) ; \\
\mu_t&=\Leb_{[0,1]}\circ y_t^{-1},
\end{aligned}
\right.
\end{align}
with initial condition $y_0=g$.
That equation describes a system of particles in interaction; for every $u \in [0,1]$, $(y_t(u))_{t\in [0,T]}$ denotes the trajectory of the particle indexed by $u$ and for every $t \in [0,T]$, $\mu_t$ is the distribution of the cloud of particles. There is a mean-field interaction in SDE~\eqref{eq:fp_with_diffusion}, both  through a drift term~$b$ which takes as an argument the probability measure $\mu_t$, and through the diffusion term, since the denominator $ (\int_0^1 \varphi(y_t(u)-y_t(v) )\mathrm dv)^{1/2}$ also depends on the  distribution on the real line  of the cloud of particles.
 
Let us briefly describe the different terms appearing in equation~\eqref{eq:fp_with_diffusion}. 
\begin{itemize}
\item[-] as in~\cite{konarovskyi17system,marx18}, the unknown $(y_t)_{t\in [0,T]}$ is a time-continuous stochastic process such that for each $t\in [0,T]$, $y_t$ is a random variable with values in the space $L_2^\uparrow[0,1]$.
Recall that for each $t\in [0,T]$, $u \in [0,1] \mapsto y_t(u)$ can be seen as the quantile function associated to the measure $\mu_t=\Leb_{[0,1]}\circ y_t^{-1}$ belonging to $\mathcal P_2(\R)$. Importantly, it means that we are studying  stochastic processes $(\mu_t)_{t \in [0,T]}$ in $\mathcal P_2(\R)$ that admit a canonical representation in the form of a tractable process of quantile functions. 

\item[-] the function $b: \R \times L_2[0,1] \to \R$ will be called the \emph{drift function} or velocity field. 

\item[-] the second term appearing in~\eqref{eq:fp_with_diffusion} is the \emph{diffusion term}. 
This term looks like the process introduced by the author in~\cite{marx18}. We refer to Remark~\ref{rem:comparaison avec modele marx18} below to explain the reasons why we slightly modify the shape of the diffusion here. 
It consists of several parts:

\subitem $\triangleright$ the complex-valued Brownian sheet $w$ is defined by $w:= \wR +i \wI$, where $\wR$ and $\wI$ are two independent real Brownian sheets on $\R \times [0,T]$ and $i= \sqrt{-1}$. To make it clear,    $\Re(  e^{-ik y_t(u)} \mathrm dw(k,t) )
= \cos(k y_t(u)) \mathrm d\wR(k,t) + \sin(k y_t(u)) \mathrm d\wI(k,t)$. The definition of Brownian sheets will be recalled at the beginning of Part~\ref{part:regularization} of this paper. 

\subitem $\triangleright$ the function $f$ will typically be of the form $f_\alpha(k) =\frac{1}{(1+k^2)^{\alpha/2}}$. The higher $\alpha$ is, the smoother the diffusion term is with respect to the space variable~$u$. 

\subitem $\triangleright$ we denote by $m_t(u):=  \int_0^1 \varphi(y_t(u)-y_t(v) )\mathrm dv$ the mass function. In order to avoid problems of cancellation of the mass, we will only consider in this text functions $\varphi$ which are positive everywhere on $\R$. A typical example will be the Gaussian density. Our results will also include the case where the mass is constant, \textit{i.e.} when $\varphi \equiv 1$. 
\end{itemize}

In words, the role of~$\varphi$ is to tune the local variance of the particle. This is similar to the models presented by Konarovskyi~\cite{konarovskyi17system} and by the author~\cite{marx18}, where the quadratic variation of $(y_t(u))_{t\in [0,T]}$ is proportional to $\int_0^t \frac{\mathrm ds}{m_s(u)}$. 
In order to make the comparison more precise, we may compute the local covariation field of the martingale component in 
\eqref{eq:fp_with_diffusion}, namely, for any two $u,u' \in [0,1]$,
\begin{align}
{\mathrm d} \bigl\langle y_{\cdot}(u),y_{\cdot}(u') \bigr\rangle_t 
&= \frac1{m_t(u)^{1/2} m_t(u')^{1/2}} \int_{\R} f^2(k) \cos \bigl( k (y_{t}(u) - y_{t}(u')) \bigr) {\mathrm dk} \; \mathrm dt \notag
\\
&= \frac{\Re \bigl( \mathcal F(f^2) \bigr) \bigl( y_{t}(u) - y_{t}(u') \bigr)}{m_t(u)^{1/2} m_t(u')^{1/2}} \; \mathrm dt, 
\label{correlation_deux_points}
\end{align}
where $\mathcal F(f^2)$ stands for the Fourier transform of $f^2$. 
Interestingly enough,  formula~\eqref{correlation_deux_points} may be compared with the covariation of the Modified Massive Arratia flow (see~\cite{konarovskyi17system}):
\begin{align*}
 \bigl\langle y_{\cdot}(u),y_{\cdot}(u') \bigr\rangle_t 
&= \int_{\tau_{u,u'}}^{t \wedge \tau_{u,u'}} \frac{1}{m_s(u)} \mathrm ds,
%\label{correlation_konarov}
\end{align*}
where $\tau_{u,u'}:= \inf \{ t \geq 0: y_t(u)=y_t(u') \} \wedge T$ stands for the collision stopping time and $m_s(u):= \int_0^1 \mathds 1_{ \{  \tau_{u,v} \leq s  \}  } \mathrm dv$ stands for the Lebesgue measure of the particles which have already coalesced with particle $u$ at time $s$.
For instance, whenever (say to make it simple) $f(k)=f_1(k)=\frac1{(1+k^2)^{1/2}}$, 
$ \mathcal F(f^2)(x)$ behaves like $\exp(-\vert x \vert)$, which shows that the range of interaction
in~\eqref{eq:fp_with_diffusion}
 is infinite but decays exponentially fast.
By computing the Fourier transform with a residue formula, the latter may be shown to remain true whenever
$f(k)=f_n(k)$ for any integer $n \geq 1$, which proves that this new model shares 
some of the features of the approximation introduced in~\cite{marx18}
but has a longer interaction range.

The first main result of this paper is the  following theorem, stating well-posedness of equation~\eqref{eq:fp_with_diffusion}. The assumptions on the velocity field $b$ are simplified here, we refer to Definition~\ref{def:B hyp} and to Theorem~\ref{theo:wp_general_case} for more details. We denote by $\partial_1^{(j)} b$, $j=1,2$, the first two derivatives of $x \mapsto b(x,\mu)$ at fixed $\mu$. 
\begin{theointro}
\label{theointro_partie1}
Let $g$ be a strictly increasing $\mathcal C^1$-function. 
Let $f:\R \to \R$ be defined by $f(k)=f_\alpha(k):=\frac{1}{(1+k^2)^{\alpha/2}}$, with $\alpha> \frac{3}{2}$. Let $b:\R \times \mathcal P_2(\R) \to \R$ be a bounded measurable function such that for each $\mu \in \mathcal P_2(\R)$, $x\mapsto b(x,\mu)$ is twice continuously differentiable and $\partial_1 b$ and $\partial_1^{(2)} b$ are uniformly bounded on  $\R \times \mathcal P_2(\R)$. Then there is a unique weak solution to equation~\eqref{eq:fp_with_diffusion}. 
\end{theointro}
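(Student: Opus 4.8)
The overall strategy is a Yamada–Watanabe-type program: establish existence of a weak solution via a compactness/mollification argument, then prove uniqueness in law, the latter being where the regularization by the infinite-dimensional noise is genuinely exploited. For existence, I would first treat the regularized dynamics where $b$ is replaced by a smooth mollification $b^\eps$ (smoothing only in the measure variable, which is harmless since $b$ is already $\mathcal C^2$ in space); for each $\eps>0$ the equation is then a genuine infinite-dimensional SDE with Lipschitz-type structure on $L_2^\uparrow[0,1]$ — one checks the mass functional $m_t(u)=\int_0^1\varphi(y_t(u)-y_t(v))\,\mathrm dv$ stays bounded away from $0$ and $\infty$ (using positivity of $\varphi$ and the a priori $L_2$ bound on $y_t$), so the diffusion coefficient is well-defined and locally Lipschitz, giving a strong solution $y^\eps$. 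Tightness of the laws of $(y^\eps)$ on $\mathcal C([0,T];L_2[0,1])$ follows from uniform moment bounds (from boundedness of $b$ and of $f\in L_2$ since $\alpha>3/2 \Rightarrow \alpha>1/2$, actually we need $2\alpha>1$, so $f_\alpha\in L_2(\R)$, giving finite quadratic variation per particle) together with a Kolmogorov-type time-regularity estimate; passing to the limit along a subsequence and identifying the limit as a weak solution is then routine, the monotonicity constraint $y_t\in L_2^\uparrow$ being preserved under weak limits.

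The heart of the matter is uniqueness in law, and here the plan is to set up a Girsanov transformation removing the drift $b$. Write the solution as $y_t(u) = \yc_t(u) + \int_0^t b(y_s(u),\mu_s)\,\mathrm ds$ where $\yc$ solves the driftless equation $\mathrm d\yc_t(u) = m_t(u)^{-1/2}\int_\R f(k)\Re(e^{-ik y_t(u)}\mathrm dw(k,t))$ — except that the diffusion coefficient still depends on $y$, not $\yc$, so one must be slightly careful: the clean move is to exhibit the law of the solution as absolutely continuous with respect to the law of the \emph{driftless} process (same $\varphi$, same $f$), via a density $\exp(\int_0^T\int \ldots \mathrm dw - \frac12\int_0^T\int\ldots)$. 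For this Girsanov argument to be legitimate I need the drift to be representable in the range of the noise operator: that is, I must solve, for a.e. $(u,t)$, the identity $b(y_t(u),\mu_t) = m_t(u)^{-1/2}\int_\R f(k)\Re(e^{-ik y_t(u)} h_t(k)\,\mathrm dk)$ for some process $h$ with $\int_0^T\int_\R |h_t(k)|^2\,\mathrm dk\,\mathrm dt<\infty$, and then check Novikov's condition — this is where boundedness of $b$ is used crucially, and where the hypothesis $\alpha>3/2$ enters through the need to invert (a twisted form of) the Fourier multiplier $f_\alpha$ against a smooth-in-$x$ target, which costs derivatives: since $\mathcal F^{-1}(1/f_\alpha)$ is, morally, a differential operator of order $\alpha$, reconstructing $h$ from the $\mathcal C^2$-in-space function $b$ requires roughly $\alpha \leq 2$, but the decay/integrability bookkeeping pushes the lower bound to $\alpha>3/2$. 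Once the Girsanov density is under control, the law of the driftless system is characterized by its own (much simpler, $b$-free) martingale problem, for which uniqueness in law must be established separately — and since after removing $b$ the coefficients depend on $y$ only through $y_t(u)$ pointwise and through the mass $m_t$, one can hope to close this by a pathwise argument on the driftless equation (e.g. a Gronwall estimate in $L_2[0,1]$, using Lipschitz-continuity of $x\mapsto \cos(kx),\sin(kx)$ weighted by $f_\alpha(k)$, which is integrable against $k\,\mathrm dk$ precisely when $\alpha>2$... so more likely one argues uniqueness in law for the driftless process through its generator and the explicit Gaussian-like structure rather than pathwise).

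I expect the \textbf{main obstacle} to be exactly this last point dovetailed with the Girsanov step: one must simultaneously (i) produce the representing process $h$ with the right integrability — an inverse-problem for the operator $v\mapsto m^{-1/2}\int f_\alpha(k)\Re(e^{-iky}\,\cdot\,)$, which is not a clean Fourier multiplier because of the phase $e^{-iky(u)}$ coupling different particles through a common spatial variable, so one works in the $x$-variable where it becomes a convolution-type operator acting on the "density side" — and (ii) ensure the driftless reference dynamics is itself well-posed in law, which requires understanding the coalescence/non-coalescence behavior (the denominator $m_t(u)$ never degenerates here since $\varphi>0$, which is why this model is more tractable than Konarovskyi's). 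The trade-off $\alpha>3/2$ against $\mathcal C^2$-regularity of $b$ should emerge quantitatively from balancing the order of the inverse operator $1/f_\alpha$ (order $\alpha$) against the two spatial derivatives available on $b$, with the half-integer gap accounting for the $L_2$-in-$k$ integrability loss; making that balance precise, uniformly over the (merely measurable in $\mu$) dependence, is the technical crux. For the detailed assumptions on $b$ I refer forward to Definition~\ref{def:B hyp} and Theorem~\ref{theo:wp_general_case}.
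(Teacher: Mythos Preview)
Your Girsanov-based strategy for uniqueness is the right one and matches the paper's, but there are two substantive deviations worth flagging.

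\textbf{Existence.} You propose a mollification-plus-tightness scheme. The paper does something much cleaner: it proves \emph{strong} well-posedness of the driftless equation first (by a Picard fixed-point in $L_2(\Omega;\mathcal C([0,T];L_2[0,1]))$), and then obtains existence for the drifted equation by Girsanov in the forward direction --- start from the driftless solution $y$, construct the representing process $h_t$, and change measure so that under the new probability the pair $(y,\widetilde w)$ solves the drifted equation. No compactness, no mollification. This is tidier because the same inversion step (finding $h_t$) serves both existence and uniqueness.

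\textbf{Driftless well-posedness and the threshold $\alpha>3/2$.} This is where your plan has a genuine miscalculation. You write that the Lipschitz Gronwall for the driftless equation requires ``$f_\alpha(k)$ integrable against $k\,\mathrm dk$, precisely when $\alpha>2$'', and then hedge toward a generator/martingale-problem argument. But the right quantity is the $L_2$ norm: the Burkholder--Davis--Gundy bound on $|y_s^1(u)-y_s^2(u)|$ produces a factor $\int_\R k^2 f_\alpha(k)^2\,\mathrm dk = \int_\R \langle k\rangle^{2-2\alpha}\,\mathrm dk$ (up to constants), which is finite exactly when $\alpha>3/2$. So the driftless dynamics admits a pathwise-unique strong solution for all $\alpha>3/2$, and this is the backbone of the whole argument (Propositions~\ref{prop:sde M ex&uniq} and~\ref{prop:exist and uniq of eq1}). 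There is no need to retreat to uniqueness-in-law machinery here; once you have pathwise uniqueness for the driftless equation, Yamada--Watanabe gives uniqueness in law for it, and then the Girsanov transfer finishes the drifted case.

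On the inversion step itself you have the right instinct: the paper writes $h_t(k)=\frac{1}{\sqrt{2\pi}f(k)}\,\mathcal F^{-1}\big(B(\cdot,y_t)\,\eta_t\,(\varphi\ast p_t)^{1/2}\big)(k)$ (with a smooth cutoff $\eta_t$ localizing to a neighborhood of the support of $\mu_t$ in the general-mass case), and the $L_2$-bound on $h_t$ follows from the $\mathcal C^2$ control on $b$ together with $\alpha\leq 2$ --- see Lemma~\ref{lemme:inversion Fourier}. The condition $\alpha>3/2$ is \emph{not} an artifact of the inversion but of the driftless fixed-point; the inversion only needs $\alpha\leq 2j$ where $2j$ is the number of derivatives available on $b$.
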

The proof of Theorem~\ref{theointro_partie1} relies on Girsanov's Theorem. The main issue is to write the drift term $b$ as a perturbation of the noise. To achieve this goal, we have to invert the diffusion coefficient; more precisely, we will resolve the following equation: find a complex-valued  process $(h_t)_{t\in [0,T]}$ satisfying for every $x \in \R$ and $t \in [0,T]$
\begin{align}
\label{eq:drift_a_inverser}
b(x,\mu_t)
=\frac{1}{\big( \int_0^1\varphi(x-y_t(v)) \mathrm dv\big)^{1/2}} \int_\R  f_\alpha(k)     \Re(e^{-ik x} h_t(k)) \mathrm dk . 
\end{align}
Thanks to the fact (and this is our rationale for it) that we chose an interaction kernel in the diffusion term 
of~\eqref{eq:fp_with_diffusion}
in a Fourier-like shape, $h$ can be defined as an inverse Fourier transform:
\begin{align*}
%\label{eq:inverse_h}
h_t(k)= \frac{1}{f_\alpha(k)} \mathcal F^{-1} \left( b(\cdot, \mu_t) \cdot (\varphi \ast p_t)^{-1/2}\right) (k),
\end{align*}
where $p_t$ denotes the density of the measure~$\mu_t$. 
To apply Girsanov's Theorem, $h$ should belong to $L_2(\R; \C)$; that is why we  assume in Theorem~\ref{theointro_partie1} some regularity of $b$ with respect to the variable~$x$. Remark that the higher $\alpha$ is, the more difficult it is to invert the kernel. It highlights a balance between the regularity of the process $y$ and the integrability of the Fourier inverse of~$h$. 

\begin{rem}
\label{rem:comparaison avec modele marx18}
Let us explain what happens when we perturb Fokker-Planck equation~\eqref{eq:fp_quantile} with the diffusion of~\cite{marx18}:
\begin{align*}
\mathrm dy_t(u)&= b(y_t(u),\mu_t) \mathrm dt 
+ \frac{1}{\int_0^1\varphi^2(y_t(u)-y_t(v)) \mathrm dv} \int_0^1  \varphi(y_t(u)-y_t(u'))    \mathrm dw(u',t).
\end{align*}
Then, the inversion problem  consists in finding an $L_2[0,1]$-process $(h_t)_{t\in [0,T]}$ satisfying for every $x \in \R$ and $t \in [0,T]$
\begin{align*}
b(x,\mu_t)
=\frac{1}{\int_0^1\varphi^2(x-y_t(v)) \mathrm dv} \int_0^1  \varphi(x-y_t(u')) h_t(u') \mathrm du',
\end{align*}
and equivalently in solving for every $x \in \R$ and $t \in [0,T]$
\begin{align}
\label{inverseh_ancien_modele}
(h_t \circ F_t) (x)=\frac{1}{p_t(x)} \mathcal F^{-1} \left(\frac{\mathcal F(b(\cdot ,\mu_t) \cdot (\varphi^2 \ast p_t) )}{\mathcal F(\varphi)} \right) (x),
\end{align}
where $F_t$ (resp. $p_t$) stands for the c.d.f. (resp. the density) associated to $\mu_t$. 
There are two major hindrances with  equation~\eqref{inverseh_ancien_modele}. The first one is the division by the density~$p_t$: $p_t$ is equal to zero outside the support of $\mu_t$ and $\frac{\mathcal F(b(\cdot ,\mu_t) \cdot ((\varphi^2) \ast p_t) )}{\mathcal F(\varphi)}$ has no chance to be smooth enough so that its inverse Fourier transform has compact support. It led us to change the model so that the integral in the right-hand side of~\eqref{eq:drift_a_inverser} is written as a Fourier transform. 
The second problem with~\eqref{inverseh_ancien_modele} is the division by $\mathcal F(\varphi)$. 
In the case where $\varphi$ is a Gaussian density, $\frac{1}{\mathcal F(\varphi)(k)}$ behaves like $e^{k^2/2}$. 
Even if $b$ is $\mathcal C^\infty$ with respect to its first variable, this would not be sufficient to obtain $L_2$-integrability of $h$. 
Let us try to reduce the regularity of $\varphi$: if $\varphi(x)=e^{-|x|}$, then $\frac{1}{\mathcal F(\varphi)(k)}=1+k^2$. 
Nevertheless, the density $p_t$ cannot be of class $\mathcal C^1$ with this choice of function~$\varphi$ (we refer to Remark~\ref{rem:cauchy}). 
Thus even with smooth functions $b$, the regularity of $\varphi^2 \ast p_t$ is not sufficient to compensate for the term $\frac{1}{\mathcal F(\varphi)}$ and to
insure that $h_t$ belongs to $L_2$. 
In order to solve this problem, we chose to consider two different functions $f_\alpha$ and $\varphi$ respectively at numerator and denominator of the diffusive part of~\eqref{eq:fp_with_diffusion}; this trick allows us to choose different regularities on  $f_\alpha$ and on $\varphi$. 
\end{rem}

\subsubsection{Restoration of uniqueness under a regularity assumption in both arguments of the velocity field}

The second main result of this text is a well-posedness result for a continuum of admissible drift functions~$b$ that interpolates the assumptions of Theorem~\ref{theointro_partie1} and the assumptions usually made for McKean-Vlasov equations with finite-dimensional noise, namely $b$ Lipschitz-continuous with respect to $\dtv$ in its measure variable and $b$ bounded and measurable in its space argument (see~\cite{jourdain97,mishuraverten, chaudrufrikha18, lacker18, roecknerzhang18}). 
Importantly, we succeed to do so at the price of relaxing in a dramatic manner the structure 
of the noise in hand and of the related notion of solution, adding in particular a new idiosyncratic noise denoted by~$\beta$.
In particular, as we explain below, the diffusion used to obtain the latter interpolation result does not fit the main features of the models in~\cite{konarovskyi17system,marx18} and in the first part. Among others, we lose here the underlying property of monotonicity, meaning that the solution can no longer be seen as the quantile function of the associated measure-valued process. Nevertheless, we feel that this interpolation argument is important to make the connection between the regularity assumptions of Theorem~\ref{theointro_partie1} and those used in the pre-existing literature.

Here is our new model. Let $\mu_0 \in \mathcal P_2(\R)$ be an initial condition and $\xi$ be a random variable with law $\mu_0$. 
Let $w:= \wR +i \wI$ be a complex-valued Brownian sheet defined as in equation~\eqref{eq:fp_with_diffusion}. 
Let $(\beta_t)_{t\in [0,T]}$ be a Brownian motion independent of $(w,\xi)$.  Let us consider the following McKean-Vlasov SDE with constant mass:
\begin{align}
\label{intro:interpolation_equation}
\left\{
\begin{aligned}
\mathrm dz_t&= b( z_t, \mu_t) \mathrm dt + \textstyle\int_\R f(k) \Re (  e^{-ik z_t} d w (k,t)) + \mathrm d\beta_t , \\
\mu_t &= \mathcal L^{\mathbb P}(z_t |\mathcal G_t^{\mu, W}),  \quad (\mu,w)  \perp \!\!\! \perp (\beta, \xi)\\
z_0 &=\xi, \quad    \mathcal L^{\mathbb P} (\xi)=\mu_0,
\end{aligned}
\right.
\end{align}
where $(\mathcal G_t^{\mu,W})_{t\in [0,T]}$ is the filtration generated by the Brownian sheet $w$ and by the measure-valued process $(\mu_t)_{t\in [0,T]}$ itself. 
Whereas $w$ is seen as a common noise, the Brownian motion~$\beta$ is seen here as an idiosyncratic source of randomness and $\mu_t$ can be seen as the law of $z_t$ with respect to the randomness carrying both the initial condition and the idiosyncratic noise. The addition of the new source of randomness $\beta$ is 
easily understood: similar to the Brownian motion in standard SDEs, it allows to mollify the drift in 
the space variable $x$. 
As for the conditioning 
in the identity
$\mu_t = \mathcal L^{\mathbb P}(z_t |\mathcal G_t^{\mu, W})$,
it must be compared with our presentation 
of McKean-Vlasov equations with a common noise, 
see 
\eqref{intro:McKeanVlasov:common}. The main difference between both is that the conditioning now involves 
$\mu$ itself: this comes from the fact we will allow for weak solutions, namely for solutions 
for which $\mu$ may not be adapted with respect to the common noise $w$. 
In fact, the latter causes some technical difficulties in the proofs. In particular, it requires to work with 
solutions that satisfy an additional assumption:
the observation of $z$ cannot bias the future realizations of $\mu,w$ and~$\beta$. That new requirement is known as the compatibility condition and has been often used in the study of weak solutions to stochastic equations (see~\cite{kurtz07,kurtz14}). We refer to Section~\ref{section:definition_solution} for a complete definition of the notion of weak compatible solution.

Here is our result. Let $\eta >0$ and $\delta \in [0,1]$. 
Let us consider the drift function $b: \R \times \mathcal P_2(\R) \to \R$ in the class $(H^\eta, \mathcal C^\delta)$. The definition of that class of admissible drift functions is given in Section~\ref{section:description_admissible_drift}, but roughly speaking, it contains functions $b$ such that for every fixed~$\mu$, $x\mapsto b(x,\mu)$ belongs to the Sobolev space $H^\eta(\R)$ with a Sobolev norm uniform in $\mu$, and for every fixed $x$, $\mu\mapsto b(x,\mu)$ is $\delta$-Hölder continuous in $\dtv$.
the  Hölder norm being uniform in $x$. Then, we have the following statement:
\begin{theointro}
\label{theointro_partie2}
Let $\eta >0$ and $\delta \in [0,1]$ be such that $\eta > \frac{3}{2}(1-\delta)$ and let $b$ be of class $(H^\eta, \mathcal C^\delta)$. 
Let $f:\R \to \R$ be defined by $f(k)=f_\alpha(k):=\frac{1}{(1+k^2)^{\alpha/2}}$, with $\frac{3}{2}< \alpha \leq \frac{\eta}{1-\delta}$. Then existence and uniqueness of a weak compatible solution to equation~\eqref{intro:interpolation_equation}  hold. 
\end{theointro}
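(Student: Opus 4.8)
The plan is to obtain both existence and uniqueness from Girsanov's theorem, in the spirit of the proof of Theorem~\ref{theointro_partie1}, the genuinely new feature being that the drift $b$ must now be split into a part that is removed through the common Brownian sheet $w$ and a part that is removed through the idiosyncratic noise $\beta$, the split being dictated by the interplay of the three exponents $\alpha,\eta,\delta$. Step~1: the \emph{reference equation}. Deleting the drift term in~\eqref{intro:interpolation_equation} leaves $\mathrm dz^0_t=\int_\R f_\alpha(k)\,\Re\!\big(e^{-ikz^0_t}\,\mathrm dw(k,t)\big)+\mathrm d\beta_t$, $z^0_0=\xi$. For a fixed realization of $(w,\beta)$ this is a standard It\^o SDE: the map $z\mapsto\big(k\mapsto f_\alpha(k)e^{-ikz}\big)$ is Lipschitz from $\R$ into $L_2(\R;\C)$ because $|e^{-ikz}-e^{-ikz'}|\le|k|\,|z-z'|$ and $\int_\R k^2f_\alpha(k)^2\,\mathrm dk<\infty$ exactly when $\alpha>\tfrac32$. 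It is therefore strongly well-posed; since the additive term $\beta$ forces the total quadratic-variation rate $\int_\R f_\alpha(k)^2\,\mathrm dk+1$ to be bounded below, the conditional law $\nu_t:=\mathcal L^{\mathbb P}(z^0_t\mid\mathcal G^W_t)$ has, for $t>0$, a density with Gaussian-type bounds; $(\nu,z^0)$ is a compatible solution of the driftless equation, and one checks by the compatibility condition that it is the unique one in law.

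Step~2: \emph{absorbing the drift and defining the fixed-point map}. Freeze a measure flow $\mu=(\mu_t)_t$ in a complete metric space $\mathcal M$ of compatible, suitably adapted measure-valued processes with a metric controlling $\mathbb E\big[\sup_{t\le T}\dtv(\mu_t,\mu'_t)\big]$, and look for a density turning $z^0$ into a solution carrying the drift $b(\cdot,\mu_\cdot)$. Mollify $b(\cdot,\mu_t)$ in its space variable at a scale $\varepsilon$ to be chosen later: the smooth part $b^\varepsilon(\cdot,\mu_t)=b(\cdot,\mu_t)*\rho_\varepsilon$ is absorbed into $w$ exactly as in Theorem~\ref{theointro_partie1}, by inverting the Fourier-shaped kernel, $h_t(k)=f_\alpha(k)^{-1}\mathcal F^{-1}\!\big(b^\varepsilon(\cdot,\mu_t)\big)(k)$, for which $\|h_t\|_{L_2(\R;\C)}=\|b^\varepsilon(\cdot,\mu_t)\|_{H^\alpha}\lesssim\varepsilon^{-(\alpha-\eta)_+}$ uniformly in $\mu$; the remainder $b(\cdot,\mu_t)-b^\varepsilon(\cdot,\mu_t)$, which is $O(\varepsilon^{\eta-(1/2-1/p)})$ in $L^p_x$ for $p$ below the Sobolev-critical exponent of $H^\eta(\R)$, is absorbed into $\beta$ by a Krylov--R\"ockner-type transform, licit precisely because $\beta$ renders the equation uniformly elliptic. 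Novikov's condition for the product density follows from the uniform bounds, and this defines $\Phi(\mu)_t:=\mathcal L^{\mathbb P}(z_t\mid\mathcal G^{\mu,W}_t)$ under the new measure.

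Step~3: \emph{closing the estimates and the trade-off}. Comparing $\Phi(\mu)$ and $\Phi(\mu')$ by another Girsanov transform onto the same reference space, together with a Pinsker-type inequality, bounds $\dtv(\Phi(\mu)_t,\Phi(\mu')_t)^2$ by the time-integrated expectation of the squared norm of the difference of the two Girsanov drifts; the $\mu$-dependence enters only through $b(\cdot,\mu_s)-b(\cdot,\mu'_s)$, which is $O\big(\dtv(\mu_s,\mu'_s)^\delta\big)$ in $L^\infty$ by the $\mathcal C^\delta$-assumption and $O(1)$ in $H^\eta$. Interpolating these two controls and choosing $\varepsilon$ as an appropriate negative power of $\dtv(\mu_s,\mu'_s)$, the blow-up $\varepsilon^{-(\alpha-\eta)}$ of the sheet-absorbed piece is exactly compensated by the gain coming from the total-variation factor --- this is where the hypothesis $\alpha\le\eta/(1-\delta)$, i.e.\ $\alpha(1-\delta)\le\eta$, is used --- so that $\dtv(\Phi(\mu)_t,\Phi(\mu')_t)^2\lesssim\int_0^t\dtv(\mu_s,\mu'_s)^{2\delta}\,\mathrm ds$. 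Iterating this over $[0,T]$ via a Gronwall-type argument in the spirit of Jourdain~\cite{jourdain97} yields uniqueness of the compatible solution; existence follows either from a short-time contraction and concatenation or from a Schauder fixed point on $\mathcal M$ combined with the uniqueness just obtained, the compatibility of the candidate solution being preserved under every change of measure.

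The hard part is Step~3: one must couple the mollification scale $\varepsilon$ to the total-variation distance between successive iterates so that the $H^\alpha$-cost of Fourier-inverting the smooth part of the drift into the Brownian sheet is precisely absorbed, under the constraint $\alpha(1-\delta)\le\eta$, by the gain provided by the $\delta$-H\"older control in total variation, and simultaneously ensure that the resulting time-integrated inequality is strong enough to run the iteration over the whole interval despite $\delta<1$. A secondary difficulty, absent from Theorem~\ref{theointro_partie1}, is the circularity of the constraint $\mu_t=\mathcal L^{\mathbb P}(z_t\mid\mathcal G^{\mu,W}_t)$: since $\mu$ need not be adapted to $w$, one has to remain within the class of compatible solutions throughout and verify that this class is stable under all the Girsanov transforms employed.
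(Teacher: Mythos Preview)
Your overall architecture---Girsanov from a reference problem, a fixed-point map on measure flows, and a split of the drift between the two noises---is the right one, but the decomposition is performed in the wrong variable, and as a consequence your contraction estimate in Step~3 does not close.

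You mollify $b(\cdot,\mu_t)$ in the \emph{space} variable $x$ at scale $\varepsilon$ and send the smooth part to $w$ and the remainder to $\beta$. But the idiosyncratic noise $\beta$ regularizes only in $x$; to run the Lacker/Jourdain fixed-point argument through $\beta$ one needs the drift absorbed there to be \emph{Lipschitz in $\dtv$}, and $b-b^\varepsilon$ remains only $\delta$-H\"older in $\mu$ no matter how one mollifies in $x$. This is why your Step~3 ends with
\[
\dtv\bigl(\Phi(\mu)_t,\Phi(\mu')_t\bigr)^2 \;\lesssim\; \int_0^t \dtv(\mu_s,\mu'_s)^{2\delta}\,\mathrm ds,
\]
and for $\delta<1$ this is an Osgood-type inequality that does \emph{not} force the left-hand side to vanish: the function $g(t)=c\,t^{1/(1-\delta)}$ solves $g'(t)=Cg(t)^{\delta}$ with $g(0)=0$, so no Gronwall argument ``in the spirit of Jourdain'' (which is precisely the case $\delta=1$) can extract uniqueness from it. Choosing $\varepsilon$ adaptively as a power of $\dtv(\mu_s,\mu'_s)$ does not help, since the $\beta$-absorbed piece already carries the full factor $\dtv^{\delta}$ regardless of $\varepsilon$.

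The paper regularizes in the \emph{measure} variable instead. At each Fourier frequency $k$, the coefficient $\lambda(k,\cdot)$ is replaced by an inf-convolution approximant $\widetilde\lambda(k,\cdot)$ on $(\mathcal P_2(\R),\dtv)$ at the $k$-dependent scale $\varepsilon(k)=\crochetk^{-\theta(2-\delta)}$ with $\theta=(\alpha-\eta)/\delta$; this yields $|\lambda-\widetilde\lambda|(k,\mu)\le C\crochetk^{-\theta\delta}\Lambda(k)$ and $|\widetilde\lambda(k,\mu)-\widetilde\lambda(k,\nu)|\le C\crochetk^{\theta(1-\delta)}\Lambda(k)\,\dtv(\mu,\nu)$. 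The resulting $\widetilde b$ is genuinely Lipschitz in $\dtv$ exactly when $\eta-\theta(1-\delta)\ge 0$, i.e.\ $\alpha(1-\delta)\le\eta$, so Lacker's entropy/Pinsker fixed point applies to the intermediate SDE with drift $\widetilde b$ and produces a \emph{linear} Gronwall estimate, hence well-posedness (and, as a by-product, adaptedness of $(\mu_t)$ to the completion of $(\mathcal G_t^{W})$). The remainder $b-\widetilde b$ is then absorbed into $w$ by a single Girsanov transform whose $L_2$-cost $\int\crochetk^{2\alpha-2\eta-2\theta\delta}\Lambda(k)^2\,\mathrm dk$ is a \emph{deterministic} constant because $\alpha-\eta-\theta\delta=0$; no further contraction is needed at that stage. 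So the trade-off you correctly identified is spent entirely inside the construction of $\widetilde\lambda$, not in balancing a space-mollification against a sub-linear contraction. This also simplifies the compatibility issue you raise: since the intermediate problem forces $(\mu_t)$ to be $(\mathcal G_t^{W})$-adapted and the second Girsanov density is $\mathcal G^{\mu,W}$-measurable, compatibility propagates automatically.
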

The 
condition 
$\eta > \frac{3}{2}(1-\delta)$
 quantifies the minimal regularity that is needed, with our approach, to restore uniqueness. If $b$ is Lipschitz-continuous in total variation distance with respect to $\mu$ ($\delta=1$), then almost no regularity of $b$ in $x$ is needed ($\eta>0$): it is close to the assumptions of~\cite{jourdain97,mishuraverten, chaudrufrikha18, lacker18, roecknerzhang18}. If $b$ is only uniformly bounded in $\mu$ ($\delta=0$), then $x\mapsto b(x,\mu)$ should belong to $H^\eta(\R)$ for some $\eta>\frac{3}{2}$, which is slightly stronger than the assumption made in Theorem~\ref{theointro_partie1}.
In particular, it holds if $\eta=\delta=\frac{2}{3}$, in a case where $b$ is not Lipschitz-continuous in any variable.

\paragraph{Organisation of the paper.}
The part~\ref{part:regularization} of this work will be devoted to the construction of the above-mentioned variant of a diffusive model on the Wasserstein space and to the proof of the regularization result stated in Theorem~\ref{theointro_partie1}. 
In the part~\ref{part:continuum_admissible_drift}, we describe the trade-off between regularity in the space and in the measure variable and we prove Theorem~\ref{theointro_partie2}.

\paragraph{Notations.}Throughout this paper, we will always denote by $C_M$ the constants depending on~$M$, even if they change from one line to the next. 
We will also denote by $\crochetk := (1+k^2)^{1/2}$.

\section{Regularization of an ill-posed Fokker-Planck equation}
\label{part:regularization}

Let $(\Omega, \mathcal G, (\mathcal G_t)_{t \in [0,T]}, \mathbb P)$ be a filtered probability space. Assume that $\mathcal G_0$ contains all the $\mathbb P$-null sets. 

Let us recall the definition, given by Walsh~\cite[p.269]{walsh86},  of a real-valued Brownian sheet on $\R \times [0,T]$. We call \textit{white noise} on $\R \times [0,T]$ any random set function $W$ defined on the set of Borel subsets of $\R \times [0,T]$ with finite Lebesgue measure such that
\begin{itemize}
\item[-] for any $A \in \mathcal B(\R \times [0,T])$ with finite Lebesgue measure, $W(A)$ is a normally distributed random variable  with zero mean and with variance equal to $\Leb (A)$;
\item[-] for any disjoint subsets $A$ and $B \in \mathcal B(\R \times [0,T])$ with finite Lebesgue measures, $W(A)$ and $W(B)$ are independent and $W(A \cup B)=W(A)+W(B)$. 
\end{itemize}
The random function $w:\R \times [0,T] \to R$, defined for every $t \in [0,T]$ by $w(k,t)=W([0,k]\times[0,t])$ if $k\geq 0$ and $w(k,t)=W([k,0]\times[0,t])$ if $k < 0$, is called \textit{Brownian sheet} on $\R \times[0,T]$.
 Let us fix two  independent $(\mathcal G_t)_{t \in [0,T]}$-adapted Brownian sheets  $\wR$ and $\wI$   on $\R \times[0,T]$. The process $w=\wR+i \wI$ is called complex-valued Brownian sheet on $\R \times[0,T]$. 
We refer to~\cite[Theorem 1.1]{marx18} for an explanation as to how Brownian sheets are naturally related to Konarovskyi's model.

Let us rewrite hereafter equation~\eqref{eq:fp_with_diffusion}: we are looking for a solution $(y_t)_{t\in [0,T]}$ with values in $L^\uparrow_2[0,1]$ such that for any $u \in [0,1]$ and any $t \in [0,T]$, 
\begin{align}
\label{eq:definition_FP_perturbee}
\left\{ 
\begin{aligned}
dy_t(u)&=  b(y_t(u),\mu_t) \mathrm dt +\frac{1}{m_t(u)^{1/2}} \int_\R f(k)  \left(  \cos(k y_t(u))  \mathrm d\wR(k,t)+  \sin(k y_t(u))  \mathrm d\wI(k,t)\right),\\
\mu_t&= \Leb_{[0,1]} \circ y_t^{-1},
\\
y_0&=g,
\end{aligned}
\right.
\end{align}
with $m_t(u)= \int_0^1 \varphi(y_t(u)-y_t(v))\mathrm dv$. Let us define and comment the different terms appearing in that equation. First, the space $L^\uparrow_2[0,1]$ is the set of square-integrable non-decreasing functions from $[0,1]$ to $\R$. To wit, we are looking for solutions to~\eqref{eq:definition_FP_perturbee} such that for each time $t\in [0,T]$, the map $u\mapsto y_t(u)$ is non-decreasing; therefore, it is the quantile function associated to the measure $\mu_t$. In other words, equation~\eqref{eq:definition_FP_perturbee} is describing the random dynamics of the process $(\mu_t)_{t \in [0,T]}$ via its canonical representation in terms of a quantile function process $(y_t)_{t \in [0,T]}$.
We will assume that the initial condition belongs to $L_{2+}^\uparrow [0,1]$, the set of non-decreasing  functions $g: [0,1] \to \R $ such that there is $p>2$ satisfying $\int_0^1 |g(u)|^p \mathrm du <+\infty$.  
The map $\varphi:\R \to \R$ is an even function of class $\mathcal C^\infty$, decreasing on $[0,+\infty)$ and such that for every $x\in [0,+\infty)$, $\varphi(x) >0$. Typical examples of functions $\varphi$ are  the constant function $\varphi \equiv 1$ and the Gaussian density $\varphi(x)=\frac{1}{\sqrt{2\pi}} e^{-x^2/2}$. 
The map $f: \R \to \R$ is an even and square integrable function. 
The precise assumptions on the drift function $b: \R \times \mathcal P_2(\R) \to \R$ will be given later.

It should be once more emphasized that, due to the presence of the noise $w$, the process $(\mu_t)_{t \in [0,T]}$ is random. More precisely, by a straightforward computation of Itô's formula, it can be shown that the process $(\mu_t)_{t \in [0,T]}$ satisfies the following SPDE: 
\begin{align}
\label{SPDE:mu_t}
\mathrm d \mu_t +  \partial_x \Big( b(\cdot,\mu_t) \; \mu_t \Big) \mathrm dt  &= \frac{1}{2} \|f\|_{L_2(\R)}^2  \;\partial_{xx}^2\left( \frac{\mu_t}{\varphi \ast \mu_t } \right) \mathrm dt \notag \\
&\quad - \partial_x \left( \frac{\mu_t}{(\varphi \ast \mu_t)^{1/2}} \int_\R f(k) \Re \left(e^{-ik \;\cdot \; }  \mathrm dw(k,t) \right)   \right).
\end{align}
We recognize on the left-hand side of equation~\eqref{SPDE:mu_t}  a Fokker-Planck equation, with a diffusive perturbation appearing on the right-hand side due to the addition of a noise. 
Here, $\varphi \ast \mu_t := \int_\R \varphi(\cdot -x) \mathrm d\mu_t(x)$ represents the mass function. 
If $\varphi$ is close to the indicator function $\mathds 1_0$ and $b \equiv 0$, then
 equation~\eqref{SPDE:mu_t} becomes very similar  to the SPDE obtained by Konarovskyi and von Renesse for their model~\cite{konarovskyivonrenesse18}:
\begin{align*}
\mathrm d\mu_t = \Gamma(\mu_t) \mathrm dt+\operatorname{div}  (\sqrt{\mu_t} d W_t),
\end{align*}
where $\Gamma$ is defined as $\langle f, \Gamma(\nu) \rangle:= \frac{1}{2} \sum_{ x\in \operatorname{Supp}(\nu)} f''(x)$.

Let us first, in Section~\ref{section:regularity properties}, construct the diffusion, \textit{i.e.} solve equation~\eqref{eq:definition_FP_perturbee} when $b\equiv 0$. Then, in Section~\ref{section:well-posed_fokker-planck}, we will prove well-posedness of equation~\eqref{eq:definition_FP_perturbee} under the assumptions given in Theorem~\ref{theointro_partie1}.

\subsection{Construction of the diffusion without drift term}
\label{section:regularity properties}

The aim of this section is to study the solvability of the equation without drift, \textit{i.e.} equation~\eqref{eq:definition_FP_perturbee} when $b \equiv 0$: 
\begin{align}
\label{eq:real equation on y}
y_t(u)&=g(u)+ \int_0^t \frac{1}{m_s(u)^{1/2}} \int_\R \cos(k y_s(u)) f(k) \mathrm d\wR(k,s) \notag\\
&\quad\quad\quad +\int_0^t \frac{1}{m_s(u)^{1/2}} \int_\R \sin(k y_s(u)) f(k) \mathrm d\wI(k,s),
\end{align}
with $m_s(u)=\int_0^1 \varphi(y_s(u)-y_s(v)) \mathrm dv$.

In Paragraph~\ref{parag:existence, uniqueness, continuity}, we will introduce an auxiliary equation where the function $\varphi$ is replaced by a truncated function $\varphi_M$ so that the diffusion coefficient is  bounded. We will prove strong well-posedness of that equation, continuity and monotonicity with respect to the space variable $u$ of the solution. In Paragraph~\ref{parag:non-blowing}, we will deduce  existence and uniqueness of a strong solution to equation~\eqref{eq:real equation on y}.

\subsubsection{Existence, uniqueness and continuity of the diffusion}
\label{parag:existence, uniqueness, continuity}

Let $M \in \Nun$. 
Recall that $\varphi$ is even and decreasing on $[0,+\infty)$. 
Let us define $\varphi_M (x):=\varphi(|x|\wedge M)$. The interest in replacing $\varphi$ by $\varphi_M$ is that $\varphi_M$ is now bounded below by a positive constant: for each $x \in \R$, $\varphi_M(x) \geq  \varphi(M) >0$. Let us consider the following equation
\begin{align}
\label{eq:def y^M}
y^M_t(u)&=g(u)+ \int_0^t \frac{1}{m_s^M(u)^{1/2}} \int_\R \cos(k y^M_s(u)) f(k) \mathrm d\wR(k,s) \notag\\
&\quad\quad\quad +\int_0^t \frac{1}{m_s^M(u)^{1/2}} \int_\R \sin(k y^M_s(u)) f(k) \mathrm d\wI(k,s).
\end{align}
where $m_s^M(u)=\int_0^1 \varphi_M(y^M_s(u)-y^M_s(v)) \mathrm dv$. 
Since the mass function $m^M_s$ is uniformly bounded  below by~$\varphi(M)$, this equation is easier to resolve and we expect that the solution also satisfies equation~\eqref{eq:real equation on y} up to a certain stopping time. 

Following~\cite{gawareckimandrekar11}, we give the following definition:
\begin{defin}
A $(\mathcal G_t)_{t \in [0,T]}$-adapted process $(M_t)_{t\in [0,T]}$ is said to be an $L_2$-valued $(\mathcal G_t)_{t \in [0,T]}$-\textit{martingale} if for each time $t \in [0,T]$, $M_t$ belongs to $L_2([0,1], \R)$ and $\E{\|M_t\|_{L_2}}<+\infty$ and if for each $h \in L_2([0,1], \R)$, the scalar product $(M_t,h)_{L_2}$ is a real-valued $(\mathcal G_t)_{t \in [0,T]}$-martingale.
\end{defin}

Recall that $\crochetk := (1+k^2)^{1/2}$. 
The next proposition states well-posedness for equation~\eqref{eq:def y^M}:
\begin{prop}
\label{prop:sde M ex&uniq}
Let $g \in  L_{2+}^\uparrow [0,1]$. Assume that $k \mapsto \crochetk f(k)$ is square integrable. 
There exists a unique solution $y^M$ in $\mathcal C([0,T],L_2[0,1])$ to equation~\eqref{eq:def y^M}. Furthermore, the process $(y^M_t)_{t \in [0,T]}$ is an $L_2$-valued continuous $(\mathcal G_t)_{t \in [0,T]}$-martingale. 
\end{prop}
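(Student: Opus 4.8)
The plan is to prove Proposition~\ref{prop:sde M ex&uniq} by a fixed-point argument in a suitable space of $L_2[0,1]$-valued continuous adapted processes, exploiting the fact that the truncation of $\varphi$ by $\varphi_M$ makes the mass function $m_s^M(u)$ bounded below by $\varphi(M)>0$, hence the diffusion coefficient $m_s^M(u)^{-1/2}$ is bounded above by $\varphi(M)^{-1/2}$. First I would set up the Banach space $\mathcal H_T := \mathcal C([0,T], L_2[0,1])$ of continuous square-integrable-function-valued processes, equipped with the norm $\|y\| := \big(\E{\sup_{t\leq T}\|y_t\|_{L_2[0,1]}^2}\big)^{1/2}$, and define the map $\Phi$ sending a process $y$ to the process whose value at time $t$ and index $u$ is $g(u)$ plus the stochastic integrals appearing on the right-hand side of~\eqref{eq:def y^M}, with $m_s^M$ computed from $y$. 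The first task is to check that $\Phi$ is well-defined, i.e. that $\Phi(y)$ lies in $\mathcal H_T$: this requires (i) checking the stochastic integral against the Brownian sheets is well-defined, which is where the hypothesis that $k\mapsto \langle k\rangle f(k)$ is square integrable enters --- it gives enough decay in $k$ to control the $L_2[0,1]$-norm of the integrand together with its sensitivity to $y_s(u)$ via the Lipschitz bound $|\cos(ka)-\cos(kb)|\leq |k||a-b|$ --- and (ii) an application of the Burkholder--Davis--Gundy inequality (or rather its Hilbert-space-valued version, using the martingale property) to bound $\E{\sup_{t\leq T}\|\Phi(y)_t\|_{L_2}^2}$ by $2\|g\|_{L_2}^2 + C_M T \|f\|^2$ or similar, uniformly. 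The integrability $g\in L_{2+}^\uparrow[0,1]\subset L_2[0,1]$ handles the initial term.

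Next I would establish the contraction estimate. Given two processes $y,\tilde y\in\mathcal H_T$, I would estimate $\E{\sup_{s\leq t}\|\Phi(y)_s - \Phi(\tilde y)_s\|_{L_2}^2}$ by splitting the difference of the two stochastic integrals into a piece coming from the difference $\cos(ky_s(u)) - \cos(k\tilde y_s(u))$ (and similarly for $\sin$), controlled by $|k|\,|y_s(u)-\tilde y_s(u)|$, and a piece coming from the difference of the mass-normalising factors $m_s^M(u)^{-1/2} - \tilde m_s^M(u)^{-1/2}$. For the latter, since $\varphi_M$ is Lipschitz (it is $\mathcal C^\infty$ away from $\pm M$, bounded with bounded derivative, and continuous overall) and bounded below, the map $m\mapsto m^{-1/2}$ is Lipschitz on $[\varphi(M),\varphi(0)]$, so this factor is Lipschitz in $\|y_s - \tilde y_s\|_{L_2}$ after integrating over $v\in[0,1]$; the boundedness of $f$'s contribution (again via $\langle k\rangle f\in L_2$) lets me absorb the remaining $k$-integral into a constant $C_M$. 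After BDG and Fubini this yields $\E{\sup_{s\leq t}\|\Phi(y)_s-\Phi(\tilde y)_s\|_{L_2}^2} \leq C_M \int_0^t \E{\sup_{r\leq s}\|y_r-\tilde y_r\|_{L_2}^2}\,\mathrm ds$, and a standard iteration of $\Phi$ (Picard iteration with the time-integrated Grönwall-type bound) shows some power $\Phi^n$ is a strict contraction on $\mathcal H_T$, giving existence and uniqueness of a fixed point $y^M$. Uniqueness in the full class $\mathcal C([0,T],L_2[0,1])$ follows from the same Grönwall argument applied directly to the difference of two solutions.

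Finally I would record the martingale property. By construction $y_t^M(u) - g(u)$ is, for each fixed $u$, a real stochastic integral against $\wR,\wI$ with integrand $m_s^M(u)^{-1/2}\cos(ky_s^M(u))f(k)$ (resp. $\sin$), which is adapted and square-integrable over $\R\times[0,T]$ thanks to the bound $m_s^M(u)^{-1/2}\leq\varphi(M)^{-1/2}$ and $f\in L_2(\R)$; hence it is a genuine $(\mathcal G_t)$-martingale. To upgrade this to the $L_2$-valued notion from the definition, I would test against an arbitrary $h\in L_2[0,1]$: by the stochastic Fubini theorem the pairing $(y_t^M - g, h)_{L_2}$ equals an iterated stochastic integral, which is again a real martingale, and $\E{\|y_t^M\|_{L_2}} \leq \|g\|_{L_2} + (\E{\|y_t^M - g\|_{L_2}^2})^{1/2} < \infty$ by the a priori bound from step one. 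Continuity in $t$ with values in $L_2[0,1]$ is part of membership in $\mathcal H_T$, so nothing more is needed.

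The main obstacle I anticipate is the careful justification of the stochastic integral against the Brownian sheet as an $L_2[0,1]$-valued object and the interchange of integration (stochastic Fubini) needed both to make sense of $\Phi$ and to verify the martingale property --- in particular, verifying that the integrand $(u,k,s)\mapsto m_s^M(u)^{-1/2}\cos(ky_s^M(u))f(k)$ has the right joint measurability and the finite $L_2(\Omega\times[0,T]\times\R;L_2[0,1])$-norm that Walsh's theory requires; the hypothesis that $k\mapsto\langle k\rangle f(k)$ is square integrable (rather than merely $f\in L_2$) is exactly what is needed to control the $L_2[0,1]$-modulus of the integrand under perturbations of $y$, and threading this through the contraction and Grönwall estimates is the delicate part. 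Everything else is routine once the functional-analytic framework is fixed.
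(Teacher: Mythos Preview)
Your proposal is correct and follows essentially the same approach as the paper: a Picard fixed-point argument in the space of continuous adapted $L_2[0,1]$-valued processes, with the contraction estimate obtained by splitting the difference of the diffusion coefficients into the $\cos/\sin$ part (controlled via $|k||y_s(u)-\tilde y_s(u)|$, which is where $\langle k\rangle f\in L_2$ is used) and the mass part (controlled via Lipschitzness of $\varphi_M$ and the lower bound $\varphi(M)$), followed by iteration so that some $\Phi^n$ is a strict contraction. Your treatment of the $L_2$-valued martingale property is in fact more detailed than the paper's, which simply notes that the fixed point equals $\psi(y^M)$ and hence is an $L_2$-valued continuous martingale.
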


\begin{rem}
In this proposition and in every following result, we assume, at least, that $k \mapsto \crochetk f(k)$ is square integrable on $\R$. In the particular case of $f_\alpha(k)=\frac{1}{\crochetk^\alpha}=\frac{1}{(1+k^2)^{\alpha/2}}$, this assumption is equivalent to the condition $\alpha> \frac{3}{2}$. 
\label{rem:comparaison Cauchy}
\end{rem}

\begin{proof} 
The proof is based on a fixed-point argument, very similar to Proposition~3.5 in~\cite{marx18}.  
Define $(\mathcal M,\|\cdot\|_{\mathcal M})$ the space of all $z \in L_2(\Omega,C([0,T],L_2[0,1]))$ such that $(z(\omega)_t)_{t \in [0,T]}$ is a $(\mathcal G_t)_{t \in [0,T]}$-adapted process with values in $L_2[0,1]$. The definition of $\|\cdot\|_{\mathcal M}$ is given by $\|z\|_{\mathcal M}:=\E{ \sup_{t \leq T} \int_0^1 |z_t(u)|^2 \mathrm du }^{1/2}  $.  Define 
\begin{align*}
\psi(z)_t(u)&:= g(u) +\int_0^t \frac{1}{m^z_s(u)^{1/2}}  \int_\R \cos(k z_s(u)) f(k) \mathrm d\wR(k,s) \\
&\quad \quad \quad +\int_0^t \frac{1}{m^z_s(u)^{1/2}}\int_\R \sin(k z_s(u)) f(k) \mathrm d\wI(k,s) , 
\end{align*}
where $m^z_s(u)= \int_0^1 \varphi_M (z_s(u)-z_s(v)) \mathrm dv$. 
For each $z \in \mathcal M$, $\psi(z)$ belongs to $\mathcal M$, since by Burkholder-Davis-Gundy inequality, there is $C>0$ such that
\begin{align*}
\E{\sup_{t \leq T} \int_0^1 |\psi(z)_t(u)|^2 \mathrm du} 
&\leq 3\|g\|_{L_2}^2 + C \E{\int_0^1 \!\! \int_0^T \!\! \int_\R \frac{\cos^2(kz_s(u))f^2(k)}{m_s^z(u)}
 \mathrm dk \mathrm ds \mathrm du} \\
& \quad
+C \E{\int_0^1 \!\! \int_0^T \!\! \int_\R \frac{\sin^2(kz_s(u))f^2(k)}{m_s^z(u)}
 \mathrm dk \mathrm ds \mathrm du}  \\
&\leq  3\|g\|^2_{L_2} + C  \|f\|_{L_2}^2  \; \E{\int_0^1 \!\! \int_0^T \!\! \frac{1}{m_s^z(u)} \mathrm ds \mathrm du}  \leq  3\|g\|^2_{L_2} + C_M \|f\|_{L_2}^2,
\end{align*}
because $m_s^z \geq \varphi(M)>0$. 
Moreover, $(\psi(z)_t)_{t \in [0,T]}$ is an $L_2$-valued martingale and for each $t \in [0,T]$
\begin{multline*}
\E{\sup_{s \leq t} \int_0^1 |\psi(z^1)_s-\psi(z^2)_s|^2(u) \mathrm du}
\\ \leq C \E{\int_0^1 \! \! \int_0^t \!\!\int_\R \left| \frac{\cos(k z^1_s(u)) f(k)}{m^{z^1}_s(u)^{1/2}}-\frac{\cos(k z^2_s(u)) f(k)}{m^{z^2}_s(u)^{1/2}}\right|^2 \mathrm dk \mathrm ds \mathrm du} \\
+ C \E{\int_0^1 \! \! \int_0^t \!\!\int_\R \left| \frac{\sin(k z^1_s(u)) f(k)}{m^{z^1}_s(u)^{1/2}}-\frac{\sin(k z^2_s(u)) f(k)}{m^{z^2}_s(u)^{1/2}}\right|^2 \mathrm dk \mathrm ds \mathrm du}.
\end{multline*}
For every $u \in [0,1]$ and every $s \in [0,T]$, $| \cos(kz^2_s(u))-\cos(kz^1_s(u))   |\leq k |z^2_s(u)-z^1_s(u)|$ and the same Lipschitz estimate holds for the sine function. Furthermore, $\varphi_M$ is bounded  below and Lipschitz-continuous, since $\varphi_M$ is $\mathcal C^\infty$ on $(-M,M)$, continuous on $\R$ and constant on $[M,+\infty)$. Thus we have:
\begin{align}
\label{comparaison masses}
\bigg| \frac{1}{\sqrt{m_s^{z^1} (u)}}-\frac{1}{\sqrt{m_s^{z^2} (u)}}\bigg|
&= \frac{1}{\sqrt{m_s^{z^1} (u)}\sqrt{m_s^{z^2} (u)}} \frac{1}{ \sqrt{m_s^{z^1} (u)}+\sqrt{m_s^{z^2} (u)}   } \left| m_s^{z^1} (u) - m_s^{z^2} (u) \right| \notag\\
&\leq \frac{1}{2 \varphi(M)^{3/2}}  \int_0^1   \left| \varphi_M (z^1_s(u)-z^1_s(v)) - \varphi_M (z^2_s(u)-z^2_s(v)) \right| \mathrm dv \notag\\
&\leq C_M \left( | z_s^1(u)-z_s^2(u)| + \int_0^1 | z_s^1(v)-z_s^2(v)|  \mathrm dv \right). 
\end{align}
It follows that:
\begin{align*}
\E{\sup_{s \leq t} \int_0^1 |\psi(z^1)_s-\psi(z^2)_s|^2(u) \mathrm du}
& \leq C_M \E{\int_0^1 \! \! \int_0^t \!\! \int_\R |z^1_s(u)-z^2_s(u)|^2 (1+|k|^2) |f(k)|^2 \mathrm dk\mathrm ds \mathrm du}\\
&\leq C_M \int_\R \crochetk^2 |f(k)|^2 \mathrm dk \int_0^t\E{\sup_{r\leq s}\int_0^1 |z^1_r(u)-z^2_r(u)|^2  \mathrm du}\mathrm ds.
\end{align*}
Define $h_n(t):=\E{\sup_{s \leq t} \int_0^1 |\psi^{\circ n} (z^1)_s-\psi^{\circ n} (z^2)_s|^2(u)\mathrm du}$. 
There is a constant $C_{M,f}$ depending on $M$ and on $\int_\R \crochetk^2 |f(k)|^2 \mathrm dk$ such that for all $n \in \N$ and for all $t \in [0,T]$, we have $h_{n+1}(t) \leq C_{M,f} \int_0^t h_n(s) \mathrm ds$. Therefore, $h_n(t) \leq \frac{C_{M,f}^n t^n }{n!}h_0(t)$ and we deduce that $\| \psi^{\circ n} (z^1)-\psi^{\circ n} (z^2)\|^2_{\mathcal M} \leq \frac{C_{M,f} T^n }{n!}\| z^1-z^2\|^2_{\mathcal M}$. 
Let $n$ be large enough so that $\frac{C_{M,f}^n  T^n }{n!}<1$, \textit{i.e.} so that $\psi^{\circ n}$ is a contraction. Then $\psi$ admits a unique fixed point, which  we denote by $y^M$. Since $y^M=\psi(y^M)$, it is an $L_2$-valued continuous $(\mathcal G_t)_{t \in [0,T]}$-martingale. 
\end{proof}

In the following two propositions, we prove that the process $(y^M_t)_{t \in [0,T]}$ preserves continuity and monotonicity of the initial condition, under the same integrability assumption on~$f$ than in Proposition~\ref{prop:sde M ex&uniq}. 
\begin{prop}
\label{prop:continuité}
Let $g \in  L_{2+}^\uparrow[0,1]$ such that $g$ is $\delta$-Hölder for some $\delta >0$. Assume that $k \mapsto \crochetk f(k)$ is square integrable. 
There exists a version of $y^M$ in $\mathcal C([0,1]\times[0,T])$.
\end{prop}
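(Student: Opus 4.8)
The plan is to obtain joint continuity by applying Kolmogorov's continuity criterion to the random field $(u,t)\mapsto y^M_t(u)$ on the parameter domain $[0,1]\times[0,T]$, in the same spirit as the corresponding statement in~\cite{marx18}. Since $g$ is $\delta$-Hölder on the compact interval $[0,1]$ it is bounded, so running the Burkholder--Davis--Gundy estimate from the proof of Proposition~\ref{prop:sde M ex&uniq} with the $2p$-th power in place of the square — and using once more that $m^M_s\geq\varphi(M)>0$ and that $\|f\|_{L_2(\R)}<+\infty$ — shows that for every $p\geq 1$ and every $u\in[0,1]$ the real-valued process $(y^M_t(u))_{t\in[0,T]}$, read through the explicit stochastic-integral representation $y^M=\psi(y^M)$, is a continuous martingale with $\E{\sup_{t\leq T}|y^M_t(u)|^{2p}}<+\infty$. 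In what follows all constants may depend on $M$, $p$, $T$ and on $\int_\R\crochetk^2 f^2(k)\,\mathrm dk$.

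For the time increment, fix $u$ and $0\leq s\leq t\leq T$. The quadratic variation over $[s,t]$ of the martingale $y^M_\cdot(u)$ equals $\int_s^t \frac{\|f\|_{L_2(\R)}^2}{m^M_r(u)}\,\mathrm dr$, so by Burkholder--Davis--Gundy and $m^M_r(u)\geq\varphi(M)$,
\[
\E{|y^M_t(u)-y^M_s(u)|^{2p}}\leq C\,\E{\Big(\int_s^t\frac{\|f\|_{L_2(\R)}^2}{m^M_r(u)}\,\mathrm dr\Big)^{p}}\leq C\,|t-s|^{p}.
\]

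The space increment is the crux. Fix $t$ and $u,u'\in[0,1]$ and set $D_r:=y^M_r(u)-y^M_r(u')$. Writing the difference of the two stochastic integrals and applying Burkholder--Davis--Gundy,
\[
\E{\sup_{r\leq t}|D_r|^{2p}}\leq C\,|g(u)-g(u')|^{2p}+C\,\E{\Big(\int_0^t\!\int_\R\Big|\frac{\cos(ky^M_s(u))f(k)}{m^M_s(u)^{1/2}}-\frac{\cos(ky^M_s(u'))f(k)}{m^M_s(u')^{1/2}}\Big|^2\mathrm dk\,\mathrm ds\Big)^{p}}
\]
plus the analogous sine term. Splitting each bracket into the variation of the numerator and the variation of the factor $m^M_s(\cdot)^{-1/2}$, bounding $|\cos(ky^M_s(u))-\cos(ky^M_s(u'))|\leq|k|\,|D_s|$ (and similarly for sine), using $m^M_s\geq\varphi(M)$, and using that $\varphi_M$ is Lipschitz together with the computation that led to~\eqref{comparaison masses} — which here gives $|m^M_s(u)^{-1/2}-m^M_s(u')^{-1/2}|\leq C_M|D_s|$ — one arrives at
\[
\Big|\frac{\cos(ky^M_s(u))f(k)}{m^M_s(u)^{1/2}}-\frac{\cos(ky^M_s(u'))f(k)}{m^M_s(u')^{1/2}}\Big|\leq C_M\,\crochetk\,|f(k)|\,|D_s|,
\]
hence $\int_\R|\cdots|^2\,\mathrm dk\leq C_M\big(\int_\R\crochetk^2 f^2(k)\,\mathrm dk\big)|D_s|^2$; this is exactly where the standing hypothesis $\crochetk f\in L_2(\R)$ is used. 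Jensen's inequality in the $\mathrm ds$-variable gives $\big(\int_0^t|D_s|^2\,\mathrm ds\big)^p\leq T^{p-1}\int_0^t|D_s|^{2p}\,\mathrm ds$, so with $\phi(t):=\E{\sup_{r\leq t}|D_r|^{2p}}$ we obtain $\phi(t)\leq C\,|g(u)-g(u')|^{2p}+C\int_0^t\phi(s)\,\mathrm ds$, and Grönwall's lemma yields $\phi(T)\leq C\,|g(u)-g(u')|^{2p}\leq C\,|u-u'|^{2p\delta}$ by the $\delta$-Hölder continuity of $g$.

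Combining the two bounds, $\E{|y^M_t(u)-y^M_s(u')|^{2p}}\leq C\big(|u-u'|^{2p\delta}+|t-s|^{p}\big)\leq C\big(|u-u'|+|t-s|\big)^{\min(2p\delta,\,p)}$ for all $(u,t),(u',s)\in[0,1]\times[0,T]$. Choosing $p>\max(2,1/\delta)$ makes the exponent strictly larger than $2$, so the two-parameter Kolmogorov continuity criterion produces a modification of $(u,t)\mapsto y^M_t(u)$ with sample paths in $\mathcal C([0,1]\times[0,T])$; since this modification coincides almost surely with $y^M_t(u)$ for each fixed $(u,t)$, Fubini's theorem shows that for each $t$ it agrees with $y^M_t$ in $L_2[0,1]$, so it is a genuine version of $y^M$. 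I expect the only real obstacle to be the spatial estimate: one must make sure the factor $|k|$ coming from the Lipschitz bound on $k\mapsto\cos(ky)$ is integrable against $f^2$ — precisely the role of the hypothesis $\crochetk f\in L_2(\R)$ — and then close the estimate by a Grönwall argument rather than the fixed-point bound used in Proposition~\ref{prop:sde M ex&uniq}.
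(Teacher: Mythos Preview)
Your argument is correct and follows essentially the same route as the paper: Burkholder--Davis--Gundy, the Lipschitz bound on the diffusion coefficient via~\eqref{comparaison masses} (yielding the factor $\crochetk$), Gr\"onwall, and Kolmogorov. The only cosmetic difference is that the paper estimates $\E{\sup_{s\leq t}|y^M_s(u_1)-y^M_s(u_2)|^{p}}$ directly and applies the one-parameter Kolmogorov criterion in $u$ with values in $\mathcal C([0,T])$, whereas you do a separate time-increment bound and invoke the two-parameter criterion; both lead to the same conclusion.
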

\begin{proof}
Let $u_1,u_2 \in [0,1]$. Let $p\geq 2$ such that $p > \frac{1}{\delta}$. 
For every $t \in [0,T]$, by Burkholder-Davis-Gundy inequality, 
\begin{multline*}
\mathbb E \Big[\sup_{s \leq t} |y^M_s(u_1)-y^M_s(u_2)|^p\Big] \\
\leq C_p |g(u_1)-g(u_2)|^p 
+ C_{p,M} \mathbb E \Big[\Big( \int_0^t \!\!  \int_\R  \crochetk^2 f(k)^2  \mathrm dk |y^M_s(u_1)-y^M_s(u_2)|^2  \mathrm ds \Big)^\frac{p}{2}\Big].
\end{multline*}
It follows that
\begin{multline*}
\mathbb E \Big[\sup_{s \leq t} |y^M_s(u_1)-y^M_s(u_2)|^p\Big] \\
\begin{aligned}
&\leq C_p |g(u_1)-g(u_2)|^p 
+ C_{p,M,f}t^{p/2-1}  \mathbb E \Big[ \int_0^t |y^M_s(u_1)-y^M_s(u_2)|^p  \mathrm ds \Big]\\
&\leq C_p |g(u_1)-g(u_2)|^p 
+ C_{p,M,f}t^{p/2-1}  \int_0^t \E{\sup_{r\leq s} |y^M_r(u_1)-y^M_r(u_2)|^p}  \mathrm ds .  
\end{aligned}
\end{multline*}
By Gronwall's Lemma, and using the $\delta$-Hölder regularity of $g$, we have:
\begin{align*}
\E{\sup_{t \leq T} |y^M_t(u_1)-y^M_t(u_2)|^p}
\leq C_{M,p,f} |u_1-u_2|^{p \delta}. 
\end{align*}
Remark that $p\delta-1>0$. Let us apply Kolmogorov's Lemma (e.g in~\cite[Theorem I.2.1, p.26]{revuzyor13} with $d=1$, $\gamma=p$ and $\eps=p\delta-1$). Thus there  exists a version $\widetilde{y}^M$ of $y^M$ in $\mathcal C([0,1] \times [0,T])$. 
\end{proof}

\begin{prop}
Let $g \in  L_{2+}^\uparrow[0,1]$. Let $u_1<u_2 \in [0,1]$ be such that $g(u_1)<g(u_2)$. 
Assume that $k \mapsto \crochetk f(k)$ is square integrable. 
Let $y^M$ be the solution to equation~\eqref{eq:def y^M}. Then almost surely and for every $t \in [0,T]$, $y^M_t(u_1)<y^M_t(u_2)$. 
\label{prop:growth}
\end{prop}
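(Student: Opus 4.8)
\textbf{Proof strategy for Proposition~\ref{prop:growth}.}

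The plan is to show that the difference process $\Delta_t(u_1,u_2) := y^M_t(u_2) - y^M_t(u_1)$ stays strictly positive by writing a closed stochastic equation for it and arguing that $0$ is an inaccessible boundary, in the spirit of the analogous monotonicity results in~\cite{konarovskyi17system,marx18}. First I would subtract the two copies of equation~\eqref{eq:def y^M}:
\begin{align*}
\Delta_t(u_1,u_2) &= \big(g(u_2)-g(u_1)\big) + \int_0^t \! \int_\R f(k) \left( \frac{\cos(k y^M_s(u_2))}{m_s^M(u_2)^{1/2}} - \frac{\cos(k y^M_s(u_1))}{m_s^M(u_1)^{1/2}} \right) \mathrm d\wR(k,s) \\
&\quad + \int_0^t \! \int_\R f(k) \left( \frac{\sin(k y^M_s(u_2))}{m_s^M(u_2)^{1/2}} - \frac{\sin(k y^M_s(u_1))}{m_s^M(u_1)^{1/2}} \right) \mathrm d\wI(k,s),
\end{align*}
and compute its quadratic variation from~\eqref{correlation_deux_points}: on the event $\{\Delta_s > 0\}$ it equals $\big( \tfrac1{m_s^M(u_1)} + \tfrac1{m_s^M(u_2)} \big)\|f\|_{L_2}^2 - 2 \tfrac{\Re(\mathcal F(f^2))(\Delta_s)}{m_s^M(u_1)^{1/2} m_s^M(u_2)^{1/2}}$, so that $\mathrm d\langle \Delta \rangle_s = a_s(\Delta_s)\,\mathrm ds$ for a nonnegative coefficient $a_s$. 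The key quantitative point is that this coefficient vanishes quadratically as $\Delta_s \to 0$: since $f$ is even, $\mathcal F(f^2)$ is even and (being $\|f\|_{L_2}^2$ at $0$) smooth enough near $0$ because $k\mapsto \crochetk f(k) \in L_2$ forces $k^2 f^2 \in L_1$, whence $\Re(\mathcal F(f^2))(x) = \|f\|_{L_2}^2 - c x^2 + o(x^2)$ with $c = \tfrac12\int_\R k^2 f^2(k)\,\mathrm dk$. Combined with the Lipschitz-and-bounded-below control of $m_s^M$ from~\eqref{comparaison masses} and its lower bound $\varphi(M)$, this yields $a_s(\delta) \leq C_M \delta^2$ for $\delta$ in a neighbourhood of $0$, uniformly in $s$ and $\omega$.

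Given this, I would run a time-change / comparison argument for a one-dimensional diffusion. Writing $\Delta_t = \Delta_0 + N_t$ with $N$ a continuous martingale whose bracket density is bounded by $C_M \Delta_s^2$ near the origin, represent $N$ via a Dambis--Dubins--Schwarz time change, $N_t = \gamma_{\langle N\rangle_t}$ for a Brownian motion $\gamma$, and note that up to the first time $\Delta$ leaves a small interval $(0,\eps_0)$ the clock satisfies $\mathrm d\langle N\rangle_s \leq C_M \Delta_s^2\,\mathrm ds$. The process $\Delta$ is then stochastically dominated (below) by a geometric-Brownian-motion-type process $\mathrm dR_s = -C_M' R_s\,\mathrm ds + \text{(mart. with bracket }\leq C_M R_s^2 ds)$ which is well known not to hit $0$ in finite time (equivalently $\log \Delta_s$ stays finite since the drift and diffusion coefficients of $\log$ of such a process are bounded near $\Delta = 0$). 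Alternatively, and perhaps more cleanly, one applies Itô's formula to $\log \Delta_{t\wedge\tau}$ where $\tau = \inf\{t: \Delta_t \leq \eps_0\}\wedge T$: the martingale part has bracket $\int_0^{\cdot} a_s(\Delta_s)/\Delta_s^2\,\mathrm ds \leq C_M T < \infty$ and the Itô correction $-\tfrac12\int_0^{\cdot} a_s(\Delta_s)/\Delta_s^2\,\mathrm ds$ is likewise bounded, so $\log\Delta_{t\wedge\tau}$ cannot tend to $-\infty$; since $\Delta$ is continuous and starts strictly positive, $\tau$ cannot be a time at which $\Delta$ hits $0$, and iterating (restarting from $\eps_0$, using continuity of paths) shows $\Delta_t > 0$ for all $t\in[0,T]$ almost surely.

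The main obstacle is the passage from "the bracket density vanishes like $\Delta^2$ near $0$" to "$\Delta$ does not reach $0$" done rigorously, because one must first localise away from $0$ (where the $\Delta^2$ estimate is valid) and handle the reentry behaviour: a priori $\Delta$ could oscillate down toward $0$ infinitely often. The clean way around this is the $\log\Delta$ computation above run on the stochastic interval where $\Delta \in (0,\eps_0)$: one shows the stopped process $\log\Delta_{\cdot\wedge\tau}$ is an $L_2$-bounded (hence a.s.\ finite-limit) semimartingale, so $\{\tau < T,\ \Delta_\tau = 0\}$ would force $\log\Delta_\tau = -\infty$, a contradiction; thus $\Delta_\tau = \eps_0$ on $\{\tau < T\}$, and a standard exhaustion over a sequence $\eps_0 \downarrow 0$ of reentry times together with path continuity gives the full statement. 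One should also record the boundary case $m_s^M \equiv$ const (i.e.\ $\varphi\equiv 1$), where the same estimates hold with $\varphi(M)$ replaced by $1$, and note that monotonicity on a dense set of pairs $(u_1,u_2)$ plus continuity in $u$ (Proposition~\ref{prop:continuité}, when $g$ is Hölder; in general by approximation of $g$) upgrades strict monotonicity to the statement that $u\mapsto y^M_t(u)$ is non-decreasing, which is what is needed to view $y^M_t$ as a quantile function.
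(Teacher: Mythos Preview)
Your approach is essentially correct---the key estimate $a_s \leq C_M \Delta_s^2$ is right, and the It\^o-on-$\log\Delta$ argument can be made to work---but it is considerably more roundabout than the paper's proof, and your localisation discussion (stopping at a fixed level $\eps_0$ and then ``iterating'') is muddled: at $\tau=\inf\{t:\Delta_t\leq\eps_0\}\wedge T$ one has $\Delta_\tau=\eps_0$ by continuity, so the claim that ``$\tau$ cannot be a time at which $\Delta$ hits $0$'' is vacuous, and the subsequent restart does not by itself prevent $\Delta$ from reaching $0$ later.

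The paper's proof is much shorter because it recognises that the difference $Y_t:=y^M_t(u_2)-y^M_t(u_1)$ satisfies a \emph{linear} SDE. Writing the diffusion integrand as a difference quotient times $Y_s$ (with the indicator $\mathds 1_{\{Y_s\neq 0\}}$, harmless since both sides vanish on $\{Y_s=0\}$), one gets $Y_t=Y_0+\int_0^t Y_s\,\mathrm dN^M_s$, where $N^M$ is the martingale driven by the difference quotients of $\theta^\Re_M$ and $\theta^\Im_M$. The very Lipschitz estimate you derive (the $C_M\crochetk$ bound, via~\eqref{comparaison masses}) shows that $\frac{\mathrm d\langle N^M\rangle_s}{\mathrm ds}\leq C_M\int_\R\crochetk^2 f(k)^2\,\mathrm dk<\infty$, so this linear SDE has the unique explicit solution $Y_t=(g(u_2)-g(u_1))\exp\bigl(N^M_t-\tfrac12\langle N^M\rangle_t\bigr)$, which is manifestly strictly positive. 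Your $\log\Delta$ computation is precisely the logarithm of this stochastic exponential, so you are rediscovering the same identity without naming it; the difference is that the paper reads off positivity from the formula, while you instead argue about non-explosion of $\log\Delta$.

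Two smaller remarks. First, the covariation expansion of $\Re(\mathcal F(f^2))$ near $0$ is unnecessary: the bound $a_s\leq C_M\Delta_s^2$ follows directly from the Lipschitz estimate on the integrands, and $a_s$ is not actually a function of $\Delta_s$ alone (it depends on both $y^M_s(u_1)$ and $y^M_s(u_2)$ through the masses). Second, the right localisation for your $\log$ argument is $\sigma_n=\inf\{t:\Delta_t\leq 1/n\}\wedge T$: since $N^M$ is a genuine martingale on $[0,T]$ with $\langle N^M\rangle_T\leq C_MT$, one gets $\log\Delta_{\sigma_n}\geq \log\Delta_0-\sup_{t\leq T}|N^M_t|-\tfrac12 C_MT$, a finite random variable, forcing $\mathbb P[\sigma_n<T]\to 0$. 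Finally, your closing comments on $\varphi\equiv 1$ and on upgrading to monotonicity for all $u$ are correct but belong to Corollary~\ref{coro:growth}, not to this proposition.
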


\begin{proof}
Let $u_1<u_2 \in [0,1]$ be such that $g(u_1)<g(u_2)$. Thus the process $Y_t=y^M_t(u_2)-y^M_t(u_1)$ satisfies 
\begin{align}
\label{eq:growth}
Y_t=g(u_2)-g(u_1) +\int_0^t  Y_s \mathrm dN^M_s, 
\end{align}
where we denote
\begin{align*}
N^M_t&=\int_0^t \!\!  \int_\R \mathds 1_{\{y^M_s(u_1)\neq y^M_s(u_2) \}}
\frac{\theta^\Re_M(y^M_s(u_2),k,s)-\theta^\Re_M(y^M_s(u_1),k,s)}
{y^M_s(u_2)-y^M_s(u_1)} f(k) \mathrm d\wR(k,s) \\
& \quad + \int_0^t \!\!  \int_\R \mathds 1_{\{y^M_s(u_1)\neq y^M_s(u_2) \}}
\frac{\theta^\Im_M(y^M_s(u_2),k,s)-\theta^\Im_M(y^M_s(u_1),k,s)}
{y^M_s(u_2)-y^M_s(u_1)} f(k) \mathrm d\wI(k,s)
\end{align*} 
and $\theta^\Re_M(x,k,s)=\frac{\cos(kx)}{(\int_0^1 \varphi_M(x-y^M_s(v))\mathrm dv)^{1/2}}$
and $\theta^\Im_M(x,k,s)=\frac{\sin(kx)}{(\int_0^1 \varphi_M(x-y^M_s(v))\mathrm dv)^{1/2}}$. 
Thus we have
\begin{align*}
\langle N^M,N^M \rangle_t&=   \int_0^t \!\! \int_\R \mathds 1_{\{y^M_s(u_1)\neq y^M_s(u_2) \}}
\left|\frac{\theta^\Re_M(y^M_s(u_2),k,s)-\theta^\Re_M(y^M_s(u_1),k,s)}
{y^M_s(u_2)-y^M_s(u_1)}\right|^2 f(k)^2 \mathrm dk \mathrm ds \\
&\quad + \int_0^t \!\! \int_\R \mathds 1_{\{y^M_s(u_1)\neq y^M_s(u_2) \}}
\left|\frac{\theta^\Im_M(y^M_s(u_2),k,s)-\theta^\Im_M(y^M_s(u_1),k,s)}
{y^M_s(u_2)-y^M_s(u_1)}\right|^2 f(k)^2 \mathrm dk \mathrm ds. 
\end{align*}
For every $x_1$, $x_2 \in \R$, for every $k\in \R$ and for every $s \in [0,T]$, we have the following two estimates:
\begin{align*}
\vert \cos(kx_2)-\cos(kx_1)   \vert
&\leq k |x_2-x_1|,
\\
\left|\int_0^1 \varphi_M(x_2-y^M_s(v))\mathrm dv-\int_0^1 \varphi_M(x_1-y^M_s(v))\mathrm dv  \right| &\leq \operatorname{Lip}(\varphi_M) |x_2-x_1|. 
\end{align*}
It follows, by the same computation as~\eqref{comparaison masses}, that
\begin{align*}
\left|\frac{1}{\big(\int_0^1 \varphi_M(x_2-y^M_s(v))\mathrm dv\big)^{1/2}}-\frac{1}{\big(\int_0^1 \varphi_M(x_1-y^M_s(v))\mathrm dv\big)^{1/2}}  \right| &\leq C_M |x_2-x_1|.
\end{align*}
Thus for every $x_1$, $x_2 \in \R$, 
\begin{multline*}
\left| \theta^\Re_M(x_2,k,s)-\theta^\Re_M(x_1,k,s) \right| \\
\begin{aligned}
&\leq \left| \frac{\cos(kx_2)-\cos(kx_1)}{(\int_0^1 \varphi_M(x_2-y^M_s(v))\mathrm dv)^{1/2}} \right| \\
&\quad  + \left| \cos(kx_1)  \right|
\left|\frac{1}{\big(\int_0^1 \varphi_M(x_2-y^M_s(v))\mathrm dv\big)^{1/2}}-\frac{1}{\big(\int_0^1 \varphi_M(x_1-y^M_s(v))\mathrm dv\big)^{1/2}}  \right|\\
&\leq  C_M \crochetk |x_2-x_1|,
\end{aligned}
\end{multline*}
Therefore, for every $s \in [0,T]$, 
\begin{align*}
 \mathds 1_{\{y^M_s(u_1)\neq y^M_s(u_2) \}}
\left|\frac{\theta^\Re_M(y^M_s(u_2),k,s)-\theta^\Re_M(y^M_s(u_1),k,s)}
{y^M_s(u_2)-y^M_s(u_1)}\right|^2 
\leq C_M \crochetk^2.
\end{align*}
We have a similar bound on $\theta^\Im$. 
We deduce that $\frac{\mathrm d\langle N^M,N^M \rangle_s}{\mathrm ds} \leq C_M \int_\R \crochetk^2 f(k)^2 \mathrm dk$ for each $s \in [0,T]$. Hence the stochastic differential equation~\eqref{eq:growth} has a unique solution and it is $Y_t=(g(u_2)-g(u_1)) \exp\left(N^M_t-\frac1{2}\langle N^M,N^M \rangle_t \right)$.  In particular
\begin{align*}
y^M_t(u_2)-y^M_t(u_1)=(g(u_2)-g(u_1)) \exp\left(N^M_t-\frac1{2}\langle N^M,N^M \rangle_t \right).
\end{align*}
Since $g(u_1)<g(u_2)$, we deduce that for every $t \in [0,T]$, $Y_t >0$. Thus for every $t \in [0,T]$, $y^M_t(u_1)<y^M_t(u_2)$. 
\end{proof}

\begin{coro}
\label{coro:growth}
Let $g \in  L_{2+}^\uparrow[0,1]$ such that $g$ is $\delta$-Hölder for some $\delta >\frac{1}{2}$. Assume that $k \mapsto \crochetk f(k)$ is square integrable. 
Then there is a  version $y^M$ of the solution to equation~\eqref{eq:def y^M} in $\mathcal C([0,1] \times [0,T])$ such that almost surely, for each $t\in [0,T]$, $u\in [0,1] \mapsto y^M_t(u)$ is strictly increasing. 
\end{coro}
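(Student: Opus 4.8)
The plan is to splice together the pathwise facts already established. Since $\delta>\tfrac12$, in particular $g$ is continuous and Proposition~\ref{prop:continuité} provides a version $\widetilde{y}^M$ of $y^M$ with $(u,t)\mapsto\widetilde{y}^M_t(u)$ continuous on $[0,1]\times[0,T]$. Moreover, Proposition~\ref{prop:growth}, applied to a \emph{fixed} pair $u_1<u_2$ with $g(u_1)<g(u_2)$, produces an event of full probability on which $\widetilde{y}^M_t(u_1)<\widetilde{y}^M_t(u_2)$ for \emph{every} $t\in[0,T]$; its proof is insensitive to replacing $y^M$ by the continuous modification $\widetilde{y}^M$, since it only identifies the unique solution of the linear equation~\eqref{eq:growth}. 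The single gap is that this exceptional set depends a priori on the chosen pair $(u_1,u_2)$, whereas the corollary asks for one null set valid for all pairs simultaneously.

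To close that gap I would fix a countable dense subset $D\subset[0,1]$, for instance $D=\Q\cap[0,1]$, and apply Proposition~\ref{prop:growth} to each of the countably many pairs $q_1<q_2$ in $D$ (as $g$ is strictly increasing, $g(q_1)<g(q_2)$ automatically). Intersecting the associated full-probability events yields a single event $\Omega_0$ with $\mathbb P(\Omega_0)=1$ on which, for every $t\in[0,T]$ and every $q_1<q_2$ in $D$, one has $\widetilde{y}^M_t(q_1)<\widetilde{y}^M_t(q_2)$.

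It then remains to transfer strict monotonicity from $D$ to all of $[0,1]$ on $\Omega_0$, which is exactly where the joint continuity is used. Given arbitrary $u_1<u_2$ in $[0,1]$ and $t\in[0,T]$, I would choose $q_1,q_2\in D$ with $u_1<q_1<q_2<u_2$; approximating $u_1$ from the right and $u_2$ from the left by points of $D$ and combining continuity of $\widetilde{y}^M$ with the monotonicity on $D$ gives $\widetilde{y}^M_t(u_1)\le\widetilde{y}^M_t(q_1)$ and $\widetilde{y}^M_t(q_2)\le\widetilde{y}^M_t(u_2)$, hence $\widetilde{y}^M_t(u_1)\le\widetilde{y}^M_t(q_1)<\widetilde{y}^M_t(q_2)\le\widetilde{y}^M_t(u_2)$. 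Since $\Omega_0$ does not depend on $(u_1,u_2,t)$, the corollary follows.

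The content has essentially all been done in Propositions~\ref{prop:continuité} and~\ref{prop:growth}, so I do not anticipate a genuine obstacle; the only point requiring care is the order of quantifiers, namely that Proposition~\ref{prop:growth} must be invoked (on the countable family $D$) \emph{before} fixing the generic pair $(u_1,u_2)$, the joint continuity being precisely what permits passing to the limit over $D$ inside the already-fixed event $\Omega_0$.
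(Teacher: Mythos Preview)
Your proposal is correct and follows essentially the same approach as the paper's proof: invoke Proposition~\ref{prop:continuité} for a jointly continuous version, apply Proposition~\ref{prop:growth} to all rational pairs to obtain a single full-probability event, then use continuity in~$u$ to extend strict monotonicity from $\Q\cap[0,1]$ to all of $[0,1]$. The paper's write-up is terser (it simply says ``since $u\mapsto y^M_t(u)$ is continuous\dots''), whereas you spell out the sandwich argument with intermediate rationals $q_1,q_2$, but the content is identical.
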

\begin{proof}
By Proposition~\ref{prop:continuité}, we know that there is a version $y^M$ of the solution to~\eqref{eq:def y^M} jointly continuous in time and space. 

Furthermore, by Proposition~\ref{prop:growth}, there exists an almost sure event $\widetilde{\Omega}$ under which $y^M$ belongs to $\mathcal C([0,1] \times [0,T])$ and for every $t\in [0,T]$ and for every $u_1,u_2 \in \Q \cap [0,1]$ such that $u_1<u_2$, we have $y_t^M(u_1)<y_t^M(u_2)$. 
Since $u \mapsto y_t^M(u)$ is continuous under the event $\widetilde{\Omega}$, we deduce that $y_t^M(u_1)<y_t^M(u_2)$ holds with every $u_1<u_2 \in [0,1]$. 
\end{proof}

\subsubsection{Construction of a non-blowing solution on the global time interval \texorpdfstring{$[0,T]$}{[0,T]}}
\label{parag:non-blowing}

In this paragraph, we build a solution to equation~\eqref{eq:real equation on y}, provided that the initial condition~$g$ is smooth enough.

\begin{defin}
\label{def:g_un}
Let $\mathbf G^1$ denote the set of $\mathcal C^1$-functions $g:[0,1]\to \R$ such that for all $u<v$, $g(u)<g(v)$.  
\end{defin}

Remark that every $g$ in $\mathbf G^1$ is the quantile function of a measure $\mu_0$, which is absolutely continuous with respect to the Lebesgue measure on $\R$. Indeed, let $F_0$ be the inverse map of~$g$, \textit{i.e.} the unique map $F_0:[g(0),g(1)] \to [0,1]$ such that $F_0 \circ g= \id_{[0,1]}$, and let $p_0$ the first derivative of~$F_0$. Then $F_0$ and $g$ are respectively the cumulative distribution function (c.d.f) and the quantile function of the measure $\mu_0$ with density~$p_0$. Furthermore, $p_0$ is continuous and has a compact support equal to $[g(0),g(1)]$. 
 
Let $g \in \mathbf G^1$. 
Let us fix $M_0$ an integer such that $M_0> g(1)-g(0)$. 
We want to construct a solution to equation~\eqref{eq:real equation on y} starting at $g$, well-defined and continuous on the whole interval $[0,T]$. We will construct it on the basis of the family $(y^M)_{M \geq M_0}$ of solutions to equation~\eqref{eq:def y^M} for each $M \geq M_0$. 
Since $g$ belongs to $\mathbf G^1$, the assumptions made in Propositions~\ref{prop:sde M ex&uniq} and \ref{prop:continuité} and Corollary~\ref{coro:growth}  can be applied. Thus for every $u,v \in [0,1]$ and for every $t \in [0,T]$, 
$|y^M_t(u)-y^M_t(v)|\leq y^M_t(1)-y^M_t(0)$. 
For every $M, M' \geq M_0$, define
\begin{align*}
\tau_{M}(y^{M'}):= \inf \left\{t \geq 0: y^{M'}_t(1)-y^{M'}_t(0) \geq M\right\} \wedge T. 
\end{align*}
Since $M> g(1)-g(0)$ and since the process $y^{M'}_\cdot(1)-y^{M'}_\cdot(0)$ is continuous, $\tau_M(y^{M'})>0$ almost surely for every $M,M' \geq M_0$. 
Assume that $M \leq M'$. Then for every $s \leq \tau_M(y^{M'})$, for every $u,v \in [0,1]$, $|y^{M'}_s(u)-y^{M'}_s(v)| \leq M \leq M'$ and thus 
\begin{align*}
 \varphi_{M'}(y^{M'}_s(u)-y^{M'}_s(v))= \varphi(y^{M'}_s(u)-y^{M'}_s(v))=\varphi_M(y^{M'}_s(u)-y^{M'}_s(v)). 
\end{align*}
Let $\sigma= \tau_M(y^M) \wedge \tau_M(y^{M'})$. We deduce from the latter equality that the processes $(y^M_{ t \wedge \sigma})_{t\in [0,T]}$ and $(y^{M'}_{ t \wedge \sigma})_{t\in [0,T]}$ are both solutions to the same stochastic differential equation:
\begin{align}
\label{eq:sde M,M'}
z_t(u)=g(u)+\int_0^{t\wedge \sigma} \frac{1}{m^z_s(u)^{1/2}}  \left(\int_\R \cos(k z_s(u)) f(k) \mathrm d\wR(k,s) + \int_\R \sin(k z_s(u)) f(k) \mathrm d\wI(k,s)\right),
\end{align}
where $m^z_s(u)=\int_0^1 \varphi_M(z_s(u)-z_s(v)) \mathrm dv$. 

Assume that $k \mapsto \crochetk f(k)$ is square integrable. 
Therefore, by pathwise uniqueness of the solution to equation~\eqref{eq:sde M,M'}, which follows from the same argument as in Proposition~\ref{prop:sde M ex&uniq}, we have for all $u \in [0,1]$, for all $t \in [0,T]$, $y^M_{t \wedge \sigma}(u)=y^{M'}_{t \wedge \sigma}(u)$, whence $\tau_M(y^M)=\tau_M(y^{M'})$. From now on, we will denote that stopping time by $\tau_M$. The sequence of stopping times $(\tau_M)_{M \geq 1}$ is non-decreasing.

Setting $\tau_{M_0-1}=0$, we define $y_t(u):= \mathds 1_{\{t=0\}} g(u) +\sum_{M=M_0}^{+\infty} \mathds 1_{\{t \in (\tau_{M-1},\tau_M]\} }y^M_t(u)$ for every $t\in [0,T]$ and $u \in [0,1]$. 
Let $\tau_\infty:=\sup_{M\geq M_0} \tau_M$. Clearly, $\tau_\infty>0$ almost surely. Since $\tau_M \leq T$ for every $M \geq M_0$, we have $\tau_\infty \leq T$.  
Furthermore, for each $M \geq M_0$, $y=y^M$ on $[0,\tau_M]$ and on the interval $[0,\tau_\infty)$, $(y_t)_{t\in [0,T]}$ is solution to equation~\eqref{eq:real equation on y}.

Let us remark that $\mathbb P$-almost surely,  $u \mapsto y_t(u)$ is strictly increasing for every $t\in [0,T]$. Moreover, the following proposition states that it is the unique solution in $\mathcal C([0,1] \times [0,T])$ to equation~\eqref{eq:real equation on y}. 

\begin{prop}
\label{prop:exist and uniq of eq1}
Let $g \in \mathbf G^1$. Assume that $k \mapsto \crochetk f(k)$ is square integrable.  There exists a unique solution $y$ in $\mathcal C([0,1]\times [0,T])$ to equation~\eqref{eq:real equation on y}  and this solution is defined on $[0,T]$. Furthermore, the process $(y_t)_{t \in [0,T]}$ is $(\mathcal G_t)_{t \in [0,T]}$-adapted. 
\end{prop}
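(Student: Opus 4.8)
\textbf{Proof proposal for Proposition~\ref{prop:exist and uniq of eq1}.}

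The plan is to establish that the process $(y_t)_{t\in[0,T]}$ constructed above by patching together the truncated solutions $y^M$ along the stopping times $\tau_M$ does not blow up, i.e.\ that $\tau_\infty=T$ almost surely, and then to derive uniqueness from the pathwise uniqueness of the truncated equations. The key observation driving the non-blow-up is the monotonicity provided by Proposition~\ref{prop:growth} and Corollary~\ref{coro:growth}: since $u\mapsto y_t(u)$ is increasing, the ``width'' of the cloud is $y_t(1)-y_t(0)$, and on the event $\{\tau_\infty<T\}$ this width must diverge as $t\uparrow\tau_\infty$. So the first step is to control $\E{\sup_{t\leq\tau_M}(y^M_t(1)-y^M_t(0))^2}$ by a constant independent of $M$. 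To do this I would apply Itô's formula (or the explicit representation $y^M_t(1)-y^M_t(0)=(g(1)-g(0))\exp(N^M_t-\tfrac12\langle N^M,N^M\rangle_t)$ already obtained in the proof of Proposition~\ref{prop:growth}, with $u_1=0$, $u_2=1$) together with the bound $\frac{\mathrm d\langle N^M,N^M\rangle_s}{\mathrm ds}\leq C_M\int_\R\crochetk^2f(k)^2\mathrm dk$; but the crux is that $C_M$ depends on $M$, so that route alone is not enough. Instead I would estimate $W_t:=y_t(1)-y_t(0)$ directly on $[0,\tau_\infty)$ where $m_s(u)=\int_0^1\varphi(y_s(u)-y_s(v))\mathrm dv\geq\varphi(W_s)$ (recall $\varphi$ is positive, even and decreasing on $[0,\infty)$), deriving $\mathrm dW_t=W_t\,\mathrm dN_t$ for a continuous martingale $N$ with $\mathrm d\langle N,N\rangle_s\leq\frac{C}{\varphi(W_s)}\int_\R\crochetk^2 f(k)^2\mathrm dk\,\mathrm ds$.

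The main obstacle is precisely this last bound: because $\varphi(W_s)\to 0$ as $W_s\to\infty$ (in the Gaussian case $\varphi(W_s)^{-1}\sim e^{W_s^2/2}$), the drift of $\log W_t$ can in principle explode, and one needs a genuine argument that $W$ nonetheless stays finite on $[0,T]$. The way around this is to note that $W_t=(g(1)-g(0))\exp(N_t-\tfrac12\langle N,N\rangle_t)$ is a \emph{nonnegative local martingale up to $\tau_\infty$}, hence a supermartingale, so $\E{W_{t\wedge\tau_M}}\leq g(1)-g(0)$ for every $M$; by Fatou, $\E{\liminf_M W_{t\wedge\tau_M}}\leq g(1)-g(0)<\infty$, which forces $\sup_{t<\tau_\infty}W_t<\infty$ almost surely, and therefore $\tau_\infty$ cannot be a true blow-up time. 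More carefully: on $\{\tau_\infty<T\}$ continuity of $t\mapsto W_t$ on $[0,\tau_\infty)$ combined with $W_{\tau_M}\geq M$ (by definition of $\tau_M$, using $W_{\tau_M}=M$ when $\tau_M<T$) would give $W_t\to\infty$, contradicting the almost-sure finiteness just obtained. Hence $\tau_\infty=T$ a.s., and $y$ is well-defined and continuous on all of $[0,T]$; the joint continuity in $(u,t)$ and the strict monotonicity in $u$ are inherited from Corollary~\ref{coro:growth} on each piece $[0,\tau_M]$, and $(\mathcal G_t)$-adaptedness is inherited from the $y^M$ since each $\tau_M$ is an $(\mathcal G_t)$-stopping time.

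For uniqueness, suppose $\widetilde y\in\mathcal C([0,1]\times[0,T])$ is another solution to~\eqref{eq:real equation on y} with $\widetilde y_0=g$. Since $\widetilde y$ is continuous and $\widetilde y_0(1)-\widetilde y_0(0)=g(1)-g(0)<M_0$, the stopping time $\widetilde\tau_M:=\inf\{t\geq 0:\widetilde y_t(1)-\widetilde y_t(0)\geq M\}\wedge T$ is positive, and on $[0,\widetilde\tau_M]$ one has $|\widetilde y_s(u)-\widetilde y_s(v)|\leq M$, so $\varphi(\widetilde y_s(u)-\widetilde y_s(v))=\varphi_M(\widetilde y_s(u)-\widetilde y_s(v))$; hence $(\widetilde y_{t\wedge\widetilde\tau_M})_{t}$ solves the truncated equation~\eqref{eq:def y^M} up to $\widetilde\tau_M$. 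By the pathwise uniqueness argument of Proposition~\ref{prop:sde M ex&uniq} (the same Gronwall/contraction estimate, localized), $\widetilde y_{t\wedge(\tau_M\wedge\widetilde\tau_M)}=y^M_{t\wedge(\tau_M\wedge\widetilde\tau_M)}=y_{t\wedge(\tau_M\wedge\widetilde\tau_M)}$ for all $t$, which first forces $\widetilde\tau_M=\tau_M$ and then, letting $M\to\infty$ and using $\tau_\infty=T$, gives $\widetilde y=y$ on $[0,T]$. I would expect the routine part to be the localized Gronwall estimate (identical in form to~\eqref{comparaison masses} and its consequences) and the only delicate point to be the supermartingale/Fatou argument ruling out blow-up of the cloud width.
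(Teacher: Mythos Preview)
Your overall strategy is the paper's: localize via $\tau_M$, show $\tau_\infty=T$ a.s.\ using that the width $W_t=y_t(1)-y_t(0)$ is a positive martingale, then obtain uniqueness from pathwise uniqueness of the truncated equations. The only real difference is in the non-blow-up step. The paper time-changes $W$ into a Brownian motion (Dambis--Dubins--Schwarz), uses $W>0$ from Proposition~\ref{prop:growth}, and reads off the gambler's-ruin bound $\P{\tau_M<T}\leq(g(1)-g(0))/M$. Your supermartingale route lands on the same bound, but the Fatou step as written is loose: finiteness of $\E{\liminf_M W_{t\wedge\tau_M}}$ for each fixed $t$ does not by itself give $\sup_{t<\tau_\infty}W_t<\infty$ a.s. The direct fix (which your ``more carefully'' remark already contains) is optional stopping plus Markov: $\E{W_{T\wedge\tau_M}}\leq g(1)-g(0)$ together with $W_{T\wedge\tau_M}=M$ on $\{\tau_M<T\}$ yields $\P{\tau_M<T}\leq(g(1)-g(0))/M$ immediately, without any time change.

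One small wrinkle in your uniqueness argument: the claim $|\widetilde y_s(u)-\widetilde y_s(v)|\leq M$ on $[0,\widetilde\tau_M]$ presupposes monotonicity of $\widetilde y$, which is not yet established for an arbitrary continuous solution. Either define the localizing time via the full oscillation $\sup_{u,v}|\widetilde y_t(u)-\widetilde y_t(v)|$ (then $\widetilde y$ solves the $\varphi_M$-equation on that interval by construction, pathwise uniqueness forces $\widetilde y=y^M$ there, and monotonicity is inherited a posteriori), or follow the paper and localize on the event $A_M=\{\sup_{u,t}|x_t(u)|\leq M/2\}$, which bounds all pairwise differences by $M$ trivially.
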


\begin{proof}
First, we prove that $\tau_\infty$ defined above is almost surely equal to $T$. Let $M \geq M_0$. Let us estimate $\P{\tau_M<T}$. Define $z^M_t:=y^M_t(1)-y^M_t(0)$. Then $(z^M_t)_{t\in[0,T]}$ is a continuous and square integrable local martingale on $[0,T]$ and thus there is a standard $\mathbb P$-Brownian motion~$\beta$ such that $z^M_t=g(1)-g(0) +\beta_{\langle z^M,z^M \rangle_t}$. Moreover, $\tau_M=\inf \{ t :z^M_t \geq M\} \wedge T$. Under the event $\{ \tau_M <T\}$, there is a random time $t_0 \in [0,T)$ such that $z^M_{t_0} \geq M$ whereas for all $t \in [0,T]$, $z^M_t >0$ by Proposition~\ref{prop:growth}. 
Let us define the process $(\gamma_t)_{t \geq 0}$ by $\gamma_t:=g(1)-g(0)+\beta_t$. Under the measure $\mathbb P$, it is a Brownian motion starting at $g(1)-g(0) \in (0,M)$. Moreover, under the event $\{ \tau_M <T\}$, $(\gamma_t)_{t \geq 0}$ reaches the level $M$ before it reaches the level $0$. 
Therefore, $\P{\tau_M <T} \leq \P{(\gamma_t)_{t \geq 0} \text{ reaches $M$ before $0$}}=\frac{g(1)-g(0)}{M}$. 
Since $\{ \tau_M <T \}_{M \geq M_0}$ is a non-increasing sequence of events, we deduce that $\P{\bigcap_{\{M\geq M_0\}}\{ \tau_M <T\}}=0$. 
Thus $\mathbb P$-almost surely, there exists  $M\geq M_0$ such that $\tau_M = T$, whence $y=y^M$. It follows that $\tau_\infty=T$ almost surely. Thus $y$ is a continuous solution to equation~\eqref{eq:real equation on y} defined on $[0,T]$.

Let us now prove pathwise uniqueness. 
Let $x^1$ and $x^2$ be two solutions on $(\Omega, \mathcal G, \mathbb P)$ to equation~\eqref{eq:real equation on y} in  $\mathcal C([0,1]\times [0,T])$. Let $\eps >0$. For every $M \geq M_0$, let us define the following event: $A^i_M:= \{\omega \in \Omega: \sup_{u \in [0,1], t\in [0,T]} |x_t^i(u)|(\omega) \leq \frac{M}{2}\}$, $i=1,2$. Let $A_M:= A_M^1\cap A_M^2$. The sequence of events $(A_M)_{M \geq M_0}$ is non-decreasing and it follows from the fact that $x^1$ and $x^2$ are continuous that $\P{\bigcup_{M \geq M_0} A_M}=1$. Thus there is $M$ such that $\P{A_M}> 1-\eps$.

Let $M$ be such that $\P{A_M}> 1-\eps$. Let $\tau_M^i:= \inf \left\{t \geq 0: x^i_t(1)-x^i_t(0) \geq M\right\} \wedge T$ and $\tau_M=\tau_M^1 \wedge \tau_M^2$. 
For $i=1,2$, the same argument as the one given in Corollary~\ref{coro:growth} implies that almost surely for each $t \in [0,T]$, $u \mapsto x^i_{t \wedge \tau_M}(u)$ is strictly increasing. Therefore, under the event $A_M$, the equality $\tau_M=T$ holds. 
Moreover, the processes $( x^1_{t \wedge \tau_M})_{t\in [0,T]}$ and $( x^2_{t \wedge \tau_M})_{t\in [0,T]}$ satisfy equation~\eqref{eq:def y^M} up to the same stopping time $\tau_M$. By Proposition~\ref{prop:sde M ex&uniq}, pathwise uniqueness holds for equation~\eqref{eq:def y^M}, so  $\P{x^1_{\cdot \wedge \tau_M} \neq x^2_{\cdot \wedge \tau_M}}=0$. In particular, $0=\P{\{x^1_{\cdot \wedge \tau_M} \neq x^2_{\cdot \wedge \tau_M}\} \cap A_M}=\P{\{x^1 \neq x^2\} \cap A_M}$. It follows that $\P{x^1 \neq x^2}< \eps$ for every $\eps >0$. Since $\eps>0$ is arbitrary, we conclude that $\P{x^1 \neq x^2}=0$ and pathwise uniqueness holds for~\eqref{eq:real equation on y}. 
\end{proof}

\subsubsection{Higher regularity  of the solution map}
\label{parag:higher regularity}

Let us remark that there is a strong relation between the regularity, for each fixed $t$, of the map $u \mapsto y_t(u)$ and the rate of decay at infinity of $f$. We have already seen in Proposition~\ref{prop:exist and uniq of eq1} that the afore-mentioned map is continuous for every $t\in [0,T]$ if $k \mapsto \crochetk  f(k)$ belongs to $L_2(\R)$. 
By differentiating formally $y$ with respect to $u$, we expect that the derivative of $y$ is  a solution to  the following linear stochastic differential equation for every $u \in [0,1]$:
\begin{align}
\label{eq:def z}
z_t(u)=g'(u)+\!\!\int_0^t \!\! z_s(u) \int_\R \!\! \phi^\Re (u,k,s) f(k) \mathrm d\wR(k,s)
+\!\!\int_0^t \!\! z_s(u) \int_\R \!\!\phi^\Im (u,k,s) f(k)\mathrm d\wI(k,s),
\end{align}
where
\begin{align*}
\phi^\Re (u,k,s)&:= \frac{-k\sin(ky_s(u))}{(\int_0^1 \varphi(y_s(u)-y_s(v)) \mathrm dv)^{1/2}} -  \frac{\cos(ky_s(u))\int_0^1 \varphi'(y_s(u)-y_s(v))\mathrm dv}{2(\int_0^1 \varphi(y_s(u)-y_s(v)) \mathrm dv)^{3/2}}  ;
\\
\phi^\Im (u,k,s)&:= \frac{k\cos(ky_s(u))}{(\int_0^1 \varphi(y_s(u)-y_s(v)) \mathrm dv)^{1/2}} - \frac{\sin(ky_s(u))\int_0^1 \varphi'(y_s(u)-y_s(v))\mathrm dv}{2(\int_0^1 \varphi(y_s(u)-y_s(v)) \mathrm dv)^{3/2}}.
\end{align*}

For every $j \in \N$ and every $\theta \in [0,1)$, let $\mathbf G^{j+\theta}$ denote the set of functions $g \in \mathbf G^{1}$ which are $j$-times differentiable and such that $g^{(j)}$ is $\theta$-Hölder continuous.

\begin{prop}
\label{prop:derivee de y}
Let $\theta \in (0,1)$. 
Let $g \in \mathbf G^{1+\theta}$. 
Assume that $k \mapsto \crochetk^{1+\theta} f(k)$ is square integrable. 
Almost surely, for every $t \in [0,T]$, the map $u \mapsto y_t(u)$ belongs to $\mathbf G^{1+\theta'}$ for  every $0\leq \theta' < \theta$ and $(\partial_u y_t)_{t \in [0,T]}$ satisfies equation~\eqref{eq:def z}.
Moreover, the derivative has the following explicit form:
\begin{multline*}
%\label{explicit form for partial y}
\partial_u y_t(u)=g'(u) \exp\bigg(\int_0^t \!\! \int_\R \phi^\Re(u,k,s) f(k) \mathrm d\wR(k,s) +\int_0^t \!\! \int_\R \phi^\Im(u,k,s) f(k) \mathrm d\wI(k,s)\\
 -\frac{1}{2} \int_0^t \!\! \int_\R (\phi^\Re(u,k,s)^2+\phi^\Im(u,k,s)^2) f(k)^2 \mathrm dk \mathrm ds \bigg).
\end{multline*}
More generally, if for an integer $j\geq 1$,  $g$ belongs to $\mathbf G^{j+\theta}$ and $k\mapsto \crochetk^{j+\theta} f(k)$ is square integrable, then almost surely, for every $t \in [0,T]$, the map $u \mapsto y_t(u)$ belongs to~$\mathbf G^{j+\theta'}$ for every $0\leq \theta'< \theta$.
\end{prop}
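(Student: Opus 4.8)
The strategy is to construct $z=(z_t(u))_{t\in[0,T],u\in[0,1]}$ as the unique solution to the linear SDE~\eqref{eq:def z}, to verify that this $z$ is a genuine spatial derivative of $y$ (so that $u\mapsto y_t(u)$ is $\mathcal C^1$), and then to bootstrap the argument to obtain the higher-order statement. First I would note that the coefficients $\phi^\Re$ and $\phi^\Im$ in~\eqref{eq:def z} are explicit functions of the already-constructed process $y$ (and of its collision structure through the mass denominator), and that, thanks to Proposition~\ref{prop:exist and uniq of eq1}, $y$ is jointly continuous and strictly increasing in $u$; combined with the assumption that $\varphi$ is smooth, even, decreasing on $[0,+\infty)$ with $\varphi>0$, this gives that for each $M$ the mass $m_s(u)$ is bounded away from zero on the event $\{s\leq\tau_M\}$, so that $\phi^\Re,\phi^\Im$ are bounded by $C_M\crochetk$ on $[0,\tau_M]$. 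Hence the linear SDE~\eqref{eq:def z}, stopped at $\tau_M$, has bounded predictable coefficients once integrated against $f(k)^2\mathrm dk$, using the hypothesis $\int_\R \crochetk^{1+\theta}f(k)\,\mathrm dk$ square integrable — actually $\crochetk^2 f(k)^2$ integrable already suffices for this first existence step — and the Doléans–Dade exponential written in the statement is its explicit solution. Passing to the limit $M\to\infty$ along the stopping times $\tau_M$ (which tend to $T$ a.s., by Proposition~\ref{prop:exist and uniq of eq1}) extends $z$ to all of $[0,T]$.

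Next I would identify $z_t(u)$ with $\partial_u y_t(u)$. The cleanest route is to consider, for $u_1<u_2$, the difference quotient $\frac{y_t(u_2)-y_t(u_1)}{u_2-u_1}$ and to show it converges, in a suitable $L_p(\Omega;\mathcal C([0,T]))$ sense, to $z_t(u_1)$ as $u_2\downarrow u_1$: writing the SDE satisfied by $y_t(u_2)-y_t(u_1)$ exactly as in the proof of Proposition~\ref{prop:growth} (that is, as $Y_t = (g(u_2)-g(u_1)) + \int_0^t Y_s\,\mathrm dN_s$ with $N$ depending on $u_1,u_2$), one sees that $\frac{Y_t}{u_2-u_1}$ solves a linear SDE whose coefficient $N$ converges, as $u_2\to u_1$, to the coefficient appearing in~\eqref{eq:def z}; a Gronwall/BDG estimate on the difference of the two Doléans–Dade exponentials, using $\frac{g(u_2)-g(u_1)}{u_2-u_1}\to g'(u_1)$ and the local Lipschitz bounds on $\phi^\Re,\phi^\Im$ in the spatial variable (which require $\crochetk^{1+\theta}f(k)$ to be square integrable because one now differentiates $\sin(ky_s(u))$, $\cos(ky_s(u))$ and the smooth function $\varphi$ once more in $u$), yields the convergence. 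Then Kolmogorov's continuity criterion applied to $(t,u)\mapsto z_t(u)$ — exactly as in Proposition~\ref{prop:continuité}, now using the $\theta$-Hölder regularity of $g'$ — gives a jointly continuous version of $z$ whose spatial modulus of continuity is $\theta'$-Hölder for every $\theta'<\theta$; since the fundamental theorem of calculus $y_t(u)=y_t(0)+\int_0^u z_t(v)\,\mathrm dv$ holds (it passes to the limit from the difference-quotient identity), $u\mapsto y_t(u)$ is $\mathcal C^1$ with derivative $z_t$, and $z_t(u)>0$ from the exponential form, so $u\mapsto y_t(u)\in\mathbf G^{1+\theta'}$.

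Finally, the general case $g\in\mathbf G^{j+\theta}$ follows by induction on $j$. Assuming $u\mapsto y_t(u)$ is $\mathcal C^{j}$ with $j$-th derivative $\theta'$-Hölder, I would differentiate~\eqref{eq:def z} formally $j-1$ further times: each $\partial_u^{(\ell)} y_t(u)$ then solves a linear SDE of the form $\mathrm d\,\partial_u^{(\ell)}y_t(u) = \partial_u^{(\ell)}y_t(u)\,\mathrm dN_t(u) + (\text{lower-order terms in } \partial_u^{(\leq \ell-1)}y)\,\mathrm dt\,\text{-and-}\,\mathrm dw\text{-integrals}$, where $N$ is the same martingale driving~\eqref{eq:def z} and the lower-order forcing terms are polynomial expressions in the already-controlled derivatives $\partial_u^{(1)}y,\dots,\partial_u^{(\ell-1)}y$ and in $k$-dependent trigonometric/derivative-of-$\varphi$ factors. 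Variation of constants plus the same BDG–Gronwall–Kolmogorov package then bounds $\partial_u^{(j)}y$, the new moment estimate costing one extra factor of $\crochetk$ per differentiation, which is precisely why the hypothesis is $\crochetk^{j+\theta}f(k)\in L_2(\R)$ and the output Hölder exponent is any $\theta'<\theta$.

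The main obstacle, I expect, is the identification step: justifying rigorously that the limit of the difference quotients is the SDE solution $z$ and, simultaneously, interchanging the limit $u_2\to u_1$ with the stochastic integrals and the stopping-time limit $M\to\infty$. Controlling the difference of two Doléans–Dade exponentials whose coefficients converge only on random (stopping-time) intervals, while keeping uniform-in-$u$ $L_p$ bounds so that Kolmogorov's criterion applies, is where the real work lies; the successive differentiations in the induction are then essentially a bookkeeping exercise once this first step is in place.
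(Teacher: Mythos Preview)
Your plan is correct and matches the paper's approach: the paper also proceeds by first working with the truncated equation (replacing $\varphi$ by $\varphi_M$, which on $[0,\tau_M]$ is equivalent to your stopping-time localization since $y=y^M$ there), carrying out what it calls ``classical'' computations---precisely the difference-quotient/BDG/Gronwall identification of $z$ with $\partial_u y$, the Kolmogorov argument for $\theta'$-H\"older continuity of $z$, and the induction on $j$---and then removing the truncation via $\tau_M\nearrow T$. One small sharpening: where you write ``local Lipschitz bounds on $\phi^\Re,\phi^\Im$'', the actual estimate used is the interpolated H\"older bound $|\phi^\Re(u_2,k,s)-\phi^\Re(u_1,k,s)|\leq C_M\crochetk^{1+\theta}|y_s(u_2)-y_s(u_1)|^\theta$ (from $|\sin a-\sin b|\leq 2^{1-\theta}|a-b|^\theta$), which is exactly what consumes the hypothesis $\crochetk^{1+\theta}f(k)\in L_2(\R)$ and produces the output exponent $\theta'<\theta$.
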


\begin{rem}
\label{rem:cauchy}
Let us consider the case of $f_\alpha(k)=\frac{1}{\crochetk^\alpha}$. The assumption $\crochetk^{j+\theta} f_\alpha(k) \in L_2(\R)$ is  equivalent to the condition $\alpha >j+\theta + \frac{1}{2}$. If $f$ is  the Cauchy density $f(k)=\frac{1}{1+k^2}$, then the process $u\mapsto y_t(u)$ is differentiable and its derivative is $\theta'$-Hölder continuous for every $\theta' < \frac{1}{2}$. 

By the property of monotonicity of $y_t$, we deduce that almost surely, for every $t\in [0,T]$, $u\mapsto \partial_u y_t(u) >0$. Recall that the c.d.f. $F_t$ associated to $y_t$ is equal to $F_t= (y_t)^{-1}$ and that the density of $p_t$ is the first derivative of $F_t$. Therefore, for every $u \in [0,1]$, $F_t(y_t(u))=u$ and $p_t(y_t(u)) \partial_u y_t(u)=1$. Thus for every $x\in [y_t(0), y_t(1)]$, 
\begin{align*}
p_t(x)= \frac{1}{\partial_u y_t(F_t(x))}.
\end{align*}
It follows that $p_t$ has the same regularity than $\partial_u y_t$. If $f(k)=\frac{1}{1+k^2}$, then $p_t$ is $\theta'$-Hölder continuous for every $\theta' < \frac{1}{2}$. 
\end{rem}

In order to prove Proposition~\ref{prop:derivee de y}, we first replace $\varphi$ by a function $\varphi_M$ bounded below as previously and prove the result for the corresponding equation: since every coefficient is now Lipschitz-continuous, the computations are classical. Finally, we recover the result of Proposition~\ref{prop:derivee de y} for a non-truncated function $\varphi$ by using the fact that almost surely for each time $t$, the solution $y$ is equal to $y_{\cdot \wedge \tau_M}$ for $M$ large enough. The interest reader may find a comprehensive proof of Proposition~\ref{prop:derivee de y} in~\cite[Prop. II.9]{marx_thesis}.

\subsection{Well-posedness of a perturbed Fokker-Planck equation}
\label{section:well-posed_fokker-planck}

We denote by $\mathcal F \phi$ the Fourier transform of a function~$\phi$;  if $\phi$ belongs to $L_1(\R)$, $\mathcal F \phi(x)= \frac{1}{\sqrt{2 \pi}}\int_\R e^{-ixy}\phi(y) \mathrm dy$. Recall that Plancherel's formula states that: $\|\phi\|_{L_2}=\|\mathcal F \phi\|_{L_2}$.  We denote by $\mathcal F^{-1}$ the inverse Fourier transform.

Let us recall equation~\eqref{eq:definition_FP_perturbee}:
\begin{align*}
\left\{ 
\begin{aligned}
dy_t(u)&=  b(y_t(u),\mu_t) \mathrm dt +\frac{1}{m_t(u)^{1/2}} \int_\R f(k) \Re \left(  e^{-ik y_t(u)}  \mathrm dw(k,t)\right),\\
\mu_t&= \Leb_{[0,1]} \circ y_t^{-1},
\\
y_0&=g,
\end{aligned}
\right.
\end{align*}
with $m_t(u)= \int_0^1 \varphi(y_t(u)-y_t(v))\mathrm dv$. 

In the previous section, we studied equation~\eqref{eq:definition_FP_perturbee} in the case where the drift $b$ is zero. As explained in the introduction, the well-posedness of equation~\eqref{eq:definition_FP_perturbee} will be deduced from the well-posedness of the diffusion without drift by a Girsanov transformation. Therefore, we will have to construct an $L_2$-valued process $(h_t)_{t \in [0,T]}$ satisfying equation~\eqref{eq:drift_a_inverser}. 

Importantly, we will assume the following assumption on~$b: \R \times \mathcal P_2(\R) \to \R$.

\begin{defin}
A measurable function $b: \R \times \mathcal P_2(\R) \to \R$ is said to satisfy the $b$-hypotheses of order $j\in \Nun$ if:
\begin{itemize}
\item [$(B1)$] for every $\mu \in \mathcal P_2(\R)$, $x \mapsto b(x,\mu)$ is continuous and $j$-times differentiable on $\R$;
\item [$(B2)$]  for every $i \in \{0,1,\dots,j\}$, there is a sequence $(C_i(M))_{M \geq M_0}$ such that the inequality $\left| \partial_1^{(i)} b (x,\mu) \right | \leq C_i(M)$  holds for every $x \in \R$ and for every $\mu \in \mathcal P_2(\R)$ with compact support satisfying $|\operatorname{Supp} \mu|\leq M$. 
\item[$(B3)$] the sequence $(C_0(M))_{M \geq M_0}$ satisfies $\frac{C_0(M)}{M} \underset{M \to +\infty}{\longrightarrow} 0$.
\end{itemize}
We say that $B:(x,z) \in \R \times L_2[0,1] \mapsto b(x,\Leb_{[0,1]} \circ z^{-1})$ satisfies the $B$-hypotheses of order~$j$ if the associated $b$ satisfies the $b$-hypotheses of order $j$. 
\label{def:B hyp}
\end{defin}
Of course, every bounded function $b$ such that $(B1)$  holds true satisfies the $b$-hypotheses of order $j$.
Moreover, Definition~\ref{def:B hyp} also allows  us also to consider unbounded functions $b$, for which $x \mapsto b(x,\mu)$ is uniformly bounded when the support of $\mu$ is controled, in the sense of assumption~$(B2)$. Assumption~$(B3)$ is here to ensure that the solution to the drifted equation  almost surely does not blow up before final time~$T$, as we will explain hereafter.

\begin{rem}
\label{exemples de drifts}
Let us give a few  examples of admissible drift functions:
\begin{itemize}

\item[-]  Let $b_1(x,\mu):= \mathbb E_\mu[a(x,Y)]=\int_\R a(x,y) \mathrm d\mu (y)$ or equivalently $B_1(x,z):=\int_0^1 a(x,z(u)) \mathrm du$. If $a:\R^2 \to R$ is bounded and $x \mapsto a(x,y)$ is $j$-times differentiable with bounded derivatives, then $b_1$ satisfies the $b$-hypotheses of order $j$. 
\item[-] Let $b_2(x,\mu):=a( \mathbb E_\mu [Y])=a(\int_\R y \mathrm d\mu(y))$. If $a$ is bounded, then $b_2$ satisfies the $b$-hypotheses of every order. 
\item [-]  Let $b_3(x,\mu):=a(x, \mathbb E_\mu [\psi(Y)])=a(x,\int_\R \psi(y) \mathrm d\mu(y))$. If $a$ is bounded and $j$-times partially  differentiable in its first argument with bounded derivatives and if $\psi$ is measurable, then $b_3$ satisfies the $b$-hypotheses of order $j$. 

\item[-] Let $b_4(x,\mu)= a(x)\operatorname{\mathbb Var}_\mu[Y]^\eta $, where $\eta<\frac{1}{2}$. If $a$ is bounded with $j$ bounded derivatives, then $b_4$ satisfies the $b$-hypotheses of order~$j$. 
Indeed, if $\mu$ has a compact support with $|\operatorname{Supp} \mu|\leq M$, then $\Var{Y}^\eta \leq M^{2\eta}$; thus $\frac{C_0(M)}{M}\leq\|a\|_{L_\infty}\frac{M^{2\eta}}{M} \to 0$.
\end{itemize}
Let us emphasize the fact that, in the first example above, $\mu \mapsto b_1(x,\mu)$ is Lipschitz-continuous in total variation distance, with a Lipschitz constant uniform in $x$ given by the $L_\infty$-norm of $a$. This means that restoration of uniqueness for the Fokker-Planck equation associated with $b_1$ can also be obtained with finite-dimensional noise, since $b_1$ satisfies the assumptions of~\cite{jourdain97}. 
Actually, Jourdain proved those results even in cases where $a$ is only bounded. Therefore, in the case of the first example above, our assumptions are more restrictive than previous existing litterature. 
The main interest of the study conducted here is that our result applies for examples $b_2$, $b_3$ and $b_4$, which do not satisfy the assumptions  of~\cite{jourdain97,mishuraverten}.
\end{rem}

The regularity assumptions on the $x$-dependence of the drift function $b$ depend on the decay rate  of $f$ at infinity. 
Recall that the faster $f$ decays at infinity, the higher regularity we can expect on the solution process; nevertheless, the drawback is that we  have to assume higher regularity on the drift function $b$ to be able to invert it. 
Therefore, the choice of the  decay rate~$\alpha$ of $f$ is crucial to obtain well-posedness for classes of drift functions of low regularity. 

\begin{defin}
\label{def:order_alpha}
We say that $f:\R \to \R$ is of \textit{order $\alpha>0$} if there exist two constants $C$ and $c >0$ such that 
$\frac{c}{\crochetk^\alpha} \leq f(k) \leq  \frac{C}{\crochetk^\alpha}$ for every $k \in \R$. Recall that $\crochetk:= (1+k^2)^{1/2}$. 
\end{defin}

In order to make clear this relation between regularity of $b$ and decay rate of $f$, we will  prove in Paragraph~\ref{parag:simple_case}  well-posedness for equation~\eqref{eq:definition_FP_perturbee} in a simplified case: we will assume that the mass is constant, namely that $\varphi \equiv 1$, and that for each $\mu \in \mathcal P_2(\R)$, $b(\cdot, \mu)$ belongs to a Sobolev space with a Sobolev norm uniform in $\mu$. In Paragraph~\ref{parag:general_case}, we will then give a general statement for more general functions $\varphi$ and $b$, but the idea of proof is the same up to technicalities.

\subsubsection{Simple case with constant mass and bounded drift function}
\label{parag:simple_case}

Let us assume in this paragraph that $\varphi$ is the constant function equal to one. In other words, we are studying the following equation:
\begin{align}
\label{eq:constant_mass}
\left\{ 
\begin{aligned}
dy_t(u)&=  b(y_t(u),\mu_t) \mathrm dt +\int_\R f(k) \Re \left(  e^{-ik y_t(u)}  \mathrm dw(k,t)\right),\\
\mu_t&= \Leb_{[0,1]} \circ y_t^{-1},
\\
y_0&=g,
\end{aligned}
\right.
\end{align}

Let us fix $\alpha, \eta>0$ and let us assume that $f:\R \to \R$ is of order $\alpha$, according to Definition~\ref{def:order_alpha}. 
Let $b: \R \times \mathcal P_2(\R) \to \R$ be a measurable function such that for each $\mu \in \mathcal P_2(\R)$, the map $x \mapsto b(x,\mu) $ belongs to the Sobolev space $H^\eta(\R)$ uniformly in $\mu$, that is there is a constant $C$ such that for every $\mu \in \mathcal P_2(\R)$, $\| b(\cdot, \mu) \|_{H^\eta} \leq C$, where 
\begin{align*}
\| \phi \|_{H^\eta} := \| k \mapsto \crochetk^\eta \mathcal F \phi (k) \|_{L_2}= \left( \int_\R (1+k^2)^\eta |\mathcal F \phi (k)|^2  \mathrm dk\right)^{1/2}.
\end{align*}
We also denote by $B: \R \times L_2[0,1] \to \R$ the function  $B(x,z):=b(x,\Leb_{[0,1]} \circ z^{-1})$. Of course,  for every $z \in L_2[0,1]$, inequality $\| B(\cdot, z) \|_{H^\eta} \leq C$ also holds with the same constant $C$ as above. 

The following lemma is the key step in order to apply a Girsanov transformation in equation~\eqref{eq:constant_mass}. Let us fix $f$, $B$ and a $(\mathcal G_t)_{t \in [0,T]}$-adapted process $(x_t)_{t\in [0,T]}$ with values in $\mathcal C([0,1], \R)$. Then we are looking for 
an  $L_2(\R, \C)$-valued   $(\mathcal G_t)_{t \in [0,T]}$-adapted process $(h_t)_{t\in[0,T]}=(h^\Re_t+i h^\Im_t)_{t\in[0,T]}$ such that for every $ t\in[0,T]$ and for every $u \in [0,1]$
\begin{align}
\label{eq:a inverser}
B(x_t(u),x_t) =   \int_\R e^{-ik x_t(u)}  f(k)  h_t(k) \mathrm dk,
\end{align}
or equivalently, taking the real part of~\eqref{eq:a inverser} (and using that $B$ and $f$ are real-valued), to find two $L_2(\R, \R)$-valued $(\mathcal G_t)_{t \in [0,T]}$-adapted processes $(h^\Re_t)_{t\in[0,T]}$ and $(h^\Im_t)_{t\in[0,T]}$ such that
\begin{align*}
B(x_t(u),x_t) =   \int_\R  {\cos(k x_t(u))f(k)}  h^\Re_t(k) \mathrm dk +\int_\R  {\sin(k x_t(u))f(k)}  h^\Im_t(k) \mathrm dk. 
\end{align*}

\begin{lemme}
\label{lemme:fourier_masseconstante}
Let $\alpha,\eta>0$. Let $f$ be  of order $\alpha$ and $b: \R \times \mathcal P_2(\R) \to \R$ be a measurable function such that for each $\mu \in \mathcal P_2(\R)$, $b(\cdot,\mu)$ belongs  to $H^\eta(\R)$ with a uniform $H^\eta$-norm. Let $(x_t)_{t\in [0,T]}$ be a  $(\mathcal G_t)_{t \in [0,T]}$-adapted process taking values in $\mathcal C([0,1], \R)$.  

If $\eta \geq \alpha$, then
there is a $(\mathcal G_t)_{t\in [0,T]}$-adapted process $(h_t)_{t\in [0,T]}$ which is solution, for every $t\in [0,T]$, to equation~\eqref{eq:a inverser} and such that   there exists $C>0$ depending only on $b$ and  $f$ for which  $\int_0^T\int_\R |h_t(k)|^2 \mathrm dk \mathrm dt \leq C$ holds almost surely.
\end{lemme}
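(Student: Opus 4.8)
The plan is to invert equation~\eqref{eq:a inverser} directly. Read pointwise in $(t,\omega)$, that equation requires the real-valued function $x\mapsto B(x,x_t)=b(x,\mu_t)$, where $\mu_t:=\Leb_{[0,1]}\circ x_t^{-1}$, to agree at the points $x=x_t(u)$, $u\in[0,1]$, with the inverse Fourier transform of $f\,h_t$. I would therefore simply \emph{define}
\begin{align*}
h_t(k):=\frac{1}{f(k)}\,\mathcal F^{-1}\!\big(b(\cdot,\mu_t)\big)(k)
\end{align*}
(up to the normalization constant of $\mathcal F$, which is immaterial and will be suppressed), and then check the required $L_2$-integrability and the identity~\eqref{eq:a inverser}. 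Adaptedness and joint measurability of $(t,\omega)\mapsto h_t$ as an $L_2(\R;\C)$-valued process come for free: $x_\cdot$ is progressively measurable, and $z\mapsto\Leb_{[0,1]}\circ z^{-1}$, $b$, $\mathcal F^{-1}$ and division by $f$ are all measurable operations. So the only substantial point is the integrability bound.

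For that bound --- which is the one place where the hypothesis $\eta\geq\alpha$ is used --- I would argue as follows. Since $f$ is of order $\alpha$, $f(k)\geq c\,\crochetk^{-\alpha}$, hence $|h_t(k)|\leq c^{-1}\,\crochetk^{\alpha}\,|\mathcal F^{-1}(b(\cdot,\mu_t))(k)|$. Using $\crochetk\geq1$ and $\eta\geq\alpha$ to write $\crochetk^{\alpha}\leq\crochetk^{\eta}$, together with Plancherel's identity and $|\mathcal F^{-1}\phi(k)|=|\mathcal F\phi(-k)|$, one gets
\begin{align*}
\int_\R|h_t(k)|^2\,\mathrm dk
&\leq \frac1{c^2}\int_\R\crochetk^{2\alpha}\,\big|\mathcal F^{-1}(b(\cdot,\mu_t))(k)\big|^2\,\mathrm dk\\
&\leq \frac1{c^2}\,\|b(\cdot,\mu_t)\|_{H^\eta}^2
\leq \frac{C^2}{c^2},
\end{align*}
the last inequality being the assumed uniform-in-$\mu$ bound on the $H^\eta$-norm of $b$. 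Integrating in $t$ over $[0,T]$ then yields $\int_0^T\int_\R|h_t(k)|^2\,\mathrm dk\,\mathrm dt\leq TC^2/c^2$, uniformly in $\omega$.

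It then remains to verify that this $h$ actually solves~\eqref{eq:a inverser}. Here I would note that $f\,h_t=\mathcal F^{-1}(b(\cdot,\mu_t))$ by construction, and that $f\,h_t\in L_1(\R)$: indeed $\crochetk^{-\alpha}\in L_2(\R)$ (this uses $\alpha>\tfrac12$, automatic in the regime of interest $\alpha>\tfrac32$), so Cauchy--Schwarz against $\crochetk^{\alpha}\mathcal F^{-1}(b(\cdot,\mu_t))\in L_2(\R)$ gives $\int_\R|f\,h_t|<\infty$. Consequently $x\mapsto\int_\R e^{-ikx}f(k)h_t(k)\,\mathrm dk$ is a genuine continuous function of $x$, equal to $b(\cdot,\mu_t)$ almost everywhere by Fourier inversion; and since $b(\cdot,\mu_t)\in H^\eta(\R)$ with $\eta>\tfrac12$ admits a continuous representative (Sobolev embedding), the two continuous functions agree everywhere, in particular at each $x=x_t(u)$, which is exactly~\eqref{eq:a inverser}.

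I expect the only genuinely delicate step to be the weight balancing in the $L_2$-estimate: it is precisely there that the competition between the decay rate $\alpha$ of $f$ (controlling how badly $1/f$ grows) and the Sobolev regularity $\eta$ of $b$ (controlling how much high-frequency growth $\mathcal F^{-1}(b(\cdot,\mu_t))$ can absorb) is resolved, and it is the reason the inversion succeeds exactly when $\eta\geq\alpha$. The passage from the $L_2$-level Fourier identity to a pointwise one is routine but does implicitly rely on $\alpha$ and $\eta$ not being too small, which is guaranteed by the running assumption $\alpha>\tfrac32$.
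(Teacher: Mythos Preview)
Your proposal is correct and follows essentially the same approach as the paper: define $h_t(k)=\frac{1}{\sqrt{2\pi}f(k)}\mathcal F^{-1}(B(\cdot,x_t))(k)$ and bound $\|h_t\|_{L_2}^2$ by $\crochetk^{2\alpha}\leq\crochetk^{2\eta}$ together with the uniform $H^\eta$-bound on $b$. You are in fact slightly more careful than the paper in justifying the pointwise identity~\eqref{eq:a inverser} via $f\,h_t\in L_1$ and Sobolev embedding, whereas the paper simply writes down the Fourier inversion and moves on.
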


\begin{proof}
By the substitution $y=x_t(u)$, equation~\eqref{eq:a inverser} is equivalent to
\begin{align}
\label{apres chgt de var}
B(y,x_t) =\int_\R e^{-ik y}  f(k)  h_t(k) \mathrm dk
\end{align}
for every $y\in \operatorname{Im}(x_t)$. In particular, if a process $(h_t)_{t\in [0,T]}$ satisfies~\eqref{apres chgt de var} for every $y \in \R$, then  it satisfies~\eqref{eq:a inverser} for every $u \in [0,1]$. 
Computing the Fourier transform on each side of equation~\eqref{apres chgt de var}, we have
$\mathcal F (f h_t) = \frac{1}{\sqrt{2\pi}}  B(\cdot,x_t)$. 
Therefore, the process defined by 
\begin{align}
h_t(k):=\frac{1}{\sqrt{2\pi}f(k)} \mathcal F^{-1} \left( B(\cdot,x_t)  \right) (k)
\label{ht formule d'inversion}
\end{align}
is solution to equation~\eqref{eq:a inverser}, provided that $h_t$ is square integrable for every $t\in [0,T]$. 
Let us compute the $L_2$-norm of $h$: there are $C_1$ and $C_2$ such that
\begin{align*}
\int_0^T \!\! \int_\R |h_t(k)|^2 \mathrm dk \mathrm dt 
&\leq  C_1 \int_0^T\!\! \int_\R  \crochetk^{2\alpha} |\mathcal F^{-1} \left( B(\cdot,x_t)  \right) (k)|^2 \mathrm dk\mathrm dt \\
&\leq  C_1 \int_0^T\!\! \int_\R  \crochetk^{2\eta} |\mathcal F^{-1} \left( B(\cdot,x_t)  \right) (k)|^2 \mathrm dk\mathrm dt \\
&= C_1 \int_0^T\!\! \int_\R  \crochetk^{2\eta} |\mathcal F \left( B(\cdot,x_t)  \right) (k)|^2 \mathrm dk\mathrm dt = C_1 \int_0^T \| B(\cdot,x_t) \|_{H^\eta}\mathrm dt \leq C_2,
\end{align*}
where we used the fact that $f$ is of order $\alpha$, that $\alpha \leq \eta$ and that $\mathcal F^{-1} (\phi) (\cdot) = \mathcal F (\phi) (-\; \cdot)$ for each $\phi \in L_2(\R)$. 
\end{proof}

Let us give,  in accordance with~\cite{karatzasshreve91},   the following sense to a weak solution to  SDE~\eqref{eq:constant_mass}. 

\begin{defin}
\label{def:weak solution}
A sextuple $(\Omega, \mathcal G, (\mathcal G_t)_{t\in [0,T]},\mathbb P,  z,w)$ is said to be a \emph{weak solution} to equation~\eqref{eq:constant_mass} if
\begin{itemize}
\item[-] $(\Omega, \mathcal G, (\mathcal G_t)_{t\in [0,T]} ,\mathbb P)$ is a filtered probability space satisfying usual conditions,
\item[-] $(z_t)_{t\in [0,T]}$ is a continuous $(\mathcal G_t)_{t\in [0,T]}$-adapted $ \mathcal C[0,1]$-valued process, 
\item[-] $w=(\wR,\wI)$, where $(\wR(k,t))_{k\in \R,t\in [0,T]}$ and $(\wI(k,t))_{k\in \R,t\in [0,T]}$ are two independent $(\mathcal G_t)_{t\in [0,T]}$-Brownian sheets under $\mathbb P$, 
\item[-] $\mathbb P$-almost surely, for every $t\in [0,T]$, 
\begin{align}
\label{eq:intégrée_z}
z_t(u)&=g(u)+\int_0^t \! \int_\R \cos( kz_s (u)) f(k) \mathrm d\wR(k,s) \notag
\\&\quad +\int_0^t \! \int_\R \sin( kz_s (u)) f(k) \mathrm d\wI(k,s)
 +\int_0^t  B(z_s(u),z_s) \mathrm ds,
\end{align}
where $B(x,z):=b(x,\Leb_{[0,1]} \circ z^{-1})$.
\end{itemize}
\end{defin}

\begin{theo}
Let $g \in \mathbf G^1$.
Let $f$ be  of order $\alpha> \frac{3}{2}$ and $b: \R \times \mathcal P_2(\R) \to \R$ be a measurable function such that for each $\mu \in \mathcal P_2(\R)$, $b(\cdot,\mu)$ belongs  to $H^\eta(\R)$ with a uniform $H^\eta$-norm.
If $\eta \geq \alpha$, there exists a unique weak solution to equation~\eqref{eq:constant_mass}.

Moreover, if $(\Omega^i, \mathcal G^i, (\mathcal G^i_t)_{t\in [0,T]}, \mathbb P^i, z^i,w^i)$, $i=1,2$, are two weak solutions to equation~\eqref{eq:constant_mass}, then the laws of $(z^1,w^1)$ and $(z^2,w^2)$ are equal in $\mathcal C([0,1] \times [0,T])\times \mathcal C(\R \times [0,T],\R^2)$.
\label{theo:weak_wp_hypotheses_simplifiees}
\end{theo}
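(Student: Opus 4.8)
The strategy is to derive~\eqref{eq:constant_mass} from the driftless equation~\eqref{eq:real equation on y} (with $\varphi \equiv 1$, so $m_s \equiv 1$) through a Girsanov transformation, using Lemma~\ref{lemme:fourier_masseconstante} to realise the drift $B$ as an exact perturbation of the noise. Two preliminary remarks: since $f$ is of order $\alpha > \tfrac32$, the integrability hypothesis $\crochetk f(k) \in L_2(\R)$ of Proposition~\ref{prop:exist and uniq of eq1} holds; and since $\eta \geq \alpha > \tfrac12$, the Sobolev embedding $H^\eta(\R) \hookrightarrow \mathcal C_b(\R)$ shows that $b$ is bounded, so that no blow-up occurs in this simplified setting and all solutions live on $[0,T]$.

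\textbf{Existence.} Start from a filtered probability space $(\Omega,\mathcal G,(\mathcal G_t),\mathbb P)$ carrying two independent $(\mathcal G_t)$-Brownian sheets $\wR,\wI$, and let $(y_t)_{t\in[0,T]}$ be the unique $(\mathcal G_t)$-adapted, jointly continuous, strictly increasing solution of the driftless equation provided by Proposition~\ref{prop:exist and uniq of eq1}. Applying Lemma~\ref{lemme:fourier_masseconstante} with $x_t := y_t$ (here $\eta \geq \alpha$) yields a $(\mathcal G_t)$-adapted process $(h_t)_{t\in[0,T]} = (h^\Re_t + i h^\Im_t)_{t\in[0,T]}$ with values in $L_2(\R;\C)$ such that, $\mathbb P$-a.s. for all $t,u$,
\begin{align*}
B(y_t(u),y_t) = \int_\R \cos(k y_t(u)) f(k) h^\Re_t(k)\,\mathrm dk + \int_\R \sin(k y_t(u)) f(k) h^\Im_t(k)\,\mathrm dk,
\end{align*}
and $\int_0^T\!\int_\R |h_t(k)|^2\,\mathrm dk\,\mathrm dt \leq C$ for a \emph{deterministic} constant $C$. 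Define
\begin{align*}
\frac{\mathrm d\mathbb Q}{\mathrm d\mathbb P} := \exp\!\Bigl( \int_0^T\!\!\int_\R h^\Re_t(k)\,\mathrm d\wR(k,t) + \int_0^T\!\!\int_\R h^\Im_t(k)\,\mathrm d\wI(k,t) - \tfrac12\!\int_0^T\!\!\int_\R |h_t(k)|^2\,\mathrm dk\,\mathrm dt \Bigr).
\end{align*}
The deterministic bound on the quadratic variation makes Novikov's criterion automatic, so $\mathbb Q$ is a probability measure equivalent to $\mathbb P$ (hence $(\Omega,\mathcal G,(\mathcal G_t),\mathbb Q)$ still satisfies the usual conditions). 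By Girsanov's theorem for Brownian sheets, $\widetilde w^\Re(k,t) := \wR(k,t) - \int_0^t h^\Re_s(k)\,\mathrm ds$ and $\widetilde w^\Im(k,t) := \wI(k,t) - \int_0^t h^\Im_s(k)\,\mathrm ds$ are two independent $(\mathcal G_t)$-Brownian sheets under $\mathbb Q$; substituting $\mathrm d\wR = \mathrm d\widetilde w^\Re + h^\Re\,\mathrm ds$ and $\mathrm d\wI = \mathrm d\widetilde w^\Im + h^\Im\,\mathrm ds$ into the driftless equation and using the displayed identity for $B$ shows that $(y,\widetilde w)$ solves~\eqref{eq:intégrée_z}. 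Since $\mathbb Q \sim \mathbb P$, $y$ keeps its path properties, so $(\Omega,\mathcal G,(\mathcal G_t),\mathbb Q,y,\widetilde w)$ is a weak solution.

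\textbf{Uniqueness in law.} Let $(\Omega,\mathcal G,(\mathcal G_t),\mathbb P,z,w)$, with $w = \wR + i\wI$, be an arbitrary weak solution, and apply Lemma~\ref{lemme:fourier_masseconstante} now with $x_t := z_t$ to obtain an adapted $L_2(\R;\C)$-valued process $(h_t)$, again with a deterministic $L_2$-bound, which represents the drift of $z$ via the analogue of the identity above. Reversing the previous computation, set
\begin{align*}
\frac{\mathrm d\widehat{\mathbb P}}{\mathrm d\mathbb P} := \exp\!\Bigl( -\int_0^T\!\!\int_\R h^\Re_t(k)\,\mathrm d\wR(k,t) - \int_0^T\!\!\int_\R h^\Im_t(k)\,\mathrm d\wI(k,t) - \tfrac12\!\int_0^T\!\!\int_\R |h_t(k)|^2\,\mathrm dk\,\mathrm dt \Bigr),
\end{align*}
which by the same reasoning is a probability measure equivalent to $\mathbb P$ under which $\widehat w^\Re(k,t) := \wR(k,t) + \int_0^t h^\Re_s(k)\,\mathrm ds$ and $\widehat w^\Im(k,t) := \wI(k,t) + \int_0^t h^\Im_s(k)\,\mathrm ds$ are independent Brownian sheets; plugging $\mathrm d\wR = \mathrm d\widehat w^\Re - h^\Re\,\mathrm ds$ into~\eqref{eq:intégrée_z} cancels the drift and shows that $(z,\widehat w)$ solves the driftless equation~\eqref{eq:real equation on y} on $(\Omega,\mathcal G,(\mathcal G_t),\widehat{\mathbb P})$. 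By Proposition~\ref{prop:exist and uniq of eq1} the driftless equation enjoys pathwise uniqueness and admits a strong solution, so by the Yamada--Watanabe principle $z$ is $\widehat{\mathbb P}$-a.s. a fixed measurable functional $\Phi$ of $(\widehat w^\Re,\widehat w^\Im)$ and of $g$, and the law of $(z,\widehat w^\Re,\widehat w^\Im)$ under $\widehat{\mathbb P}$ is uniquely determined. Since $h$ is a fixed functional of $z$, it is a fixed functional of $(\widehat w^\Re,\widehat w^\Im)$, and so are the density $\mathrm d\mathbb P/\mathrm d\widehat{\mathbb P}$ (after re-expressing it through $\widehat w$) and the original sheets $\wR = \widehat w^\Re - \int_0^\cdot h^\Re_s\,\mathrm ds$, $\wI = \widehat w^\Im - \int_0^\cdot h^\Im_s\,\mathrm ds$. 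Hence the joint law of $(z,\wR,\wI)$ under $\mathbb P$ depends only on $g$, $f$ and $b$; together with the existence step, this gives existence and uniqueness of a weak solution and the equality of laws of any two weak solutions.

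\textbf{Main obstacle.} The conceptual scheme is short, and the work lies in the technical verifications: the joint measurability in $(t,\omega)$ and the $(\mathcal G_t)$-adaptedness of $t\mapsto h_t\in L_2(\R;\C)$ coming from the inversion formula~\eqref{ht formule d'inversion}, and the careful use of Girsanov's theorem and of the Yamada--Watanabe principle in the infinite-dimensional Brownian-sheet setting. The decisive simplification — and the precise place where the hypotheses $\eta\geq\alpha$ and the uniform Sobolev bound on $b(\cdot,\mu)$ are exploited — is that Lemma~\ref{lemme:fourier_masseconstante} delivers an \emph{almost surely deterministic} bound on $\int_0^T\|h_t\|_{L_2(\R)}^2\,\mathrm dt$, which makes Novikov's condition trivial and removes any need for stopping-time localisation in the change of measure.
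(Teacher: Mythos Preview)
Your proof is correct and matches the paper's approach: Girsanov via Lemma~\ref{lemme:fourier_masseconstante} for existence, then reverse Girsanov plus Yamada--Watanabe (the paper's Lemma~\ref{lemme:cas_sans_drift}) for uniqueness, with the key observation that $h_t$ is a fixed measurable functional of $z_t$ so that the original law can be reconstructed. One minor slip: for a Brownian sheet the shifted process should read $\widetilde w^\Re(k,t) = \wR(k,t) - \int_0^t\!\int_0^k h^\Re_s(l)\,\mathrm dl\,\mathrm ds$ (integrating also over the spatial variable), but this does not affect the argument.
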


Let us remark that the Brownian sheets $w^1$ and $w^2$ are seen here as taking values in $\R^2$, by an identification of $\R^2$ with $\C$.

\begin{proof}[Proof (Theorem~\ref{theo:weak_wp_hypotheses_simplifiees}, existence part)]
Let $(\Omega, \mathcal G,(\mathcal G_t)_{t\in [0,T]}, \mathbb P )$ be a filtered probability space and $(w(u,t))_{u \in [0,1],t\in [0,T]}$ be a $(\mathcal G_t)_{t\in [0,T]}$-Brownian sheet. 
Since $\alpha > \frac{3}{2}$, the map $k \mapsto \crochetk f(k)$ is square integrable. 
Let us consider equation~\eqref{eq:real equation on y} with $\varphi \equiv 1$ or equivalently equation~\eqref{eq:intégrée_z} with $B\equiv 0$: 
\begin{align}
\label{sans la masse}
y_t(u)=g(u)+\int_0^t \! \int_\R \cos( k y_s (u)) f(k) \mathrm d\wR(k,s)
+\int_0^t \! \int_\R \sin( k y_s (u)) f(k) \mathrm d\wI(k,s).
\end{align}
By Proposition~\ref{prop:exist and uniq of eq1}, there is a unique process $(y_t)_{t\in [0,T]}$ satisfying equation~\eqref{sans la masse}  for every $u \in [0,1]$. Moreover, $(y_t)_{t\in [0,T]}$ is a  $(\mathcal G_t)_{t \in [0,T]}$-adapted process taking values in $\mathcal C([0,1], \R)$.

Therefore, by Lemma~\ref{lemme:fourier_masseconstante}, there is a process $(h_t)_{t \in[0,T]}=(h^\Re_t+i h^\Im_t)_{t \in[0,T]}$ with values in $L_2(\R,\C)$ satisfying for every $t \in [0,T]$ and $u \in [0,1]$:
\begin{align*}
B(y_t(u),y_t) 
&=   \int_\R  \cos(k y_t(u))f(k)  h^\Re_t(k) \mathrm dk \\
&\quad+\int_\R  \sin(k y_t(u))f(k) h^\Im_t(k) \mathrm dk
\end{align*}
and such that there exists a constant $C$ such that almost surely, 
\begin{align}
\label{estimée_ht}
\int_0^T \!\!\int_\R |h_t(k)|^2 \mathrm dk \mathrm dt \leq C.
\end{align}
Therefore, we can rewrite equation~\eqref{sans la masse} as follows:
\begin{align*}
%\label{eq:ajout du drift}
y_t(u)
&=g(u)+ \int_0^t \!\! \int_\R \cos(k y_s(u)) f(k) \mathrm d\wR(k,s)  
+\int_0^t \!\! \int_\R \sin(k y_s(u)) f(k) \mathrm d\wI(k,s)  \notag \\
&\quad +\int_0^t \!\! B(y_s(u),y_s)\mathrm ds  - \int_0^t \!\! \int_\R  \cos(k y_s(u))f(k)   h^\Re_s(k) \mathrm dk 
- \int_0^t \!\! \int_\R  \sin(k y_s(u))f(k)   h^\Im_s(k) \mathrm dk \notag \\
&=g(u)+ \int_0^t \!\! \int_\R \cos(k y_s(u)) f(k) \mathrm d\widetilde{w}^\Re(k,s)  
+\int_0^t \!\! \int_\R \sin(k y_s(u)) f(k) \mathrm d\widetilde{w}^\Im(k,s)  \notag \\
&\quad +\int_0^t \!\! B(y_s(u),y_s)\mathrm ds,
\end{align*}
where we define for every $k \in \R$ and for every $s\in [0,T]$
\begin{align*}
\mathrm d\widetilde{w}^\Re(k,s) &:=\mathrm d\wR(k,s)  - h^\Re_s(k) \mathrm dk  \mathrm ds, \\
\mathrm d\widetilde{w}^\Im(k,s) &:=\mathrm d\wI(k,s)  - h^\Im_s(k) \mathrm dk  \mathrm ds.
\end{align*}
Let us consider the process $(G_t)_{t \in [0,T]}$ defined by:
\begin{align*}
G_t:= \exp \left( \int_0^t\!\! \int_\R  h^\Re_s(k) \mathrm d\wR(k,s) +  \int_0^t \!\! \int_\R  h^\Im_s(k) \mathrm d\wI(k,s) - \frac{1}{2} \int_0^t \!\! \int_\R |h_s(k)|^2 \mathrm dk \mathrm ds \right). 
\end{align*}
By~\eqref{estimée_ht}, there is $C>0$ such that $\exp(\frac{1}{2} \int_0^t  \int_\R |h_s(k)|^2 \mathrm dk \mathrm ds) \leq C$ almost surely. Thus Novikov's condition holds and the process $(G_t)_{t \in [0,T]}$ is a $\mathbb P$-martingale.
Let us define the probability measure $\mathbb Q$ by the absolutely continuous measure with respect to~$\mathbb P$ with density $\frac{\mathrm d\mathbb Q}{\mathrm d\mathbb P}=G_T$. 
By Girsanov's Theorem, under the probability measure $\mathbb Q$, $(\widetilde{w}^\Re(k,t),\widetilde{w}^\Im(k,t))_{k\in \R, t \in [0,T]}$ are two independent Brownian sheets on $\R \times [0,T]$ and the couple $(y,\widetilde{w})$ satisfies equation~\eqref{eq:intégrée_z}. 
Thus $(\Omega,\mathcal G, (\mathcal G_t)_{t\in [0,T]},\mathbb Q, y, \widetilde{w})$ is a weak solution of equation~\eqref{eq:constant_mass}. 
\end{proof}

Let us start by proving the uniqueness part of Theorem~\ref{theo:weak_wp_hypotheses_simplifiees} in the case where the drift function $b \equiv 0$ in equation~\eqref{eq:constant_mass}, namely in the case of equation~\eqref{sans la masse}.

\begin{lemme}
\label{lemme:cas_sans_drift}
Let us assume that  $(\Omega^i, \mathcal G^i, (\mathcal G^i_t)_{t\in [0,T]}, \mathbb P^i, z^i,w^i)$, $i=1,2$, are two weak solutions to equation~\eqref{sans la masse}. Then $(z^1,w^1)$ and $(z^2,w^2)$ have same law in $\mathcal C([0,1] \times [0,T])\times \mathcal C(\R \times [0,T],\R^2)$.  
\end{lemme}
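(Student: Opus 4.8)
The statement is a uniqueness-in-law result for the driftless equation~\eqref{sans la masse}, which we already know (Proposition~\ref{prop:exist and uniq of eq1}) has a pathwise-unique solution that is, moreover, adapted to the driving Brownian sheet. The natural strategy is the classical Yamada–Watanabe principle: pathwise uniqueness plus existence of a (strong) solution implies uniqueness in law. So the first step is to observe that given any weak solution $(\Omega^i,\mathcal G^i,(\mathcal G^i_t),\mathbb P^i,z^i,w^i)$, the Brownian sheet $w^i$ alone, together with the deterministic initial datum $g$, already generates (via Proposition~\ref{prop:exist and uniq of eq1}) a $(\mathcal G^i_t)$-adapted solution $\widehat z^i$ of~\eqref{sans la masse} on $(\Omega^i,\mathcal G^i,\mathbb P^i)$; by pathwise uniqueness $z^i=\widehat z^i$ $\mathbb P^i$-a.s. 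Hence on each probability space the solution is a measurable functional of the sheet: $z^i=\Phi(g,w^i)$ for one and the same measurable map $\Phi:\mathcal C[0,1]\times\mathcal C(\R\times[0,T],\R^2)\to\mathcal C([0,1]\times[0,T])$, independent of the index $i$.

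**Key steps, in order.** (1) Construct $\Phi$: run the localisation/fixed-point machinery of Subsection~\ref{section:regularity properties} — which only uses the sheet and $g$ as inputs — to get, for a fixed canonical Wiener-sheet space, a measurable solution map; that this can be done so that $\Phi(g,w)$ is $\sigma(w)$-measurable and solves~\eqref{sans la masse} is exactly the content of Propositions~\ref{prop:sde M ex&uniq}–\ref{prop:exist and uniq of eq1}. A clean way is to take the fixed point $y^M=\psi(y^M)$ of Proposition~\ref{prop:sde M ex&uniq}, which as a limit of Picard iterates $\psi^{\circ n}(g)$ is a measurable function of $w$, then paste the $y^M$'s along the stopping times $\tau_M$ as in Paragraph~\ref{parag:non-blowing}; since $\tau_M$ and the pasting are measurable operations, $\Phi$ is measurable. (2) Identify the solution on each weak-solution space with $\Phi$: given $(z^i,w^i)$, note $w^i$ is a $(\mathcal G^i_t)$-Brownian sheet, so $\Phi(g,w^i)$ is a well-defined $(\mathcal G^i_t)$-adapted process solving~\eqref{sans la masse}; pathwise uniqueness for~\eqref{sans la masse} — established in Proposition~\ref{prop:exist and uniq of eq1} — gives $z^i=\Phi(g,w^i)$ $\mathbb P^i$-a.s. (3) Conclude: since $w^1$ and $w^2$ have the same law (both are Brownian sheets on $\R\times[0,T]$) and $(z^i,w^i)=(\Phi(g,w^i),w^i)$ is the image of $w^i$ under a fixed measurable map, $(z^1,w^1)$ and $(z^2,w^2)$ have the same law on $\mathcal C([0,1]\times[0,T])\times\mathcal C(\R\times[0,T],\R^2)$.

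**Main obstacle.** The only genuinely delicate point is step (1): verifying that the solution really is a \emph{measurable functional of the sheet alone}, uniformly enough that it can be applied to the $w^i$ coming from an arbitrary weak solution. On the canonical space this is routine (Picard iterates are measurable, stopping times $\tau_M$ are measurable, the pasting is measurable), but one must be slightly careful that the fixed-point argument of Proposition~\ref{prop:sde M ex&uniq} is carried out in a way that does not secretly use extra randomness, and that the stochastic integrals $\int_\R\cos(kz_s(u))f(k)\,\mathrm d\wR(k,s)$ etc. are constructed consistently across spaces — this is the usual ``filtered measurable space'' bookkeeping underlying Yamada–Watanabe, and is standard (see, e.g.,~\cite{karatzasshreve91}). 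There is no new analytic difficulty beyond what Propositions~\ref{prop:sde M ex&uniq} and~\ref{prop:exist and uniq of eq1} already provide; the lemma is essentially a packaging of pathwise uniqueness into a statement about laws.
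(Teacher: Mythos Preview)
Your proposal is correct and follows essentially the same approach as the paper: both invoke the Yamada--Watanabe principle, using the pathwise uniqueness established in Proposition~\ref{prop:exist and uniq of eq1} to deduce uniqueness in law. The paper's proof is in fact even terser than yours---it simply cites an infinite-dimensional version of Yamada--Watanabe from~\cite[Prop 5.3.20]{karatzasshreve91} and is done in two sentences; your more detailed account of the construction of the measurable solution map $\Phi$ via Picard iterates and the pasting along the $\tau_M$ is a helpful unpacking of what that citation encapsulates.
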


\begin{proof}
Recall that by Proposition~\ref{prop:exist and uniq of eq1}, equation~\eqref{sans la masse} has a unique pathwise solution. By an infinite-dimensional version of Yamada-Watanabe result  (see~\cite[Prop 5.3.20]{karatzasshreve91}), it implies that the law of $(z^1,w^1)$ under $\mathbb P^1$ is equal to the law of $(z^2,w^2)$ under $\mathbb P^2$.
\end{proof}

The proof of uniqueness in law for equation~\eqref{eq:constant_mass} is based on Girsanov's Theorem. As in the proof of the existence part, we will apply Lemma~\ref{lemme:fourier_masseconstante} to the drift function $B$ and to a weak solution to equation~\eqref{eq:constant_mass}.

\begin{proof}[Proof (Theorem~\ref{theo:weak_wp_hypotheses_simplifiees}, uniqueness part)]
Let us consider  $(\Omega^i, \mathcal G^i, (\mathcal G^i_t)_{t\in [0,T]}, \mathbb P^i, z^i,w^i)$, for $i=1,2$,  two weak solutions to equation~\eqref{eq:constant_mass}. 
Let $i=1$ or $2$. In particular, $(z^i_t)_{t \in [0,T]}$ is a   $(\mathcal G^i_t)_{t \in [0,T]}$-adapted process taking values in $\mathcal C([0,1], \R)$.  
Thus by Lemma~\ref{lemme:fourier_masseconstante},  there is a $(\mathcal G^i_t)_{t\in [0,T]}$-adapted process $(h^i_t)_{t\in[0,T]}$ such that  
$\int_0^T \int_\R |h^i_t(k)|^2 \mathrm dk \mathrm ds \leq C$ almost surely and for all $t\in[0,T]$ and $u\in [0,1]$, 
\begin{align*}
B(z^i_t(u),z^i_t) 
&=   \int_\R  \cos(k z^i_t(u))f(k) h^{i,\Re}_t(k) \mathrm dk +\int_\R  \sin(k z^i_t(u))f(k) h^{i,\Im}_t(k) \mathrm dk.
\end{align*}
Furthermore, by equation~\eqref{ht formule d'inversion}, there is a measurable map $\mathcal H: \mathcal C[0,1] \to L_2(\R, \C)$ such that $h_t^i= \mathcal H(z_t^i)$ for every $t\in [0,T]$ and for $i=1,2$. The map $\mathcal H$ is defined by:
\begin{align*}
\mathcal H(\mathbf x): k \mapsto  \frac{1}{\sqrt{2\pi} f(k)} \mathcal F^{-1} (B(\cdot, \mathbf x)) (k)
\end{align*}
for every $\mathbf x \in \mathcal C[0,1]$.

Since $(z^i_t)_{t\in [0,T]}$ is solution to equation~\eqref{eq:intégrée_z}, we have $\mathbb P^i$-almost surely for every $t\in [0,T]$ and for every $u\in [0,1]$:
\begin{align*}
z^i_t(u)&=g(u)+\int_0^t \int_\R \cos( kz^i_s (u)) f(k) (\mathrm dw^{i,\Re}(k,s) + h^{i,\Re}_s(k) \mathrm dk\mathrm ds) 
\\&\quad +\int_0^t  \int_\R \sin( kz^i_s (u)) f(k) (\mathrm dw^{i,\Im}(k,s) + h^{i,\Im}_s(k) \mathrm dk\mathrm ds).
\end{align*}
Let us define for every $k\in \R$ and every $s\in [0,T]$
\begin{align*}
\mathrm d\widetilde{w}^{i,\Re}(k,s) &:=\mathrm dw^{i,\Re}(k,s)  + h^{i,\Re}_s(k) \mathrm dk  \mathrm ds, \\
\mathrm d\widetilde{w}^{i,\Im}(k,s) &:=\mathrm dw^{i,\Im}(k,s)  + h^{i,\Im}_s(k) \mathrm dk  \mathrm ds.
\end{align*}
Let us consider the process $(G^i_t)_{t \in [0,T]}$ defined by:
\begin{align*}
G^i_t:= \exp  \left(- \int_0^t \!\! \int_\R h^{i,\Re}_s(k) \mathrm dw^{i,\Re}(k,s) - \int_0^t \!\! \int_\R  h^{i,\Im}_s(k) \mathrm dw^{i,\Im}(k,s) - \frac{1}{2} \int_0^t \!\! \int_\R | h^i_s(k)|^2 \mathrm dk \mathrm ds \right). 
\end{align*} 
Novikov's condition applies because $\int_0^T \int_\R |h^i_t(k)|^2 \mathrm dk \mathrm ds \leq C$ almost surely and  the process $(G^i_t)_{t\in [0,T]}$ is a $\mathbb P^i$-martingale.
We define the probability measure $\mathbb Q^i$ by the absolutely continuous measure with respect to $\mathbb P^i$ with density $\frac{\mathrm d\mathbb Q^i}{\mathrm d\mathbb P^i}=G^i_T$.
By Girsanov's Theorem, under $\mathbb Q^i$,  
 $\widetilde{w}^i=(\widetilde{w}^{i,\Re},\widetilde{w}^{i,\Im})$ is a couple of two independent Brownian sheets  and $\mathbb Q^i$-almost surely, for every $t\in [0,T]$, 
\begin{align*}
z^i_t(u)&=g(u)+\int_0^t  \!\!  \int_\R f(k) \left( \cos( kz^i_s (u))  \mathrm d\widetilde{w}^{i,\Re}(k,s) + \sin( kz^i_s (u))  \mathrm d\widetilde{w}^{i,\Im}(k,s)\right).  
\end{align*}
Thus $(\Omega^i, \mathcal G^i, (\mathcal G^i_t)_{t\in [0,T]}, \mathbb Q^i, z^i,\widetilde{w}^i)$,  for $i=1,2$,  are two weak solutions to equation~\eqref{eq:constant_mass} in the case where $B\equiv 0$. 
By Lemma~\ref{lemme:cas_sans_drift}, it follows that for every measurable function $\psi : \mathcal C([0,1] \times [0,T])\times \mathcal C(\R \times [0,T],\R^2) \to \R$ such that $\mathbb E^{\mathbb Q^i} \left[ |\psi(z^i,\widetilde{w}^i)| \right]<+\infty$ for $i=1,2$, we have
\begin{align}
\mathbb E^{\mathbb Q^1} \left[ \psi(z^1,\widetilde{w}^1) \right]
=\mathbb E^{\mathbb Q^2} \left[ \psi(z^2,\widetilde{w}^2) \right].
\label{psi_equality}
\end{align}

Let $\phi: \mathcal C([0,1] \times [0,T])\times \mathcal C(\R \times [0,T],\R^2) \to \R$ be a bounded and measurable function. 
We have
\begin{align}
\mathbb E^{\mathbb P^i}\left[ \phi(z^i,w^i) \right]
&= \mathbb E^{\mathbb Q^i}\left[ \phi(z^i,w^i) (G_T^i)^{-1}\right] \notag\\
&= \mathbb E^{\mathbb Q^i}\Bigg[ \phi(z^i,w^i)  \exp \Bigg( \int_0^T \!\! \int_\R  h^{i,\Re}_s(k) \mathrm dw^{i,\Re}(k,s) \notag \\ 
&\quad\quad\quad\quad\quad\quad\quad\quad\quad\quad
+\int_0^T \!\! \int_\R  h^{i,\Im}_s(k) \mathrm dw^{i,\Im}(k,s)
+\frac{1}{2} \int_0^T \!\! \int_\R | h^i_s(k)|^2 \mathrm dk \mathrm ds \Bigg)\Bigg]\notag
\\
&= \mathbb E^{\mathbb Q^i}\Bigg[ \phi(z^i,\widetilde{w}^i +\textstyle \int_0^\cdot \int_0^\cdot  h_s^i(k) \mathrm dk \mathrm ds ) \displaystyle \exp \Bigg( \int_0^T \!\! \int_\R  h^{i,\Re}_s(k) \mathrm d\widetilde{w}^{i,\Re}(k,s)\notag \\ 
&\quad\quad\quad\quad\quad\quad\quad\quad\quad\quad
+ \int_0^T \!\! \int_\R  h^{i,\Im}_s(k) \mathrm d\widetilde{w}^{i,\Im}(k,s)
-\frac{1}{2} \int_0^T \!\! \int_\R | h^i_s(k)|^2 \mathrm dk \mathrm ds \Bigg)\Bigg]\notag\\
&=\mathbb E^{\mathbb Q^i} \left[ \psi(z^i,\widetilde{w}^i) \right],
\label{loi_de_zi_wi}
\end{align}
where $\psi :\mathcal C([0,1] \times [0,T])\times \mathcal C(\R \times [0,T],\R^2) \to \R$ is a measurable function, because for each $t\in [0,T]$, $h_t^i= \mathcal H(z_t^i)$ with $\mathcal H: \mathcal C[0,1] \to L_2(\R, \C)$ a measurable function. By equality~\eqref{psi_equality}, we deduce that $\mathbb E^{\mathbb P^1}\left[ \phi(z^1,w^1) \right]=\mathbb E^{\mathbb P^2}\left[ \phi(z^2,w^2) \right]$. 
Thus $(z^1,w^1)$ and $(z^2,w^2)$ have the same law and this completes the proof of the theorem. 
\end{proof}

\subsubsection{General case}
\label{parag:general_case}

In the previous paragraph, our assumptions on $b$ were rather restrictive: for instance, the inversion statement of Lemma~\ref{lemme:fourier_masseconstante} does not apply for $b\equiv 1$ because it does not belong to $H^\eta(\R)$ for any positive $\eta$. 
In this paragraph, we  explain briefly how we can extend the well-posedness result for a larger class of  drift functions $b$  or general mass functions $\varphi$. 
Because the proofs are very similar to the particular case seen above, the  statements of this paragraph will be explained shortly  without the detailled proofs: the interest reader can find the complete proofs of the results stated below in~\cite[Parag. II.4]{marx_thesis}. 

Let us recall that we consider $\varphi: \R \to \R$ an even $\mathcal C^\infty$-function, such that $\varphi$ is positive and decreasing on $[0,+\infty)$. 
For every fixed $M >0$, we define the following assumptions:
\begin{defin}
A process $(x_t)_{t \in [0,T]}$ with values in $\mathcal C[0,1]$ is said to satisfy the $X_M$-hypotheses if:
\begin{itemize}
\item[$(X1)$] $(x_t)_{t \in [0,T]}$ is $(\mathcal G_t)_{t \in [0,T]}$-adapted.
\item[$(X2)$] almost surely, for every $t \in [0,T]$, $u \mapsto x_t(u)$ is strictly increasing. 
\item[$(X3)$] almost surely, for every $t \in [0,T]$, $|x_t(1)-x_t(0)|\leq M$. 
\end{itemize}
\label{def:x hyp}
\end{defin}
As a consequence of Corollary~\ref{coro:growth} and of Proposition~\ref{prop:exist and uniq of eq1}, the stopped process $(y_{t\wedge \tau_M})_{t\in [0,T]}$  solution to the  equation without drift function    satisfies the assumptions of Definition~\ref{def:x hyp}: 
\begin{prop}
Let $g\in \mathbf G^1$. Assume that $f$ is of order $\alpha>\frac{3}{2}$.  Let $(y_t)_{t\in [0,T]}$ be the unique solution  to equation~\eqref{eq:real equation on y} given by Proposition~\ref{prop:exist and uniq of eq1}. 
Let $M > g(1)-g(0)$ and recall the definition of $\tau_M:=\inf \{t \geq 0: y_t(1)-y_t(0) \geq M\} \wedge T$.
Then $(y_{t\wedge \tau_M})_{t\in [0,T]}$ satisfies the $X_M$-hypotheses. 
\label{prop:y verifie hyp xm}
\end{prop}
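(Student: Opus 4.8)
The plan is to assemble the facts already established in Section~\ref{parag:non-blowing}; the statement is essentially a bookkeeping exercise, and the only point requiring real care is~$(X3)$, because one must notice that it is built into the definition of the truncation time rather than derived from the equation. First I would record the preliminaries: since $g\in\mathbf G^1$ is of class $\mathcal C^1$ on the compact interval $[0,1]$, it is Lipschitz-continuous, hence $\delta$-Hölder with $\delta=1>\tfrac12$; and since $f$ is of order $\alpha>\tfrac32$, the map $k\mapsto\crochetk f(k)$ behaves like $\crochetk^{1-\alpha}$ and is square integrable. Thus Proposition~\ref{prop:exist and uniq of eq1} and Corollary~\ref{coro:growth} both apply, so that the solution $(y_t)_{t\in[0,T]}$ to equation~\eqref{eq:real equation on y} is $(\mathcal G_t)_{t\in[0,T]}$-adapted, has paths in $\mathcal C([0,1]\times[0,T])$, and, $\mathbb P$-almost surely, $u\mapsto y_t(u)$ is strictly increasing for every $t\in[0,T]$ (this last point following, as noted just above Proposition~\ref{prop:exist and uniq of eq1}, from $y=y^M$ on $[0,\tau_M]$, from $\sup_M\tau_M=T$ a.s., and from the strict monotonicity of each $y^M$ given by Corollary~\ref{coro:growth}).

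Next I would dispatch $(X1)$ and $(X2)$. For $(X1)$: the scalar process $t\mapsto y_t(1)-y_t(0)$ is continuous and $(\mathcal G_t)$-adapted, hence $\tau_M=\inf\{t\geq0:y_t(1)-y_t(0)\geq M\}\wedge T$ is an $(\mathcal G_t)$-stopping time, and therefore the stopped process $(y_{t\wedge\tau_M})_{t\in[0,T]}$ is again $(\mathcal G_t)$-adapted with continuous paths. For $(X2)$: restricting the strict monotonicity of the previous paragraph to the times $t\wedge\tau_M$ yields that, $\mathbb P$-almost surely, $u\mapsto y_{t\wedge\tau_M}(u)$ is strictly increasing for every $t\in[0,T]$.

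Finally I would prove $(X3)$. Working on the almost sure event on which $y$ has continuous paths and $u\mapsto y_t(u)$ is strictly increasing for every $t$ (so that $y_t(1)-y_t(0)>0$ for all $t$): if $t<\tau_M$, then $y_t(1)-y_t(0)<M$ by the very definition of $\tau_M$; if $t\geq\tau_M$, then $y_{t\wedge\tau_M}(1)-y_{t\wedge\tau_M}(0)=y_{\tau_M}(1)-y_{\tau_M}(0)=\lim_{s\uparrow\tau_M}\big(y_s(1)-y_s(0)\big)\leq M$ by continuity. In both cases $0\leq y_{t\wedge\tau_M}(1)-y_{t\wedge\tau_M}(0)\leq M$, hence $|y_{t\wedge\tau_M}(1)-y_{t\wedge\tau_M}(0)|\leq M$ for every $t\in[0,T]$, which is~$(X3)$. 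I would close by remarking that this last argument uses only path-continuity of $y$ and the definition of $\tau_M$, so it is immaterial whether $M\geq M_0$ or $g(1)-g(0)<M<M_0$.
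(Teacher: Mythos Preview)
Your proof is correct and follows exactly the approach the paper indicates: the paper states this proposition without a formal proof, presenting it simply ``as a consequence of Corollary~\ref{coro:growth} and of Proposition~\ref{prop:exist and uniq of eq1},'' and your argument spells out precisely those consequences for each of $(X1)$, $(X2)$, $(X3)$.
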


Under those less restrictive assumptions on $\varphi$, $B$ and $x$, the following lemma shows the existence of an $L_2(\R,\C)$-valued process $(h_t)_{t\in[0,T]}=(h^\Re_t+i h^\Im_t)_{t\in[0,T]}$ such that 
\begin{align}
\label{eq:a_inverser avec masse}
B(x_t(u),x_t) = \frac{1}{\big(\int_0^1 \varphi(x_t(u)-x_t(v))\mathrm dv\big)^{1/2}}  \int_\R e^{-ik x_t(u)}  f(k)  h_t(k) \mathrm dk.
\end{align}

\begin{lemme}
\label{lemme:inversion Fourier}
Let $M > g(1)-g(0)$, $j \in \N$ and $\alpha>0$. Let us assume that $f$ is of order $\alpha$, that $B:\R \times L_2[0,1] \to \R$ satisfies the $B$-hypotheses of order $2j$ and thatthe process $(x_t)_{t\in [0,T]}$ satisfies the $X_M$-hypotheses.

If $2j \geq \alpha$, then there is a $(\mathcal G_t)_{t\in [0,T]}$-adapted process $(h_t)_{t\in [0,T]}$ which is solution, for every $t\in [0,T]$, to equation~\eqref{eq:a_inverser avec masse} and such that   there exists $C_M>0$ depending only on $B$, $f$, $\varphi$ and~$M$ for which  $\int_0^T\int_\R |h_t(k)|^2 \mathrm dk \mathrm dt \leq C_M$ holds almost surely.
\end{lemme}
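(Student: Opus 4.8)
The plan is to follow the proof of Lemma~\ref{lemme:fourier_masseconstante}. The new difficulty is that $b(\cdot,\mu)$ is no longer assumed to live in a Sobolev space — under the $B$-hypotheses of order $2j$ it is only $\mathcal C^{2j}$ with derivatives bounded once $\operatorname{Supp}\mu$ is controlled — so $b(\cdot,\mu_t)(\varphi\ast\mu_t)^{1/2}$ need not be square-integrable on $\R$ and must be localised before being inverted. What makes this possible is that, by the strict monotonicity $(X2)$, equation~\eqref{eq:a_inverser avec masse} only has to hold for $x_t(u)$ ranging over the \emph{compact} interval $\operatorname{Im}(x_t)=[x_t(0),x_t(1)]$, whose length is at most $M$ by $(X3)$.

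Writing $\mu_t:=\Leb_{[0,1]}\circ x_t^{-1}$, so that $B(x_t(u),x_t)=b(x_t(u),\mu_t)$ and $\int_0^1\varphi(x_t(u)-x_t(v))\,\mathrm dv=(\varphi\ast\mu_t)(x_t(u))$, I would fix once and for all a cut-off $\chi\in\mathcal C_c^\infty(\R)$ with $\chi\equiv1$ on $[-M,M]$ and $\operatorname{Supp}\chi\subseteq[-2M,2M]$, let $c_t$ be the (adapted) midpoint of $\operatorname{Supp}\mu_t=[x_t(0),x_t(1)]$, and set $G_t(y):=b(y,\mu_t)\,(\varphi\ast\mu_t)(y)^{1/2}\,\chi(y-c_t)$. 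Then $G_t$ agrees with $b(\cdot,\mu_t)(\varphi\ast\mu_t)^{1/2}$ on $[c_t-M,c_t+M]\supseteq\operatorname{Im}(x_t)$ and is supported in an interval of length $\le4M$. The next step is to bound $\|G_t\|_{\mathcal C^{2j}(\R)}$ by a constant $C_M$ \emph{uniformly in $\omega$}: on $\operatorname{Supp}\chi(\cdot-c_t)$ every point lies within distance $5M/2$ of $\operatorname{Supp}\mu_t$, so $\varphi(5M/2)\le(\varphi\ast\mu_t)(y)\le\varphi(0)$ there since $\varphi$ is even and decreasing on $[0,+\infty)$; hence $s\mapsto s^{1/2}$ is evaluated on a fixed interval away from $0$ and $+\infty$, and, combined with $|\partial_y^i(\varphi\ast\mu_t)(y)|\le\|\varphi^{(i)}\|_{L_\infty}$, the chain and product rules control $\|(\varphi\ast\mu_t)^{1/2}\|_{\mathcal C^{2j}}$ on that set by $C_M$. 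Together with $\|b(\cdot,\mu_t)\|_{\mathcal C^{2j}(\R)}\le\sum_{i=0}^{2j}C_i(M)$ (from $(B1)$–$(B2)$ and $|\operatorname{Supp}\mu_t|\le M$) and the fixed $\mathcal C^{2j}$-norm of $\chi$ — unchanged by the translation by $c_t$, which is the whole reason for recentring the cut-off — the Leibniz rule gives the bound.

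Since $G_t$ is $\mathcal C^{2j}$ with support in an interval of fixed length we get $\|G_t\|_{H^{2j}}\le C_M$, and as $2j\ge\alpha$ and $\crochetk\ge1$ also $\|G_t\|_{H^\alpha}\le\|G_t\|_{H^{2j}}\le C_M$. I would then define, as in~\eqref{ht formule d'inversion},
\[
h_t(k):=\frac{1}{\sqrt{2\pi}\,f(k)}\,\mathcal F^{-1}(G_t)(k),
\]
for which Fourier inversion gives $\int_\R e^{-iky}f(k)h_t(k)\,\mathrm dk=G_t(y)$ for all $y\in\R$; taking $y=x_t(u)\in\operatorname{Im}(x_t)$, the right-hand side equals $b(x_t(u),\mu_t)\,(\varphi\ast\mu_t)(x_t(u))^{1/2}$, and dividing by the strictly positive mass yields exactly~\eqref{eq:a_inverser avec masse} for every $u\in[0,1]$ and $t\in[0,T]$. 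The $L_2$-estimate then follows as in Lemma~\ref{lemme:fourier_masseconstante} from $f$ being of order $\alpha$ and from Plancherel's formula (using $\mathcal F^{-1}\phi(\cdot)=\mathcal F\phi(-\cdot)$): $\int_\R|h_t(k)|^2\,\mathrm dk\le\frac{1}{2\pi c^2}\int_\R\crochetk^{2\alpha}|\mathcal F G_t(k)|^2\,\mathrm dk=\frac{1}{2\pi c^2}\|G_t\|_{H^\alpha}^2\le C_M$, hence $\int_0^T\int_\R|h_t(k)|^2\,\mathrm dk\,\mathrm dt\le C_M$ almost surely; and $(h_t)_{t\in[0,T]}$ is $(\mathcal G_t)$-adapted by $(X1)$, since $x_t\mapsto(\mu_t,c_t)\mapsto G_t\mapsto h_t$ is a composition of Borel maps.

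The main obstacle is exactly the uniform-in-$\omega$ control of $\|G_t\|_{H^\alpha}$: the interval $\operatorname{Im}(x_t)$ drifts randomly along the real line, so one has to localise with a cut-off recentred at $c_t$ and to check that $\varphi\ast\mu_t$ stays bounded below on the enlarged support $\operatorname{Supp}\chi(\cdot-c_t)$ — both consequences of $(X3)$ — and to verify that the square root and the $2j$ derivatives of $b$ contribute only $M$-dependent constants, so that everything is of the form $C_M$ in the sense of the paper's convention. Once these points are in place, the rest of the argument is routine and follows the proof of Lemma~\ref{lemme:fourier_masseconstante} essentially verbatim.
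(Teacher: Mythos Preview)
Your proposal is correct and follows essentially the same approach as the paper's own proof: localise by a smooth compactly supported cut-off so that $G_t$ (the paper's $\Phi_t$) coincides with $b(\cdot,\mu_t)(\varphi\ast\mu_t)^{1/2}$ on $\operatorname{Im}(x_t)$, define $h_t$ by Fourier inversion divided by $f$, and bound $\|h_t\|_{L_2}$ via Plancherel using $\frac{1}{f(k)}\le c\crochetk^{2j}$ together with uniform $\mathcal C^{2j}$-bounds on $G_t$ and its fixed-length support. The only cosmetic difference is the shape of the cut-off: the paper takes $\eta_t=\eta_{x_t(0),x_t(1)}$ built from a fixed transition profile $\Psi$ on intervals of width $1$, whereas you translate a single fixed $\chi$ by the midpoint $c_t$; both give $\omega$-independent $\mathcal C^{2j}$-bounds, and your explicit check that $\varphi\ast\mu_t\ge\varphi(5M/2)$ on $\operatorname{Supp}\chi(\cdot-c_t)$ makes transparent a point the paper leaves implicit.
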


\begin{proof}
Let $(x_t)_{t \in [0,T]}$ be a process satisfying the $X_M$-hypotheses. Therefore, for a fixed $t \in [0,T]$, the map $u \mapsto x_t(u)$ is a continuous strictly increasing function and can be seen as the quantile function of a measure $\mu_t \in \mathcal P_2(\R)$. Let us denote by $F_t$ and $p_t$ respectively the c.d.f. and the density associated to $\mu_t$. 
More precisely, $F_t(x_t(u))=u$ for all $u \in [0,1]$, $F_t(y)=0$ for all $y \leq x_t(0)$ and $F_t(y)=1$ for all $y \geq x_t(1)$. Since almost surely, for every $t\in [0,T]$, $x_t(1)-x_t(0) \leq M$, we have $|\operatorname{Supp} p_t|\leq M$, where $|\operatorname{Supp} p_t|$ denotes the Lebesgue measure of the support of $p_t$. 

By the substitution $y=x_t(u)$, equation~\eqref{eq:a_inverser avec masse} is equivalent to
\begin{align}
\label{eq:apres chgt de var}
B(y,x_t) \left( \int_0^1 \varphi(y-x_t(v)) \mathrm dv \right)^{1/2}=\int_\R e^{-ik y}  f(k)  h_t(k) \mathrm dk
\end{align}
for every $y\in [x_t(0),x_t(1)]$. 
Let us fix a $\mathcal C^\infty$-function $\Psi: \R \to \R$ that is equal to $0$ on $(-\infty,0]$ and equal to $1$ on $[1,+\infty)$. 
For every $a<b$, we  define the cut-off function $\eta_{a,b}: \R \to \R$ by
\begin{align*}
\eta_{a,b}(y)=
\begin{cases}
1 &\text{ on } [a,b], \\
\Psi (y-(a-1)) &\text{ on } (a-1,a),\\
\Psi (b+1-y) &\text{ on } (b,b+1).\\
0 &\text{ elsewhere. }\\
\end{cases}
% \label{defin:eta_ab}
\end{align*}
Let us denote by $\eta_t:=\eta_{x_t(0),x_t(1)}$. For every $y \in [x_t(0),x_t(1)]$, $\eta_t(y)=1$. Moreover, $\eta_t$ has a compact support included in $[x_t(0)-1,x_t(1)+1]$. 
Therefore, if a process $(h_t)_{t\in [0,T]}$ satisfies
\begin{align*}
%\label{eq:avec cut-off function}
B(y,x_t)  \eta_t(y) \left(\int_0^1 \varphi(y-x_t(v)) \mathrm dv\right)^{1/2} =\int_\R e^{-ik y}  f(k)  h_t(k) \mathrm dk
\end{align*}
for every $y \in \R$, then it satisfies~\eqref{eq:apres chgt de var} for every $y \in [x_t(0),x_t(1)]$ and thus it satisfies~\eqref{eq:a_inverser avec masse} for every $u \in [0,1]$. 
Therefore, the process defined by 
\begin{align*}
h_t(k):=\frac{1}{\sqrt{2\pi}f(k)} \mathcal F^{-1} \left( B(\cdot,x_t) \eta_t \left(\int_0^1 \varphi(\cdot-x_t(v)) \mathrm dv \right)^{1/2} \right) (k)
%\label{def:ht formule d'inversion}
\end{align*}
is solution to equation~\eqref{eq:a_inverser avec masse}, provided that $h_t$ is square integrable for every $t\in [0,T]$. Define 
$\Phi_t:=B(\cdot,x_t) \eta_t \left(\int_0^1 \varphi(\cdot-x_t(v)) \mathrm dv\right)^{1/2}$. 
  Note that for every $t\in [0,T]$,   $y\mapsto \eta_t(y) (\int_0^1 \varphi(y-x_t(v)) \mathrm dv)^{1/2}$ is a bounded $\mathcal C^\infty$-function with compact support and $y\mapsto B(y,x_t)$ is a bounded continuous function. Therefore, $ \Phi_t$ belongs to $L^1(\R, \C)$ and $h_t$ is well-defined. Moreover, since $( \Phi_t)_{t\in [0,T]}$ is $(\mathcal G_t)_{t\in [0,T]}$-adapted, $(h_t)_{t\in [0,T]}$ is also $(\mathcal G_t)_{t\in [0,T]}$-adapted. 

Furthermore, we know by assumption that there is $c>0$ such that for every $k \in \R$,  $\frac{1}{f(k)}\leq c \crochetk^\alpha \leq c \crochetk^{2j}$. Thus  by Plancherel's Theorem (and denoting by $\Delta$ the Laplacian) we have
\begin{align*}
\|h_t\|_{L_2}^2=\int_\R |h_t(k)|^2 \mathrm dk 
=\int_\R \frac{1}{2\pi}  \frac{|\mathcal F^{-1} \Phi_t (k)|^2}{|f(k)|^2} \mathrm dk
&\leq C \int_\R \left| \crochetk^{2j} \mathcal F^{-1} \Phi_t (k)  \right|^2 \mathrm dk \\
&= C \int_\R \left|  \mathcal F^{-1}((1+\Delta)^j \Phi_t) (k)  \right|^2 \mathrm dk \\
&= C \left\| (1+\Delta)^j \Phi_t \right\|_{L_2}^2.
%\label{norme2h_t}
\end{align*}
On the one hand,  $B$ satisfies the $B$-hypotheses of order $2j$, then for every  $i \in \{0,1,\dots,2j\}$, for every $t \in [0,T]$ and for every $y \in \R$, 
$|\partial_1^{(i)} B(y,x_t)| \leq C_i(M)$.   
On the other hand,  $y\mapsto \eta_t(y) (\int_0^1 \varphi(y-x_t(v)) \mathrm dv)^{1/2}$ is a $\mathcal C^\infty$-function with compact support, thus this function and all its derivatives are bounded on $\R$. We deduce that for every $i \leq 2j$,
there is a constant $C_i$ depending on $B$ and $\varphi$ such that almost surely, for every $t\in [0,T]$,  $\|\partial^i \Phi_t\|_{L_\infty} \leq C_i$.

Recall that the support of $\eta_t$ is included in $[x_t(0)-1,x_t(1)+1]$. Henceforth, almost surely for every $t\in [0,T]$, the Lebesgue measure of the support of $\Phi_t$ is bounded by $M+2$. Therefore, for every $i \leq 2j$, there is a constant $C_{i,M}$ such that almost surely, for every $t\in [0,T]$, 
\begin{align*}
\|\partial^i \Phi_t\|_{L_2} \leq |\operatorname{Supp} \Phi_t|^{1/2} \|\partial^i \Phi_t\|_{L_\infty} \leq C_{i,M}. 
\end{align*}
We deduce that there is $C_M>0$ such that $\int_0^T \|h_t\|_{L_2}^2 \mathrm dt \leq C_M$ almost surely, which completes the proof. 
\end{proof}

Thus, we can state the following theorem, which is a rewriting of Theorem~\ref{theointro_partie1} under the precise assumptions on~$b$. 
\begin{theo}
\label{theo:wp_general_case}
Let $g \in \mathbf G^1$ and $j \in \Nun$. 
Let $f$ be a function of order $\alpha>\frac{3}{2}$.
Let $B: \R \times L_2[0,1] \to \R$ satisfy the $B$-hypotheses of order $2j$.
If $2j \geq \alpha$, there exists a weak solution to equation~\eqref{eq:definition_FP_perturbee} and uniqueness in law holds for this equation. 
\end{theo}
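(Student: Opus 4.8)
Here I sketch how I would prove Theorem~\ref{theo:wp_general_case}. The plan is to reproduce, localised on the sequence of random intervals $[0,\tau_M]$, the Girsanov argument that gives Theorem~\ref{theo:weak_wp_hypotheses_simplifiees}; the only genuinely new ingredient, needed because the mass function $\int_0^1\varphi(\cdot-y_t(v))\,\mathrm dv$ is no longer bounded below, is a control ensuring that the drifted solution does not blow up before time~$T$, and this is exactly where assumption $(B3)$ of Definition~\ref{def:B hyp} is used.

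\emph{Existence.} On a filtered space carrying a complex Brownian sheet $w$, I would first invoke Proposition~\ref{prop:exist and uniq of eq1} (licit since $\alpha>\frac32$) to get the pathwise unique driftless solution $(y_t)_{t\in[0,T]}$ of~\eqref{eq:real equation on y}, which is adapted, jointly continuous and strictly increasing in~$u$. For $M>g(1)-g(0)$ I set $\tau_M:=\inf\{t\geq 0:\ y_t(1)-y_t(0)\geq M\}\wedge T$, so that by Proposition~\ref{prop:y verifie hyp xm} the stopped process $y_{\cdot\wedge\tau_M}$ satisfies the $X_M$-hypotheses of Definition~\ref{def:x hyp}. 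Since $2j\geq\alpha$ and $B$ satisfies the $B$-hypotheses of order $2j$, Lemma~\ref{lemme:inversion Fourier} then yields an adapted $L_2(\R,\C)$-valued process $h^M$ solving~\eqref{eq:a_inverser avec masse} along $x=y_{\cdot\wedge\tau_M}$ with $\int_0^T\int_\R|h^M_t(k)|^2\,\mathrm dk\,\mathrm dt\leq C_M$ almost surely, of the form $h^M_t=\mathcal H(y_{t\wedge\tau_M})$ for one measurable map $\mathcal H:\mathcal C[0,1]\to L_2(\R,\C)$ that does not depend on $M$ (the cut-off $\eta_t$ built in that lemma depends only on the current configuration). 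Consequently $h^M_t=h^{M'}_t$ on $[0,\tau_M]$ whenever $M'\geq M$, so $h_t:=h^M_t$ for $t<\tau_M$ is unambiguously defined on $[0,\tau_\infty)$ with $\tau_\infty:=\sup_M\tau_M$; the nonnegative local martingale $G_t:=\exp\bigl(\int_0^t h_s\cdot\mathrm dw_s-\frac12\int_0^t|h_s|^2\,\mathrm ds\bigr)$ agrees on $[0,\tau_M]$ with the Novikov exponential $G^M$ of $h^M$, so $\mathbb E^{\mathbb P}[G_{\tau_M}]=1$ and $\mathbb E^{\mathbb P}[G_{\tau_M}\mathds 1_A]=\mathbb Q^M(A)$ for every $A\in\mathcal G_{\tau_M}$, where $\mathbb Q^M:=G^M_T\cdot\mathbb P$ is the Girsanov measure attached to $h^M$.

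\emph{Non-explosion.} This is the step I expect to be the main obstacle. Under $\mathbb Q^M$ the process $z^M_t:=y_{t\wedge\tau_M}(1)-y_{t\wedge\tau_M}(0)$ is positive by Corollary~\ref{coro:growth}, and by~\eqref{eq:a_inverser avec masse} it solves an equation whose drift is bounded by $2C_0(M)$ in absolute value on $[0,\tau_M]$, because there the measure $\mu_s$ has support of Lebesgue measure at most $M$, so that $(B2)$ applies; hence $z^M_t=\widehat M_t+r^M_t$ with $\widehat M$ a continuous $\mathbb Q^M$-martingale started from $g(1)-g(0)$ (its quadratic variation being bounded since $\varphi\geq\varphi(M)>0$ there) and $|r^M_t|\leq 2C_0(M)T$, whence $\widehat M_\cdot+2C_0(M)T\geq 0$ and Doob's maximal inequality gives
\begin{align*}
\mathbb Q^M(\tau_M<T)\leq\mathbb Q^M\bigl(\sup\nolimits_{t\leq T}z^M_t\geq M\bigr)\leq\frac{g(1)-g(0)+2C_0(M)T}{M},
\end{align*}
which tends to $0$ as $M\to\infty$, precisely by $(B3)$. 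Writing $1=\mathbb E^{\mathbb P}[G_{\tau_M}]=\mathbb E^{\mathbb P}[G_{\tau_M}\mathds 1_{\{\tau_M=T\}}]+\mathbb Q^M(\tau_M<T)$ and using that $\{\tau_M=T\}\uparrow\Omega$ $\mathbb P$-a.s.\ (Proposition~\ref{prop:exist and uniq of eq1}) together with $G_{\tau_M}\mathds 1_{\{\tau_M=T\}}=G_T\mathds 1_{\{\tau_M=T\}}$, monotone convergence then yields $\mathbb E^{\mathbb P}[G_T]=1$; thus the nonnegative supermartingale $G$ is a true $\mathbb P$-martingale, $\mathbb Q:=G_T\cdot\mathbb P$ is a probability measure, $\tau_\infty=T$ holds $\mathbb Q$-a.s., and by Girsanov's theorem $\widetilde w:=w-\int_0^\cdot h_s(k)\,\mathrm dk\,\mathrm ds$ is a complex Brownian sheet under $\mathbb Q$ while $(y,\widetilde w)$ solves~\eqref{eq:definition_FP_perturbee} on $[0,T]$.

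\emph{Uniqueness in law.} Given two weak solutions $(\Omega^i,\mathcal G^i,(\mathcal G^i_t),\mathbb P^i,z^i,w^i)$, $i=1,2$, each $z^i$ being continuous and $L_2^\uparrow[0,1]$-valued, I would set $\tau^i_M:=\inf\{t:\ z^i_t(1)-z^i_t(0)\geq M\}\wedge T$, so $z^i_{\cdot\wedge\tau^i_M}$ satisfies the $X_M$-hypotheses and Lemma~\ref{lemme:inversion Fourier} furnishes $h^{i,M}_t=\mathcal H(z^i_{t\wedge\tau^i_M})$ with the uniform bound $C_M$. Exactly as in the uniqueness part of Theorem~\ref{theo:weak_wp_hypotheses_simplifiees}, removing the drift via the Girsanov density $\exp\bigl(-\int_0^T h^{i,M}_s\cdot\mathrm dw^i_s-\frac12\int_0^T|h^{i,M}_s|^2\,\mathrm ds\bigr)$ turns $z^i_{\cdot\wedge\tau^i_M}$ into a solution of the driftless equation~\eqref{eq:def y^M} stopped at $\tau^i_M$ (the mass $\varphi$ coinciding with $\varphi_M$ until that time); pathwise uniqueness for that equation, which follows by localisation from Proposition~\ref{prop:exist and uniq of eq1} as in Lemma~\ref{lemme:cas_sans_drift}, identifies the laws of the two stopped pairs, and reverting the change of measure (the density being a measurable functional of the stopped path of $z^i$ and of the shifted sheet, since $\mathcal H$ is measurable) identifies the laws of $(z^i_{\cdot\wedge\tau^i_M},w^i)$ for $i=1,2$. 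Letting $M\to\infty$, which is legitimate because $z^i$ is continuous on $[0,T]$ and hence $\tau^i_M=T$ eventually almost surely, one concludes that $(z^1,w^1)$ and $(z^2,w^2)$ have the same law. As already stressed, the one delicate point is that the densities $G^M$ need not be uniformly integrable in $M$, so that the passage to the limit in the existence part hinges entirely on the estimate $\mathbb Q^M(\tau_M<T)\to 0$ provided by $(B3)$.
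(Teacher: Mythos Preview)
Your argument is correct and is morally the same localisation-plus-Girsanov strategy as the paper's, but the existence part is organised differently and is in fact more direct. The paper works on the canonical space, constructs for each $M$ a weak solution $\mathbb Q^M$ to the \emph{stopped} equation~\eqref{eq:stoppee}, checks consistency of the family $(\mathbb Q^M)_M$ via uniqueness in law, and then invokes Parthasarathy's projective-limit theorem to produce a measure $\mathbb Q$ extending all the $\mathbb Q^M$, before finally proving $\mathbb Q[\tau_M<T]\to 0$ from $(B3)$. You instead stay on the original probability space carrying the driftless solution $y$, observe that the localised Girsanov densities $G^M$ patch together into a single nonnegative local martingale $G$, and prove that $G$ is a true martingale by the identity $1=\mathbb E^{\mathbb P}[G_T\mathds 1_{\{\tau_M=T\}}]+\mathbb Q^M(\tau_M<T)$ together with $(B3)$ and monotone convergence. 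This bypasses the extension theorem entirely and is arguably cleaner; what the paper's route buys is that it never needs $G_T$ to be well-defined before the non-explosion step, whereas you implicitly use $\tau_\infty=T$ $\mathbb P$-a.s.\ (from Proposition~\ref{prop:exist and uniq of eq1}) to make sense of $G_T$.

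Your non-explosion estimate is also a mild variant of the paper's: the paper bounds the drift contribution of $z^M$ by Markov's inequality and handles the martingale part separately as in Proposition~\ref{prop:exist and uniq of eq1}, while you package both into a single application of the maximal inequality for the nonnegative martingale $\widehat M_\cdot+2C_0(M)T$. Both yield a bound of the form $(g(1)-g(0)+O(C_0(M)T))/M\to 0$. The uniqueness parts are essentially identical in the two approaches.
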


Let us briefly explain the different steps of the proof of Theorem~\ref{theo:wp_general_case}, the detailled proof being given in~\cite[Parag. II.4]{marx_thesis}. 

\textit{Step 1.}
Let us fix $M \geq g(1)-g(0)$ and let us define the stopped version of equation~\eqref{eq:definition_FP_perturbee}:
\begin{align}
\label{eq:stoppee}
\left\{ 
\begin{aligned}
dy_t(u)&=  \mathds 1_{\{t \leq \tau_M\}}\left(B(y_t(u),y_t) \mathrm dt +\frac{1}{m_t(u)^{1/2}} \int_\R f(k) \Re \left(  e^{-ik y_t(u)}  \mathrm dw(k,t)\right)\right),\\
y_0&=g,
\end{aligned}
\right.
\end{align}
where $m_t(u)= \int_0^1 \varphi(y_t(u)-y_t(v))\mathrm dv$ and $\tau_M:=\inf\{t \geq 0: y_t(1)-y_t(0) \geq M\} \wedge T$. 
We start by proving, for $f$ of order $\alpha > \frac{3}{2}$ and for $B$ satisfying the $B$-hypotheses of order $2j \geq \alpha$, the existence of a weak solution to~\eqref{eq:stoppee}. The scheme of proof is the following: since $\alpha> \frac{3}{2}$, we know by Proposition~\ref{prop:y verifie hyp xm} that $(y_{t\wedge \tau_M})_{t\in [0,T]}$ satisfies the $X_M$-hypotheses. Then, by  Lemma~\ref{lemme:inversion Fourier}, there exists an appropriate process $(h_t)_{t\in[0,T]}$ such that $\int_0^T \int_\R |h_t(k) |^2 \mathrm dk \mathrm dt \leq C_M$. Then the proof is the same as for Theorem~\ref{theo:weak_wp_hypotheses_simplifiees}. 

\textit{Step 2.}
By analogy with Theorem~\ref{theo:weak_wp_hypotheses_simplifiees}, we prove that 
if $(\Omega^i, \mathcal G^i, (\mathcal G^i_t)_{t\in [0,T]}, \mathbb P^i, z^i,w^i)$, $i=1,2$, are two weak solutions to equation~\eqref{eq:stoppee}, then the laws of $(z^1,w^1)$ and $(z^2,w^2)$ are equal in $\mathcal C([0,1] \times [0,T])\times \mathcal C(\R \times [0,T],\R^2)$.
First, we observe that this statement is true for $B\equiv 0$, by an infinite-dimensional version of Yamada-Watanabe result  (see~\cite[Prop 5.3.20]{karatzasshreve91}). Then we show that a solution to equation~\eqref{eq:stoppee} satisfies the $X_M$-hypotheses. The statement of step 2 follows by the same arguments as for Theorem~\ref{theo:weak_wp_hypotheses_simplifiees}.

\textit{Step 3.}
For each integer $M$ greater than $M_0:=g(1)-g(0)$, 
we consider the solution to equation~\eqref{eq:stoppee} on the canonical probability space. Let $\Omega=\Omega_1 \times \Omega_2$, where $\Omega_1:= \mathcal C([0,1]\times [0,T])$ and $\Omega_2:= \mathcal C(\R \times [0,T],\R^2)$, equipped with the class $\mathcal B(\Omega)$ of Borel subsets of $\Omega$. 
To every $z\in \Omega_1$, we associate $\zeta^M_z=\inf\{t \geq 0: z_t(1)-z_t(0) \geq M\}\wedge T$. 
Let $\mathcal G^M$ be the $\sigma$-algebra generated by the map $\pi^M:z \in \Omega_1 \mapsto z_{ \cdot \wedge \zeta^M_z} \in \Omega_1$. 
By step 1, there is a weak solution $(\Omega, \mathcal G^M \otimes \mathcal B(\Omega_2), (\mathcal G_t)_{t\in [0,T]}, \mathbb Q^M, z_{\cdot \wedge \zeta_z^M},w)$ to equation~\eqref{eq:stoppee}.

We prove that the family $( \mathbb Q^M)_{M\geq M_0}$ is consistent, which follows from uniqueness in law proved in step 2. We use Theorem V.4.2 of Parthasarathy's book~\cite[p.143]{parthasarathy} to construct a probability  measure $\mathbb Q$ on $(\Omega, \mathcal B(\Omega))$ such that for each $M \geq M_0$, for each $A \in \mathcal G^M$ and for each $B \in \mathcal B(\Omega_2)$, $\mathbb Q  \left[A \times B  \right]= \mathbb Q^M  \left[A \times B  \right]$. 

\textit{Step 4.}
In order to prove the theorem, it remains to prove that $\mathbb Q [ \zeta^M_z<T] \to 0$ when $M \to + \infty$.  
We control the martingale part of a solution $(z_t)_{t \in [0,T]}$ to equation~\eqref{eq:definition_FP_perturbee} by the same arguments as in Proposition~\ref{prop:growth}. For the control of the drift part, we use assumption $(B3)$ on~$B$ (it is the only point where this assumption is needed) to obtain:
\begin{align*}
\mathbb Q^M \left[ \sup_{t \leq T} \left|\int_0^{t \wedge \zeta^M} ( B(z_s(1),z_s)- B(z_s(0),z_s) ) \mathrm ds  \right| \geq \frac{M}{2}  \right]
\leq \frac{2}{M} 2T C_0(M) \underset{M \to + \infty}{ \longrightarrow} 0.
\end{align*}
This concludes the proof of weak well-posedness for equation~\eqref{eq:definition_FP_perturbee}.

\section{A continuum of admissible drift functions}
\label{part:continuum_admissible_drift}

In this part, we make the connection between the result of restoration of uniqueness obtained in Theorem~\ref{theo:weak_wp_hypotheses_simplifiees} and  results of existence and uniqueness for standard McKean-Vlasov equations driven by a velocity field that is merely measurable in the space variable (see~\cite{jourdain97,mishuraverten,lacker18,roecknerzhang18}). The connection reads in the form of a new existence and uniqueness result but for a suitable notion of weak solution and for a class of admissible drifts. We address both in the next two subsections.

\subsection{Description of the class of admissible drift functions}
\label{section:description_admissible_drift}

Recall the definition of the distance in total variation between two probability measures. For any $\mu, \nu \in \mathcal P(\R)$, 
\begin{align}
\label{def:dtv}
\dtv(\mu,\nu) = 2 \inf_{\substack{\mathcal L(X)=\mu \\ \mathcal L(Y)=\nu}} \P{X\neq Y}, 
\end{align}
where the infimum is taken here over every coupling $(X,Y)$ of random variables $X$ and $Y$ in $L_2(\Omega, \mathcal F, \mathbb P)$ with respective distributions $\mu$ and $\nu$, where $(\Omega,\mathcal F, \mathbb P)$ is any fixed Polish and atomless probability space.

Let us define the following space on which we will consider the drift function:
\begin{defin}
\label{def:class_HC}
Let $\eta > 0$ and  $\delta \in [0,1]$. 
We say that $b: \R \times \mathcal P_2(\R) \to \R$ is of class $(H^\eta, \mathcal C^\delta)$ if there are measurable functions  $\lambda^\Re, \lambda^\Im: \R \times \mathcal P_2(\R) \to \R$ and $\Lambda: \R \to \R_+$ such that for every $x\in \R$ and $\mu \in \mathcal P_2(\R)$, 
\begin{align*}
b(x,\mu)=\int_\R \crochetk^{-\eta} \Big( \cos (kx) \lambda^\Re(k,\mu) + \sin(kx) \lambda^\Im(k, \mu)  \Big) \mathrm dk ,
\end{align*}
where
\begin{itemize}
%\item $a$ is Lipschitz-continuous in total variation distance: there is $C>0$ such that for every $\mu, \nu \in \mathcal P_2(\R)$, $|a(\mu)-a(\nu)| \leq C \dtv (\mu, \nu)$;
\item $\lambda:=\lambda^\Re + i\lambda^\Im$ is bounded in the measure variable: for every $k \in \R$ and $\mu \in \mathcal P_2(\R)$,  $|\lambda(k,\mu)| \leq \Lambda(k)$;
\item $\lambda$ is $\delta$-Hölder continuous in the measure variable: for every $k$ and for every $\mu, \nu \in \mathcal P_2(\R)$,  $|\lambda(k,\mu)-\lambda(k,\nu)| \leq \Lambda(k) \dtv(\mu,\nu)^\delta$;
\item  $\Lambda \in L_1(\R) \cap L_2(\R)$.
\end{itemize}
\end{defin}

In particular, if $b$ is of class $(H^\eta, \mathcal C^\delta)$, then for every $\mu \in \mathcal P_2(\R)$, the map $x \mapsto b(x,\mu)$ belongs to the Sobolev space $H^\eta(\R)$. Indeed, denoting by $\mathcal F(b(\cdot,\mu))$ the Fourier transform of $b(\cdot, \mu)$, we have
\begin{align*}
\int_\R |\crochetk^{\eta} \mathcal F(b(\cdot,\mu))(k)|^2 \mathrm dk \leq C 
\int_\R |\lambda(-k,\mu)|^2 \mathrm dk \leq C \int_\R \Lambda(-k)^2 \mathrm dk <+\infty. 
\end{align*}

Moreover, if $b$ is of class $(H^\eta, \mathcal C^\delta)$, then for every $x \in \R$, $\mu \mapsto b(x,\mu)$ is $\delta$-Hölder continuous in total variation distance:
\begin{align*}
|b(x,\mu)-b(x,\nu)| 
\leq \int_\R \crochetk^{-\eta} | \lambda(k,\mu)-\lambda(k,\nu)| \mathrm dk 
&\leq \int_\R \crochetk^{-\eta} \Lambda(k) \mathrm dk \;
\dtv(\mu,\nu)^\delta. 
\end{align*}
Since $\eta \geq 0$ and $\Lambda \in L_1(\R)$, $\int_\R \crochetk^{-\eta} \Lambda(k) \mathrm dk$ is finite. 

In order to apply our strategy, we need to assume the following minimal regularity assumption on the drift $b$: 
\begin{align*}
\eta > \frac{3}{2}(1-\delta).
\end{align*}
It describes a continuum of admissible drift functions~$b$ between the following two extremal classes:
\begin{itemize}
\item if $\delta=0$: the drift is only bounded in the measure variable. 
In that case, $\eta$ has to satisfy $\eta>\frac{3}{2}$: this coincide exactly with the assumptions of  Theorem~\ref{theo:weak_wp_hypotheses_simplifiees}, where we assumed that for each $\mu \in \mathcal P_2(\R)$, $b(\cdot,\mu)$ belongs  to $H^\eta(\R)$ with a uniform $H^\eta$-norm for some $\eta \geq \alpha>\frac{3}{2}$. 
\item if $\delta=1$: the drift is Lipschitz-continuous in total variation distance with respect to the measure argument. Jourdain~\cite{jourdain97},  Mishura-Veretennikov~\cite{mishuraverten}, Lacker~\cite{lacker18}, Chaudru de Raynal-Frikha~\cite{chaudrufrikha18}, Röckner-Zhang~\cite{roecknerzhang18} among others have proved results under this assumption if $b$ is only measurable and bounded in the space variable. Our result applies  if $b$ belongs to $H^\eta(\R)$ for some $\eta>0$ and if the Fourier transform of $b$ belongs to $L_1(\R)$; it is a subset of the space $\mathcal C_0(\R)$ of continuous functions vanishing at infinity. 
\end{itemize}

\subsection{Definition of the notion of solution}
\label{section:definition_solution}

Let us consider a new model, with the purpose to make a link between the results obtained in this paper and recent regularization by noise results for McKean-Vlasov equations obtained among others by~\cite{jourdain97,mishuraverten,lacker18, chaudrufrikha18, roecknerzhang18}. There are some important changes with respect to the model~\eqref{eq:definition_FP_perturbee} previously studied in this work. The main modification consists in adding a Brownian motion~$\beta$, independent of $w$, in order to take benefit from some additional regularizing effect. 
In short, the role of $\beta$ in the model below is to smooth out the (finite dimensional) space variable in the drift coefficient. Obviously, this comes in contrast with 
the role of the Brownian sheet $w$, the action of which is 
to mollify 
the velocity field in the measure argument, as made clear by 
Theorem~\ref{theo:weak_wp_hypotheses_simplifiees}. Of course, we know 
from the standard diffusive case (i.e. $w \equiv 0$ and $b(x,\mu) \equiv b(x)$) that, in order to fully benefit from the action of $\beta$ onto the space variable, we should average out over all the possible realizations of $\beta$ (for instance, we may consider the 
semi-group generated by the diffusion process). In the present context, this prompts us to disentangle the roles of the two noises 
$\beta$ and $w$ in the mean-field interaction. Similarly to the standard McKean-Vlasov model, we shall compute the law of the particle (i.e. the mean-field component) with respect to the noise carrying $\beta$ and the initial condition, but, similarly to the model addressed in the previous section, we shall freeze the realization of $w$. According to the terminology that has been used in the literature (see 
in particular the mean-field game literature \cite{gueant:lasry:lions,carmonadelarue18:II}, see also 
the earlier references
\cite{vaillancourt,dawson:vaillancourt,kurtz:xiong:1,kurtz:xiong:2,coghi}), 
$\beta$ will be regarded as an idiosyncratic noise acting independently on each particle and $w$ as a common (or systemic) noise.
To sum-up, in the previous sections, we defined $\mu_t$  as $\mu_t=\Leb_{[0,1]} \circ (z_t)^{-1}$, the space $[0,1]$ therein carrying the initial condition in the form $z_{0}(u)=g(u)$ for $u \in [0,1]$. Implicitly, this allowed us to identify~$\mu_{t}$ with the conditional law of $z_t$ given $(w(k,s))_{k \in \R, s\leq t}$. Now, $\mu_t$ will be understood as the law of the particle over the randomness carrying both $\beta$ and the initial condition. This idea is made more precise in Remark~\ref{rem:definition_mut}.

There are two other modifications of the model introduced in this section. In the Girsanov's arguments that we will use in the following proofs, we will not be able  to preserve the monotonicity of the solution with respect to the variable~$u$ 
as in the first part. 
So we decide to use the same framework as usual in the literature on McKean-Vlasov SDEs, namely we take as initial condition a random variable $\xi$ of prescribed law, independent from $\beta$ and $w$. 
Furthermore, we decide to consider the easiest possible assumption on the mass, namely that it is constant equal to one.

Let  $\eta > 0$ and  $\delta \in [0,1]$ be such that $\eta > \frac{3}{2}(1-\delta)$. Let $b:\R \times \mathcal P_2(\R) \to \R$ be of class $(H^\eta, \mathcal C^\delta)$. Let $f: \R \to \R$ be a function of order $\alpha$, such that
\begin{align}
\label{assumption_alpha}
\frac{3}{2}<  \alpha  \leq \frac{\eta}{1-\delta} ,
\end{align}
(if $\delta=1$, we just require that $\alpha> \frac{3}{2}$). 
The condition $\eta> \frac{3}{2}(1-\delta)$ insures that this choice of $\alpha$ is possible. 
Let $\mu_0$ be any given initial condition in $\mathcal P_2(\R)$. 
Let us consider the following SDE:
\begin{align}
\label{sde_beta_drift_general}
\left\{
\begin{aligned}
\mathrm dz_t&=  \int_\R f(k) \Re (  e^{-ik z_t} \mathrm d w (k,t)) + \mathrm d\beta_t + b( z_t, \mu_t) \mathrm dt, \\
\mu_t &= \mathcal L^{\mathbb P}(z_t |\mathcal G_t^{\mu, W})  \text{ a.s.}  \\
z_0 &=\xi, \quad    \mathcal L^{\mathbb P} (\xi)=\mu_0,
\end{aligned}
\right.
\end{align}
where the filtration $(\mathcal G_t^{\mu, W})_{t\in [0,T]}$ is defined by $\mathcal G_t^{\mu,W}:= \sigma \{ w(\cdot,s), \mu_s \; ; s\leq t\}$ and where $(\mu,w)$ is independent of $(\beta,\xi)$. 
\textit{Note:} In that equation and in all this section, $\mathcal L^{\mathbb P} (X)$ denotes the law of the random variable $X$ under the probability measure $\mathbb P$, that is the distribution $\mathbb P \circ X^{-1}$.

Let us define the notion of weak solution to~\eqref{sde_beta_drift_general}:
\begin{defin}
\label{def:weak_solution_drift_general}
An element $\mathbf \Omega= (\Omega, \mathcal G, (\mathcal G_t)_{t\in [0,T]}, \mathbb P, z,w, \beta, \xi)$ is said to be a \emph{weak solution} to equation~\eqref{sde_beta_drift_general} if
\begin{itemize}
\item[-] $(\Omega, \mathcal G,(\mathcal G_t)_{t\in [0,T]}, \mathbb P )$ is a filtered probability space satisfying usual conditions,
\item[-] $(w,\beta,\xi)$ are independent random variables on $(\Omega, \mathcal G)$, where
\subitem $\triangleright$ $w:=(\wR,\wI)$, with $(\wR(k,t))_{k\in \R,t\in [0,T]}$ and $(\wI(k,t))_{k\in \R,t\in [0,T]}$  two independent $(\mathcal G_t)_{t\in [0,T]}$-Brownian sheets under $\mathbb P$, 
\subitem $\triangleright$ $(\beta_t)_{t\in [0,T]}$ is a standard $(\mathcal G_t)_{t\in [0,T]}$-Brownian motion under $\mathbb P$,
\subitem $\triangleright$ for any $t \in [0,T]$, the $\sigma$-field  $\sigma \{\wR(k,t')-\wR(k,t),\; \wI(k,t')-\wI(k,t),\; \beta_{t'}-\beta_{t} ; $  $k \in \R,\ t' \in [t,T] \}$ is independent of ${\mathcal G}_{t}$ under ${\mathbb P}$,
\subitem $\triangleright$ $\xi$ has distribution $\mu_0$ under $\mathbb P$;
\item[-] $(z_t)_{t\in [0,T]}$ is a continuous $(\mathcal G_t)_{t\in [0,T]}$-adapted process satisfying $\mathbb P$-almost surely, for every $t\in [0,T]$,
\begin{align*}
z_t= \xi + \int_0^t \!\! \int_\R f(k) \Re( e^{-ikz_s} \mathrm dw(k,s)) +\beta_t+\int_0^t b(z_s,\mu_s) \mathrm ds.
\end{align*}
\item[-] $(\mu_{t})_{t \in [0,T]}$ is a ${\mathcal P}_{2}({\mathbb R})$-valued continuous 
$({\mathcal G}_{t})_{t \in [0,T]}$-adapted process such that, for every $t\in [0,T]$, ${\mathbb P}$-almost surely, $\mu_t= \mathcal L^{\mathbb P} (z_t | \mathcal G_t^{\mu,W})$, where $\mathcal G_t^{\mu,W}:= \sigma \{ w(k,s), \mu_s \; ; k\in \R, s\leq t\}$,
\item[-] \textit{compatibility condition:} $(\mu,w)$ is independent of $(\beta,\xi)$ under ${\mathbb P}$ (and thus $(\mu,w)$, $\beta$ and $\xi$ are independent) and, more generally, 
for every $t \in [0,T]$, the processes $(\xi,w,\mu)$ and $\beta$ are conditionally independent 
given $\mathcal G_t$.
\end{itemize}
\end{defin}

\begin{rem}
\label{rem:definition_mut}
The last two conditions are certainly the most difficult ones to understand. In fact, both are dictated by the fact that we are looking for weak solutions only: \textit{a priori}, nothing is said on the measurability of $z$ and $\mu$ with respect to the inputs 
$\xi$, $w$ and $\beta$. In particular, at this stage, $\mu$ may not be measurable with respect to $w$ (which comes in contrast with the intuitive explanations we gave in introduction of the section). This is the rationale for defining the McKean-Vlasov constraint 
in terms of the conditional law of $z_{t}$ given the $\sigma$-field generated (up to time $t$) not only by $w$ but also 
by $\mu$ itself. Similarly, the compatibility condition  has been widely used (in a slightly stronger manner) in the analysis of weak solutions to stochastic equations, see for instance \cite{kurtz07, kurtz14}. In short, it says that the observation of $z$ does not 
corrupt the independence property of  
$(\xi,\mu,w)$ and $\beta$. Quite obviously, see for instance \cite[Remark I.11]{carmonadelarue18:II}, compatibility is automatically satisfied if
$\mu$ is adapted with respect to the completion of ${\mathcal G}^W$, in which case the solution should be called semi-strong.
\end{rem}

We will prove weak well-posedness for the SDE~\eqref{sde_beta_drift_general} in three steps: $i)$ when the drift $b$ is equal to zero; $ii)$ when the drift $b$ is bounded and Lipschitz-continuous in total variation distance with respect to the measure variable; essentially, we will adapt to our case the proof given by Lacker~\cite{lacker18}, where we will make use of the averaging over the noise~$\beta$; $iii)$ in the general case, when the drift $b$ belongs to the class $(H^\eta, \mathcal C^\delta)$: we will use here the same arguments as in the first part, using the infinite-dimensional Brownian sheet~$w$ to mollify~$b$ in the measure argument.

Let us first consider the case where the drift is zero: 
\begin{align}
\label{sde_beta_drift_nul}
\left\{
\begin{aligned}
\mathrm dz_t&=  \int_\R f(k) \Re (  e^{-ik z_t} \mathrm d w (k,t)) + \mathrm d\beta_t , \\
z_0 &=\xi, \quad    \mathcal L^{\mathbb P} (\xi)=\mu_0. 
\end{aligned}
\right.
\end{align}
In this case, well-posedness holds even in a strong sense. 
\begin{prop}
\label{prop:sansdrift}
Let $f:\R \to \R$ be a function of order $\alpha> \frac{3}{2}$. Then there is a unique strong solution to equation~\eqref{sde_beta_drift_nul}. Moreover, if $\mathbf{\Omega^1}$ and $\mathbf{\Omega^2}$ are two solutions to~\eqref{sde_beta_drift_nul}, then $\mathcal L^{\mathbb P^1}(z^1,w^1, \beta^1)=\mathcal L^{\mathbb P^2}(z^2,w^2, \beta^2)$. 
\end{prop}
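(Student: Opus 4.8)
The point is that~\eqref{sde_beta_drift_nul} is, despite being driven by the infinite-dimensional noise $(w,\beta)$, a standard stochastic differential equation in $\R$ whose coefficients $x\mapsto\cos(kx)$, $x\mapsto\sin(kx)$ are Lipschitz with constant $|k|$, and $k\mapsto\crochetk f(k)$ is square integrable since $f$ is of order $\alpha>\frac32$ (Remark~\ref{rem:comparaison Cauchy}); so a Picard fixed-point argument, in the spirit of the proof of Proposition~\ref{prop:sde M ex&uniq}, produces a unique strong solution, after which uniqueness in law is a consequence of pathwise uniqueness. I would first record that a continuous $(\mathcal G_t)_{t\in[0,T]}$-adapted real process $(z_t)_{t\in[0,T]}$ solves~\eqref{sde_beta_drift_nul} iff, $\mathbb P$-a.s. for every $t$,
\[
z_t=\xi+\int_0^t\!\!\int_\R\cos(kz_s)f(k)\,\mathrm d\wR(k,s)+\int_0^t\!\!\int_\R\sin(kz_s)f(k)\,\mathrm d\wI(k,s)+\beta_t,
\]
that these Walsh stochastic integrals are well defined and admit continuous modifications (for a predictable $z$ the quantity $\int_0^T\!\int_\R\cos^2(kz_s)f^2(k)\,\mathrm dk\,\mathrm ds\le T\|f\|_{L_2}^2$ is deterministically finite, likewise for the sine term), and that $\E{\xi^2}=\int_\R x^2\,\mathrm d\mu_0(x)<\infty$ because $\mu_0\in\mathcal P_2(\R)$, while $\E{\sup_{t\le T}\beta_t^2}\le 4T$.

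\noindent Next I would run the contraction argument. Let $(\mathcal M,\|\cdot\|_{\mathcal M})$ be the complete space of continuous $(\mathcal G_t)_{t\in[0,T]}$-adapted real processes $z$ with $\|z\|_{\mathcal M}^2:=\E{\sup_{t\le T}|z_t|^2}<\infty$, and define $\Phi(z)_t$ as the right-hand side above. By the Burkholder--Davis--Gundy inequality,
\[
\E{\sup_{t\le T}|\Phi(z)_t|^2}\le C\bigl(1+\E{\xi^2}+\|f\|_{L_2}^2\bigr)<\infty ,
\]
so $\Phi(\mathcal M)\subset\mathcal M$, and $\Phi(z)$ is continuous since the stochastic integrals and $\beta$ are. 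Using $|\cos(kx_1)-\cos(kx_2)|\le|k|\,|x_1-x_2|$ and the analogous bound for the sine, BDG yields, for every $t\le T$,
\[
\E{\sup_{s\le t}|\Phi(z^1)_s-\Phi(z^2)_s|^2}\le C\,\E{\int_0^t\!\!\int_\R\crochetk^2 f(k)^2\,|z_s^1-z_s^2|^2\,\mathrm dk\,\mathrm ds}\le C_f\int_0^t\E{\sup_{r\le s}|z_r^1-z_r^2|^2}\,\mathrm ds ,
\]
with $C_f$ depending only on $\int_\R\crochetk^2 f(k)^2\,\mathrm dk$. Iterating gives $\|\Phi^{\circ n}(z^1)-\Phi^{\circ n}(z^2)\|_{\mathcal M}^2\le\frac{(C_f T)^n}{n!}\|z^1-z^2\|_{\mathcal M}^2$, so $\Phi^{\circ n}$ is a contraction for $n$ large and $\Phi$ has a unique fixed point $z\in\mathcal M$, which is a continuous solution of~\eqref{sde_beta_drift_nul}; the same estimate applied directly to two solutions, together with Gronwall's lemma, gives pathwise uniqueness. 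Since the Picard scheme uses only $\xi$, $\beta$ and Walsh integrals against $w$, all of which are measurable functionals of the inputs on any filtered probability space, the limit is of the form $z=\Psi(\xi,w,\beta)$ for a fixed measurable map $\Psi$; in particular $z$ is adapted to the $\mathbb P$-completed filtration generated by $(\xi,w,\beta)$, i.e.\ the solution is strong.

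\noindent Finally, for uniqueness in law, let $\mathbf\Omega^i=(\Omega^i,\mathcal G^i,(\mathcal G^i_t),\mathbb P^i,z^i,w^i,\beta^i,\xi^i)$, $i=1,2$, be two solutions. On each, $(\xi^i,w^i,\beta^i)$ are independent with $\xi^i\sim\mu_0$, $w^i$ a Brownian sheet and $\beta^i$ a Brownian motion, so $\mathcal L(\xi^i,w^i,\beta^i)$ does not depend on $i$. Pathwise uniqueness and weak existence having been established, the infinite-dimensional Yamada--Watanabe theorem (see~\cite[Prop.~5.3.20]{karatzasshreve91}, as invoked in Lemma~\ref{lemme:cas_sans_drift}) -- equivalently the representation $z^i=\Psi(\xi^i,w^i,\beta^i)$ above -- shows that $\mathcal L^{\mathbb P^i}(z^i,w^i,\beta^i)$ is the image of $\mathcal L(\xi^i,w^i,\beta^i)$ under $(\zeta,\mathbf v,\mathbf b)\mapsto(\Psi(\zeta,\mathbf v,\mathbf b),\mathbf v,\mathbf b)$, hence is independent of $i$. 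The only points requiring care are the well-posedness and BDG inequality for the Walsh integral against the Brownian sheet (standard, cf.~\cite{walsh86}) and phrasing pathwise uniqueness so that Yamada--Watanabe applies with two driving noises and the compatibility condition of Definition~\ref{def:weak_solution_drift_general}; no genuinely new difficulty arises, the statement being a routine Lipschitz-coefficient result.
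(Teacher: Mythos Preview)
Your proof is correct and follows essentially the same approach as the paper: a Picard fixed-point argument in the spirit of Proposition~\ref{prop:sde M ex&uniq} (now simpler because the mass is identically~$1$ and the additional noise~$\beta$ is harmless) for strong well-posedness, followed by the infinite-dimensional Yamada--Watanabe theorem for uniqueness in law and the measurable representation $z=\mathcal Z(\xi,w,\beta)$. The paper merely sketches these steps by reference, while you spell them out, but the strategy is identical.
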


\begin{proof}
Strong well-posedness can be proved by a classical fixed-point argument, as in the proof of Proposition~\ref{prop:sde M ex&uniq} for example (but the proof is now easier since the mass is equal to $1$ everywhere). The additional noise $\beta$ does not change anything to this proof. Moreover, the assumption $\alpha> \frac{3}{2}$ insures that the assumption of square integrability of $k \mapsto \crochetk f(k)$ is satisfied (see Proposition~\ref{prop:sde M ex&uniq}); in other words, it insures that the diffusive coefficient in front of the noise $w$ is Lipschitz-continuous. 

Furthermore, by Yamada-Watanabe Theorem, the law of $(z,w,\beta)$ solution to~\eqref{sde_beta_drift_nul} is uniquely determined. That result is stated and proved in~\cite[Prop 5.3.20, p.309]{karatzasshreve91} in a finite-dimensional case, but the proof is the same for an   infinite dimensional noise.
Moreover,  a corollary to Yamada-Watanabe Theorem~\cite[Cor 5.3.23, p.310] {karatzasshreve91} states the following result: if $\mathbf \Omega=(\Omega, \mathcal G, (\mathcal G_t)_{t\in [0,T]}, \mathbb P, z,w, \beta, \xi)$ is a  solution to~\eqref{sde_beta_drift_nul}, then $\mathbb P$-almost surely, for every $t\in [0,T]$,
\begin{align*}
z_t= \mathcal Z_t(\xi, w,\beta),
\end{align*}
where $\mathcal Z$ is a function defined on the canonical space
\begin{align}
\label{Z_de_yamadawatanabe}
\mathcal Z: \R \times \mathcal C(\R \times [0,T] , \R^2) \times \mathcal C([0,T], \R) & \to \mathcal C([0,T], \R)  \\
(x,\omega^W,\omega^\beta) & \mapsto \mathcal Z(x,\omega^W,\omega^\beta)\notag 
\end{align}
 which is progressively measurable with respect to the canonical filtration on $ \mathcal C(\R \times [0,T] , \R^2) \times \mathcal C([0,T], \R)$. 
 Remark that $\mathcal C([0,T], \R)$ represents here the canonical space on which we define the Wiener measure of a standard Wiener process on $[0,T]$, and $\mathcal C(\R \times [0,T] , \R^2)$ represents the Wiener space associated to the measure of a $\R^2$-valued Brownian sheet $(\wR, \wI)$ on $\R \times [0,T]$. 
\end{proof}

\subsection{Resolution of the SDE when the drift is Lipschitz continuous}
\label{section:resolution_lipschitz}

Let us assume that $\widetilde{b}: \R \times \mathcal P_2(\R) \to \R$ is uniformly bounded and uniformly Lipschitz-continuous in total variation distance in the measure variable. 
We consider the following SDE with the drift $\widetilde{b}$:
\begin{align}
\label{sde_beta_drift_lipschitz}
\left\{
\begin{aligned}
\mathrm dz_t&=  \int_\R f(k) \Re (  e^{-ik z_t} \mathrm d w (k,t)) + \mathrm d\beta_t + \widetilde{b}( z_t, \mu_t) \mathrm dt, \\
\mu_t &= \mathcal L^{\mathbb P}(z_t | \mathcal G_t^{\mu,W}),  \\
z_0 &=\xi, \quad    \mathcal L^{\mathbb P} (\xi)=\mu_0,
\end{aligned}
\right.
\end{align}
with the same assumptions and the same interpretation as in Definition 
\ref{def:weak_solution_drift_general}. 
Let us prove existence and uniqueness of a weak solution. 
\begin{prop}
\label{prop:drift_lipschitz_exist}
Let $f:\R \to \R$ be a function of order $\alpha> \frac{3}{2}$. Let $\widetilde{b}: \R \times \mathcal P_2(\R) \to \R$ be a function such that there exists $C>0$ 
satisfying for every $x \in \R$ and for every $\mu,\nu \in \mathcal P_2(\R)$
\begin{itemize}
\item[-] $ |\widetilde{b}(x,\mu)| \leq C$;
\item[-] $|\widetilde{b}(x,\mu)-\widetilde{b}(x,\nu) | \leq C \dtv (\mu,\nu)$.
\end{itemize}
Then there exists a  weak solution to~\eqref{sde_beta_drift_lipschitz}.
\end{prop}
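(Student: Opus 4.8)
The plan is to build a weak solution to~\eqref{sde_beta_drift_lipschitz} by a Picard-type fixed-point iteration on the measure-valued process $(\mu_t)_{t \in [0,T]}$, taking advantage of the fact that the drift-free equation~\eqref{sde_beta_drift_nul} is already well-posed in the strong sense by Proposition~\ref{prop:sansdrift} and of the Lipschitz-continuity of $\widetilde b$ in total variation. The starting point is to work on the canonical setup carrying $(\xi, w, \beta)$ with $(\mu,w)$ independent of $(\beta,\xi)$, and to set up the iteration at the level of the common-noise randomness: given a candidate $\mathcal G^W$-adapted (or more generally compatible) flow of conditional laws $(\mu_t^{(n)})_{t\in[0,T]}$, solve the \emph{linear} SDE
\begin{align*}
\mathrm dz_t^{(n+1)} = \int_\R f(k)\,\Re(e^{-ik z_t^{(n+1)}}\,\mathrm dw(k,t)) + \mathrm d\beta_t + \widetilde b(z_t^{(n+1)},\mu_t^{(n)})\,\mathrm dt, \qquad z_0^{(n+1)}=\xi,
\end{align*}
which is strongly well-posed since the $w$-coefficient is Lipschitz (as $\alpha>\tfrac32$, $k\mapsto\crochetk f(k)\in L_2$, see Proposition~\ref{prop:sde M ex&uniq}) and $\widetilde b$ is bounded; then update $\mu_t^{(n+1)} := \mathcal L^{\mathbb P}(z_t^{(n+1)} \mid \mathcal G_t^{\mu^{(n)},W})$. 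One then shows this map is a contraction in an appropriate complete metric on processes of conditional laws, for instance $\mathbb E\big[\sup_{s\le t}\dtv(\mu_s^{(n+1)},\mu_s^{(n)})^2\big]$ with a time-weight, using the coupling characterization~\eqref{def:dtv}: two copies $z^{(n+1)}, z'^{(n+1)}$ driven by the \emph{same} $(\xi,w,\beta)$ but by $\mu^{(n)}, \mu'^{(n)}$ differ, by Gronwall applied to the difference of drifts together with the Lipschitz bound on $\widetilde b$, by an amount controlled by $\int_0^t \dtv(\mu_s^{(n)},\mu_s'^{(n)})\,\mathrm ds$; since $\dtv(\mu_t^{(n+1)},\mu_t'^{(n+1)}) \le 2\,\mathbb P(z_t^{(n+1)}\ne z_t'^{(n+1)} \mid \cdot)$, this propagates the contraction after enough iterations (as in the estimate $h_{n+1}(t)\le C\int_0^t h_n(s)\,\mathrm ds$ in the proof of Proposition~\ref{prop:sde M ex&uniq}).

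An arguably cleaner alternative, which I would actually prefer to carry out, is to adapt Lacker's fixed-point argument~\cite{lacker18} directly: one introduces the drift-free solution from Proposition~\ref{prop:sansdrift}, written via the Yamada–Watanabe map $z^0 = \mathcal Z(\xi,w,\beta)$ of~\eqref{Z_de_yamadawatanabe}, and for a given flow of conditional laws $\mu$ performs a Girsanov change of measure removing the drift $\widetilde b$. Because $\widetilde b$ is \emph{bounded}, the Girsanov density
\begin{align*}
\frac{\mathrm d\mathbb P^\mu}{\mathrm d\mathbb P} = \exp\Big( \int_0^T \widetilde b(z_s^0,\mu_s)\,\mathrm d\beta_s - \frac12\int_0^T \widetilde b(z_s^0,\mu_s)^2\,\mathrm ds \Big)
\end{align*}
satisfies Novikov's condition trivially (the exponent of the Novikov integrand is bounded by $\tfrac12\|\widetilde b\|_\infty^2 T$), so under $\mathbb P^\mu$ the process $z^0$ solves the drifted equation with drift $\widetilde b(\cdot,\mu_\cdot)$ while $w$ remains a Brownian sheet and $\widetilde\beta_t := \beta_t - \int_0^t \widetilde b(z_s^0,\mu_s)\,\mathrm ds$ is a Brownian motion. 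The map $\mu \mapsto \big(t\mapsto \mathcal L^{\mathbb P^\mu}(z_t^0 \mid \mathcal G_t^{\mu,W})\big)$ is then shown to have a fixed point via Schauder/Banach: the total-variation Lipschitz bound on $\widetilde b$ controls, through the density, how $\mathbb P^\mu$ depends on $\mu$, and a computation with $|\mathrm d\mathbb P^\mu/\mathrm d\mathbb P - \mathrm d\mathbb P^{\mu'}/\mathrm d\mathbb P|$ in $L_2(\mathbb P)$ (using $|e^a-e^b|\le(e^a+e^b)|a-b|$ and boundedness) yields a contraction estimate of the form $\dtv(\mu_t,\mu_t') \le C\int_0^t \mathbb E[\dtv(\mu_s,\mu_s')]\,\mathrm ds$ after iterating, hence uniqueness of the fixed point on $[0,T]$ by the usual $t^n/n!$ argument.

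The main steps, in order, would be: (1) record that Proposition~\ref{prop:sansdrift} supplies, on the canonical space, a strong drift-free solution together with the measurable solution map $\mathcal Z$; (2) for fixed $\mu$, apply Girsanov (Novikov holds since $\widetilde b$ is bounded) to produce $\mathbb P^\mu$ under which $(z^0, w, \widetilde\beta, \xi)$ solves the drifted equation with the prescribed drift flow, and check that the compatibility condition of Definition~\ref{def:weak_solution_drift_general} is preserved under the change of measure (the Girsanov density is adapted to the filtration generated by $z^0, \beta$, so independence/compatibility of $(\xi,\mu,w)$ and $\widetilde\beta$ is kept); (3) define $\Phi(\mu)_t := \mathcal L^{\mathbb P^\mu}(z_t^0 \mid \mathcal G_t^{\mu,W})$, check $\Phi$ maps a suitable complete space of $\mathcal P_2(\mathbb R)$-valued compatible processes into itself (continuity in $t$ and the $\mathcal P_2$-bound follow from moment estimates on $z^0$ plus boundedness of $\widetilde b$); (4) establish the contraction/Gronwall estimate in $\dtv$ and conclude existence (and, for free, uniqueness) of the fixed point, then read off that the resulting $\mathbf\Omega=(\Omega,\mathcal G,(\mathcal G_t),\mathbb P^\mu,z^0,w,\widetilde\beta,\xi)$ is a weak solution. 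I expect the genuine obstacle to be step~(2)–(3): namely verifying carefully that the conditioning $\sigma$-field $\mathcal G_t^{\mu,W}$ and the compatibility condition behave well under the Girsanov reweighting and under the fixed-point limit — in particular that $\Phi(\mu)$ is again an admissible (continuous, adapted, compatible) measure flow and that the conditional-law identity $\mu_t=\mathcal L^{\mathbb P^\mu}(z_t^0\mid\mathcal G_t^{\mu,W})$ is stable in the limit — whereas the analytic estimates are routine thanks to the boundedness and total-variation Lipschitz continuity of $\widetilde b$.
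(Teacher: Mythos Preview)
Your preferred (second) approach---solve the drift-free equation via Proposition~\ref{prop:sansdrift} to get $z^0=\mathcal Z(\xi,w,\beta)$, then Girsanov-shift only the idiosyncratic noise $\beta$ and run a fixed-point on the measure flow---is exactly the route the paper takes, following Lacker. Two points of comparison are worth recording.

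First, the paper sets up the fixed point on the space $\mathcal X$ of $\mathcal G^W$-progressively measurable maps $w\mapsto\mu^T(w)$, not on $\mathcal G^{\mu,W}$-adapted flows. This is the natural choice for existence: since $\mu$ is built as a measurable function of $w$ alone, the compatibility condition of Definition~\ref{def:weak_solution_drift_general} is automatic (see Remark~\ref{rem:definition_mut}), and your worry in step~(2)--(3) evaporates. Your formulation with $\mathcal G_t^{\mu,W}$ is more general than needed here and makes the verification harder without gain.

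Second, for the contraction estimate the paper does \emph{not} bound $|\mathrm d\mathbb P^\mu/\mathrm d\mathbb P-\mathrm d\mathbb P^{\mu'}/\mathrm d\mathbb P|$ in $L_2$; instead it computes the relative entropy exactly,
\[
H\bigl(\phi(\mu)^t\,\big|\,\phi(\nu)^t\bigr)=\tfrac12\,\mathbb E^{\mathbb P^\mu}\!\Bigl[\int_0^t|\widetilde b(z_s,\mu_s)-\widetilde b(z_s,\nu_s)|^2\,\mathrm ds\,\Big|\,\mathcal G_t^W\Bigr],
\]
and then applies Pinsker's inequality. This is cleaner than the elementary density bound you suggest, since it avoids moment control on exponential martingales; the entropy identity falls out directly from the Girsanov structure (this is the content of Lemma~\ref{lemme:inegalite_lacker}).

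Finally, your first approach (direct Picard on the SDE, coupling two copies with the same $(\xi,w,\beta)$) has a genuine gap: $\widetilde b$ is only Lipschitz in $\mu$, not in $x$, so the pathwise Gronwall comparison of $z^{(n+1)}$ and $z'^{(n+1)}$ does not give control on $\mathbb P(z_t^{(n+1)}\neq z_t'^{(n+1)})$---once the drifts differ on a set of positive probability, the continuous solutions separate and the $\dtv$ bound via~\eqref{def:dtv} is lost. You were right to abandon it.
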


\begin{prop}
\label{prop:drift_lipschitz_uniqu}
Under the same assumptions as Proposition~\ref{prop:drift_lipschitz_exist}, 
if  $\mathbf{\Omega^1}$ and $\mathbf{\Omega^2}$ are two weak solutions to~\eqref{sde_beta_drift_lipschitz}, then $\mathcal L^{\mathbb P^1}(z^1,w^1)=\mathcal L^{\mathbb P^2}(z^2,w^2)$. In particular, uniqueness in law holds for the SDE~\eqref{sde_beta_drift_lipschitz}. 
Moreover, for any weak solution $\mathbf \Omega$, $(\mu_t)_{t\in [0,T]}$ is adapted to the completion of  $(\mathcal G_t^W= \sigma \{ w(\cdot,s)  \; ; s\leq t\})_{t\in [0,T]}$. 
\end{prop}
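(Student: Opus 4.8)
The plan is to transpose Lacker's fixed-point argument \cite{lacker18} to the present conditional setting, the idiosyncratic noise $\beta$ being exactly what allows us to treat the frozen equation by a Girsanov transformation based at the driftless equation~\eqref{sde_beta_drift_nul}. Throughout, I denote by $\mathbb{W}$ and $\mathbb{B}$ the laws of a $\R^2$-valued Brownian sheet on $\R\times[0,T]$ and of a standard Brownian motion on $\mathcal{C}([0,T],\R)$, and by $\mathcal{Z}$ the canonical solution map of~\eqref{sde_beta_drift_nul} furnished by Proposition~\ref{prop:sansdrift}, which applies since $\alpha>\tfrac32$. Let $\mathbf{\Omega}^i=(\Omega^i,\mathcal{G}^i,(\mathcal{G}^i_t),\mathbb{P}^i,z^i,w^i,\beta^i,\xi^i)$, $i=1,2$, be two weak solutions. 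Since $\widetilde{b}$ is bounded, Novikov's condition holds and one defines $\mathbb{Q}^i$ by $\frac{\mathrm d\mathbb{Q}^i}{\mathrm d\mathbb{P}^i}=\exp\big(-\int_0^T\widetilde{b}(z^i_s,\mu^i_s)\,\mathrm d\beta^i_s-\tfrac12\int_0^T\widetilde{b}(z^i_s,\mu^i_s)^2\,\mathrm ds\big)$. By Girsanov's theorem, $\widetilde{\beta}^i_t:=\beta^i_t+\int_0^t\widetilde{b}(z^i_s,\mu^i_s)\,\mathrm ds$ is a $\mathbb{Q}^i$-Brownian motion, $w^i$ remains a $\mathbb{Q}^i$-Brownian sheet (as $\langle w^i,\beta^i\rangle\equiv0$), and the compatibility condition is preserved because the density is $(\mathcal{G}_t^i)$-adapted; hence $(\Omega^i,\mathcal{G}^i,(\mathcal{G}^i_t),\mathbb{Q}^i,z^i,w^i,\widetilde{\beta}^i,\xi^i)$ is a weak solution to~\eqref{sde_beta_drift_nul}, and Proposition~\ref{prop:sansdrift} gives $z^i_t=\mathcal{Z}_t(\xi^i,w^i,\widetilde{\beta}^i)$, $\mathbb{P}^i$-a.s., for all $t$.

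Next I would show that the McKean--Vlasov constraint collapses, after this change of measure, into a closed equation driven by $w^i$ alone. Conditioning on $(w^i,\mu^i)$ and applying Girsanov to the conditional law, one checks that under $\mathbb{Q}^i$ the pair $(\widetilde{\beta}^i,\xi^i)$ is distributed as $\mathbb{B}\otimes\mu_0$ and is independent of $(w^i,\mu^i)$, whose marginal law is unchanged. Writing $L^i_t:=\exp\big(\int_0^t\widetilde{b}(z^i_s,\mu^i_s)\,\mathrm d\widetilde{\beta}^i_s-\tfrac12\int_0^t\widetilde{b}(z^i_s,\mu^i_s)^2\,\mathrm ds\big)=\frac{\mathrm d\mathbb{P}^i}{\mathrm d\mathbb{Q}^i}\big|_{\mathcal{G}^i_t}$, using the Bayes (Kallianpur--Striebel) formula together with $\mathcal{G}^{\mu^i,W}_t\subseteq\mathcal{G}^i_t$ and the progressive measurability of $\mathcal{Z}$, the identity $\mu^i_t=\mathcal{L}^{\mathbb{P}^i}(z^i_t\,|\,\mathcal{G}^{\mu^i,W}_t)$ rewrites, for every bounded measurable $\psi$,
\begin{align}
\label{eq:plan_fixed_point}
\int_\R\psi\,\mathrm d\mu^i_t=\frac{\iint \psi\big(\mathcal{Z}_t(x,w^i,b)\big)\,\ell_t(x,w^i,b,\mu^i)\,\mu_0(\mathrm dx)\,\mathbb{B}(\mathrm db)}{\iint \ell_t(x,w^i,b,\mu^i)\,\mu_0(\mathrm dx)\,\mathbb{B}(\mathrm db)},
\end{align}
where $\ell_t(x,\omega^W,b,m)$ is the measurable weight obtained by substituting $z_s=\mathcal{Z}_s(x,\omega^W,b)$ and $m$ into $L^i_t$, and the right-hand side depends only on $(w^i,\mu^i)$ restricted to $[0,t]$. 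Thus $\mu^i$ solves a fixed-point equation $\mu=\Psi(w^i,\mu)$ whose only data is the realization of $w^i$.

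Then I would run the contraction. For $\mathbb{W}$-a.e. realization $\omega^W$, consider $\Psi(\omega^W,\cdot)$ on $\mathcal{C}([0,T],\mathcal P_2(\R))$ with the metric $d_t(m,m'):=\sup_{s\le t}\dtv(m_s,m'_s)$. Comparing numerators and denominators in~\eqref{eq:plan_fixed_point} for two flows $m,m'$, using the Lipschitz bound $|\widetilde{b}(\cdot,m_s)-\widetilde{b}(\cdot,m'_s)|\le C\dtv(m_s,m'_s)$, a Burkholder--Davis--Gundy estimate on the difference of the exponential weights, and Pinsker's inequality, I expect $\dtv\big(\Psi_t(\omega^W,m),\Psi_t(\omega^W,m')\big)\le C\int_0^t\dtv(m_s,m'_s)\,\mathrm ds$, uniformly in $\omega^W$. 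Iterating, $\Psi(\omega^W,\cdot)^{\circ n}$ is a contraction for large $n$, so there is a unique fixed point $\mu[\omega^W]$, measurable in $\omega^W$. Hence $\mu^i=\mu[w^i]$ $\mathbb{P}^i$-a.s., which proves that $(\mu_t)_{t\in[0,T]}$ is adapted to the completion of $(\mathcal{G}^W_t)$ and that $\mathcal{L}^{\mathbb{P}^1}(\mu^1,w^1)=\mathcal{L}^{\mathbb{P}^2}(\mu^2,w^2)$, since $w^1$ and $w^2$ have the same law. Finally, since $L^i_T=\ell_T(\xi^i,w^i,\widetilde{\beta}^i,\mu[w^i])$ is now a \emph{fixed} measurable functional of $(\xi^i,w^i,\widetilde{\beta}^i)$ and, under $\mathbb{Q}^i$, $(\xi^i,w^i,\widetilde{\beta}^i)\sim\mu_0\otimes\mathbb{W}\otimes\mathbb{B}$, the identity $\mathbb{E}^{\mathbb{P}^i}[\phi(z^i,w^i)]=\mathbb{E}^{\mathbb{Q}^i}\big[\phi\big(\mathcal{Z}(\xi^i,w^i,\widetilde{\beta}^i),w^i\big)L^i_T\big]$ has a value independent of $i$ for every bounded measurable $\phi$, so $\mathcal{L}^{\mathbb{P}^1}(z^1,w^1)=\mathcal{L}^{\mathbb{P}^2}(z^2,w^2)$.

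The main obstacle is the reduction step~\eqref{eq:plan_fixed_point}: one must argue that the conditioning in the McKean--Vlasov constraint is not corrupted by the Girsanov change of measure, i.e. that under $\mathbb{Q}^i$ the inputs $(\widetilde{\beta}^i,\xi^i)$ are independent of $(w^i,\mu^i)$ with exactly the conditional law $\mathbb{B}\otimes\mu_0$; this is precisely where the compatibility condition of Definition~\ref{def:weak_solution_drift_general} enters, and it requires a careful conditional application of Girsanov (in the spirit of Kurtz's work on weak solutions). Once this is granted, the Burkholder--Davis--Gundy plus Gronwall contraction of Step~3 is routine, and the existence statement of Proposition~\ref{prop:drift_lipschitz_exist} follows in parallel: for $\mathbb{W}$-a.e. $\omega^W$ one takes $\mu[\omega^W]$ to be the fixed point built above, and one checks that, on the canonical space carrying $(\xi,w,\widetilde{\beta})\sim\mu_0\otimes\mathbb{W}\otimes\mathbb{B}$, the process $z=\mathcal{Z}(\xi,w,\widetilde{\beta})$ reweighted by $\ell_T(\xi,w,\widetilde{\beta},\mu[w])$ is a weak solution to~\eqref{sde_beta_drift_lipschitz}.
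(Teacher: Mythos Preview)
Your plan is correct and follows essentially the same route as the paper: Girsanov removes the drift, the compatibility condition yields independence of $(\widetilde{\beta},\xi)$ from $(w,\mu)$ under the new measure, and a fixed-point/contraction argument identifies $\mu$ as a measurable functional of $w$ alone. The paper carries out the contraction via the explicit relative-entropy identity of Lemma~\ref{lemme:inegalite_lacker} together with Pinsker and Gronwall, reusing the fixed point $\mu^\phi$ from the existence proof rather than your Kallianpur--Striebel representation, but these are equivalent executions of the same strategy.
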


Note that the statement of Proposition~\ref{prop:drift_lipschitz_uniqu} shows that the weak solution of~\eqref{sde_beta_drift_lipschitz} is adapted to the filtration generated by the noise $w$.

\begin{rem}
\label{rem:adaptness}
The question of the filtration under which the measure-valued process $(\mu_t)_{t\in [0,T]}$ is adapted is important here. Actually, we will see in the proof of existence that the weak solution that we will construct is automatically adapted with respect to the filtration generated by $w$. Nevertheless, we want to give a more general statement for uniqueness, \textit{i.e.} we want to be able to compare two weak solutions where $(\mu_t)_{t\in [0,T]}$ is adapted with respect to a filtration generated by $w$ and possibly another source of randomness, provided $(\mu,W)$ remains independent of $(\beta,\xi)$. This will be useful in the proof of Theorem~\ref{theo:interpolation_lacker}, which states well-posedness for the SDE with $(H^\eta,\mathcal C^\delta)$-drift $b$, since for this general case, our proof based on Girsanov's Theorem does not imply that $(\mu_t)_{t\in [0,T]}$ is adapted with respect to  the filtration generated by $w$ (see Remark~\ref{rem:adaptness_bis}). 
\end{rem}

The assumptions on $\widetilde{b}$ are the same as in~\cite{lacker18}. We will essentially apply the same proof, which we will recall hereafter. 

\subsubsection{Existence of a weak solution to the intermediate SDE}

Let us prove in this paragraph Proposition~\ref{prop:drift_lipschitz_exist}. We begin by constructing a weak solution on the canonical space. 

\begin{proof}[Proof (Proposition~\ref{prop:drift_lipschitz_exist})]
Let us consider the filtered canonical probability space, denoted by $(\Omega^W, \mathcal G^W, (\mathcal G^W_t)_{t\in [0,T]}, \mathbb P^W)$, where $\Omega^W:= \mathcal C(\R \times [0,T] , \R^2)$, $\mathcal G^W$ is the Borel $\sigma$-algebra on $\Omega^W$,  $(\mathcal G^W_t)_{t\in [0,T]}$ is the canonical filtration on $(\Omega^W, \mathcal G^W)$ and  $\mathbb P^W$ is the probability measure on $(\Omega^W, \mathcal G^W)$ such that the  distribution of the random variable $w^W
\mapsto w^W$ is the law of two independent (real-valued) Brownian sheets on $\R \times [0,T]$.

Let $(\Omega^{\beta,\xi}, \mathcal G^{\beta,\xi}, (\mathcal G^{\beta,\xi}_t)_{t\in [0,T]}, \mathbb P^{\beta,\xi})$ be another filtered probability space on which we define   two independent random variables $\xi$ and $(\beta_t)_{t\in [0,T]}$  such that $(\beta_t)_{t\in [0,T]}$ is a $(\mathcal G^{\beta,\xi}_t)_{t\in [0,T]}$-adapted Brownian motion and such that the law of $\xi$ is $\mu_0$.

Let  $(\Omega, \mathcal G, (\mathcal G_t)_{t\in [0,T]}, \mathbb P)$ be the product space: $\Omega=\Omega^W \times \Omega^{\beta,\xi}$, $\mathcal G=\mathcal G^W \otimes \mathcal G^{W,\beta}$, $\mathcal G_t=\sigma(\mathcal G_t^W, \mathcal G_t^{\beta,\xi})$ and $\mathbb P = \mathbb P^W \otimes \mathbb P^{\beta,\xi}$. In particular, $w$ is independent of $(\beta,\xi)$ under $\mathbb P$. 
Up to adding negligible subsets, we assume that the filtration $(\mathcal G_t)_{t\in [0,T]}$ is complete. 
Let $(z_t)_{t\in [0,T]}$ be the unique solution on $(\Omega, \mathcal G, (\mathcal G_t)_{t\in [0,T]}, \mathbb P)$ of the SDE:
\begin{align}
\label{sde_beta_drift_nul_espace_canonique}
\left\{
\begin{aligned}
\mathrm dz_t&=  \int_\R f(k) \Re (  e^{-ik z_t} \mathrm d w (k,t)) + \mathrm d\beta_t , \\
z_0 &=\xi, \quad    \mathcal L^{\mathbb P} (\xi)=\mu_0. 
\end{aligned}
\right.
\end{align}
Existence and uniqueness of a strong solution to~\eqref{sde_beta_drift_nul_espace_canonique} is given by Proposition~\ref{prop:sansdrift}. Furthermore, by Yamada-Watanabe Theorem, there is a $(\mathcal G_t)_{t\in [0,T]}$-progressively measurable map $\mathcal Z_t$ as defined in~\eqref{Z_de_yamadawatanabe} such that $\mathbb P$-almost surely, $z_t=\mathcal Z_t(\xi,w,\beta)$.

Let us denote by $\mathcal C$  the space $\mathcal C([0,T],\R)$ and by $\mathcal P(\mathcal C)$ the space of probability measures on $\mathcal C$. 
For each time $t\in [0,T]$, let us denote by $\pi_t: \mu^T \in \mathcal P(\mathcal C) \mapsto \mu^t \in \mathcal P({\mathcal C})$ the map associating to $\mu^T$ the push-forward measure of $\mu^T$ by the map $x \in \mathcal C \mapsto x_{\cdot \wedge t} \in {\mathcal C}$. 
Let $(\mathcal X,d)$ be the complete metric space of  functions
\begin{align*}
\mu: \Omega^W= \mathcal C(\R \times [0,T], \R^2)  &\to \mathcal P(\mathcal C) \\
w &\mapsto \mu^T(w),
\end{align*}
such that, for each $t\in [0,T]$, $(\mu^t = \pi^t(\mu^T))_{t\in [0,T]}$ is $(\mathcal G^W_t)_{t\in [0,T]}$-progressively measurable. The distance $d$ is defined by $d(\mu,\nu):= \mathbb E^W \left[  \dtv (\mu^T,\nu^T)^2 \right]^{1/2}$, where 
$\dtv$ is here understood as the total variation distance on ${\mathcal P}(\mathcal C)$ (while we defined it before on ${\mathcal P}(\R)$).
Furthermore, for $\mu \in \mathcal X$ and for $t \in [0,T]$, we call $\mu_{t}$ the image of $\mu$ by the mapping 
$x \in {\mathcal C} \mapsto x_{t} \in \R$.

Let $\nu \in \mathcal X$.
Recall that $\widetilde{b}: \R \times \mathcal P_2(\R) \to \R$ is uniformly bounded. Therefore
\begin{align*}
\mathcal E^\nu_t:= \exp \left( \int_0^t \widetilde{b}(z_s, \nu_s) \mathrm d\beta_s - \frac{1}{2}\int_0^t | \widetilde{b}(z_s, \nu_s) |^2 \mathrm ds \right)
\end{align*}
is a $(\mathcal G_t)_{t \in [0,T]}$-martingale.
Let $\mathbb P^\nu$ be the probability measure on $(\Omega, \mathcal G)$ absolutely continuous with respect to $\mathbb P=\mathbb P^W \otimes \mathbb P^{\beta,\xi}$, with density:
\begin{align*}
%\label{pnu_sur_p}
\frac{\mathrm d \mathbb P^\nu}{\mathrm d\mathbb P}=\mathcal E^\nu_T.
\end{align*}
For every $w \in \Omega^W$, let us denote by $\mathbb P^{\nu, \beta,\xi}(w)$ the probability measure on $(\Omega^{\beta,\xi}, \mathcal G^{\beta,\xi})$ with the following density with respect to $\mathbb P^{\beta,\xi}$:
\begin{align*}
\frac{\mathrm d \mathbb P^{\nu,\beta,\xi}(w)}{\mathrm d \mathbb P^{\beta,\xi}}=\mathcal E^\nu_T (w).
\end{align*}
Equivalently, $\mathbb P^{\nu, \beta,\xi}:\Omega^W \times \mathcal G^{\beta,\xi} \to \R_+$ is also defined as the conditional probability satisfying
 for every $(A^W,A^{\beta,\xi}) \in  \mathcal G^W \times \mathcal G^{\beta,\xi}$
\begin{align*}
\mathbb P^\nu (A^W \times A^{\beta,\xi})= \int_{A^W}   \mathbb P^{\nu, \beta,\xi } (w, A^{\beta,\xi}) \mathrm d \mathbb P^W (w).
\end{align*}
Let us define $\mathrm d \widetilde{\beta}_t^\nu:= \mathrm d\beta_t- \widetilde{b} (z_t,\nu_t) \mathrm dt$. By Girsanov's Theorem, $(\widetilde{\beta}_t^\nu)_{t\in [0,T]}$ is a Brownian motion under the measure $\mathbb P^\nu$, 
$(\widetilde{\beta}^\nu, \xi, w)$ are independent under $\mathbb P^\nu$ and, for any $t \in [0,T]$, the $\sigma$-field $\sigma \{w(k,t')-w(k,t),\widetilde \beta_{t'}^\nu -\widetilde \beta_{t}^\nu; \ k \in \R, \ t' \in [t,T] \}$ is independent of ${\mathcal G}_{t}$ under ${\mathbb P}^{\nu}$. 
Moreover the process $(z_t)_{t\in [0,T]}$ satisfies:
\begin{align*}
\mathrm dz_t =  \int_\R f(k) \Re (  e^{-ik z_t} \mathrm d w (k,t)) + \mathrm d\widetilde{\beta}^\nu_t + \widetilde{b} (z_t,\nu_t) \mathrm dt.
\end{align*}
If $\nu$ satisfies for every $t\in [0,T]$, $\mathbb P^W$-almost surely, 
\begin{align}
\label{condition_point_fixe_nu}
\nu_t= \mathcal L^{\mathbb P^\nu} (z_t |\mathcal G_t^W),
\end{align}
 then it also satisfies for every $t\in [0,T]$, 
$\mathbb P^W$-almost surely, $\nu_t=\mathcal L^{\mathbb P^\nu} (z_t |\mathcal G_t^{\nu,W})$, where $\mathcal G_t^{\nu,W}=\sigma \{ w(\cdot,s), \nu_s \; ; s\leq t\}$. Furthermore, $(\nu,w)$ is adapted to the completion of $\mathcal G^W$; hence under~$\mathbb P^\nu$, $(\nu,w)$ is independent of $(\tilde \beta^{\nu},\xi)$, and by Remark~\ref{rem:definition_mut} the compatibility condition is automatically satisfied. Thus if~\eqref{condition_point_fixe_nu} is satisfied for any $t\in [0,T]$ $\mathbb P^W$-almost surely, then  $(\Omega , \mathcal G, (\mathcal G_t)_{t\in [0,T]}, \mathbb P^\nu,z, w, \widetilde{\beta}^\nu, \xi)$  is a weak solution to~\eqref{sde_beta_drift_lipschitz}.
 Equivalently, it is solution if for $\mathbb P^W$-almost every $w \in \Omega^W$,  for every $t\in [0,T]$, $\nu_t(w)=\mathcal L^{\mathbb P^{\nu,\beta,\xi}(w)}(\mathcal Z_t(\cdot,w,\cdot))$ (the latter obviously implying 
 \eqref{condition_point_fixe_nu}
 and the converse following from the fact that, in 
 \eqref{condition_point_fixe_nu},  
 $\mathcal G_t^W$ can be replaced by 
 $\mathcal G_T^W$, which implies not only that, for any $t \in [0,T]$, 
  for $\mathbb P^W$-almost every $w \in \Omega^W$, 
 $\nu_t(w)=\mathcal L^{\mathbb P^{\nu,\beta,\xi}(w)}(\mathcal Z_t(\cdot,w,\cdot))$
but also that the quantifiers \textit{for all $t$} and \textit{for $\mathbb P^W$-almost every} 
can be exchanged by a standard continuity argument). Notice in particular, that by Fubini's Theorem, $w \mapsto \mathcal L^{\mathbb P^{\nu,\beta,\xi}(w)}(\mathcal Z_t(\cdot,w,\cdot))$ is $\mathcal G_t^W$-measurable. 

Let us prove that there is a process $\nu \in \mathcal X$ satisfying~\eqref{condition_point_fixe_nu}. 
For every $\nu \in \mathcal X$, let us define $\phi(\nu)_t :=  w \mapsto \mathcal L^{\mathbb P^{\nu,\beta,\xi}(w)}(\mathcal Z_t(\cdot,w,\cdot))$. By construction, $\phi(\nu)$  also belongs to $\mathcal X$.
For every $\mu^T \in \mathcal X$ and for every $t \in [0,T]$, let us denote by $\mu^t \in \mathcal P(\mathcal C)$ the push-forward measure of $\mu^T$ through the map $x \in \mathcal C \mapsto x_{\cdot \wedge t} \in \mathcal C$.  In particular, for every $t\in [0,T]$, $\phi(\nu)^t = \mathcal L^{\mathbb P^{\nu,\beta,\xi}(w)}(\mathcal Z_{\cdot \wedge t}(\cdot,w,\cdot))$. 
For $\mu,\nu \in \mathcal P_2(\mathcal C)$, let us denote by $H(\mu| \nu)$ the relative entropy
\begin{align*}
H(\mu|\nu) = \int_{\mathcal C}  \ln \frac{\mathrm d \mu}{\mathrm d\nu} \mathrm d\mu \quad \text{if } \mu \ll \nu, \quad \quad
H(\mu|\nu)=+\infty \quad \text{otherwise}.
\end{align*}

Here, we apply the same strategy of proof as in~\cite[Thm 2.4]{lacker18}.
Let us state the following lemma, which is shown at the end of the current proof. 
\begin{lemme}
\label{lemme:inegalite_lacker}
For every $\mu, \nu \in \mathcal X$ and for every $t\in [0,T]$, 
\begin{align*}
H(\phi(\mu)^t | \phi(\nu)^t)
=\frac{1}{2}
 \mathbb E ^{\mathbb P^\mu} \left[
\int_0^t | \widetilde{b}(z_s, \nu_s) - \widetilde{b}(z_s, \mu_s) |^2 \mathrm ds     \Big | \mathcal G_t^W 
\right]. 
\end{align*}
\end{lemme}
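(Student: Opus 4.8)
The plan is to read off $H(\phi(\mu)^t\mid\phi(\nu)^t)$ from Girsanov's theorem, by identifying $\phi(\mu)^t$ and $\phi(\nu)^t$ as the conditional laws of one and the same path‑functional $z_{\cdot\wedge t}$ under the two equivalent measures $\mathbb P^\mu$ and $\mathbb P^\nu$, and then applying the chain rule for relative entropy. Fix $t\in[0,T]$ and set $\mathcal A_t:=\mathcal G_t^W\vee\sigma(z_{\cdot\wedge t})$. The first (and, I expect, most delicate) step is the measurability reduction $\mathcal A_t=\mathcal G_t$ up to $\mathbb P$‑null sets: from the driftless equation~\eqref{sde_beta_drift_nul_espace_canonique} one recovers $\xi=z_0$ and, for $s\le t$,
\begin{align*}
\beta_s=z_s-\xi-\int_0^s\!\!\int_\R f(k)\,\Re\!\big(e^{-ikz_r}\,\mathrm dw(k,r)\big),
\end{align*}
and the stochastic integral on the right is an $(\mathcal A_s)$‑adapted functional of $(z,w)$, because $w$ is also an $(\mathcal A_s)$‑Brownian sheet under $\mathbb P$ (it is $(\mathcal A_s)$‑adapted and its increments stay independent of $\mathcal A_s$, since $w\perp(\beta,\xi)$) and the integrand $e^{-ikz_r}$ is $(\mathcal A_s)$‑adapted; that these two constructions of the integral agree is a standard fact on stochastic integration under a sub‑filtration (Stricker's theorem). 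Hence $\mathcal G_t^{\beta,\xi}\subseteq\overline{\mathcal A_t}$, so $\mathcal A_t=\mathcal G_t$, and in particular $\mathcal E^\mu_t,\mathcal E^\nu_t$ are $\mathcal A_t$‑measurable.

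Next I would do the Radon--Nikodym bookkeeping. Since $\widetilde b$ is bounded, Novikov's condition holds, $(\mathcal E^\mu_s)_s$ and $(\mathcal E^\nu_s)_s$ are true $\mathbb P$‑martingales for $(\mathcal G_s)_s$, whence $\frac{\mathrm d\mathbb P^\mu|_{\mathcal A_t}}{\mathrm d\mathbb P|_{\mathcal A_t}}=\mathcal E^\mu_t$ and likewise for $\nu$; moreover, conditionally on $w$ the variable $\mathcal E^\mu_t$ is a Doléans exponential in $\beta$, so $\mathbb E^{\mathbb P}[\mathcal E^\mu_t\mid\mathcal G_t^W]=1$, giving $\mathbb P^\mu|_{\mathcal G_t^W}=\mathbb P^\nu|_{\mathcal G_t^W}=\mathbb P^W|_{\mathcal G_t^W}$. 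By the compatibility condition (under $\mathbb P^\mu$ the increments of $w$ beyond $t$ are independent of $\mathcal G_t$), one has $\mathcal L^{\mathbb P^\mu}(z_{\cdot\wedge t}\mid\mathcal G_t^W)=\mathcal L^{\mathbb P^\mu}(z_{\cdot\wedge t}\mid\mathcal G_T^W)=\phi(\mu)^t$, and similarly $\phi(\nu)^t=\mathcal L^{\mathbb P^\nu}(z_{\cdot\wedge t}\mid\mathcal G_t^W)$. Since $z_{\cdot\wedge t}$ generates $\mathcal A_t=\mathcal G_t$ over $\mathcal G_t^W$, the chain rule for relative entropy yields, $\mathbb P^W$‑a.s.,
\begin{align*}
H(\phi(\mu)^t\mid\phi(\nu)^t)
=\mathbb E^{\mathbb P^\mu}\!\left[\log\frac{\mathrm d\mathbb P^\mu|_{\mathcal A_t}/\mathrm d\mathbb P^\nu|_{\mathcal A_t}}{\mathrm d\mathbb P^\mu|_{\mathcal G_t^W}/\mathrm d\mathbb P^\nu|_{\mathcal G_t^W}}\,\bigg|\,\mathcal G_t^W\right]
=\mathbb E^{\mathbb P^\mu}\!\left[\log\frac{\mathcal E^\mu_t}{\mathcal E^\nu_t}\,\bigg|\,\mathcal G_t^W\right],
\end{align*}
which is in particular finite since $\widetilde b$ is bounded.

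Finally I would compute the integrand. Expanding the two Doléans exponentials,
\begin{align*}
\log\frac{\mathcal E^\mu_t}{\mathcal E^\nu_t}
=\int_0^t\big(\widetilde b(z_s,\mu_s)-\widetilde b(z_s,\nu_s)\big)\,\mathrm d\beta_s
-\frac12\int_0^t\big(|\widetilde b(z_s,\mu_s)|^2-|\widetilde b(z_s,\nu_s)|^2\big)\,\mathrm ds,
\end{align*}
and substituting $\mathrm d\beta_s=\mathrm d\widetilde\beta^\mu_s+\widetilde b(z_s,\mu_s)\,\mathrm ds$, where $\widetilde\beta^\mu$ is a $\mathbb P^\mu$‑Brownian motion independent of $w$ (Girsanov plus compatibility), the drift terms collapse via $a^2-ba-\tfrac12(a^2-b^2)=\tfrac12(a-b)^2$ to $\tfrac12\int_0^t|\widetilde b(z_s,\mu_s)-\widetilde b(z_s,\nu_s)|^2\mathrm ds$. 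Taking $\mathbb E^{\mathbb P^\mu}[\,\cdot\mid\mathcal G_t^W]$, the stochastic integral against $\widetilde\beta^\mu$ contributes nothing: it is a genuine $\mathbb P^\mu$‑martingale ($\widetilde b$ bounded), and since its integrand is $\mathcal G_t$‑measurable and the increments of $w$ beyond $t$ are independent of $\mathcal G_t$, conditioning on $\mathcal G_t^W$ is the same as conditioning on all of $w$, which is independent of $\widetilde\beta^\mu$. This leaves exactly $\tfrac12\,\mathbb E^{\mathbb P^\mu}[\int_0^t|\widetilde b(z_s,\nu_s)-\widetilde b(z_s,\mu_s)|^2\mathrm ds\mid\mathcal G_t^W]$, the claimed identity. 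The main obstacle, as indicated, is the first step: making sure that the noise term $\int_0^\cdot\int_\R f(k)\Re(e^{-ikz_s}\mathrm dw(k,s))$, which is defined a priori using both $\beta$ and $w$, is in fact a measurable functional of $(z,w)$ alone, so that $\mathcal E^\mu_t$ is $\mathcal A_t$‑measurable; everything else is routine Girsanov bookkeeping.
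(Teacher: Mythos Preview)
Your proof is correct and follows essentially the same route as the paper. The one organizational difference is that you front-load the key measurability observation by establishing $\mathcal A_t:=\mathcal G_t^W\vee\sigma(z_{\cdot\wedge t})=\mathcal G_t$ (up to null sets) and then invoke the chain rule for relative entropy to get $\frac{\mathrm d\phi(\mu)^t}{\mathrm d\phi(\nu)^t}(z_{\cdot\wedge t})=\mathcal E^\mu_t/\mathcal E^\nu_t$ directly, whereas the paper first writes $\frac{\mathrm d\phi(\mu)^t}{\mathrm d\phi(\nu)^t}(z_{\cdot\wedge t})=\mathbb E^{\mathbb P^\nu}\!\big[\tfrac{\mathrm d\mathbb P^\mu}{\mathrm d\mathbb P^\nu}\,\big|\,\mathcal G_t^{z,W}\big]$ and only afterwards observes that $\widetilde\beta^\nu$ is $(\mathcal G_t^{z,W})$-adapted (the same fact you use to recover $\beta$) to evaluate this conditional expectation. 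Both arguments hinge on the identical point---that the idiosyncratic noise is a measurable functional of $(z,w)$---and the remaining Girsanov bookkeeping and the vanishing of the $\widetilde\beta^\mu$-stochastic integral under $\mathbb E^{\mathbb P^\mu}[\,\cdot\mid\mathcal G_t^W]$ are handled identically.
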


By Lipschitz-continuity of $\widetilde{b}$, there is $C>0$ such that
\begin{align*}
H(\phi(\mu)^t | \phi(\nu)^t) 
\leq C \int_0^t \mathbb E ^{\mathbb P^\mu} \left[\dtv(\mu_s,\nu_s)^2
\Big | \mathcal G_t^W \right] \mathrm ds
&= C \int_0^t \dtv(\mu_s,\nu_s)^2 \mathrm ds \\
&\leq  C \int_0^t \dtv(\mu^s,\nu^s)^2 \mathrm ds.
\end{align*}
By Pinsker's inequality, $\dtv(\phi(\mu)^t, \phi(\nu)^t)^2 \leq 2 H(\phi(\mu)^t | \phi(\nu)^t)$. 
Therefore, there is $C$ such that for every $t\in [0,T]$,
\begin{align}
\label{phi une fois}
\mathbb E^W \left[ \dtv ( \phi(\mu)^t, \phi(\nu)^t)^2   \right]
\leq C \int_0^t  \mathbb E^W \left[  \dtv ( \mu^s, \nu^s)^2   \right] \mathrm ds.
\end{align}
For every $n \in \Nun$, let us write $\phi^{\circ n}$ for $\underbrace{\phi \circ \dots \circ \phi}_{n \text{ times}}$. It follows from a simple recursion and from~\eqref{phi une fois} that for every $t\in [0,T]$ and  $n \in \N$
\begin{align*}
\mathbb E^W \left[ \dtv ( \phi^{\circ n}(\mu)^t, \phi^{\circ n}(\nu)^t)^2   \right]
\leq \frac{C^n t^n}{n!}  \mathbb E^W \left[  \dtv ( \mu^t, \nu^t)^2   \right] .
\end{align*}
Recall that the distance $d$ on $\mathcal X$ is defined by $d(\mu,\nu)=\mathbb E^W \left[  \dtv (\mu^T,\nu^T)^2 \right]^{1/2}$. Thus for every $n \geq 1$ and for every $\mu, \nu \in \mathcal X$, 
\begin{align*}
d( \phi^{\circ n}(\mu), \phi^{\circ n}(\nu))^2 \leq \frac{C^nT^n}{n!} d(\mu,\nu)^2.  
\end{align*}
Therefore, for $n$ large enough so that $\frac{C^nT^n}{n!} <1$, $\phi^{\circ n}$ is a contraction. Therefore, by Picard's fixed-point Theorem, there is a unique solution, called $\mu^\phi \in \mathcal X$, of $\mu^\phi= \phi(\mu^\phi)$. 
In particular, 
there exists a weak solution to equation~\eqref{sde_beta_drift_lipschitz}. This completes the proof of Proposition~\ref{prop:drift_lipschitz_exist}. 
\end{proof}

\begin{proof}[Proof (Lemma~\ref{lemme:inegalite_lacker})]
Let us first compute for every $t\in [0,T]$, 
\begin{align}
\label{lem38d}
H(\phi(\mu)^t | \phi(\nu)^t)
= \int_{\mathcal C} \ln \frac{\mathrm d \phi(\mu)^t}{\mathrm d \phi(\nu)^t} \mathrm d \phi(\mu)^t
=\mathbb E ^{\mathbb P^\mu} \left[ \ln  \frac{\mathrm d \phi(\mu)^t}{\mathrm d \phi(\nu)^t} (z_{\cdot \wedge t}) \Big | \mathcal G_t^W\right].
\end{align}
Let us prove that 
\begin{align}
\label{lem38a}
\frac{\mathrm d \phi(\mu)^t}{\mathrm d \phi(\nu)^t} (z_{\cdot \wedge t})
= \mathbb E^{\mathbb P^\nu} \left[ \frac{\mathrm d \mathbb P^\mu}{\mathrm d \mathbb P^\nu}\Big| \mathcal G_t^{z,W} \right],
\end{align}
where $\mathcal G_t^{z,W}=\sigma \{ z_s, w(\cdot,s) \; ; s\leq t\}$. Indeed,  for every measurable and bounded functions $f:\mathcal C([0,T], \R) \to \R$ and $g:\mathcal C(\R \times [0,T], \R^2) \to \R$ (recall that we denote by $\mathcal C$ the space $\mathcal C([0,T], \R)$):
\begin{multline}
\label{lem38b}
\mathbb E^{\mathbb P^\nu} \left[   \frac{\mathrm d \mathbb P^\mu}{\mathrm d \mathbb P^\nu} f(z_{\cdot \wedge t}) g(w_{\cdot \wedge t})\right]
=\mathbb E^{\mathbb P^\mu} \left[  f(z_{\cdot \wedge t}) g(w_{\cdot \wedge t})\right]
=\mathbb E^{\mathbb P^\mu} \left[  \int_{\mathcal C} f(x) \mathrm d \phi(\mu)^t(x) \; g(w_{\cdot \wedge t})\right] \\
= \mathbb E^{\mathbb P^\mu} \left[  \int_{\mathcal C} f(x) \frac{\mathrm d \phi(\mu)^t}{\mathrm d \phi(\nu)^t}(x) \mathrm d \phi(\nu)^t(x) \;g(w_{\cdot \wedge t})\right]
= \mathbb E^{\mathbb P^\nu} \left[  \frac{\mathrm d \mathbb P^\mu}{\mathrm d \mathbb P^\nu} \int_{\mathcal C} f(x) \frac{\mathrm d \phi(\mu)^t}{\mathrm d \phi(\nu)^t}(x) \mathrm d \phi(\nu)^t(x) \;g(w_{\cdot \wedge t})\right] \\
\shoveright{
= \mathbb E^{\mathbb P^\nu} \left[ \mathbb E^{\mathbb P^\nu} \left[ \frac{\mathrm d \mathbb P^\mu}{\mathrm d \mathbb P^\nu} \Big| \mathcal G_t^W \right] \int_{\mathcal C} f(x) \frac{\mathrm d \phi(\mu)^t}{\mathrm d \phi(\nu)^t}(x) \mathrm d \phi(\nu)^t(x) \;g(w_{\cdot \wedge t})\right] }\\
= \mathbb E^{\mathbb P^\nu} \left[ \mathbb E^{\mathbb P^\nu} \left[ \frac{\mathrm d \mathbb P^\mu}{\mathrm d \mathbb P^\nu} \Big| \mathcal G_t^W \right] f(z_{\cdot \wedge t}) \frac{\mathrm d \phi(\mu)^t}{\mathrm d \phi(\nu)^t}(z_{\cdot \wedge t})  \;g(w_{\cdot \wedge t})\right].
\end{multline}
Moreover, recalling the relation $\mathrm d \beta_s= \mathrm d\widetilde{\beta}^\nu_s+ \widetilde{b}(z_s,\nu_s) \mathrm ds$, 
\begin{multline*} 
\mathbb E^{\mathbb P^\nu} \left[ \frac{\mathrm d \mathbb P^\mu}{\mathrm d \mathbb P^\nu} \Big| \mathcal G_t^W \right]
= \mathbb E^{\mathbb P^\nu} \left[ \mathcal E_T^\mu (\mathcal E_T^\nu)^{-1} \Big| \mathcal G_t^W \right] \\
\begin{aligned}
&=\mathbb E^{\mathbb P^\nu} \left[\exp \left( \int_0^T (\widetilde{b}(z_s, \mu_s)-\widetilde{b}(z_s, \nu_s)) \mathrm d\beta_s - \frac{1}{2}\int_0^T | \widetilde{b}(z_s, \mu_s) |^2 \mathrm ds +  \frac{1}{2}\int_0^T | \widetilde{b}(z_s, \nu_s) |^2 \mathrm ds \right) \Big| \mathcal G_t^W \right] \\
&=\mathbb E^{\mathbb P^\nu} \left[\exp \left( \int_0^T (\widetilde{b}(z_s, \mu_s)-\widetilde{b}(z_s, \nu_s)) \mathrm d\widetilde{\beta}^\nu_s - \frac{1}{2}\int_0^T | \widetilde{b}(z_s, \mu_s) -\widetilde{b}(z_s, \nu_s)|^2 \mathrm ds  \right) \Big| \mathcal G_t^W \right] .
\end{aligned}
\end{multline*}
For every bounded and measurable $g:\mathcal C(\R \times [0,T], \R^2) \to \R$
\begin{multline*}
\mathbb E^{\mathbb P^\nu} \Big[ \exp \Big( \int_0^T (\widetilde{b}(z_s, \mu_s)-\widetilde{b}(z_s, \nu_s)) \mathrm d\widetilde{\beta}^\nu_s - \frac{1}{2}\int_0^T | \widetilde{b}(z_s, \mu_s) -\widetilde{b}(z_s, \nu_s)|^2 \mathrm ds  \Big)g(w_{\cdot\wedge t}) \Big] \\
\begin{aligned}
&=\mathbb E^{\mathbb P^\nu} \Big[ \mathbb E^{\mathbb P^{\nu,\beta,\xi}(w)} \Big[  \exp \Big( \int_0^T \!(\widetilde{b}(z_s, \mu_s)-\widetilde{b}(z_s, \nu_s)) \mathrm d\widetilde{\beta}^\nu_s - \frac{1}{2}\int_0^T | \widetilde{b}(z_s, \mu_s) -\widetilde{b}(z_s, \nu_s)|^2 \mathrm ds  \Big) \Big]g(w_{\cdot\wedge t}) \Big]  \\
&=\mathbb E^{\mathbb P^\nu} \left[ g(w_{\cdot\wedge t}) \right] ,
\end{aligned}
\end{multline*}
since under $\mathbb P^\nu$, $\widetilde{\beta}^\nu$ and $w$ are independent and since the  exponential is a $\mathbb P^{\nu,\beta,\xi}(w)$-martingale by Novikov's condition (recalling that $\widetilde{b}$ is uniformly bounded).
Thus
\begin{align}
\label{lem38c}
\mathbb E^{\mathbb P^\nu} \left[ \frac{\mathrm d \mathbb P^\mu}{\mathrm d \mathbb P^\nu} \Big| \mathcal G_t^W \right]=1. 
\end{align}
Using equalities~\eqref{lem38b} and~\eqref{lem38c}, we get equality~\eqref{lem38a}. 

Therefore, back to equality~\eqref{lem38d}, we obtain
\begin{align*}
H(\phi(\mu)^t | \phi(\nu)^t)
=\mathbb E ^{\mathbb P^\mu} \left[ \ln  \mathbb E^{\mathbb P^\nu} \left[ \frac{\mathrm d \mathbb P^\mu}{\mathrm d \mathbb P^\nu}\Big| \mathcal G_t^{z,W} \right] \Big | \mathcal G_t^W\right].
\end{align*}
Recall that $(\Omega , \mathcal G, (\mathcal G_t)_{t\in [0,T]}, \mathbb P^\nu,z, w, \widetilde{\beta}^\nu, \xi)$  is a weak solution to~\eqref{sde_beta_drift_lipschitz}. Thus $\mathbb P^\nu$-almost surely, $ \widetilde{\beta}^\nu$ satisfies for every $t\in [0,T]$:
\begin{align*}
 \widetilde{\beta}^\nu_t = z_t -z_0 - \int_0^t \!\! \int_\R f(k) \Re( e^{-ik z_s} \mathrm dw(k,s) ) - \int_0^t \widetilde{b}(z_s,\mu_s) \mathrm ds.
\end{align*}
Thus $(\widetilde{\beta}^\nu_t)_{t\in [0,T]}$ is $(\mathcal G_t^{z,W})_{t\in [0,T]}$-adapted and we deduce that
\begin{multline*}
H(\phi(\mu)^t | \phi(\nu)^t) \\
\begin{aligned}
&=\mathbb E ^{\mathbb P^\mu} \Big[ \ln  \exp \Big( \int_0^t (\widetilde{b}(z_s, \mu_s)-\widetilde{b}(z_s, \nu_s)) \mathrm d\widetilde{\beta}^\nu_s - \frac{1}{2}\int_0^t | \widetilde{b}(z_s, \mu_s) -\widetilde{b}(z_s, \nu_s)|^2 \mathrm ds \Big)  \Big | \mathcal G_t^W\Big] \\
&=\mathbb E ^{\mathbb P^\mu} \Big[ \int_0^t (\widetilde{b}(z_s, \mu_s)-\widetilde{b}(z_s, \nu_s)) \mathrm d\widetilde{\beta}^\nu_s - \frac{1}{2}\int_0^t | \widetilde{b}(z_s, \mu_s) -\widetilde{b}(z_s, \nu_s)|^2 \mathrm ds \Big | \mathcal G_t^W\Big] \\
&=\mathbb E ^{\mathbb P^\mu} \Big[ \int_0^t (\widetilde{b}(z_s, \mu_s)-\widetilde{b}(z_s, \nu_s)) \mathrm d\widetilde{\beta}^\mu_s + \frac{1}{2}\int_0^t | \widetilde{b}(z_s, \mu_s)-\widetilde{b}(z_s, \nu_s) |^2 \mathrm ds   \Big | \mathcal G_t^W\Big] \\
&=  \mathbb E ^{\mathbb P^\mu} \Big[   \frac{1}{2}\int_0^t | \widetilde{b}(z_s, \mu_s) -\widetilde{b}(z_s, \nu_s)|^2 \mathrm ds    \Big | \mathcal G_t^W\Big],
\end{aligned}
\end{multline*}
because $\mathrm d\widetilde{\beta}^\nu_s- \mathrm d\widetilde{\beta}^\mu_s=(\widetilde{b}(z_s, \mu_s)-\widetilde{b}(z_s, \nu_s)) \mathrm ds$. This completes the proof of the Lemma. 
\end{proof}

\subsubsection{Uniqueness in law for the  intermediate SDE}

Let us prove in this paragraph Proposition~\ref{prop:drift_lipschitz_uniqu}. 

\begin{proof}[Proof (Proposition~\ref{prop:drift_lipschitz_uniqu})]
Let $\mathbf{\Omega^1}$ and $\mathbf{\Omega^2}$ be two weak solutions to~\eqref{sde_beta_drift_lipschitz}, often denoted by~$\mathbf{\Omega^n}$, $n=1,2$. In particular, the  process $(z^n_t)_{t\in [0,T]}$ satisfies $\mathbb P^n$-almost surely, 
\begin{align*}
z^n_t= \xi^n + \int_0^t \!\! \int_\R f(k) \Re( e^{-ikz^n_s} \mathrm dw^n(k,s)) +\beta^n_t+\int_0^t \widetilde{b}(z^n_s,\mu^n_s) \mathrm ds, 
\end{align*}
where for every $t\in [0,T]$, $\mathbb P^n$-almost surely $\mu^n_t= \mathcal L^{\mathbb P^n} (z_t^n | \mathcal G_t^{\mu^n,W^n})$
and where $(\mu^n,w^n)$ is independent of $(\beta^n,\xi^n)$. 

Let $\mathbb Q^n$ be the probability measure on $(\Omega^n, \mathcal G^n)$ with the following density with respect to~$\mathbb P^n$, 
\begin{align}
\label{qi_sur_pi}
\frac{\mathrm d \mathbb Q^n}{\mathrm d \mathbb P^n}
= \exp \left( -\int_0^T \widetilde{b}(z^n_s, \mu_s^n) \mathrm d\beta^n_s -\frac{1}{2} \int_0^T |\widetilde{b}(z^n_s, \mu_s^n)|^2 \mathrm ds \right). 
\end{align}
Let $\widetilde{\beta}^n_t =\beta^n_t+\int_0^t \widetilde{b}(z^n_s,\mu^n_s) \mathrm ds$. By Girsanov's Theorem, $\mathcal L^{\mathbb Q^n}(w^n,\widetilde{\beta}^n, \xi^n)=\mathcal L^{\mathbb P^n}(w^n,\beta^n, \xi^n)$
and for any $t \in [0,T]$, the $\sigma$-field $\sigma \{w^n(k,t')-w^n(k,t),\widetilde \beta^n_{t'}-\widetilde \beta^n_{t}, \ k \in \R, \ t' \in [t,T] \}$ is independent of ${\mathcal G}_{t}^n$ under ${\mathbb Q}^n$.  It follows that $\widetilde{\mathbf {\Omega^n}}=(\Omega^n,  \mathcal G^n, (\mathcal G^n_t)_{t\in [0,T]}, \mathbb Q^n, z^n,w^n, \widetilde{\beta}^n, \xi^n)$ is a weak solution to the SDE~\eqref{sde_beta_drift_nul} with zero drift. By Proposition~\ref{prop:sansdrift}, $\mathcal L^{\mathbb Q^1}(z^1,w^1,\widetilde{\beta}^1)=\mathcal L^{\mathbb Q^2}(z^2,w^2,\widetilde{\beta}^2)$ and $\mathbb Q^n$-almost surely, $z^n=\mathcal Z(\xi^n,w^n, \widetilde{\beta}^n)$, where $\mathcal Z$ is of the form~\eqref{Z_de_yamadawatanabe}.

Moreover, recall that for $n=1,2$, $\mu^n_t= \mathcal L^{\mathbb P^n} (z_t^n | \mathcal G_t^{\mu^n,W^n})$. Recall also that $\mu^\phi$ is defined as being the unique fixed-point of $\phi$ in $\mathcal X$ (see proof of Proposition~\ref{prop:drift_lipschitz_exist}). 
Let us state the following lemma, which will be shown at the end of the current proof. 
\begin{lemme}
\label{lemme:mui_point_fixe}
Let $n=1,2$. Then $\mathbb Q^n$-almost surely, for every $t\in [0,T]$, $\mu^n_t= \mu^\phi (w^n)_t$. In particular, $(\mu^n_t)_{t\in [0,T]}$ is adapted to the completion of $(\mathcal G_t^{W^n})_{t\in [0,T]}$, where $\mathcal G_t^{ W^n}:= \sigma\{ w^n(k,s)\;; k\in \R, s\leq t \}$.
\end{lemme}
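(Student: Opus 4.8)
The plan is to reverse, conditionally on the pair $(\mu^n,w^n)$, the Girsanov transformation of~\eqref{qi_sur_pi}, and then to recognise the McKean--Vlasov constraint $\mu^n_t = \mathcal L^{\mathbb P^n}(z^n_t\mid\mathcal G^{\mu^n,W^n}_t)$ as the fixed-point equation $\mu^n=\phi(\mu^n)(w^n)$ that was solved, for $\mathbb P^W$-almost every realisation of the noise, in the proof of Proposition~\ref{prop:drift_lipschitz_exist}. First I would record what is already available: since $\widetilde{\mathbf{\Omega^n}}$ is a weak solution to the drift-free equation~\eqref{sde_beta_drift_nul}, the law $\mathcal L^{\mathbb Q^n}(\xi^n,w^n,\widetilde\beta^n)$ is exactly the product of $\mu_0$, the law of two independent Brownian sheets and the Wiener measure, that is the reference measure of the canonical space used in that proof, and $z^n=\mathcal Z(\xi^n,w^n,\widetilde\beta^n)$ holds $\mathbb Q^n$-a.s.\ with $\mathcal Z$ as in~\eqref{Z_de_yamadawatanabe}. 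Inverting~\eqref{qi_sur_pi} and using $\mathrm d\beta^n_s=\mathrm d\widetilde\beta^n_s-\widetilde b(z^n_s,\mu^n_s)\mathrm ds$ yields $\tfrac{\mathrm d\mathbb P^n}{\mathrm d\mathbb Q^n}=\mathcal E^{\mu^n}_T$, where $\mathcal E^{\mu^n}_t:=\exp\bigl(\int_0^t\widetilde b(z^n_s,\mu^n_s)\,\mathrm d\widetilde\beta^n_s-\tfrac12\int_0^t|\widetilde b(z^n_s,\mu^n_s)|^2\,\mathrm ds\bigr)$.

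The core step is then a conditional-law identity: for every $t\in[0,T]$ and every bounded measurable $F$ on $\mathcal C([0,t],\R)$, $\mathbb P^n$-a.s.,
\[
\mathbb E^{\mathbb P^n}\bigl[F(z^n_{\cdot\wedge t})\,\big|\,\mathcal G^{\mu^n,W^n}_t\bigr]
=\mathbb E^{\mathbb Q^n}\bigl[\mathcal E^{\mu^n}_t\,F(z^n_{\cdot\wedge t})\,\big|\,\mathcal G^{\mu^n,W^n}_t\bigr].
\]
This rests on two points, both modelled on the proof of Lemma~\ref{lemme:inegalite_lacker}. First, $\mathcal E^{\mu^n}$ is a $\mathbb Q^n$-martingale for the filtration $\mathcal H_t:=\sigma\{z^n_{\cdot\wedge t},w^n_{\cdot\wedge t},\mu^n_{\cdot\wedge t}\}$: the weak-solution and compatibility properties of $\mathbf{\Omega^n}$ make $\widetilde\beta^n$ a $(\mathbb Q^n,\mathcal H_t)$-Brownian motion, its increments after $t$ being independent of $\mathcal G^n_t\supseteq\mathcal H_t$, and since $\widetilde b$ is bounded Novikov's condition applies; hence one may replace $\mathcal E^{\mu^n}_T$ by $\mathcal E^{\mu^n}_t$ when conditioning on $\mathcal G^{\mu^n,W^n}_t\subseteq\mathcal H_t$. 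Second, the normalising denominator in Bayes' formula is trivial, $\mathbb E^{\mathbb Q^n}[\mathcal E^{\mu^n}_t\mid\mathcal G^{\mu^n,W^n}_t]=1$, which is obtained just as equality~\eqref{lem38c}: conditionally on $\mathcal G^{\mu^n,W^n}_T=\sigma\{\mu^n,w^n\}$, $\widetilde\beta^n$ remains a Brownian motion under $\mathbb Q^n$ (here the compatibility condition of $\mathbf{\Omega^n}$ is used to turn the independence of $(\mu^n,w^n)$ and $(\beta^n,\xi^n)$ into this statement), and $z^n_{\cdot\wedge t}=\mathcal Z_{\cdot\wedge t}(\xi^n,w^n,\widetilde\beta^n)$ is then a functional of $(\xi^n,\widetilde\beta^n_{\cdot\wedge t})$ and the frozen $w^n$, so the stochastic exponential has conditional mean one.

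Evaluated at a realisation $(m,w)$ of $(\mu^n,w^n)$, the right-hand side of the identity above is precisely $\mathcal L^{\mathbb P^{m,\beta,\xi}(w)}\bigl(\mathcal Z_{\cdot\wedge t}(\cdot,w,\cdot)\bigr)=\phi(m)^t(w)$ in the notation of the proof of Proposition~\ref{prop:drift_lipschitz_exist}; hence the constraint $\mu^n_t=\mathcal L^{\mathbb P^n}(z^n_t\mid\mathcal G^{\mu^n,W^n}_t)$ reads $\mu^n=\phi(\mu^n)(w^n)$, $\mathbb Q^n$-a.s. Replaying the Pinsker--Lipschitz estimate of that proof pathwise in $w$ shows that, for $\mathcal L^{\mathbb Q^n}(w^n)$-almost every $w$ (this law being $\mathbb P^W$), the map $m\mapsto\phi(m)(w)$ is an iterated contraction on $\mathcal P(\mathcal C([0,T],\R))$ for $\dtv$, so has at most one fixed point; as $w\mapsto\mu^\phi(w)$ is such a fixed point by construction, we obtain $\mu^n_\cdot=\mu^\phi(w^n)_\cdot$, first $\mathbb Q^n$-a.s.\ and then $\mathbb P^n$-a.s.\ by mutual absolute continuity, with the quantifier over $t$ exchanged by continuity of both processes. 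Since $\mu^\phi\in\mathcal X$ is $(\mathcal G^W_t)_{t\in[0,T]}$-progressively measurable, $(\mu^n_t)_{t\in[0,T]}$ is adapted to the completion of $(\mathcal G^{W^n}_t)_{t\in[0,T]}$. I expect the main obstacle to be precisely the two sub-claims of the second step: justifying carefully, from the compatibility condition alone, that $\widetilde\beta^n$ is still a Brownian motion both for the enlarged filtration $\mathcal H_t$ and conditionally on $\sigma\{\mu^n,w^n\}$, since $\mu^n$ is a priori only $\mathcal G^{\mu^n,W^n}_t$-adapted and plays no role in the drift-free weak solution $\widetilde{\mathbf{\Omega^n}}$.
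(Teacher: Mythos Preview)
Your proposal is correct and shares with the paper the one genuinely delicate ingredient: showing that under $\mathbb Q^n$ the pair $(\mu^n,w^n)$ is independent of $(\widetilde\beta^n,\xi^n)$, which is exactly what the compatibility condition is for. You correctly flag this as the main obstacle; the paper isolates it as its step~$(1)$ and proves it by expanding the exponential martingale and using that, under~$\mathbb P^n$, $(\xi^n,w^n,\mu^n)$ and $\beta^n$ are conditionally independent given~$\mathcal G^n_t$.

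The organisation after that point differs. The paper introduces an auxiliary measure $\mathbb R$ (built with $\mu^\phi(w^n)$ in place of $\mu^n$), proves $\mu^\phi(w^n)_t=\mathcal L^{\mathbb R}(z^n_t\mid\mathcal G^{\mu^n,W^n}_t)$, and then compares $\mu^n$ and $\mu^\phi(w^n)$ by redoing the entropy computation of Lemma~\ref{lemme:inegalite_lacker} between $\mathbb P^n$ and $\mathbb R$, concluding by Gronwall. You instead use Bayes' formula directly (with denominator equal to~$1$, as in~\eqref{lem38c}) to recognise the McKean--Vlasov constraint as the pathwise fixed-point equation $\mu^n=\phi_{w^n}(\mu^n)$, and then invoke the pathwise-in-$w$ iterated contraction to identify $\mu^n$ with $\mu^\phi(w^n)$. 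Your route is a bit more direct---it avoids the auxiliary measure $\mathbb R$ and the second run through the entropy estimate---while the paper's route makes the comparison with the known fixed point more explicit by writing both $\mu^n_t$ and $\mu^\phi(w^n)_t$ as conditional laws given the \emph{same} $\sigma$-field $\mathcal G^{\mu^n,W^n}_t$. Both routes ultimately rest on the same Pinsker--Lipschitz estimate, applied either pathwise (yours) or after integrating in $w$ (the paper's step~$(3)$).
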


Let us consider a measurable function $\psi : \mathcal C([0,T],\R)\times \mathcal C(\R \times [0,T],\R^2) \to \R$ such that $\mathbb E^{\mathbb P^n} \left[ |\psi(z^n,w^n)| \right]<+\infty$ for $n=1,2$. It follows from~\eqref{qi_sur_pi} and from Lemma~\ref{lemme:mui_point_fixe} that
\begin{align*}
\mathbb E^{\mathbb P^n} \left[ \psi(z^n,w^n) \right]
&=\mathbb E^{\mathbb Q^n} \left[ \psi(z^n,w^n)
\exp \left( \int_0^T \widetilde{b}(z^n_s, \mu_s^n) \mathrm d\beta^n_s +\frac{1}{2} \int_0^T |\widetilde{b}(z^n_s, \mu_s^n)|^2 \mathrm ds \right)
 \right] \notag\\
 &=\mathbb E^{\mathbb Q^n} \left[ \psi(z^n,w^n)
\exp \left( \int_0^T \widetilde{b}(z^n_s, \mu_s^n) \mathrm d\widetilde{\beta}^n_s -\frac{1}{2} \int_0^T |\widetilde{b}(z^n_s, \mu_s^n)|^2 \mathrm ds \right)
 \right] \notag\\
&= \mathbb E^{\mathbb Q^n} \left[ \psi(z^n,w^n)
\exp \left( \int_0^T \widetilde{b}(z^n_s, \mu^\phi(w^n)_s) \mathrm d\widetilde{\beta}^n_s -\frac{1}{2} \int_0^T |\widetilde{b}(z^n_s, \mu^\phi(w^n)_s)|^2 \mathrm ds \right)
 \right] \\
 &= \mathbb E^{\mathbb Q^n} \left[ \widetilde{\psi}(z^n,w^n, \widetilde{\beta}^n)
 \right]
\end{align*}
where  $\widetilde{\psi}$ is a measurable map such that $\mathbb E^{\mathbb Q^n} \left[ |\widetilde{\psi}(z^n,w^n, \widetilde{\beta}^n)|
 \right]<+\infty$; the measurability of $\widetilde{\psi}$ follows from the fact that $\mu^\phi$ belongs to $\mathcal X$. Furthermore, $\mu^\phi$ does not depend on $n=1,2$, since it is the unique fixed-point of $\phi$. 
Recalling the equality $\mathcal L^{\mathbb Q^1}(z^1,w^1,\widetilde{\beta}^1)=\mathcal L^{\mathbb Q^2}(z^2,w^2,\widetilde{\beta}^2)$, we conclude that $\mathbb E^{\mathbb P^1} \left[ \psi(z^1,w^1) \right]=\mathbb E^{\mathbb P^2} \left[ \psi(z^2,w^2) \right]$. Moreover, by Lemma~\ref{lemme:mui_point_fixe}, $(\mu^n_t)_{t\in [0,T]}$ is $(\mathcal G_t^{W^n})_{t\in [0,T]}$-measurable. 
This completes the proof of Proposition~\ref{prop:drift_lipschitz_uniqu}. 
\end{proof}

\begin{proof}[Proof (Lemma~\ref{lemme:mui_point_fixe})]
Let us forget about the exponent $n$ in this proof. On the one hand, the process $(\mu_t)_{t\in [0,T]}$ satisfies for every $t\in [0,T]$, $\mu_t= \mathcal L^{\mathbb P} (z_t | \mathcal G_t^{\mu,W})$ and $(\mu,w)$ is independent of $(\beta,\xi)$. 
Moreover, it follows from equality~\eqref{qi_sur_pi} that $\mathbb P$ is absolutely continuous with respect to~$\mathbb Q$ with a density given by
\begin{align}
\label{dp_sur_dq}
\frac{\mathrm d\mathbb P}{\mathrm d\mathbb Q}
&= \exp \left( \int_0^T \widetilde{b}(z_s, \mu_s) \mathrm d\beta_s +\frac{1}{2} \int_0^T |\widetilde{b}(z_s, \mu_s)|^2 \mathrm ds \right) \notag \\
&= \exp \left( \int_0^T \widetilde{b}(z_s, \mu_s) \mathrm d\widetilde{\beta}_s -\frac{1}{2} \int_0^T |\widetilde{b}(z_s, \mu_s)|^2 \mathrm ds \right).
\end{align} 

On the other hand, since $\mu^\phi$ is the fixed point of $\phi$, the process $(\mu^\phi(w)_t)_{t\in [0,T]}$ satisfies $\mu^\phi(w)=\phi(\mu^\phi) (w)= \mathcal L^{\mathbb P^{\mu^\phi, \beta,\xi}(w)}( \mathcal Z(\cdot,w,\cdot))$. Since under $\mathbb Q$, $z=\mathcal Z (\xi,w,\widetilde{\beta})$, we deduce that for every $t\in [0,T]$, $\mu^\phi(w)_t=\mathcal L^{\mathbb R} (z_t |\mathcal G_t^W)$, where $\mathcal G_t^{W}:= \sigma\{ w(k,s) \;; k\in \R, s\leq t \}$
and $\mathbb R$ is defined by
\begin{align}
\label{dr_sur_dq}
\frac{\mathrm d\mathbb R}{\mathrm d\mathbb Q}
&= \exp \left( \int_0^T \widetilde{b}(z_s, \mu^\phi(w)_s) \mathrm d\widetilde{\beta}_s -\frac{1}{2} \int_0^T |\widetilde{b}(z_s, \mu^\phi(w)_s)|^2 \mathrm ds \right).
\end{align}

Let us prove that
\begin{itemize}
\item[$(1)$] under the probability measure $\mathbb Q$, $(\mu,w)$ is independent of $(\widetilde{\beta},\xi)$;
\item[$(2)$] for every $t\in [0,T]$, $\mathbb \R$-almost surely,  $\mu^\phi(w)_t=\mathcal L^{\mathbb R} (z_t |\mathcal G_t^{\mu,W})$;
\item[$(3)$] conclude the proof of the lemma by comparing, for every $t\in [0,T]$, $\mu_t= \mathcal L^{\mathbb P} (z_t | \mathcal G_t^{\mu,W})$ with $\mu^\phi(w)_t=\mathcal L^{\mathbb R} (z_t |\mathcal G_t^{\mu,W})$.
\end{itemize}

\textbf{Proof of $(1)$.}
By definition of a weak solution, under probability measure $\mathbb P$, $w$, $\beta$ and $\xi$ are independent random variables and $(\mu,w)$ is independent of $(\beta,\xi)$. 
Let us consider bounded and measurable functions $f: \R \to \R$ and $\psi: \mathcal C([0,T], \mathcal P_2(\R)) \times \mathcal C(\R \times [0,T] ,\R^2) \to \R$ and let $g:[0,T] \to \R$ be a deterministic square integrable function. 
Recalling that $\mathrm d\widetilde{\beta}_t =\mathrm d\beta_t+ \widetilde{b}(z_t,\mu_t) \mathrm dt$, let us compute
\begin{multline*}
\mathbb E^{\mathbb Q} \left[ \psi(\mu,w) f(\xi) \exp\Big(\textstyle\int_0^T g_s \mathrm d\widetilde{\beta}_s - \frac{1}{2}\int_0^T g_s^2 \mathrm ds \Big) \right] \\
\begin{aligned}
&=\mathbb E^{\mathbb P} \left[ \psi(\mu,w) f(\xi) \exp\Big(\textstyle\int_0^T g_s \mathrm d\widetilde{\beta}_s - \frac{1}{2}\int_0^T g_s^2 \mathrm ds \Big)  \exp \Big(\textstyle -\int_0^T \widetilde{b}(z_s, \mu_s) \mathrm d\beta_s -\frac{1}{2} \int_0^T |\widetilde{b}(z_s, \mu_s)|^2 \mathrm ds \Big)\right]\\
&=\mathbb E^{\mathbb P} \left[ \psi(\mu,w) f(\xi) \exp\Big(\textstyle\int_0^T (g_s- \widetilde{b}(z_s,\mu_s)) \mathrm d\beta_s - \frac{1}{2}\int_0^T |g_s- \widetilde{b}(z_s,\mu_s)|^2 \mathrm ds \Big)  \right].
\end{aligned}
\end{multline*}
We now show that the last line is in fact equal to $\mathbb E^{\mathbb P} [ \psi(\mu,w) f(\xi)]$.
By expanding the exponential martingale by It\^o's formula, it is in fact sufficient to prove that, for any 
$({\mathcal G}_{t})_{t\in [0,T]}$ progressively-measurable and square integrable process $(H_{t})_{t \in [0,T]}$,
the stochastic integral $\int_{0}^T H_{s} d \beta_{s}$ is orthogonal to 
$\psi(\mu,w) f(\xi)$ under ${\mathbb P}$. By a standard approximation, it is even sufficient to do so for simple processes
$(H_{t})_{t \in [0,T]}$. In other words, it suffices to prove that, for any $0 \leq t \leq t' \leq T$, for any $\mathcal G_t$-measurable square-integrable random variable $H_t$, 
\begin{equation*}
{\mathbb E}^{\mathbb P} \left[ \psi(\mu,w) f(\xi) H_{t} (\beta_{t'}- \beta_{t}) \right] =0.
\end{equation*}
By taking the conditional expectation given ${\mathcal G}_{t}$ in the expectation appearing in the left-hand side, it is sufficient to prove  that, for any $0 \leq t \leq t' \leq T$, 
\begin{align*}
{\mathbb E}^{\mathbb P} \left[ \psi(\mu,w) f(\xi) (\beta_{t'}- \beta_{t}) \Big| \mathcal G_t \right]=0.
\end{align*}
Thanks to the compatibility condition in Definition 
\ref{def:weak_solution_drift_general},
\begin{align*}
{\mathbb E}^{\mathbb P} \left[ \psi(\mu,w) f(\xi) (\beta_{t'}- \beta_{t}) \Big| \mathcal G_t \right]={\mathbb E}^{\mathbb P} \left[ \psi(\mu,w) f(\xi) \Big| \mathcal G_t \right]
{\mathbb E}^{\mathbb P} \left[  (\beta_{t'}- \beta_{t}) \Big| \mathcal G_t \right]=0
\end{align*}
because $\beta_{t'}-\beta_t$ is independent of $\mathcal G_t$. 
Therefore, 
\begin{align*}
\mathbb E^{\mathbb Q} \left[ \psi(\mu,w) f(\xi) \exp\Big(\textstyle\int_0^T g_s \mathrm d\widetilde{\beta}_s - \frac{1}{2}\int_0^T g_s^2 \mathrm ds \Big) \right]
=\mathbb E^{\mathbb P} \left[ \psi(\mu,w) f(\xi)\right].
\end{align*}

It follows that 
\begin{multline*}
\mathbb E^{\mathbb Q} \left[ \psi(\mu,w) f(\xi) \exp\Big(\textstyle\int_0^T g_s \mathrm d\widetilde{\beta}_s - \frac{1}{2}\int_0^T g_s^2 \mathrm ds \Big) \right]
=\mathbb E^{\mathbb P} \left[ \psi(\mu,w) \right]
\cdot \mathbb E^{\mathbb P} \left[  f(\xi)\right]
 \\\begin{aligned}
&=\mathbb E^{\mathbb Q} \left[ \psi(\mu,w) \right]
\cdot \mathbb E^{\mathbb Q} \left[  f(\xi)\right] \\
&=\mathbb E^{\mathbb Q} \left[ \psi(\mu,w) \right]
\cdot \mathbb E^{\mathbb Q} \left[  f(\xi)\right] \cdot 
\mathbb E^{\mathbb Q} \left[ \exp\Big(\textstyle\int_0^T g_s \mathrm d\widetilde{\beta}_s - \frac{1}{2}\int_0^T g_s^2 \mathrm ds \Big) \right],
\end{aligned}
\end{multline*}
since $\Big(\exp\Big(\textstyle\int_0^T g_s \mathrm d\widetilde{\beta}_s - \frac{1}{2}\int_0^T g_s^2 \mathrm ds\Big)\Big)_{t\in [0,T]}$ is a martingale under the measure $\mathbb Q$. 
Moreover, the linear span of $\{ \exp (\int_0^T g_s \mathrm d\widetilde{\beta}_s ), g \in L_2([0,T], \R)\}$ is dense in $L_2(\Omega,\mathcal G^{\widetilde{\beta}} , \mathbb Q)$, where $\mathcal G^{\widetilde{\beta}}$ is the $\sigma$-algebra generated by $(\widetilde{\beta}_t)_{t\in [0,T]}$. 
Therefore,  $(f(\xi),  \exp(\int_0^T g_s \mathrm d\widetilde{\beta}_s ))$ generates the $\sigma$-algebra $\mathcal G^{\xi,\widetilde{\beta}}$, and thus  $(\mu,w)$ and $(\xi,\widetilde{\beta})$ are independent under the probability measure~$\mathbb Q$.

\textbf{Proof of $(2)$.}
Recall that for every $t\in [0,T]$, $\mu^\phi(w)_t=\mathcal L^{\mathbb R} (z_t |\mathcal G_t^W)$, and let us prove that  for every $t\in [0,T]$, $\mathbb R$-almost surely,  $\mu^\phi(w)_t=\mathcal L^{\mathbb R} (z_t |\mathcal G_t^{\mu,W})$.
Let $f:\mathcal  \R \to \R$, $g:\mathcal C([0,T], \mathcal P_2(\R))\to \R$ and $h:\mathcal C(\R \times [0,T], \R^2) \to \R$ be bounded and measurable functions. Fix $t\in [0,T]$. By~\eqref{dr_sur_dq}, we have 
\begin{multline}
\mathbb E^{\mathbb R} \left[ f(z_t) g(\mu_{\cdot \wedge t}) h(w_{\cdot \wedge t})  \right] \\
\begin{aligned}
&=\mathbb E^{\mathbb Q} \left[ f(z_t) \exp \Big( \textstyle \int_0^t \widetilde{b}(z_s, \mu^\phi(w)_s) \mathrm d\widetilde{\beta}_s -\frac{1}{2} \int_0^t |\widetilde{b}(z_s, \mu^\phi(w)_s)|^2 \mathrm ds \Big) g(\mu_{\cdot \wedge t}) h(w_{\cdot \wedge t})  \right] \\
&=\mathbb E^{\mathbb Q} \left[ {\mathbb E}^{\mathbb Q} \left[ F_t \, \vert \, {\mathcal G}_{t}^{\mu,W} \right] \;g(\mu_{\cdot \wedge t}) \; h(w_{\cdot \wedge t})  \right],
\end{aligned}
\label{lem40a}
\end{multline}
where (recall that  $z$ has the form
$z=\mathcal Z (\xi,w,\widetilde{\beta})$)
\begin{align*}
F_t&:= f(z_t) \exp \Big( \textstyle \int_0^t \widetilde{b}(z_s, \mu^\phi(w)_s) \mathrm d\widetilde{\beta}_s -\frac{1}{2} \int_0^t |\widetilde{b}(z_s, \mu^\phi(w)_s)|^2 \mathrm ds \Big) \\
&=  f(\mathcal Z_t (\xi,w,\widetilde{\beta})) \exp \Big( \textstyle \int_0^t \widetilde{b}(\mathcal Z_s (\xi,w,\widetilde{\beta}), \mu^\phi(w)_s) \mathrm d\widetilde{\beta}_s -\frac{1}{2} \int_0^t |\widetilde{b}(\mathcal Z_s (\xi,w,\widetilde{\beta}), \mu^\phi(w)_s)|^2 \mathrm ds \Big) .
\end{align*}
Note that $F_t$ is $\mathcal G_t^{W,\beta,\xi}$-measurable. 
By statement $(1)$, under probability measure $\mathbb Q$, $(\mu,w)$ is independent of $(\widetilde{\beta},\xi)$.
Hence  ${\mathbb E}^{\mathbb Q} \left[ F_t \, \vert \, {\mathcal G}_{t}^{\mu,W} \right]$ is $\mathcal G_t^{W}$-measurable.
Thus it follows from~\eqref{lem40a} that
\begin{align*}
\mathbb E^{\mathbb R} \left[ f(z_t) g(\mu_{\cdot \wedge t}) h(w_{\cdot \wedge t})  \right]
= \mathbb E^{\mathbb Q} \left[ {\mathbb E}^{\mathbb Q} \left[ F_t \, \vert \, {\mathcal G}_{t}^{\mu,W} \right] \;\mathbb E^{\mathbb Q} \left[g(\mu_{\cdot \wedge t}) | \mathcal G_t^W \right]  h(w_{\cdot \wedge t})  \right].
\end{align*}
Since $\mathbb E^{\mathbb Q} \left[g(\mu_{\cdot \wedge t}) | \mathcal G_t^W \right] h(w_{\cdot \wedge t})$ is $\mathcal G_t^W$-measurable and bounded, there is a bounded and measurable function  $k:\mathcal C(\R \times [0,T], \R^2) \to \R$ such that $\mathbb E^{\mathbb Q} \left[g(\mu_{\cdot \wedge t}) | \mathcal G_t^W \right] h(w_{\cdot \wedge t})=k(w_{\cdot \wedge t})$.
Thus,  redoing the same computations in reverse, we obtain:
\begin{multline}
\mathbb E^{\mathbb R} \left[ f(z_t) g(\mu_{\cdot \wedge t}) h(w_{\cdot \wedge t})  \right] 
=\mathbb E^{\mathbb Q} \left[ {\mathbb E}^{\mathbb Q} \left[ F_t \, \vert \, {\mathcal G}_{t}^{\mu,W} \right] \;k(w_{\cdot \wedge t})  \right] 
=\mathbb E^{\mathbb Q} \left[  F_t  \;k(w_{\cdot \wedge t})  \right] 
=\mathbb E^{\mathbb R} \left[ f(z_t)  \;k(w_{\cdot \wedge t})  \right]\\
\begin{aligned}
&= \mathbb E^{\mathbb R} \left[\int_\R f(x) \mathrm d \mu^\phi(w)_t(x) \;k(w_{\cdot \wedge t})  \right] 
= \mathbb E^{\mathbb R} \left[\int_\R f(x) \mathrm d \mu^\phi(w)_t(x) \;\mathbb E^{\mathbb Q} \left[g(\mu_{\cdot \wedge t}) | \mathcal G_t^W \right] h(w_{\cdot \wedge t})  \right] \\
&= \mathbb E^{\mathbb R} \left[\int_\R f(x) \mathrm d \mu^\phi(w)_t(x) \; g(\mu_{\cdot \wedge t}) \; h(w_{\cdot \wedge t})  \right].
\end{aligned}
\label{lem40b}
\end{multline}
Since the process $(\mu^\phi(w)_t)_{t\in [0,T]}$ is $(\mathcal G_t^W)_{t\in [0,T]}$-adapted, it is in particular $(\mathcal G_t^{\mu,W})_{t\in [0,T]}$-adapted, thus equality~\eqref{lem40b} implies that $\mu^\phi(w)_t=\mathcal L^{\mathbb R} (z_t |\mathcal G_t^{\mu,W})$. 
It completes the proof of $(2)$. 

\textbf{Proof of $(3)$.}
Let us denote for every $t\in [0,T]$, $\nu_t=\mu^\phi(w)_t$. We want to prove that $(\mu_t)_{t\in [0,T]}=(\nu_t)_{t\in [0,T]}$. Recall that for every $t\in [0,T]$, $\mathbb Q$-almost surely,  $\mu_t= \mathcal L^{\mathbb P} (z_t | \mathcal G_t^{\mu,W})$ and, by point $(2)$, $\nu_t= \mathcal L^{\mathbb R} (z_t | \mathcal G_t^{\mu,W})$. By~\eqref{dp_sur_dq} and~\eqref{dr_sur_dq}, 
\begin{align*}
\frac{\mathrm d\mathbb P}{\mathrm d\mathbb Q}
&= \exp \left( \int_0^T \widetilde{b}(z_s, \mu_s) \mathrm d\widetilde{\beta}_s -\frac{1}{2} \int_0^T |\widetilde{b}(z_s, \mu_s)|^2 \mathrm ds \right); \\
\frac{\mathrm d\mathbb R}{\mathrm d\mathbb Q}
&= \exp \left( \int_0^T \widetilde{b}(z_s, \nu_s) \mathrm d\widetilde{\beta}_s -\frac{1}{2} \int_0^T |\widetilde{b}(z_s, \nu_s)|^2 \mathrm ds \right).
\end{align*}
Let us apply the same computation as in the proof of Lemma~\ref{lemme:inegalite_lacker}. Recall that for every $t\in [0,T]$,  $\mu^t$ denotes $\mathcal L^{\mathbb P}(z_{\cdot \wedge t} | \mathcal G_t^{\mu,W})$ and $\nu^t:= \mathcal L^{\mathbb R}(z_{\cdot \wedge t} | \mathcal G_t^{\mu,W})$.
For every $t\in [0,T]$, 
\begin{align*}
H(\nu^t | \mu^t)
= \int_\R \ln \frac{\mathrm d\nu^t}{\mathrm d\mu^t} \mathrm d\nu^t
= \mathbb E^{\mathbb R} \left[ \ln \frac{\mathrm d\nu^t}{\mathrm d\mu^t} (z _{\cdot \wedge t}) \Big| \mathcal G_t^{\mu,W}  \right].
\end{align*}
We use the fact that under $\mathbb P$, $\beta$ is independent of $(\mu,w)$ in order to prove, exactly as in the proof of Lemma~\ref{lemme:inegalite_lacker}, that for every $t\in [0,T]$, $\mathbb E^{\mathbb P} \left[ \frac{\mathrm d \mathbb R}{\mathrm d \mathbb P} \Big| \mathcal G_t^{\mu,W} \right]=1$. Again by mimicking the proof of~\eqref{lem38a}, this leads to 
\begin{align*}
\frac{\mathrm d\nu^t}{\mathrm d\mu^t} (z _{\cdot \wedge t}) 
=\mathbb E^{\mathbb P} \left[ \frac{\mathrm d\mathbb R}{\mathrm d\mathbb P} \Big| \mathcal G_t^{z,\mu,W} \right].
\end{align*}
Therefore, we finally obtain
\begin{align*}
H(\nu^t |\mu^t)
=\frac{1}{2}\mathbb E^{\mathbb R} \left[ \int_0^t |\widetilde{b}(z_s, \nu_s)-\widetilde{b}(z_s, \mu_s)|^2 \mathrm ds \Big| \mathcal G_t^{\mu,W}  \right].
\end{align*}
Applying Pinsker's inequality and using the fact that $\widetilde{b}$ is Lipschitz-continuous with respect to the measure variable, 
we finally obtain for every $t\in [0,T]$, 
\begin{align*}
\mathbb E^{\mathbb Q}  \left[\dtv (\nu_t, \mu_t)^2 \right]
\leq \mathbb E^{\mathbb Q}  \left[\dtv (\nu^t, \mu^t)^2 \right] 
\leq C \int_0^T \mathbb E^{\mathbb Q}\left[\dtv (\nu_s, \mu_s)^2 \right] \mathrm ds. 
\end{align*}
Thus by Gronwall's inequality, we obtain that for every $t\in [0,T]$, $\mathbb E ^{\mathbb Q} \left[\dtv (\nu_t, \mu_t)^2 \right]=0$. In particular, $\mathbb Q$-almost surely, the two continuous processes $(\mu_t)_{t\in [0,T]}$ and $(\nu_t)_{t\in [0,T]}$ are equal. This completes the proof of the lemma. 
\end{proof}

\subsection{Resolution of the SDE when the drift is general}
\label{section:resolution_general}

Let us state the well-posedness result for the general case:
\begin{theo}
\label{theo:interpolation_lacker}
Let $\eta > 0$ and  $\delta \in [0,1]$ satisfy the inequality $\eta  > \frac{3}{2}(1-\delta)$.
Let $b:\R \times \mathcal P_2(\R) \to \R$ be of class $(H^\eta, \mathcal C^\delta)$. Let $f: \R \to \R$ be a function of order $\alpha \in (\frac{3}{2}, \frac{\eta}{1-\delta}]$. 

Then existence of a weak solution and uniqueness in law hold for equation~\eqref{sde_beta_drift_general}. 
\end{theo}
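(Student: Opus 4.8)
The plan is to deduce Theorem~\ref{theo:interpolation_lacker} from the two building blocks already established in this section --- strong well-posedness of the zero-drift equation \eqref{sde_beta_drift_nul} (Proposition~\ref{prop:sansdrift}) and weak well-posedness of equation \eqref{sde_beta_drift_lipschitz} with a bounded drift that is Lipschitz-continuous in total variation (Propositions~\ref{prop:drift_lipschitz_exist} and \ref{prop:drift_lipschitz_uniqu}) --- by absorbing the drift $b$ into the common noise $w$ through a Girsanov transformation, exactly as in the proof of Theorem~\ref{theo:weak_wp_hypotheses_simplifiees}. When $\delta=1$ the statement is essentially Propositions~\ref{prop:drift_lipschitz_exist}--\ref{prop:drift_lipschitz_uniqu}, since a drift of class $(H^\eta,\mathcal C^1)$ is bounded and Lipschitz-continuous in $\dtv$; so the real content is the range $\delta<1$. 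Note that a drift of class $(H^\eta,\mathcal C^\delta)$ is automatically bounded (because $\Lambda\in L_1(\R)$) and that $x\mapsto b(x,\mu)$ lies in $H^\eta(\R)$ with a norm uniform in $\mu$; the heart of the argument is therefore to invert, for a frozen path of $\mu$, the diffusion coefficient $\phi\mapsto\int_\R f(k)\Re(e^{-ik\cdot}\phi(k))\,\mathrm dk$, i.e. to produce an $L_2(\R;\C)$-valued, suitably adapted process $(h_t)_{t\in[0,T]}$ with $b(x,\mu_t)=\int_\R e^{-ikx}f(k)h_t(k)\,\mathrm dk$, namely $h_t(k)=\frac1{\sqrt{2\pi}f(k)}\mathcal F^{-1}(b(\cdot,\mu_t))(k)$, together with the comparison estimate between two frozen paths $\mu$ and $\nu$. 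Writing $b(\cdot,\mu)$ through the symbol $\lambda$ of Definition~\ref{def:class_HC} gives $|h_t(k)|\lesssim\crochetk^{\alpha-\eta}|\lambda(k,\mu_t)|$ and $|h_t^\mu(k)-h_t^\nu(k)|\lesssim\crochetk^{\alpha-\eta}|\lambda(k,\mu_t)-\lambda(k,\nu_t)|\le\crochetk^{\alpha-\eta}\Lambda(k)\,\dtv(\mu_t,\nu_t)^\delta$, and this is precisely where the constraint $\tfrac32<\alpha\le\tfrac\eta{1-\delta}$ is consumed: it lets one split the frequencies into a low band $|k|\le R$, on which the loss $\crochetk^{\alpha-\eta}$ is controlled by a power of $R$ while the $\delta$-Hölder gain of $\lambda$ is retained, and a high band $|k|>R$, on which $b$ is small in $L_\infty$ (again because $\Lambda\in L_1$) and can be handled with the idiosyncratic noise $\beta$ as in Propositions~\ref{prop:drift_lipschitz_exist}--\ref{prop:drift_lipschitz_uniqu}; balancing $R$, and using the smoothing carried by the common noise $w$ onto the conditional laws $\mu_t$, is what turns the $\dtv$-Hölder dependence of $b$ into a dependence on $\mu$ strong enough to close a Picard/Gronwall scheme.

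For existence I would work, as in the proof of Proposition~\ref{prop:drift_lipschitz_exist}, on the canonical product space $\Omega=\Omega^W\times\Omega^{\beta,\xi}$ carrying the Brownian sheet $w$, the Brownian motion $\beta$ and the initial condition $\xi\sim\mu_0$, take $z=\mathcal Z(\xi,w,\beta)$ the strong solution of \eqref{sde_beta_drift_nul} given by Proposition~\ref{prop:sansdrift}, and to each frozen $\nu$ in the complete metric space $(\mathcal X,d)$ associate the Girsanov density $\mathcal E^\nu_T$ built from the corrector $h^\nu$ defined above --- carried out with respect to $w$, supplemented if needed by a $\beta$-change of measure for the high-frequency remainder of $b$ --- and set $\phi(\nu)_t:=w\mapsto\mathcal L^{\mathbb P^\nu}(z_t\mid\mathcal G_t^W)$. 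Running the relative-entropy identity of Lemma~\ref{lemme:inegalite_lacker} (with $\|h^\mu_s-h^\nu_s\|_{L_2}^2$ in place of $|\widetilde b(z_s,\mu_s)-\widetilde b(z_s,\nu_s)|^2$), then Pinsker's inequality, should yield $\mathbb E^W[\dtv(\phi(\mu)^t,\phi(\nu)^t)^2]\le C\int_0^t\mathbb E^W[\dtv(\mu^s,\nu^s)^2]\,\mathrm ds$, so that $\phi^{\circ n}$ is a contraction on $(\mathcal X,d)$ for $n$ large and its fixed point $\mu^\phi$ produces, together with $(z,w,\widetilde{\beta}^{\mu^\phi},\xi)$, a weak solution of \eqref{sde_beta_drift_general}; the compatibility condition of Definition~\ref{def:weak_solution_drift_general} holds by Remark~\ref{rem:definition_mut} since $\mu^\phi$ is $\mathcal G^W$-adapted by construction.

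For uniqueness in law I would follow the pattern of §\ref{section:resolution_lipschitz} and of the uniqueness part of Theorem~\ref{theo:weak_wp_hypotheses_simplifiees}: given two weak solutions $\mathbf{\Omega^1},\mathbf{\Omega^2}$ of \eqref{sde_beta_drift_general}, apply the same $w$-Girsanov (and the $\beta$-Girsanov for the remainder) to strip the drift, observe that the transformed elements are weak solutions of \eqref{sde_beta_drift_nul} to which Proposition~\ref{prop:sansdrift} applies, prove an analogue of Lemma~\ref{lemme:mui_point_fixe} identifying each measure component with $\mu^\phi(w^n)$ (so that, as in Remark~\ref{rem:adaptness}, $\mu^n$ is adapted to the completion of $\mathcal G^{W^n}$), and finally transport the equality of laws back through the Radon--Nikodym densities, which are measurable functionals of $(z,w)$ by the inversion formula for $h$. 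The delicate points --- and, I expect, the main obstacle --- are twofold: first, the frequency-splitting estimate on $h$ that must upgrade the merely $\delta$-Hölder-in-$\dtv$ dependence of $b$ into a contractive (or at least Osgood-type) dependence on $\mu$, which is exactly the step forcing $\eta>\tfrac32(1-\delta)$ and pinning down the admissible $\alpha$; second, the conditional-independence bookkeeping entailed by the compatibility condition, needed throughout because a priori $\mu$ is not adapted to $w$, and to be handled by the same conditioning arguments as in the proofs of Lemmas~\ref{lemme:inegalite_lacker} and \ref{lemme:mui_point_fixe}.
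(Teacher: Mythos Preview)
Your overall architecture is right --- absorb the drift into the common noise $w$ by Girsanov and build on Propositions~\ref{prop:drift_lipschitz_exist}--\ref{prop:drift_lipschitz_uniqu} --- but the heart of your argument has a genuine gap. The direct corrector $h_t(k)\simeq\crochetk^{\alpha-\eta}\lambda(k,\mu_t)$ is in general \emph{not} in $L_2(\R)$: the hypotheses only give $\Lambda\in L_1\cap L_2$, while for $\delta>0$ the range $\alpha\le\eta/(1-\delta)$ allows $\alpha-\eta>0$, and nothing ensures $\crochetk^{\alpha-\eta}\Lambda\in L_2$. Your proposed frequency splitting does not rescue this: the high-band piece $b_{>R}$ is still only $\delta$-H\"older in $\dtv$, not Lipschitz, so it does not fall under Propositions~\ref{prop:drift_lipschitz_exist}--\ref{prop:drift_lipschitz_uniqu}; and both the $w$- and the $\beta$-entropy terms that enter Lemma~\ref{lemme:inegalite_lacker} still carry $\dtv(\mu_s,\nu_s)^{2\delta}$, so balancing $R$ does not upgrade H\"older to Lipschitz and your Picard/Gronwall loop does not close.

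The paper's proof supplies the missing idea: instead of splitting by frequency bands, it \emph{regularises at each frequency} by inf-convolution on the Wasserstein lift. Lemma~\ref{lemme:regularisation_alpha} produces, for each $k$, a Lipschitz approximation $\widetilde\lambda(k,\cdot)$ of $\lambda(k,\cdot)$ with
\[
|\lambda(k,\mu)-\widetilde\lambda(k,\mu)|\le C\crochetk^{-\theta\delta}\Lambda(k),\qquad
|\widetilde\lambda(k,\mu)-\widetilde\lambda(k,\nu)|\le C\crochetk^{\theta(1-\delta)}\Lambda(k)\,\dtv(\mu,\nu),
\]
with $\theta=(\alpha-\eta)/\delta$. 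This yields a decomposition $b=\widetilde b+(b-\widetilde b)$ where $\widetilde b$ is bounded and \emph{genuinely Lipschitz} in $\dtv$ (since $\eta-\theta(1-\delta)=\tfrac{\eta-\alpha(1-\delta)}{\delta}\ge0$), hence amenable to Propositions~\ref{prop:drift_lipschitz_exist}--\ref{prop:drift_lipschitz_uniqu}, while the corrector for $(b-\widetilde b)$ satisfies $|h_t(k)|\lesssim\crochetk^{\alpha-\eta-\theta\delta}\Lambda(k)=\Lambda(k)\in L_2$, so the $w$-Girsanov is legitimate. The condition $\alpha\le\eta/(1-\delta)$ (and hence $\eta>\tfrac32(1-\delta)$) is consumed precisely in checking that $\widetilde b$ is Lipschitz; your frequency cut does not hit this balance.

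Consequently the proof is a two-stage affair, not a single fixed point on the zero-drift equation: for existence one starts from a weak solution with drift $\widetilde b$ (which, by Proposition~\ref{prop:drift_lipschitz_uniqu}, has $\mu$ adapted to the completion of $\mathcal G^W$ --- this is essential for the subsequent measurability of $h$ and for the compatibility condition under the new measure) and $w$-Girsanov-adds $(b-\widetilde b)$; for uniqueness one $w$-Girsanov-removes $(b-\widetilde b)$ from each of two given solutions and invokes Proposition~\ref{prop:drift_lipschitz_uniqu} directly. No Osgood argument is needed, and the inf-convolution lemma is the step that makes the whole scheme close.
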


\textit{Note:} The assumption on $\alpha$ is the same as the one given by inequality~\eqref{assumption_alpha}. 

As a first step, let us show that a drift function $b$ satisfying the assumptions of Theorem~\ref{theo:interpolation_lacker} can be written as  a sum $\widetilde{b}+(b-\widetilde{b})$, where $\widetilde{b}$ satisfies the assumptions of Proposition~\ref{prop:drift_lipschitz_exist} and where $b-\widetilde{b}$ satisfies assumptions similar to Definition~\ref{def:B hyp}, and apply on $b-\widetilde{b}$ the same Fourier inversion as in Lemma~\ref{lemme:fourier_masseconstante}.

Recall that by Definition~\ref{def:class_HC}, $b$ can be written as
\begin{align}
\label{Bxmu}
b(x,\mu)
&=\int_\R \crochetk^{-\eta} \Big( \cos (kx) \lambda^\Re(k,\mu) + \sin(kx) \lambda^\Im(k, \mu)  \Big) \mathrm dk,
\end{align}
where $\lambda=\lambda^\Re + i\lambda^\Im$ satisfies for every $k \in \R$ and for every $\mu, \nu \in \mathcal P_2(\R)$, 
\begin{align}
\label{lambda_borne}
|\lambda(k,\mu)| & \leq \Lambda(k); \\
\label{lambda_holder}
|\lambda(k,\mu)-\lambda(k,\nu)| &\leq \Lambda(k) \dtv(\mu, \nu)^\delta,
\end{align}
and $\Lambda$ belongs to $L_1(\R) \cap L_2(\R)$.

\begin{lemme}
\label{lemme:regularisation_alpha}
Let $\theta:= \frac{\alpha-\eta}{\delta}$. There exists $\widetilde{\lambda}= \widetilde{\lambda}^\Re+i \widetilde{\lambda}^\Im$, where $\widetilde{\lambda}^\Re, \widetilde{\lambda}^\Im: \R \times \mathcal P_2(\R) \to \R$, such that for each $k \in \R$ and for each $\mu, \nu \in \mathcal P_2(\R)$, 
\begin{align}
\label{lambda_moins_lambdatilde}
|\lambda(k,\mu)-\widetilde{\lambda}(k,\mu)| &\leq \frac{C}{\crochetk^{\theta \delta}} \Lambda(k) ;\\
\label{lambdatilde_lipschitz}
|\widetilde{\lambda}(k,\mu)-\widetilde{\lambda}(k,\nu)| &\leq C \crochetk^{\theta(1-\delta)} \Lambda(k) \dtv(\mu,\nu),
\end{align}
where $C$ is independent of $k$, $\mu$, $\nu$ and $\theta$. 
\end{lemme}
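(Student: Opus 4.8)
The idea is to regularize $\lambda(\cdot,\mu)$ in the frequency variable by convolving it, for each fixed $k$, with a probability measure (or smoothing kernel) on $\mathcal P_2(\R)$ adapted to the total-variation metric; more concretely, since the only $\mu$-dependence we need to control is the modulus of continuity in $\dtv$, I would interpolate between the two bounds \eqref{lambda_borne} and \eqref{lambda_holder} by a mollification at a frequency-dependent scale. Recall $\theta=\frac{\alpha-\eta}{\delta}$, which is $\geq 0$ precisely because $\alpha\leq\frac{\eta}{1-\delta}$ forces $\alpha\geq\eta$ only when $\delta\le 1$… — more carefully, the assumption $\frac32<\alpha\leq\frac{\eta}{1-\delta}$ together with $\eta>\frac32(1-\delta)$ gives $0\leq\theta\leq\frac{\eta}{1-\delta}\cdot\frac1\delta-\frac\eta\delta=\frac{\eta}{\delta}\big(\frac{1}{1-\delta}-1\big)=\frac{\eta}{1-\delta}$, so $\theta$ is a nonnegative number and $\theta\delta=\alpha-\eta$, $\theta(1-\delta)$ is the ``loss'' exponent. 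The target inequalities say: pay a factor $\crochetk^{-\theta\delta}=\crochetk^{-(\alpha-\eta)}$ in the approximation error (which is exactly what is needed so that $\crochetk^{-\eta}$ times the error decays like $\crochetk^{-\alpha}$, matching the order of $f$), and in exchange gain Lipschitz-continuity in $\dtv$ at the cost of a factor $\crochetk^{\theta(1-\delta)}$.

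\textbf{Key steps.} First, fix a smooth compactly supported mollifier and, for each $k$, a scale $\rho_k:=\crochetk^{-\theta}$ (to be confirmed by the computation). Define $\widetilde\lambda(k,\mu)$ by averaging $\mu'\mapsto\lambda(k,\mu')$ over measures $\mu'$ within $\dtv$-distance $\lesssim\rho_k$ of $\mu$; the cleanest way to make this rigorous without introducing a measure on $\mathcal P_2(\R)$ is to go through the quantile/representation picture or, even simpler, to exploit that $\lambda(k,\cdot)$ restricted to any $\dtv$-ball is $\delta$-Hölder and bounded by $\Lambda(k)$, and to set $\widetilde\lambda(k,\mu):=\mathbb E[\lambda(k,\Phi_{\rho_k}(\mu,U))]$ where $\Phi_{\rho}$ is a suitable $\dtv$-contraction-type perturbation of size $\rho$ and $U$ an auxiliary uniform variable — but since the lemma only asks for the existence of \emph{some} $\widetilde\lambda$ with these two bounds, I would in fact bypass all of this and argue abstractly: by the Whitney-type extension / mollification of Hölder functions on a metric space, any $\delta$-Hölder function $g$ on the metric space $(\mathcal P_2(\R),\dtv)$ with Hölder constant $L$ and sup-norm $\leq L$ admits, for every $\rho>0$, a Lipschitz approximation $g_\rho$ with $\|g-g_\rho\|_\infty\leq C L\rho^\delta$ and $\mathrm{Lip}(g_\rho)\leq C L\rho^{\delta-1}$ — this is the standard ``inf-convolution'' (McShane) regularization $g_\rho(\mu):=\inf_\nu\{g(\nu)+L\rho^{\delta-1}\dtv(\mu,\nu)\}$, whose error and Lipschitz bounds are elementary. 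Apply this separately to $\lambda^\Re(k,\cdot)$ and $\lambda^\Im(k,\cdot)$ with $L=\Lambda(k)$ and $\rho=\rho_k:=\crochetk^{-\theta}$. Then:
\begin{itemize}
\item[(i)] $|\lambda(k,\mu)-\widetilde\lambda(k,\mu)|\leq C\Lambda(k)\rho_k^\delta=C\Lambda(k)\crochetk^{-\theta\delta}$, which is \eqref{lambda_moins_lambdatilde};
\item[(ii)] $|\widetilde\lambda(k,\mu)-\widetilde\lambda(k,\nu)|\leq C\Lambda(k)\rho_k^{\delta-1}\dtv(\mu,\nu)=C\Lambda(k)\crochetk^{\theta(1-\delta)}\dtv(\mu,\nu)$, which is \eqref{lambdatilde_lipschitz}.
\end{itemize}
Measurability of $(k,\mu)\mapsto\widetilde\lambda(k,\mu)$ follows from joint measurability of $\lambda$ and the fact that an infimum of a jointly measurable family over a separable metric space is measurable. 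Finally, record that $\widetilde b(x,\mu):=\int_\R\crochetk^{-\eta}(\cos(kx)\widetilde\lambda^\Re(k,\mu)+\sin(kx)\widetilde\lambda^\Im(k,\mu))\,\mathrm dk$ is well-defined (the integrand is dominated by $\crochetk^{-\eta}\Lambda(k)$, integrable since $\Lambda\in L_1$ and $\eta\geq0$), is bounded uniformly, and is Lipschitz in $\dtv$ with constant $\int_\R\crochetk^{-\eta}\,C\crochetk^{\theta(1-\delta)}\Lambda(k)\,\mathrm dk=C\int_\R\crochetk^{-\eta+(\alpha-\eta)(1-\delta)/\delta}\Lambda(k)\,\mathrm dk$; one checks the exponent $-\eta+\theta(1-\delta)$ is $\leq 0$ precisely under $\alpha\leq\frac{\eta}{1-\delta}$, hence the integral is finite because $\Lambda\in L_1$. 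Likewise $b-\widetilde b$ has Fourier density $\crochetk^{-\eta}(\lambda-\widetilde\lambda)$ dominated by $\crochetk^{-\eta-\theta\delta}\Lambda(k)=\crochetk^{-\alpha}\Lambda(k)$, which feeds the Fourier-inversion Lemma~\ref{lemme:fourier_masseconstante}/\ref{lemme:inversion Fourier} with room to spare since $\Lambda\in L_2$ and $\alpha>\frac32$.

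\textbf{Main obstacle.} The only genuine subtlety is getting the constant $C$ in \eqref{lambda_moins_lambdatilde}--\eqref{lambdatilde_lipschitz} to be \emph{independent of $k$}: this works because the McShane regularization is ``scale-covariant'' — the error and Lipschitz bounds depend on $(L,\rho)$ only through the explicit products $L\rho^\delta$ and $L\rho^{\delta-1}$, with a universal numerical constant coming solely from the metric-space inf-convolution estimate, not from $k$. Everything else (measurability, the integrability checks that $\widetilde b$ is bounded/Lipschitz and that $b-\widetilde b$ sits in the Fourier class of order $\alpha$) is routine bookkeeping with the hypotheses $\Lambda\in L_1\cap L_2$, $\eta>0$, and the arithmetic relating $\alpha,\eta,\delta,\theta$.
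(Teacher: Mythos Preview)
Your proposal is correct and shares the same core idea as the paper---an inf-convolution regularization at a $k$-dependent scale---but your execution is simpler. The paper lifts $\lambda(k,\cdot)$ to random variables and uses a \emph{quadratic} penalty,
\[
v^\eps(X):=\inf_{Y}\Big\{v(Y)+\tfrac{1}{2\eps}\,\mathbb P[X\neq Y]^2\Big\},
\]
then spends a nontrivial amount of work showing (i) that $v^\eps$ depends only on the law of $X$ (via a conditional-law argument), (ii) the approximation bound with exponent $\eps^{\delta/(2-\delta)}$, and (iii) the Lipschitz bound with exponent $\eps^{(\delta-1)/(2-\delta)}$, before setting $\eps=\crochetk^{-\theta(2-\delta)}$. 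Your McShane (linear-penalty) inf-convolution $g_\rho(\mu)=\inf_\nu\{g(\nu)+L\rho^{\delta-1}\dtv(\mu,\nu)\}$ directly on $(\mathcal P_2(\R),\dtv)$ bypasses step (i) entirely and yields the cleaner exponents $\rho^\delta$ and $\rho^{\delta-1}$ with $\rho=\crochetk^{-\theta}$; the two choices of scale agree after the algebra. One caveat: your measurability claim (``infimum over a separable metric space'') is not quite right as stated, since $(\mathcal P_2(\R),\dtv)$ is not separable and $\nu\mapsto\dtv(\mu,\nu)$ is not $W_2$-continuous; the paper's lift to the separable space $L_2(\Omega\times[0,1])$ is one way to repair this, or you can observe that $g_\rho$ is $\dtv$-Lipschitz and argue measurability from the measurability of $g$ via a pointwise-limit construction. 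This is a technicality and does not affect the substance of your argument.
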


\begin{proof}
Let us fix $k\in \R$. We will focus on the proof for the real part; the case of the imaginary part is identical. 

Let us define $u:\mathcal P_2(\R) \to \R$ by $u(\mu):= \frac{\lambda^\Re(k, \mu)}{\Lambda(k)}$. By~\eqref{lambda_borne} and~\eqref{lambda_holder}, for every $\mu, \nu \in \mathcal P_2(\R)$, $|u(\mu)| \leq 1$ and $|u(\mu)-u(\nu)| \leq \dtv(\mu,\nu)^\delta$. Let $(\Omega, \mathcal F, \mathbb P)$ be a Polish and atomless probability space. Let us define $v:L_2(\Omega) \to \R$ by $v(X):= u(\mathcal L(X))$.

The following approximation method is inspired by the inf-convolution techniques. 
Let $\eps>0$. Let us define $v^\eps:L_2(\Omega) \to \R$ by
\begin{align}
\label{def:veps}
v^\eps (X):= \inf_{Y \in L_2(\Omega \times [0,1])} \left\{  v(Y) +\frac{1}{2\eps} (\mathbb P \otimes \Leb_{[0,1]}) \left[X \neq Y\right]^2 \right\}. 
\end{align}
We consider here the infimum over random variables in a larger probability space  in order to be enseure the existence of a random variable~$Y$ independent of $X$. In~\eqref{def:veps}, the map $v$ is extended to $L_2(\Omega \in [0,1]) \times \R$ by $v(Y):= u (\mathcal L(Y))$. 
Let us prove that
\begin{itemize}
\item[$(i)$] $v^\eps(X)$ depends only on the law of $X$; thus we can define $u^\eps (\mu)$ by letting  $u^\eps (\mu):=v^\eps (X)$, whatever the choice of the random variable $X$ with distribution $\mu$. 
\item[$(ii)$] for every $\mu \in \mathcal P_2(\R)$, $|u^\eps (\mu)-u(\mu) | \leq C \eps^{\frac{\delta}{2-\delta}}$. 
\item[$(iii)$] for every $\mu,\nu \in \mathcal P_2(\R)$, $|u^\eps (\mu)-u^\eps (\nu) | \leq C \eps^{\frac{\delta-1}{2-\delta}} \dtv(\mu,\nu)$. 
\end{itemize}

\textbf{Proof of $(i)$.} Let $X,X' \in L_2(\Omega)$ with same law. We want to prove that $v^\eps (X)=v^\eps (X')$. Remark that by definition of $v$, $v(X)$ depends only on the law of $X$. 
Fix $\eta>0$. There is $Y^\eta \in L_2(\Omega \times [0,1])$ such that
\begin{align}
\label{v_Yeta}
v(Y^\eta)+\frac{1}{2\eps } (\mathbb P \otimes \Leb_{[0,1]}) \left[X \neq Y^\eta\right]^2 \leq v^\eps (X) +\eta. 
\end{align}
Let $\nu:\R \times \mathcal B(\R) \to \R$ be the conditional law of $Y^\eta$ given $X$; in other words, for every fixed $x\in \R$, $\nu(x,\cdot)$ belongs to $\mathcal P_2(\R)$, for every fixed $A \in \mathcal B(\R)$, $x \mapsto\nu (x,A)$ is measurable and for every $f:\R^2\to \R$ bounded and measurable, $(\mathbb E \otimes \Leb_{[0,1]}) \left[f(X,Y^\eta)\right]=(\mathbb E \otimes \Leb_{[0,1]}) \left[\int_\R f(X,y) \nu(X,\mathrm dy)\right]$. 

Furthermore, for every fixed $x\in \R$, let us denote by $u\in [0,1] \mapsto
g(x,u)$ the quantile function associated to the probability measure $\nu(x,\cdot)$. 
For every $t\in \R$ and for every $u\in [0,1]$, $\{ x: g(x,u) \leq t \}=\{ x: \nu(x,(-\infty,t]) \leq u \} \in \mathcal B(\R)$, so we deduce that for every $u\in [0,1]$, $x \mapsto g(x,u)$ is measurable. Moreover, $u\mapsto g(x,u)$ is a càdlàg function. It follows from~\cite[Proposition 1.13]{karatzasshreve91} that $(x,u) \mapsto g(x,u)$ is measurable. 

Let $U \in L_2([0,1])$ be a random variable with  uniform law on $[0,1]$; in particular, it is independent of  $X'$ (remark that we have considered a larger probability space in order to ensure the existence of $U$ independent of $X'$). Let $Y':= g(X',U)$. Then for every $f:\R^2\to \R$ bounded and measurable
\begin{align*}
(\mathbb E \otimes \Leb_{[0,1]}) \left[f(X',Y')\right]
&=(\mathbb E \otimes \Leb_{[0,1]}) \left[f(X', g(X',U))\right] \\
&=(\mathbb E \otimes \Leb_{[0,1]}) \left[\int_0^1 f(X', g(X',u)) \mathrm du\right] \\
&=(\mathbb E \otimes \Leb_{[0,1]}) \left[\int_\R f(X',y) \nu(X', \mathrm dy)\right].
\end{align*}
Since $X$ and $X'$ have same law, we deduce that
\begin{align*}
(\mathbb E \otimes \Leb_{[0,1]}) \left[f(X',Y')\right]=(\mathbb E \otimes \Leb_{[0,1]}) \left[\int_\R f(X,y) \nu(X, \mathrm dy)\right]= (\mathbb E \otimes \Leb_{[0,1]}) \left[f(X,Y^\eta)\right].
\end{align*}
Therefore, the pair $(X',Y')$ has same distribution as $(X, Y^\eta)$. 
It follows that 
\begin{align*}
(\mathbb P \otimes \Leb_{[0,1]}) \left[X \neq Y^\eta\right]=(\mathbb P \otimes \Leb_{[0,1]}) \left[X' \neq Y'\right]
\end{align*}
and $v(Y^\eta)=v(Y')$ since $v$ depends only on the law of the random variable. 
Thus by inequality~\eqref{v_Yeta}, 
\begin{align*}
v(Y')+\frac{1}{2\eps } (\mathbb P \otimes \Leb_{[0,1]}) \left[X' \neq Y'\right]^2 \leq v^\eps (X) +\eta. 
\end{align*}
By definition~\eqref{def:veps} of $v^\eps$, $v^\eps (X') \leq v(Y')+\frac{1}{2\eps } (\mathbb P \otimes \Leb_{[0,1]}) \left[X' \neq Y'\right]^2$, thus $v^\eps (X') \leq v^\eps (X) +\eta$. We proved that the inequality holds with every $\eta>0$, thus $v^\eps (X') \leq v^\eps (X)$. By symmetry, $v^\eps (X) \leq v^\eps (X')$, hence  the equality holds true.

\textbf{Proof of $(ii)$.}
Let us prove that for every $X \in L_2(\Omega)$, $|v^\eps (X) - v(X) | \leq C \eps^{\frac{\delta}{2-\delta}}$. 
Fix $X \in L_2(\Omega)$. By definition~\eqref{def:veps}, it is obvious that $v^\eps(X) \leq v(X)$. Thus it is sufficient to prove that $v(X)-v^\eps(X) \leq C \eps^{\frac{\delta}{2-\delta}}$.

Fix $\eta >0$. There exists $Y^\eta$ such that~\eqref{v_Yeta}. It follows that
\begin{align*}
v(Y^\eta)+\frac{1}{2\eps } (\mathbb P \otimes \Leb_{[0,1]}) \left[X \neq Y^\eta\right]^2 \leq v (X) +\eta. 
\end{align*}
By definition of $v$, $|v(X)-v(Y^\eta)| = |u(\mathcal L(X)) - u(\mathcal L(Y^\eta))| \leq \dtv(\mathcal L(X),\mathcal L(Y^\eta))^\delta$. 
Therefore, by~\eqref{def:dtv}, 
\begin{align}
\label{dtv_yeta_1}
\dtv(\mathcal L(X),\mathcal L(Y^\eta))^2
\leq 4 (\mathbb P \otimes \Leb_{[0,1]}) \left[X \neq Y^\eta\right]^2 
\leq 8\eps  \left[\dtv(\mathcal L(X),\mathcal L(Y^\eta))^\delta + \eta\right]. 
\end{align}
Let $l:=\limsup_{\eta \searrow 0} \dtv(\mathcal L(X),\mathcal L(Y^\eta))$. Thus $l^2 \leq 8\eps l^\delta$, hence we get $l^{2-\delta} \leq 8\eps$. 
It follows  that
\begin{align}
\label{dtv_yeta_2}
\limsup_{\eta \searrow 0} \dtv(\mathcal L(X),\mathcal L(Y^\eta))
\leq 2^{\frac{3}{2-\delta}} \eps^{\frac{1}{2-\delta}}. 
\end{align}
By inequality~\eqref{v_Yeta}, 
\begin{align*}
v(X)-v^\eps(X) 
&\leq v(X) - v(Y^\eta)- \frac{1}{2\eps } (\mathbb P \otimes \Leb_{[0,1]}) \left[X \neq Y^\eta\right]^2 +\eta \\
&\leq |v(X) - v(Y^\eta)| +\eta
\leq \dtv(\mathcal L(X),\mathcal L(Y^\eta))^\delta + \eta. 
\end{align*}
By passing to the limit $\eta \searrow 0$, we obtain $v(X)-v^\eps(X) \leq C \eps^{\frac{\delta}{2-\delta}}$, which completes the proof of $(ii)$. 

\textbf{Proof of $(iii)$.}
Let us first prove that $u^\eps$ is also $\delta$-Hölder continuous. 
Let $\mu, \nu \in \mathcal P_2(\R)$. Let $X$ and $X' \in L_2(\Omega)$ with respective distributions $\mu$ and $\nu$. 
Fix $\eta>0$. Let $Y^\eta$ satisfying~\eqref{v_Yeta}. Then
\begin{align*}
v^\eps(X')-v^\eps(X) 
&\leq v^\eps (X') -  v(Y^\eta)-\frac{1}{2\eps } (\mathbb P \otimes \Leb_{[0,1]}) \left[X \neq Y^\eta\right]^2+\eta \\
&\leq v(Y^\eta+ X'-X) + \frac{1}{2\eps } (\mathbb P \otimes \Leb_{[0,1]}) \left[X' \neq Y^\eta +X'-X\right]^2 \\
& \quad \quad -  v(Y^\eta)-\frac{1}{2\eps } (\mathbb P \otimes \Leb_{[0,1]}) \left[X \neq Y^\eta\right]^2+\eta \\
&\leq |v(Y^\eta+ X'-X) -  v(Y^\eta) |+\eta 
\leq \dtv(\mathcal L(Y^\eta+ X'-X),\mathcal L(Y^\eta))^\delta + \eta.
\end{align*}
By definition of the distance in total variation, $\dtv(\mathcal L(Y^\eta+ X'-X),\mathcal L(Y^\eta)) \leq 2 \P{X' \neq X}$. Thus for every $\eta>0$, $v^\eps(X')-v^\eps(X)  \leq 2^\delta \P{X' \neq X}^\delta + \eta$. By letting $\eta$ tend to zero and by symmetry, we deduce that there is $C>0$ depending only on $\delta$ such that
\begin{align*}
|u^\eps(\mu)-u^\eps(\nu)| = |v^\eps(X)-v^\eps(X') | \leq C \P{X' \neq X}^\delta.
\end{align*}
By taking the infimum over every coupling $(X,X')$ of $(\mu,\nu)$, we finally get
\begin{align}
\label{ue_holder}
|u^\eps(\mu)-u^\eps(\nu)| \leq C \dtv(\mu,\nu)^\delta. 
\end{align}
Therefore, $u^\eps$ is also $\delta$-Hölder continuous. 

Keep $X,X' \in L_2(\Omega)$ two random variables with laws $\mu$ and $\nu$. Let $(Y^\eta)_{\eta>0}$ satisfy~\eqref{v_Yeta}. 
It follows from~\eqref{dtv_yeta_1} and~\eqref{dtv_yeta_2} that
\begin{align*}
\limsup_{\eta \searrow 0}  (\mathbb P \otimes \Leb_{[0,1]}) \left[X \neq Y^\eta\right]
\leq \sqrt{2\eps 2^{\frac{3\delta}{2-\delta}} \eps^{\frac{\delta}{2-\delta}}}
= 2^{\frac{1+\delta}{2-\delta}} \eps^{\frac{1}{2-\delta}}. 
\end{align*}
For every $\eta>0$, let us define
\begin{align*}
S^\eta := \bigg\{ Y \in L_2(\Omega \times [0,1]) :  (\mathbb P \otimes \Leb_{[0,1]}) \left[X \neq Y\right] &\leq 2^{\frac{1+\delta}{2-\delta}} \eps^{\frac{1}{2-\delta}}+\eta \\ \text{or} \quad (\mathbb P \otimes \Leb_{[0,1]}) \left[X' \neq Y\right] &\leq 2^{\frac{1+\delta}{2-\delta}} \eps^{\frac{1}{2-\delta}}+\eta  \bigg\}.
\end{align*}
Fix $\eta>0$. Thus there is $Y^\eta \in S^\eta$ such that~\eqref{v_Yeta} holds true. We deduce that
\begin{align*}
v^\eps(X') - v^\eps (X) 
&\leq v(Y^\eta) + \frac{1}{2\eps} (\mathbb P \otimes \Leb_{[0,1]}) \left[X' \neq Y^\eta\right]^2  \\
&\quad \quad -v(Y^\eta) - \frac{1}{2\eps} (\mathbb P \otimes \Leb_{[0,1]}) \left[X \neq Y^\eta\right]^2 +\eta \\
&\leq \frac{1}{2\eps} \left(  (\mathbb P \otimes \Leb_{[0,1]}) \left[X' \neq Y^\eta\right]^2 - (\mathbb P \otimes \Leb_{[0,1]}) \left[X \neq Y^\eta\right]^2 \right) +\eta. 
\end{align*}
By symmetry, we deduce that
\begin{align*}
|v^\eps(X') - v^\eps (X) | 
\leq \frac{1}{2\eps} \sup_{Y \in S^\eta} \left|  (\mathbb P \otimes \Leb_{[0,1]}) \left[X' \neq Y\right]^2 - (\mathbb P \otimes \Leb_{[0,1]}) \left[X \neq Y\right]^2 \right| +\eta. 
\end{align*}
Moreover,  
\begin{multline*}
\left|  (\mathbb P \otimes \Leb_{[0,1]}) \left[X' \neq Y\right]^2 - (\mathbb P \otimes \Leb_{[0,1]}) \left[X \neq Y\right]^2 \right| \\
\begin{aligned}
 & \leq  \left|  (\mathbb P \otimes \Leb_{[0,1]}) \left[X' \neq Y\right]- (\mathbb P \otimes \Leb_{[0,1]}) \left[X \neq Y\right] \right| \\
&\quad \quad \cdot \left|  (\mathbb P \otimes \Leb_{[0,1]}) \left[X' \neq Y\right]+ (\mathbb P \otimes \Leb_{[0,1]}) \left[X \neq Y\right] \right|\\
 & \leq   \mathbb P \left[X' \neq X\right]
 \cdot \left|  (\mathbb P \otimes \Leb_{[0,1]}) \left[X' \neq Y\right]+ (\mathbb P \otimes \Leb_{[0,1]}) \left[X \neq Y\right] \right|.
\end{aligned}
\end{multline*}
For every $Y\in S^ \eta$, we have
\begin{align*}
\left|  (\mathbb P \otimes \Leb_{[0,1]}) \left[X' \neq Y\right]+ (\mathbb P \otimes \Leb_{[0,1]}) \left[X \neq Y\right] \right|
\leq \mathbb P  \left[X \neq X'\right] +2 \left(  2^{\frac{1+\delta}{2-\delta}} \eps^{\frac{1}{2-\delta}}+\eta \right). 
\end{align*}
By passing to the limit $\eta \searrow 0$ it follows that
there exists $C>0$ depending on $\delta$ such that for every $X,X' \in L_2(\Omega)$ with respective distributions $\mu$ and $\nu$, 
\begin{align*}
|u^\eps (\mu)-u^\eps (\nu)| 
=|v^\eps(X) - v^\eps (X') | 
\leq \frac{1}{2\eps}
\mathbb P  \left[X' \neq X\right] \left(\mathbb P  \left[X' \neq X\right] + C \eps^{\frac{1}{2-\delta}}  \right). 
\end{align*}
Let us distinguish two cases:
\begin{itemize}
\item if $\dtv(\mu,\nu) < \eps^{\frac{1}{2-\delta}}$: by definition~\eqref{def:dtv}, there exists a coupling $(X,X')$ of law $(\mu,\nu)$ such that $\P{X \neq X'} < \eps^{\frac{1}{2-\delta}}$. Thus
\begin{align*}
|u^\eps (\mu)-u^\eps (\nu)| 
\leq C \frac{\eps^{\frac{1}{2-\delta}}}{\eps} \P{X' \neq X}
\leq C \eps^{\frac{\delta-1}{2-\delta}} \dtv(\mu,\nu).
\end{align*}
\item if $\dtv(\mu,\nu) \geq \eps^{\frac{1}{2-\delta}}$: recall that $u^\eps$ is $\delta$-Hölder continuous (see~\eqref{ue_holder}). 
Thus
\begin{align*}
|u^\eps (\mu)-u^\eps (\nu)|  \leq C \dtv(\mu,\nu)^\delta
\leq C \frac{\dtv(\mu,\nu)}{\dtv(\mu,\nu)^{1-\delta}}
\leq C \eps^{\frac{\delta-1}{2-\delta}} \dtv(\mu,\nu).
\end{align*}
\end{itemize}
This completes the proof of $(iii)$.

Let us conclude the proof of Lemma~\ref{lemme:regularisation_alpha}. Let us define $\widetilde{\lambda}^\Re (k,\mu):= \Lambda(k) u^\eps (\mu)$, with $\eps=\frac{1}{\crochetk^{\theta(2-\delta)}}$. For every $\mu, \nu \in \mathcal P_2(\R)$, we have
\begin{align*}
|\widetilde{\lambda}^\Re (k,\mu) -\lambda^\Re (k,\mu)|
&\leq \Lambda(k)|u^\eps (\mu) - u(\mu) | 
\leq C \Lambda(k) \eps^{\frac{\delta}{2-\delta}}
\leq C \Lambda(k) \frac{1}{\crochetk^{\theta\delta}} ; \\
|\widetilde{\lambda}^\Re (k,\mu) -\widetilde{\lambda}^\Re (k,\nu)|
&\leq \Lambda(k)|u^\eps (\mu) - u^\eps (\nu) |
\leq  C \Lambda(k) \eps^{\frac{\delta-1}{2-\delta}} \dtv(\mu,\nu) 
\leq C \Lambda(k) \crochetk^{\theta(1-\delta)} \dtv(\mu,\nu). 
\end{align*}
 It completes the proof of~\eqref{lambda_moins_lambdatilde} and~\eqref{lambdatilde_lipschitz} for the case of the real part. The proof for the imaginary part is the same. 
\end{proof}
In particular, it follows from~\eqref{lambda_borne} and from~\eqref{lambda_moins_lambdatilde} that there is $C>0$ such that for each $k \in \R$, $| \widetilde{\lambda}(k, \cdot)| \leq C \Lambda(k)$. 

Let us define 
\begin{align}
\label{Btildxmu}
\widetilde{b}(x,\mu):= \int_\R \crochetk^{-\eta} \Big( \cos (kx) \widetilde{\lambda}^\Re(k,\mu) + \sin(kx) \widetilde{\lambda}^\Im(k, \mu)  \Big) \mathrm dk.
\end{align}
For every $x\in \R$ and $\mu \in \mathcal P_2(\R)$, 
\begin{align*}
| \widetilde{b}(x,\mu) |
\leq C \int_\R \crochetk^{-\eta} \big(|\widetilde{\lambda}^\Re(k,\mu)| + |\widetilde{\lambda}^\Im(k,\mu)|  \big) \mathrm dk
 \leq C \int_\R \crochetk^{-\eta} \Lambda(k) \mathrm dk
\leq C,
\end{align*}
since  $\eta  > 0$ and $\Lambda \in L_1(\R)$. 
Furthermore, by~\eqref{lambdatilde_lipschitz}, for every $x\in \R$ and for every $\mu, \nu \in \mathcal P_2(\R)$, 
\begin{align*}
| \widetilde{b}(x, \mu) - \widetilde{b}(x, \nu) |
&\leq \int_\R \crochetk^{-\eta} \Big(|\widetilde{\lambda}^\Re(k,\mu)-\widetilde{\lambda}^\Re(k,\nu)|  +|\widetilde{\lambda}^\Im(k,\mu)-\widetilde{\lambda}^\Im(k,\nu)|  \Big)\mathrm dk\\
&\leq C \int_\R \crochetk^{-\eta} \crochetk^{\theta(1-\delta)} \Lambda(k) \mathrm dk\; \dtv(\mu,\nu). 
\end{align*}
Moreover, $\eta-\theta(1-\delta) \geq 0$. Indeed, $\eta-\theta(1-\delta)=\eta+\theta \delta -\theta = \alpha-\frac{\alpha-\eta}{\delta}=\frac{\eta-\alpha(1-\delta)}{\delta} \geq 0$  by inequality~\eqref{assumption_alpha}.
Since $\Lambda$ belongs to $L_1(\R)$, it implies that $\int_\R \crochetk^{-\eta} \crochetk^{\theta(1-\delta)} \Lambda(k) \mathrm dk<+\infty$.
Therefore, the drift function $\widetilde{b}$ is uniformly bounded and uniformly Lipschitz-continuous in the measure variable.

\subsubsection{Existence of a weak solution to the SDE with drift function \texorpdfstring{$b$}{b}.}

Let us prove existence of a weak solution to equation~\eqref{sde_beta_drift_general}. 
We follow the same idea as in Theorem~\ref{theo:weak_wp_hypotheses_simplifiees}. 

\begin{proof}[Proof (Theorem~\ref{theo:interpolation_lacker}, existence part)]
Let $\mathbf{\Omega}$ be a weak solution to the SDE~\eqref{sde_beta_drift_lipschitz} with drift~$\widetilde{b}$ given by~\eqref{Btildxmu}. 
In particular, $\mathbb P$-almost surely and for every $t\in [0,T]$, 
\begin{align*}
z_t= \xi + \int_0^t \!\! \int_\R f(k) \Re( e^{-ikz_s} \mathrm dw(k,s)) +\beta_t+\int_0^t \widetilde{b}(z_s,\mu_s) \mathrm ds, 
\end{align*}
where for every $t\in [0,T]$, $\mu_t= \mathcal L^{\mathbb P} (z_t | \mathcal G_t^W)$ and $\mathcal G_t^W:= \sigma\{ w(k,s), k\in \R, s\leq t \}$. Recall that Proposition~\ref{prop:drift_lipschitz_uniqu} states that every weak solution has this form, \textit{i.e.} $(\mu_t)_{t\in [0,T]}$ is adapted to (the completion of) $(\mathcal G^W_t)_{t\in [0,T]}$. 

Let $(h_t)_{t\in [0,T]}=(h^\Re_t+i h^\Im_t)_{t\in [0,T]}$ be a  process with values in $L_2(\R, \C)$ satisfying for every $t\in [0,T]$ and for every $x\in \R$,  
\begin{align}
\label{fourier_b-btild}
(b-\widetilde{b})(x, \mu_t) = \int_\R f(k) \Big( \cos (kx) h_t^\Re(k) + \sin(kx) h_t^\Im(k)  \Big) \mathrm dk.
\end{align}
By~\eqref{Bxmu} and~\eqref{Btildxmu}, the unique solution to~\eqref{fourier_b-btild} is given,  for every $k \in \R$ and for every $t\in [0,T]$, by
\begin{align*}
h_t(k)=\frac{1}{f(k)} \crochetk^{-\eta} (\lambda(k,\mu_t) - \widetilde{\lambda}(k, \mu_t)).
\end{align*}
Since $\mu_t$ is a $(\mathcal G_t^W)_{t\in [0,T]}$-adapted process, the process $(h_t)_{t\in [0,T]}$ is also $(\mathcal G_t^W)_{t\in [0,T]}$-adapted. 
Furthermore, by~\eqref{lambda_moins_lambdatilde} and since $f$ is of order $\alpha$, 
\begin{align*}
\int_0^T \!\!\int_\R |h_t(k)|^2  \mathrm dk \mathrm dt
&\leq C \int_0^T \!\!\int_\R  \crochetk^{2\alpha -2\eta}  |\lambda(k,\mu_t) - \widetilde{\lambda}(k, \mu_t)|^2    \mathrm dk \mathrm dt \\
&\leq C \int_0^T \!\!\int_\R  \crochetk^{2\alpha -2\eta-2\theta \delta} \Lambda(k)^2   \mathrm dk \mathrm dt.
\end{align*}
Since $\alpha=\eta+\theta \delta$ and $\Lambda \in L_2(\R)$,  we deduce that $\int_0^T \!\!\int_\R |h_t(k)|^2  \mathrm dk \mathrm dt$ is bounded by a deterministic constant. 
Therefore, the  measure $\mathbb Q$ on $(\Omega, \mathcal G)$ with the following density with respect to~$\mathbb P$:
\begin{align*}
\frac{\mathrm d \mathbb Q}{\mathrm d \mathbb P}
=\exp \left( \int_0^T \!\! \int_\R h^\Re_t(k) \mathrm dw^\Re(k,t)+\int_0^T \!\! \int_\R h^\Im_t(k) \mathrm dw^\Im(k,t) - \frac{1}{2}  \int_0^T \!\! \int_\R |h_t(k)|^2 \mathrm dk \mathrm dt \right)
\end{align*}
is a probability measure. Let us define $\widetilde{w}(k,t)=\widetilde{w}^\Re(k,t)+ i \widetilde{w}^\Im(k,t)$, where
\begin{align*}
\widetilde{w}^\Re(k,t) &:=\wR(k,t)  - \int_0^t \!\!\int_0^k h^\Re_s(l) \; \mathrm dl \mathrm ds, \\
\widetilde{w}^\Im(k,t) &:=\wI(k,t)  - \int_0^t \!\!\int_0^k h^\Im_s(l) \; \mathrm dl \mathrm ds.
\end{align*}
By Girsanov's Theorem, $\mathcal L^{\mathbb Q} (\widetilde{w},\beta, \xi) = \mathcal L^{\mathbb P} (w,\beta, \xi)$
and for any $t \in [0,T]$, the $\sigma$-field $\sigma \{\widetilde w(k,t')-\widetilde w(k,t),\beta_{t'}- \beta_{t}, \ k \in \R, \ t' \in [t,T] \}$ is independent of ${\mathcal G}_{t}$ under ${\mathbb Q}$.  
Moreover, $\mathbb Q$-almost surely, the process $(z_t)_{t\in [0,T]}$ satisfies
\begin{align*}
z_t&= \xi + \int_0^t \!\! \int_\R f(k) \Re( e^{-ikz_s} \mathrm d\widetilde{w}(k,s)) 
+\int_0^t \!\! \int_\R f(k) \Big( \cos (kz_s) h_t^\Re (k) + \sin (kz_s) h_t^\Im (k) \Big) \mathrm dk \mathrm ds \\ 
&\quad \quad +\beta_t+\int_0^t \widetilde{b}(z_s,\mu_s) \mathrm ds \\
&= \xi + \int_0^t \!\! \int_\R f(k) \Re( e^{-ikz_s} \mathrm d\widetilde{w}(k,s)) 
+\int_0^t (b-\widetilde{b})(z_s, \mu_s) \mathrm ds  +\beta_t+\int_0^t \widetilde{b}(z_s,\mu_s) \mathrm ds \\
&= \xi + \int_0^t \!\! \int_\R f(k) \Re( e^{-ikz_s} \mathrm d\widetilde{w}(k,s)) +
\beta_t+\int_0^t b(z_s,\mu_s) \mathrm ds.
\end{align*}

Furthermore, recall that for every $t\in [0,T]$ $\mathbb P$-almost surely, $\mu_t= \mathcal L^{\mathbb P} (z_t | \mathcal G_t^W)$. We want to prove that for every $t\in [0,T]$ $\mathbb Q$-almost surely, $\mu_t= \mathcal L^{\mathbb Q} (z_t | \mathcal G_t^{\mu,\widetilde{W}})$, where the filtration $(\mathcal G_t^{\mu,\widetilde{W}})_{t\in [0,T]}$ is defined by  $\mathcal G_t^{\mu,\widetilde{W}}= \sigma\{ \widetilde{w}(k,s),\mu_s \;; k\in \R, s\leq t \}$. 
Let $\psi:\R \to \R$ and $\varphi: \mathcal C([0,T],\mathcal P_2(\R)) \times \mathcal C(\R \times [0,T] , \R^2) \to \R$ be bounded and measurable functions.
Fix $t\in [0,T]$. Then
\begin{multline*}
\mathbb E^{\mathbb Q} \left[ \psi(z_t) \; \varphi(\mu_{\cdot \wedge t},\widetilde{w}_{\cdot \wedge t}) \right] \\
= \mathbb E^{\mathbb P} \left[ \psi(z_t) \;\varphi(\mu_{\cdot \wedge t},\widetilde{w}_{\cdot \wedge t})\exp \Big(  \int_0^t \!\! \int_\R \Re \big( \overline{ h_t(k)} \mathrm dw(k,t)\big) - \frac{1}{2}  \int_0^t \!\! \int_\R |h_t(k)|^2 \mathrm dk \mathrm dt \Big) \right].
\end{multline*}
Recall that the process $(h_t)_{t\in [0,T]}$ is  $(\mathcal G_t^W)_{t\in [0,T]}$-adapted. It follows that the process $(\widetilde{w}_{\cdot \wedge t})_{t\in [0,T]}$ is also $(\mathcal G_t^W)_{t\in [0,T]}$-adapted, since $\widetilde{w}_{\cdot \wedge t} = w_{\cdot \wedge t} - \int_0^{\cdot\wedge t} \int_0^\cdot h_s(l) \mathrm dl \mathrm ds$. Thus
\begin{multline}
\mathbb E^{\mathbb Q} \left[ \psi(z_t) \; \varphi(\mu_{\cdot \wedge t},\widetilde{w}_{\cdot \wedge t}) \right] \\
\begin{aligned}
&= \mathbb E^{\mathbb P} \left[ \mathbb E^{\mathbb P} \left[ \psi(z_t)| \mathcal G_t^W \right] \varphi(\mu_{\cdot \wedge t},\widetilde{w}_{\cdot \wedge t}) \exp \Big(   \int_0^t \!\!  \int_\R \Re \big( \overline{ h_t(k)} \mathrm dw(k,t)\big) - \frac{1}{2}  \int_0^t \!\! \int_\R |h_t(k)|^2 \mathrm dk \mathrm dt \Big) \right]  \\
&= \mathbb E^{\mathbb P} \left[\int_\R \psi (x) \mathrm d\mu_t(x)  \; \varphi(\mu_{\cdot \wedge t},\widetilde{w}_{\cdot \wedge t}) \exp \Big(  \int_0^t \!\!  \int_\R \Re \big( \overline{ h_t(k)} \mathrm dw(k,t)\big) - \frac{1}{2}  \int_0^t  \!\!\int_\R |h_t(k)|^2 \mathrm dk \mathrm dt \Big) \right]  \\
&= \mathbb E^{\mathbb Q} \left[\int_\R \psi (x) \mathrm d\mu_t(x)  \; \varphi(\mu_{\cdot \wedge t},\widetilde{w}_{\cdot \wedge t}) \right].
\end{aligned}
\label{preuvetheo41}
\end{multline}
Therefore, for every $t\in [0,T]$, $\mathbb E^{\mathbb Q} \left[ \psi(z_t) | \mathcal G_t^{\mu,\widetilde{W}} \right] = \int_\R \psi(x) \mathrm d\mu_t(x)$. We deduce that for every $t\in [0,T]$, $\mathbb Q$-almost surely $\mu_t= \mathcal L^{\mathbb Q} (z_t | \mathcal G_t^{\mu,\widetilde{W}})$.

Furthermore, the pair $(\mu, \widetilde{w})$ is $\mathcal G^W$-measurable and, subsequently, 
$\frac{d {\mathbb Q}}{d {\mathbb P}}$ is also ${\mathcal G}^W$-measurable.
By independence of $(\xi,w,\beta)$ under ${\mathbb P}$, we deduce that 
$(\mu,\widetilde w)$ and $(\beta,\xi)$ are independent under ${\mathbb Q}$. 
By the same argument and by the compatibility property under ${\mathbb P}$, we deduce that, under ${\mathbb Q}$, for any $t \in [0,T]$, 
$(\xi,\widetilde w,\mu)$ and $\beta$ are conditionally independent given ${\mathcal G}_{t}$, which is the required compatibility 
condition.

Therefore $(\Omega, \mathcal G, (\mathcal G_t)_{t\in [0,T]}, \mathbb Q, z,\widetilde{w}, \beta, \xi)$ is a weak solution to~\eqref{sde_beta_drift_general}. 
This proves the first statement of Theorem~\ref{theo:interpolation_lacker}. 
\end{proof}

\begin{rem}
\label{rem:adaptness_bis}
In Remark~\ref{rem:adaptness}, we emphasized the importance of the filtration under which $(\mu_t)_{t\in [0,T]}$ is adapted. It makes sense in~\eqref{preuvetheo41}, because in order to identify $\int_\R \psi(x) \mathrm d\mu_t(x)$ with the conditional expectation $\mathbb E^{\mathbb Q} \left[ \psi(z_t) | \mathcal G_t^{\mu,\widetilde{W}} \right]$, we need to know that $\mu_t$ is $\mathcal G_t^{\mu,\widetilde{W}}$-measurable. This is obviously true, but it is not necessarily true with $\mathcal G_t^{\widetilde{W}}$ instead of $\mathcal G_t^{\mu,\widetilde{W}}$. 
\end{rem}

\subsubsection{Uniqueness in law for the SDE with drift function \texorpdfstring{$b$}{b}.}

Let us conclude the proof of Theorem~\ref{theo:interpolation_lacker} by showing uniqueness in law for equation~\eqref{sde_beta_drift_general}. 
We follow the same idea as in Theorem~\ref{theo:weak_wp_hypotheses_simplifiees}.

\begin{proof}[Proof (Theorem~\ref{theo:interpolation_lacker}, uniqueness part)]
Let $\mathbf{\Omega^1}$ and $\mathbf{\Omega^2}$ be two weak solutions to~\eqref{sde_beta_drift_general}. We want to prove that $\mathcal L^{\mathbb P^1} (z^1)=\mathcal L^{\mathbb P^2} (z^2)$. 
In particular, for $n=1,2$, $\mathbb P^n$-almost surely, the process $(z_t^n)_{t\in [0,T]}$ satisfies
\begin{align*}
z^n_t= \xi^n + \int_0^t \!\! \int_\R f(k) \Re( e^{-ikz^n_s} \mathrm dw^n(k,s)) +\beta^n_t+\int_0^t b(z^n_s,\mu^n_s) \mathrm ds, 
\end{align*}
where for every $t\in [0,T]$, $\mu^n_t= \mathcal L^{\mathbb P^n} (z_t^n | \mathcal G_t^{\mu^n, W^n})$, $\mathcal G_t^{\mu^n, W^n}:= \sigma\{ w^n(k,s), \mu^n_s \; ;k\in \R, s\leq t \}$ and $(\mu^n,w^n)$ is independent of $(\beta^n,\xi^n)$ under $\mathbb P^n$. 

For $n=1,2$, define the process $(h_t^n)_{t\in [0,T]}$ by $h_t^n(k):= \frac{1}{f(k)} \crochetk^{-\eta} (\lambda(k,\mu^n_t)-\widetilde{\lambda}(k,\mu^n_t))$ for every $k \in \R$ and for every $t\in [0,T]$.
It is $(\mathcal G_t^{\mu^n})_{t\in [0,T]}$-adapted, $\int_0^T \int_\R |h^n_t(k)|^2 \mathrm dk \mathrm dt$ is bounded and $(h_t^n)_{t\in [0,T]}$ satisfies for every $x\in \R$ and for every $t \in [0,T]$
\begin{align*}
(b-\widetilde{b})(x, \mu^n_t) = \int_\R f(k) \Big( \cos (kx) h_t^{\Re,i}(k) + \sin(kx) h_t^{\Im,i}(k)  \Big) \mathrm dk.
\end{align*}
Let us define $\mathbb Q^n$ as the absolutely continuous probability measure with respect to $\mathbb P^n$ with density
\begin{align*}
\frac{\mathrm d \mathbb Q^n}{\mathrm d \mathbb P^n}
=\exp \left(- \int_0^T \!\! \int_\R \Re \big( \overline{h^n_t(k)} \mathrm dw^n(k,t) \big) - \frac{1}{2}  \int_0^T \!\! \int_\R |h^n_t(k)|^2 \mathrm dk \mathrm dt \right).
\end{align*}
Let us denote $\mathrm d \widetilde{w}^n(k,t)=\mathrm dw^n(k,t) + h_t^n(k) \mathrm dk \mathrm dt$. It follows from Girsanov's Theorem that $\mathcal L^{\mathbb Q^n} (\widetilde{w}^n, \beta^n, \xi^n) = \mathcal L^{\mathbb P^n} (w^n, \beta^n, \xi^n)$
and that, for any $t \in [0,T]$, the $\sigma$-field $\sigma \{\widetilde w^n(k,t')-\widetilde w^n(k,t),$ $\beta^n_{t'}- \beta^n_{t}, \ k \in \R, \ t' \in [t,T] \}$ is independent of ${\mathcal G}_{t}^n$ under ${\mathbb Q}^n$. Moreover, $(z^n_t)_{t\in [0,T]}$ satisfies
\begin{align*}
z^n_t= \xi^n + \int_0^t \!\! \int_\R f(k) \Re( e^{-ikz^n_s} \mathrm d\widetilde{w}^n(k,s)) +
\beta^n_t+\int_0^t \widetilde{b}(z^n_s,\mu^n_s) \mathrm ds.
\end{align*}
Let us remark that $\Big(\exp (- \int_0^t  \int_\R \Re \big( \overline{h^n_s(k)} \mathrm dw^n(k,s) \big) - \frac{1}{2}  \int_0^t \int_\R |h^n_s(k)|^2 \mathrm dk \mathrm ds )\Big)_{t\in [0,T]}$ and $\widetilde{w}^n_{\cdot \wedge t} = w^n_{\cdot \wedge t} - \int_0^{\cdot\wedge t} \int_0^\cdot h^n_s(l) \mathrm dl \mathrm ds$
are $(\mathcal G_t^{\mu^n,W^n})_{t\in [0,T]}$-adapted.
Let us consider the same function $\varphi$ and $\psi$ as in equality~\eqref{preuvetheo41}. We obtain by a similar computation:
\begin{multline*}
\mathbb E^{\mathbb Q^n} \left[ \psi(z_t^n) \; \varphi(\mu^n_{\cdot \wedge t},\widetilde{w}^n_{\cdot \wedge t}) \right] \\
\begin{aligned}
&= \mathbb E^{\mathbb P^n} \left[ \psi(z^n_t) \; \varphi(\mu^n_{\cdot \wedge t},\widetilde{w}^n_{\cdot \wedge t}) 
\exp \Big( -  \int_0^t  \!\! \int_\R \Re \big( \overline{h^n_s(k)} \mathrm dw^n(k,s) \big) - \frac{1}{2}  \int_0^t \!\! \int_\R |h_s^n(k)|^2 \mathrm dk \mathrm ds \Big) \right]  \\
&= \mathbb E^{\mathbb P^n} \bigg[ \mathbb E^{\mathbb P^n} \left[ \psi(z_t^n)| \mathcal G_t^{\mu^n,W^n} \right] \varphi(\mu^n_{\cdot \wedge t},\widetilde{w}^n_{\cdot \wedge t})  \\ 
&\quad \quad  \quad 
\cdot \exp \Big(\! - \!\! \int_0^t \!\! \int_\R \Re \big( \overline{h^n_s(k)} \mathrm dw^n(k,s) \big) - \frac{1}{2}  \int_0^t\!\!  \int_\R |h_s^n(k)|^2 \mathrm dk \mathrm ds \Big) \bigg] \\
&= \mathbb E^{\mathbb P^n} \left[\int_\R \psi (x) \mathrm d\mu^n_t(x)  \; \varphi(\mu^n_{\cdot \wedge t},\widetilde{w}^n_{\cdot \wedge t}) \exp \Big( \! - \int_0^t\!\!   \int_\R \Re \big( \overline{h^n_s(k)} \mathrm dw^n(k,s) \big) - \frac{1}{2}  \int_0^t \!\! \int_\R |h_s^n(k)|^2 \mathrm dk \mathrm ds \Big) \right]  \\
&= \mathbb E^{\mathbb Q^n} \left[\int_\R \psi (x) \mathrm d\mu^n_t(x)  \; \varphi(\mu^n_{\cdot \wedge t},\widetilde{w}^n_{\cdot \wedge t}) \right],
\end{aligned}
\end{multline*}
and thus for every $t\in [0,T]$, $\mathbb Q^n$-almost surely, $\mu^n_t= \mathcal L^{\mathbb Q^n} (z_t^n |\mathcal G_t^{\mu^n,\widetilde{W}^n} )$.

Moreover $(\mu^n, \widetilde{w}^n)$ and $\frac{\mathrm d \mathbb Q^n}{\mathrm d \mathbb P^n}$ are $\mathcal G^{\mu^n,W^n}$-measurable and under $\mathbb P^n$, $(\mu^n,w^n)$ is independent of $(\beta^n,\xi^n)$. 
Thus for any bounded and measurable functions   $g:\mathcal C([0,T],\R) \times \R \to \R$ and $f:  \mathcal C([0,T],\mathcal P_2(\R)) \times \mathcal C(\R \times [0,T] , \R^2) \to \R$, we have
\begin{align*}
\mathbb E^{\mathbb Q^n} \left[ f(\mu^n, \widetilde{w}^n) \; g(\beta^n, \xi^n) \right]
&=\mathbb E^{\mathbb P^n} \left[ f(\mu^n, \widetilde{w}^n)  \; \frac{\mathrm d \mathbb Q^n}{\mathrm d \mathbb P^n}\; g(\beta^n, \xi^n) \right] \\
&=\mathbb E^{\mathbb P^n} \left[ f(\mu^n, \widetilde{w}^n)  \; \frac{\mathrm d \mathbb Q^n}{\mathrm d \mathbb P^n}\right] \cdot \mathbb E^{\mathbb P^n} \left[ g(\beta^n, \xi^n) \right] \\
&=\mathbb E^{\mathbb Q^n} \left[ f(\mu^n, \widetilde{w}^n)  \right] \cdot \mathbb E^{\mathbb Q^n} \left[ g(\beta^n, \xi^n) \right].
\end{align*}
Thus under $\mathbb Q^n$, $(\mu^n, \widetilde{w}^n)$ is independent of $(\beta^n,\xi^n)$. 
By the same argument and by the compatibility property under $\mathbb P^n$,  we get that, under~${\mathbb Q}^n$, for any $t \in [0,T]$, 
$(\xi^n,\widetilde w^n,\mu^n)$ and $\beta^n$ are conditionally independent given ${\mathcal G}_{t}^n$, which 
proves compatibility 
under ${\mathbb Q}^n$.

Thus we deduce that for $n=1,2$, $(\Omega^n, \mathcal G^n, (\mathcal G^n_t)_{t\in [0,T]}, \mathbb Q^n, z^n,\widetilde{w}^n, \beta^n, \xi^n)$ are  weak solutions to the  SDE~\eqref{sde_beta_drift_lipschitz} with drift $\widetilde{b}$. 
By Proposition~\ref{prop:drift_lipschitz_uniqu}, it follows that $\mathcal L^{\mathbb Q^1} (z^1, \widetilde{w}^1)=\mathcal L^{\mathbb Q^2} (z^2, \widetilde{w}^2)$ and that for every $t\in [0,T]$, $\mu^n_t= \mathcal L^{\mathbb Q^n} (z_t^n |\mathcal G_t^{\widetilde{W}^n} )$.
Then, we apply the same computation as~\eqref{loi_de_zi_wi}: for each bounded and measurable $\phi:\mathcal C( [0,T], \R) \to \R$, 
\begin{align*}
\mathbb E^{\mathbb P^n} \left[ \phi(z^n) \right]
= \mathbb E^{\mathbb Q^n} \left[ \phi(z^n) \exp \Big(
\int_0^T \!\!\int_\R  \Re \big(\overline{ h_t^n(k) }\mathrm d\widetilde{w}^n(k,t)\big) - \frac{1}{2} \int_0^T \!\!\int_\R |h_t^n(k)|^2 \mathrm dk \mathrm dt
\Big) \right].
\end{align*}
Recall that $h_t^n(k)= \frac{1}{f(k)} \crochetk^{-\eta} (\lambda(k,\mu^n_t)-\widetilde{\lambda}(k,\mu^n_t))$ and that $\mu^n_t= \mathcal L^{\mathbb Q^n} (z_t^n |\mathcal G_t^{\widetilde{W}^n} )$. Hence the process $(h_t^n)_{t\in [0,T]}$ is $(\mathcal G_t^{\widetilde{W}^n})_{t\in [0,T]}$-progressively measurable. It follows that  there is a measurable map $\psi: \mathcal C(\R \times [0,T] , \R^2) \to \R$, independent of $n$, such that $ \mathbb E^{\mathbb Q^n} \left[| \psi(\widetilde{w}^n)| \right] <+\infty$ and
\begin{align*}
\mathbb E^{\mathbb P^1} \left[ \phi(z^1) \right]
= \mathbb E^{\mathbb Q^1} \left[\phi(z^1)\psi(\widetilde{w}^1) \right]
= \mathbb E^{\mathbb Q^2} \left[\phi(z^2)\psi(\widetilde{w}^2) \right]
=\mathbb E^{\mathbb P^2} \left[ \phi(z^2) \right].
\end{align*}
 We conclude that $\mathcal L^{\mathbb P^1} (z^1)=\mathcal L^{\mathbb P^2} (z^2)$. 
This completes the proof of Theorem~\ref{theo:interpolation_lacker}. 
\end{proof}

\begin{rem}
As the last computation right above highlights it, we have in fact that 
${\mathcal L}^{\mathbb P^1}(z^1,\widetilde w^1)=
{\mathcal L}^{\mathbb P^2}(z^2,\widetilde w^2)
$ and then 
${\mathcal L}^{\mathbb P^1}(z^1,\mu^1)=
{\mathcal L}^{\mathbb P^2}(z^2,\mu^2)
$.
\end{rem}

\bibliographystyle{alpha}

\bibliography{ref_victor} 
\end{document}